\newcommand{\RN}[1]{\textup{\uppercase\expandafter{\romannumeral#1}}}
\numberwithin{equation}{section}
\newcommand\mc{\mathcal}
\newcommand\mf{\mathfrak}
\newcommand\mb{\mathbb}
\newcommand\mbf{\mathbf}
\crefname{equation}{}{}
\newtheorem{theorem}{Theorem}[section]
\newtheorem{lemma}[theorem]{Lemma}
\newtheorem{proposition}[theorem]{Proposition}
\newtheorem{corollary}[theorem]{Corollary}
\title{A Generalized vanishing theorem for Blow-ups of Quasi-smooth Stacks}
\author{Yu Zhao}
\theoremstyle{definition}
\newtheorem{conjecture}[theorem]{Conjecture}
\newtheorem{definition}[theorem]{Definition}
\newtheorem{assumption}[theorem]{Assumption}
\newtheorem{example}[theorem]{Example}
\theoremstyle{remark}
\newtheorem{remark}[theorem]{Remark}
\begin{document}
\maketitle
\begin{center}
  {\it To our friend Kun Wang (1993-2022).}
\end{center}

\begin{abstract}
  We prove a generalized vanishing theorem for certain quasi-coherent sheaves along the derived blow-ups of quasi-smooth derived Artin stacks. We give four applications of the generalized vanishing theorem: we prove a $K$-theoretic version of the generalized vanishing theorem which verified a conjecture of the author \cite{yuzhaoderived} and give a new proof of the $K$-theoretic localization theorem for quasi-smooth derived schemes through the intrinsic blow-up theory of Kiem-Li-Savvas \cite{kiem2017generalized}; we prove a desingularization theorem for quasi-smooth derived schemes and give an approximation formula for the virtual fundamental classes; we give a resolution of the diagonal along the projection map of blow-ups of smooth varieties, which strengthens the semi-orthogonal decomposition theorem of Orlov \cite{MR1208153}; we illustrate the relation between the generalized vanishing theorem and weak categorifications of quantum loop and toroidal algebra actions on the derived category of Nakajima quiver varieties. We also propose several conjectures related to birational geometry and the $L_{\infty}$-algebroid of Calaque-C\u{a}ld\u{a}raru-Tu \cite{MR3228441}.
\end{abstract}
\setlength{\epigraphwidth}{0.45\textwidth}
\epigraph{\itshape Struggling on the edge of the youth;\\ Gazing from the end of freedom; \\Wandering in the barren prairie;\\ Looking forward to the ideal.}{Lao Wang,\textit{ I am still young}}
\setlength{\epigraphwidth}{0.45\textwidth}

\section{Introduction}
\subsection{The main result of this paper}

Given a closed embedding of derived stacks with a cotangent complex,
$$f:Z\to X,$$
Hekking and his collaborators \cite{Hekking_2022} defined a derived enhancement of the extended Rees algebra
$$R_{Z/X}^{ext},$$
and moreover defined a derived blow-up
$\mb{B}l_{Z/X}.$ The purpose of this paper is to establish a generalized vanishing theorem for certain quasi-coherent sheaves: we will compare the following two $\mb{Z}$-graded $\mc{O}_{X}[t^{-1}]$ quasi-coherent modules over $X$, where $t^{-1}$ has homogeneous degree $-1$:
\begin{enumerate}
\item the derived extended Rees algebra $R_{Z/X}^{ext}$ defined in Hekking's thesis \cite{hekking2021graded};
\item
  $$W_{Z/X}:=\bigoplus_{d\in \mb{Z}}pr_{Z/X*}\mc{O}_{\mb{B}l_{Z/X}}(d)$$
where $\mc{O}_{\mb{B}l_{Z/X}}$ is the exceptional divisor of the derived blow-up, and the $t^{-1}$ action is induced by the push-forward of the canonical map of exceptional divisors
$$\mc{O}_{\mb{B}l_{Z/X}}(d+1)\to\mc{O}_{\mb{B}l_{Z/X}}(d).$$
\end{enumerate}

The motivation to consider the generalized vanishing theorem is comprehensive, complicated, and not straightforward. Thus we will also give four applications and several conjectures to illustrate the relation between the generalized vanishing theorem and different areas of algebraic geometry and representation theory in a very rough way.
\subsection{Assumptions and the structure theorem}
We first state some assumptions and notations we need to formulate the generalized vanishing theorem:
\begin{assumption}
  \label{ass1}
  We propose the following assumptions for a closed embedding of $S$-derived stacks
  $$f:Z\to X:$$
  where $Z$ and $X$ are derived Artin stacks over $S$ with cotangent complexes $L_{Z/S}$ and $L_{X/S}$ which are both perfect and have $Tor$-amplitude $[-1,1]$ and constant ranks. Here $S$ is a derived Artin stack over an field $\mb{C}$ which is algebraically and of characteristic $0$.\end{assumption}
We make the following notations:
\begin{itemize}
\item $pr_{Z/X} \text{ or } pr_{f}:\mb{B}l_{Z/X}\to X$ as the projection map;
\item $\mc{O}_{\mb{B}l_{Z/X}}(1)$ as the (virtual) exceptional divisor of $\mb{B}l_{Z/X}$;
\item $C_{Z/X} \text{ or } C_{f}:=L_{Z/X}[-1]$ as the cotangent complex of $Z$ over $X$;
\item $r:=rank(L_{X/S})-rank(L_{Z/S})$ as the virtual codimension of $Z$ over $X$;
\item $\mb{P}_{Z}(\mc{F})$ is Jiang's derived projectivization in \cite{jiang2022derived}, where $\mc{F}$ is a quasi-coherent sheaf over $Z$.
\end{itemize}

\begin{theorem}[\cref{structurethm}, Structure theorem of derived blow-ups]
  Assuming \cref{ass1}, we have that
  \begin{enumerate}
      \item the blow-up $\mb{B}l_{Z/X}$ is also a quasi-smooth derived Artin stack over $S$;
      \item the cotangent complex $L_{\mb{B}l_{Z/X}/S}$ and $L_{X/S}$ have the same rank;
      \item the projection $pr_{Z/X}$ is proper.
  \end{enumerate}
\end{theorem}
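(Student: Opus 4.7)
The plan is to reduce the three claims to an étale-local model for $f$ and then invoke Jiang's derived projectivization together with Hekking's universal characterization of $\mb{B}l_{Z/X}$ via virtual Cartier divisors.

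Under \cref{ass1}, the cofiber sequence $f^{*}L_{X/S}\to L_{Z/S}\to L_{Z/X}$, the Tor-amplitude hypotheses, and the connectivity of $L_{Z/X}[-1]$ for a closed embedding together force $C_{Z/X}=L_{Z/X}[-1]$ to be perfect of Tor-amplitude $[0,1]$ with constant rank $r$. This is precisely the regime in which Jiang's derived projectivization $\mb{P}_{Z}(C_{Z/X})$ is a quasi-smooth derived Artin stack, proper over $Z$ of virtual relative dimension $r-1$, equipped with a tautological line bundle $\mc{O}(1)$. Étale-locally on $X$, a standard argument presents $f$ as the derived zero locus of a section $s:X\to\mc{E}$ of a vector bundle of rank $r$, and in this case $C_{Z/X}\cong\mc{E}^{\vee}|_{Z}$ is a genuine vector bundle.

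In this local model, Hekking's universal property identifies $\mb{B}l_{Z/X}$ with the derived vanishing locus inside the classical projective bundle $\mb{P}_{X}(\mc{E})$ of the section of the universal quotient bundle induced by $s$. This exhibits
\[
\mb{B}l_{Z/X}\hookrightarrow \mb{P}_{X}(\mc{E})\to X
\]
as a quasi-smooth closed immersion of virtual codimension $r-1$ into a smooth projective bundle of relative dimension $r-1$ over $X$. All three claims then follow at once: the composition of a quasi-smooth closed immersion with a smooth proper morphism is quasi-smooth over $S$ and proper over $X$, and the rank computation yields
\[
\operatorname{rank}(L_{\mb{B}l_{Z/X}/S})=\operatorname{rank}(L_{X/S})+(r-1)-(r-1)=\operatorname{rank}(L_{X/S}).
\]
Since all three conclusions can be checked étale-locally on $X$ and Hekking's construction is compatible with étale base change, the local statements glue to the global assertions; the fact that $pr_{Z/X}$ is an isomorphism over $X\setminus Z$ gives an independent check of the rank claim away from the exceptional divisor.

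The main obstacle I expect is the careful matching of Hekking's abstract derived blow-up with the concrete derived zero locus in the local model: this requires invoking the universal property of $\mb{B}l_{Z/X}$ as parametrizing virtual Cartier divisors lifting $Z\to X$, and, separately, a clean identification of Jiang's $\mb{P}_{Z}(C_{Z/X})$ (which reappears as the exceptional divisor) with the classical projective bundle of $\mc{E}^{\vee}|_Z$ in the local model. Once this comparison is pinned down, the remainder of the argument is the derived counterpart of the classical fact that blowing up a regularly embedded subscheme in a smooth ambient scheme produces a smooth proper birational morphism of the expected relative dimension zero.
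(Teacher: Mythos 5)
The core difficulty with your proposal is the local model. You claim that ``\'{e}tale-locally on $X$, a standard argument presents $f$ as the derived zero locus of a section $s:X\to\mc{E}$ of a vector bundle of rank $r$, and in this case $C_{Z/X}\cong\mc{E}^{\vee}|_{Z}$ is a genuine vector bundle.'' That presentation would force $f$ to be a quasi-smooth closed embedding (i.e.\ $L_{Z/X}$ to be a shifted vector bundle). But \cref{ass1} imposes Tor-amplitude $[-1,1]$ on $L_{Z/S}$ and $L_{X/S}$ only, \emph{not} on $L_{Z/X}$; what you correctly derive earlier in your own argument is that $C_{Z/X}$ has Tor-amplitude $[0,1]$, which is strictly weaker than being a vector bundle. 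In fact the virtual codimension $r=\operatorname{rank}(L_{X/S})-\operatorname{rank}(L_{Z/S})$ can be negative, and the paper explicitly exploits this regime (see the discussion of $D_{l}$ for $r\le 0$ preceding \cref{conj:1.4}); a vector bundle cannot have negative rank, so for $r<0$ your local model is impossible even in principle. A concrete counterexample (already \'etale-local and affine) is $Z=\mathrm{Spec}(\mb{C})\hookrightarrow X=\mathrm{Spec}(\mb{C}\oplus\mb{C}[1])$ over $S=\mathrm{pt}$, where $C_{Z/X}\cong\mc{O}_{Z}[1]$ has Tor-amplitude $[1,1]$ and rank $-1$, and $Z$ is not the zero locus of any section of a vector bundle on $X$.

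The fix, and the route the paper actually takes, is to use a local model that resolves \emph{both} $Z$ and $X$: by \cref{kuranishi}, one chooses (fppf-locally) a smooth $S$-scheme $W$ and a sequence $V_{1}\xrightarrow{g_{1}}V_{0}\xrightarrow{w_{0}}\mc{O}_{W}$ so that $X$ is the derived zero locus of $w_{0}\circ g_{1}$ and $Z$ that of $w_{0}$. In this local chart $C_{Z/X}$ is the two-term complex $\{V_{1}\to V_{0}\}|_{Z}$, of Tor-amplitude $[0,1]$ as expected. The identification of $\mb{B}l_{Z/X}$ as a derived zero locus inside $\mb{P}_{W}(V_{0})$ (which \emph{is} smooth over $S$) is then supplied by \cref{lemA421} in combination with \cref{prop:conj41} and \cref{cor:ccc}; this is precisely the content of the ``main obstacle'' you flag but do not address. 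Once the local model is corrected, your conclusions about quasi-smoothness, rank, and properness follow by the same type of computation you sketch (quasi-smooth closed immersion into a smooth proper $W$-scheme), but the two virtual codimensions that cancel are those of $\mb{P}_W(V_0)/W$ and of $\mb{B}l_{Z/X}\hookrightarrow X\times_W\mb{P}_W(V_0)$, not the ones you wrote.
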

\subsection{The generalized vanishing theorem \RN{1} and \RN{2}}
 We give two formulations of the generalized vanishing theorems. The first is more conceptual, and the second is more useful for applications.
\begin{theorem}[Generalized vanishing theorem \RN{1}, \cref{isomorphism2}]
 There exists a canonical morphism of $\mb{Z}$-graded quasi-coherent $\mc{O}_{X}[t^{-1}]$-modules
  $$\phi^{f}:R_{Z/X}^{ext}\to W_{Z/X}$$
  with the following commutative diagram (up to equivalence) where all the horizontal and vertical arrows are fiber sequences:
  \begin{equation*}
      \begin{tikzcd}
      R_{Z/X}^{ext}\ar{r}{t^{-1}} \ar{d}{\phi^{f}} & R_{Z/X}^{ext}\ar{r}\ar{d}{\phi^{f}} & \mf{H}_{\bullet}^{f+} \ar{d}{f_{*}\phi_{C_{Z/X}}} \\
      W_{Z/X}\ar{r}{t^{-1}}\ar{d} & W_{Z/X} \ar{d}\ar{r}& f_{*}pr_{C_{Z/X}*}(\mc{O}_{\mb{P}_{Z}(C_{Z/X})}(\bullet)) \ar{d}{f_{*}\psi_{C_{Z/X}}}\\
      cofib(\phi^{f})\ar{r}{t^{-1}} & cofib(\phi^{f})\ar{r} & \mf{H}_{\bullet}^{f-},
    \end{tikzcd}
  \end{equation*}
  where
  \begin{itemize}
  \item $cofib(\phi^{f})$ is the cofiber of $\phi^{f}$:
  \item we denote
  \begin{align*}
   &   \mf{H}_{b}^{f+}:=f_{*}S^{b}_{Z}(C_{Z/X})\\
   & \mf{H}_{b}^{f-}:=f_{*}(S^{-r-b}_{Z}(C_{Z/X})^{\vee}\otimes det(C_{Z/X})^{\vee})[1-r]
  \end{align*}
  for $b\in \mb{Z}$. We denote $\mf{H}_{\bullet}^{f+}:=\oplus_{b\in \mb{Z}}\mf{H}_{b}^{f+}$ and make similar notations for $pr_{C_{Z/X}*}(\mc{O}_{\mb{P}_{Z}(C_{Z/X})}(\bullet))$ and $\mf{H}_{\bullet}^{f-}$.
  \item $\phi_{C_{Z/X}}$ and $\psi_{C_{Z/X}}$ is defined in Jiang's generalized Serre theorem (\cref{thm:serre2}).
  \end{itemize}
  Moreover, when $d>-r$, the degree $d$ part of $\phi^{f}$, which we denote as $\phi^{f}_{\geq d}$, is an equivalence.
\end{theorem}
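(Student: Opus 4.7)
The strategy is to construct $\phi^f$, compute the cofibers of the $t^{-1}$-action on the top two rows, obtain the bottom row and right column via the $3\times 3$-lemma in the stable $\infty$-category, and identify the right column with Jiang's generalized Serre sequence.

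\emph{Step 1: construction of $\phi^f$.} Hekking's derived extended Rees algebra $R_{Z/X}^{ext}$ enjoys a universal property: it is initial among $\mathbb{Z}$-graded $\mathcal{O}_X[t^{-1}]$-algebras whose $t^{-1}$-associated graded receives the derived symmetric algebra $f_*S_Z^{\bullet}(C_{Z/X})$. The target $W_{Z/X}$ carries a $t^{-1}$-action via the maps $\mathcal{O}_{\mathbb{B}l_{Z/X}}(d+1)\to\mathcal{O}_{\mathbb{B}l_{Z/X}}(d)$ coming from the exceptional Cartier divisor, and the structure theorem ensures the pushforwards $pr_{Z/X*}\mathcal{O}_{\mathbb{B}l_{Z/X}}(d)$ are well-defined $\mathcal{O}_X$-modules. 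The universal property then produces the canonical morphism $\phi^f$.

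\emph{Step 2: the two rows of cofibers.} The defining property of the derived extended Rees algebra identifies the cofiber of $t^{-1}$ on $R_{Z/X}^{ext}$ with $f_*S_Z^{\bullet}(C_{Z/X})=\mathfrak{H}^{f+}_{\bullet}$. On the middle row, the exceptional divisor $E\subset\mathbb{B}l_{Z/X}$ is a Cartier divisor canonically equivalent to Jiang's derived projectivization $\mathbb{P}_Z(C_{Z/X})$, and the fiber sequence
\begin{equation*}
\mathcal{O}_{\mathbb{B}l_{Z/X}}(d+1)\to\mathcal{O}_{\mathbb{B}l_{Z/X}}(d)\to\mathcal{O}_E(d),
\end{equation*}
together with the identity $pr_{Z/X}|_E = f\circ pr_{C_{Z/X}}$, identifies the cofiber of $t^{-1}$ on $W_{Z/X}$ in degree $d$ with $f_* pr_{C_{Z/X}*}\mathcal{O}_{\mathbb{P}_Z(C_{Z/X})}(d)$.

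\emph{Step 3: the diagram via the $3\times 3$-lemma and Jiang's theorem.} With the top two rows established as fiber sequences and the vertical map $\phi^f$ in place, the stability of the ambient $\infty$-category automatically produces the bottom row and the right column as fiber sequences, fitting everything into the claimed $3\times 3$ diagram. Jiang's generalized Serre theorem provides, on the other hand, a canonical fiber sequence $\mathfrak{H}^{f+}_b\to f_* pr_{C_{Z/X}*}\mathcal{O}_{\mathbb{P}_Z(C_{Z/X})}(b)\to \mathfrak{H}^{f-}_b$ with the named maps $\phi_{C_{Z/X}}$ and $\psi_{C_{Z/X}}$; I would match this with the one coming from $\phi^f$ by invoking the universal property of Step 1.

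\emph{Step 4: the equivalence in degrees $d>-r$.} The term $\mathfrak{H}^{f-}_b=f_*(S_Z^{-r-b}(C_{Z/X})^{\vee}\otimes \det(C_{Z/X})^{\vee})[1-r]$ vanishes whenever $-r-b<0$, i.e., for $b>-r$. The bottom fiber sequence then forces $t^{-1}:cofib(\phi^f)_{b+1}\to cofib(\phi^f)_b$ to be an equivalence in those degrees. Combined with the fact that $\phi^f$ is an equivalence for $d\gg 0$ (both $R_{Z/X}^{ext}$ and $W_{Z/X}$ stabilize to $\mathcal{O}_X$ in sufficiently high positive degree), descending induction from large positive $d$ down through this chain of equivalences yields $cofib(\phi^f)_d=0$ for all $d>-r$.

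\emph{Main obstacle.} The technical crux is Step 2: identifying the cofiber of $t^{-1}$ on $R_{Z/X}^{ext}$ with $f_*S_Z^{\bullet}(C_{Z/X})$ in the derived setting requires careful analysis of Hekking's construction under the Tor-amplitude $[-1,1]$ hypothesis on $C_{Z/X}$, and identifying the exceptional divisor of the derived blow-up with Jiang's derived projectivization $\mathbb{P}_Z(C_{Z/X})$ bridges two initially different formalisms. Once these compatibilities are established and Jiang's theorem is recognized as describing the right column, the rest of the proof is essentially formal.
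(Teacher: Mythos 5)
Your Steps 2 and 3 match the paper's logic (the cofiber-of-$t^{-1}$ identifications come from the Cartesian diagram for the deformation space $D_{Z/X}$, i.e.\ \cref{eq:051734} and \cref{eq:phin}), but Steps 1 and 4 each contain a genuine gap.

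\textbf{Step 1.} You construct $\phi^f$ by invoking a universal property of $R_{Z/X}^{ext}$ that is not established in Hekking's thesis in the form you need, and even granting some universal property, you would still have to verify that $W_{Z/X}$ admits a $\mb Z$-graded $\mc O_X[t^{-1}]$-algebra structure whose associated graded receives $f_*S^\bullet_Z(C_{Z/X})$ compatibly. The paper sidesteps all of this: $\phi^f$ is defined concretely as the canonical restriction map $d_{Z/X*}\mc O_{D_{Z/X}}\to (d_{Z/X}\circ j_{Z/X})_*\mc O_{\mb V^*_{\mb{B}l_{Z/X}}(\mc O_{\mb{B}l_{Z/X}}(1))}$, where $j_{Z/X}$ is the open embedding of the complement of $Z\times\mb A^1$ into the deformation space; the structure theorem (\cref{cor:ddd}) is invoked precisely to show this $j_{Z/X}$ is quasi-compact and locally of finite presentation so that the pushforward is a well-behaved $\mc O_X[t^{-1}]$-module and the fiber-sequence diagram \cref{eq:phin} falls out of \cref{eq:051734}. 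Your approach would require proving those algebra compatibilities from scratch, which is not formal.

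\textbf{Step 4.} Your base case for the descending induction is false: neither $R^d_{Z/X}$ nor $pr_{f*}\mc O_{\mb{B}l_{Z/X}}(d)$ stabilizes to $\mc O_X$ as $d\to+\infty$ (for a classical smooth pair the degree-$d$ piece of the Rees algebra is the ideal power $\mc I^d$, which keeps shrinking). It is the \emph{negative} degrees that are trivial (\cref{support1}), not the positive ones, and those cannot seed your induction since it runs in the wrong direction. Without a base case, the inductive argument proves nothing. The paper instead proves the equivalence for $d>-r$ by fppf descent and reduction to a Kuranishi chart (\cref{kuranishi}), ultimately landing in the model case $X\cong\mb V_Z(\mc F)$, $f=i_{0,\mc F}$, where \cref{prop:conj41} and \cref{cor:ccc} make everything explicit: $R^d_{Z/X}\cong\bigoplus_{n\geq d}S^n\mc F$, $pr_{f*}\mc O_{\mb{B}l_{Z/X}}(d)\cong\bigoplus_{n\geq d}pr_{\mc F*}\mc O_{\mb P_Z(\mc F)}(n)$, and $\phi^f_{\geq d}=\bigoplus_{n\geq d}\phi_{n,\mc F}$, which Jiang's generalized Serre theorem shows is an equivalence precisely when $d>-r$. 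If you want to salvage an induction-style argument you would still need this local model computation (or an equivalent) to supply the base case, at which point the induction becomes redundant.
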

\begin{theorem}[Generalized vanishing theorem \RN{2}, \cref{thm:main}]
  \label{thm:nmain}
  There exists $\mf{W}_{a,b}^{f}\in \mathrm{QCoh}(X)$ for any $a\leq b$, with morphisms
  \begin{align*}
   \mf{w}_{(a,b),(a',b')}^{f}:\mf{W}^{f}_{a,b}\to \mf{W}^{f}_{a',b'},\quad  h_{a,b}^{f+}:\mf{W}_{a,b}^{f}\to \mf{H}_{a}^{f+},\quad h_{a,b}^{f-}:\mf{W}_{a,b}^{f}\to \mf{H}_{b}^{f-}
 \end{align*}
for $a'\leq a, b'\leq b$, such that
\begin{enumerate}
 \item 
  $$\mf{w}_{(a_{2},b_{2}),(a_{3},b_{3})}^{f}\circ\mf{w}_{(a_{1},b_{1}),(a_{2},b_{2})}^{f}\cong \mf{w}_{(a_{1},b_{1}),(a_{3},b_{3})}^{f}$$
  when all the above morphisms are well defined and $\mf{w}_{(a,b),(a,b)}^{f}\cong id$. 
  \item For any $a$, we have
    $$\mf{W}_{a,a}^{f}\cong pr_{f*}\mc{O}_{\mb{B}l_{Z/X}}(a)$$
    and for any $a\leq b$, the morphism $\mf{w}_{(b,b),(a,a)}^{f}$ is represented by 
    $$pr_{Z/X*}\mc{O}_{\mb{B}l_{Z/X}}(b)\xrightarrow{t^{a-b}} pr_{Z/X*}\mc{O}_{\mb{B}l_{Z/X}}(a).$$
\item For any $b\geq -r+1$, we have
    $$\mf{W}_{a,b}^{f}\cong R_{Z/X}^{a},$$
    and for any $b\geq b'\geq -r+1$, the morphism $\mf{w}_{(a,b),(a',b')}^{f}$ is represented by
    $$t^{a'-a}:R_{Z/X}^{a}\to R_{Z/X}^{a'}.$$
\item  For any $a<b$, we have the commutative diagram (up to equivalence) where all the rows and columns are fiber sequences:
\begin{equation*}
    \begin{tikzcd}[column sep=2cm]
        \mf{W}_{a+1,b+1}^{f}\ar{r}{\mf{w}_{(a+1,b+1),(a+1,b)}^{f}}\ar{d}{\mf{w}_{(a+1,b+1),(a,b+1)}^{f}} & \mf{W}_{a+1,b}^{f}\ar{d}{\mf{w}_{(a+1,b),(a,b)}^{f}} \ar{r}{\mf{H}_{a+1,b}^{f-}} & \mf{H}_{b}^{f-}\ar{d}{id}\\
        \mf{W}_{a,b+1}^{f} \ar{r}{\mf{w}_{(a,b+1),(a,b)}^{f}}\ar{d}{\mf{H}_{a,b+1}^{f+}} & \mf{W}_{a,b}^{f} \ar{r}{\mf{H}_{a,b}^{f-}} \ar{d}{\mf{H}_{a,b}^{f+}}& \mf{H}_{b}^{f-} \\
        \mf{H}_{a}^{f+}\ar{r}{id} &  \mf{H}_{a}^{f+}
    \end{tikzcd}
\end{equation*}
\item For any $a\in \mb{Z}$, we have the commutative diagram (up to equivalence) where all the rows and columns are fiber sequences:
   \begin{equation*}
      \begin{tikzcd}[column sep=2cm]
        \mf{W}_{a+1,a+1}^{f}\ar{r}{id}\ar{d}{\mf{w}_{(a+1,a+1),(a,a+1)}} & \mf{W}_{a+1,a+1}^{f} \ar{d}{\mf{w}_{(a+1,a+1),(a,a)}}\\
        \mf{W}_{a,a+1}^{f}\ar{r}{\mf{w}_{(a-1,a),(a,a)}} \ar{d}{h_{a,a+1}^{+}} & \mf{W}_{a,a}^{f} \ar{r}{h_{a,a}^{-}}\ar{d} &  \mf{H}_{a}^{f-}\ar{d} \\
        \mf{H}_{a}^{f+}\ar{r}{f_{*}(\phi_{a,C_{Z/X}})} & f_{*}pr_{C_{Z/X}*}\mc{O}_{\mb{P}_{Z}(C_{Z/X})}(a) \ar{r}{f_{*}(\psi_{a,C_{Z/X}})} & \mf{H}_{a}^{f-}.
      \end{tikzcd}
    \end{equation*}
  \item 
    For any $b\geq max\{-r+1,0\}$, the canonical map
    $$\mc{O}_{X}\to pr_{f*}(\mc{O}_{\mb{B}l_{Z/X}})$$
    is represented by $\mf{w}_{(0,b),(0,0)}$, which is an equivalence if $r>0$. 
\item The morphism
    $$\mc{O}_{X}\to f_{*}\mc{O}_{Z}$$
    is represented by $h_{0,b}^{f+}$ for any $b\geq max\{-r+1,0\}$.
\end{enumerate}
\end{theorem}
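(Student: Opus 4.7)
The plan is to build the two-parameter family $\mf{W}_{a,b}^f$ directly from the $3\times 3$ fiber-sequence diagram supplied by Theorem I (\cref{isomorphism2}), so that every claim in (1)--(7) reduces to octahedral / $3\times 3$ manipulations in the stable $\infty$-category $\mathrm{QCoh}(X)$ combined with Jiang's generalized Serre theorem (\cref{thm:serre2}).

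Write $C^c := (\mathrm{cofib}\,\phi^f)^c$ for the degree-$c$ component of the cofiber of $\phi^f$. The bottom row of the big diagram realises $C^{c+1}\xrightarrow{t^{-1}} C^c \to \mf{H}_c^{f-}$ as a fiber sequence, and the vanishing at the end of Theorem I gives $C^c = 0$ for $c \ge -r+1$. For $a \le b$, I introduce the partial truncation
\[
\tilde C^{[a,b]} := \mathrm{cofib}\bigl((t^{-1})^{b-a}\colon C^b \to C^a\bigr),
\]
so that $\tilde C^{[a,a]} = 0$ and $\tilde C^{[a,b]} \simeq C^a$ whenever $b \ge -r+1$. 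The family itself is then defined by
\[
\mf{W}_{a,b}^f := \mathrm{fib}\bigl(W_{Z/X}^a \to \tilde C^{[a,b]}\bigr),
\]
with structure map the composition $W^a \to C^a \to \tilde C^{[a,b]}$ supplied by the middle column $R_{Z/X}^{ext}\to W_{Z/X}\to\mathrm{cofib}(\phi^f)$ of Theorem I. The transition morphisms $\mf{w}_{(a,b),(a',b')}^f$ are then induced by $t^{a'-a}\colon W^a\to W^{a'}$ and the tautological map $\tilde C^{[a,b]} \to \tilde C^{[a',b']}$, and the maps $h_{a,b}^{f\pm}$ come from the cofiber identifications below.

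Properties (1)--(3) are immediate: composition is built in by functoriality, the case $b=a$ gives $\mf{W}_{a,a}^f = W_{Z/X}^a = pr_{f*}\mc{O}_{\mb{B}l_{Z/X}}(a)$ with $\mf{w}_{(b,b),(a,a)}$ acting as $t^{a-b}$, and the case $b\ge -r+1$ gives $\mf{W}_{a,b}^f = R_{Z/X}^a$ with transition $t^{a'-a}$. The horizontal fiber sequence in (4) follows by combining the $3\times 3$ lemma (identifying $\mathrm{cofib}(\mf{W}_{a,b+1}^f\to\mf{W}_{a,b}^f)$ with $\mathrm{fib}(\tilde C^{[a,b+1]}\to\tilde C^{[a,b]})$) with the octahedron on $C^{b+1}\xrightarrow{t^{-1}} C^b \to C^a$, which evaluates that fiber as $\mf{H}_b^{f-}$. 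The vertical sequence $\mf{W}_{a+1,b}^f\to\mf{W}_{a,b}^f\to\mf{H}_a^{f+}$ follows by the same pattern: $\mathrm{cofib}(W^{a+1}\xrightarrow{t^{-1}} W^a) = f_*pr_{C_{Z/X}*}\mc{O}_{\mb{P}_Z(C_{Z/X})}(a)$ by the middle row of Theorem I and $\mathrm{cofib}(\tilde C^{[a+1,b]}\to\tilde C^{[a,b]}) = \mf{H}_a^{f-}$ by a second octahedron on $C^b\to C^{a+1}\to C^a$, so the cofiber of $\mf{W}_{a+1,b}^f\to\mf{W}_{a,b}^f$ is $\mathrm{fib}\bigl(f_*pr_{C_{Z/X}*}\mc{O}_{\mb{P}_Z(C_{Z/X})}(a) \to \mf{H}_a^{f-}\bigr) = \mf{H}_a^{f+}$ by Jiang's Serre theorem. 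Diagram (5) is the specialization $b=a+1$ of this picture enlarged by the Jiang--Serre row; its compatibility with the $\mf{W}$-layer is the commutativity of the middle-right square of the original $3\times 3$ in Theorem I. Finally (6) and (7) are the specialization $a=0$, using $R_{Z/X}^0=\mc{O}_X$, $\mf{H}_0^{f+}=f_*S^0_Z(C_{Z/X})=f_*\mc{O}_Z$, and the vanishing of $C^0$ when $r\ge 1$.

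The principal obstacle is coherence: the construction is indexed by the poset $\{(a,b)\in\mb{Z}^2 : a\le b\}$, and the compatibilities in (1) together with the $3\times 3$ squares in (4) and (5) must hold in the ambient $\infty$-category, not merely up to homotopy. To secure this I would upgrade the construction to an honest $(\infty,1)$-functor out of this poset by performing the truncations $\tilde C^{[\cdot,\cdot]}$ inside the arrow $\infty$-category of $\mathrm{QCoh}(X)$ and then fibering against the natural transformation $W^\bullet\to C^\bullet$ along the diagonal; the required squares then arise from a canonical bisimplicial totalization, so that the identifications demanded by (4) and (5) become tautologies of stable $\infty$-categorical limits rather than ad hoc diagram chases.
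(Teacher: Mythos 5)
Your construction of $\mf{W}_{a,b}^f$ as $\mathrm{fib}\bigl(W^a \to \tilde C^{[a,b]}\bigr)$ is the pullback-corner presentation of the very same object the paper defines as the pushout $\mathrm{cofib}\bigl(R^b \to W^b\oplus R^a\bigr)$, and your verification of (1)--(7) via the $3\times 3$ diagram from the first vanishing theorem, Jiang's generalized Serre theorem, and (implicitly) the Rees-algebra lemmas $R^0_{Z/X}\simeq \mc{O}_X$ and the support lemma matches the paper's proof. This is essentially the same approach; the one thing you add beyond the paper is the closing paragraph on upgrading the poset-indexed family to an $(\infty,1)$-functor, which the paper leaves tacit.
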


\subsection{A $K$-theoretic comparison theorem  and virtual localizations}
\label{sec:1.3}
 When $S=\mathrm{Spec}(\mb{C})$, the quasi-coherent sheaves $\mf{W}_{a,b}^{f}$ are moreover coherent over $X$. If $r\leq 0$, we take $D_{l}:=\mf{W}_{0,l}$ for $0\leq l\leq -r+1$. Then \cref{thm:nmain} verifies Conjecture 1.10 of the author \cite{yuzhaoderived}:
\begin{theorem}
  \label{conj:1.4}
 We have
  \begin{enumerate}
  \item $D_{-r+1}\cong \mc{O}_{X}$ and $D_{0}\cong pr_{f*}\mc{O}_{\mb{B}l_{Z/X}}$;
  \item
    We have fiber sequences:
    $$D_{l+1}\to D_{l}\to f_{*}det(C_{Z/X})^{-1}S^{-l-r}_{Z}(C_{Z/X})^{\vee}[1-r]$$
  \end{enumerate}
  Moreover, we have the following equality in the Grotheneieck group $G_{0}(X)$:
\begin{equation}
  \label{eq:noref}
  pr_{f*}([\mc{O}_{\mb{B}l_{Z/X}}])=[\mc{O}_{X}]+(-1)^{1-r}f_{*}(\sum_{l=0}^{-r}det(C_{Z/X})^{-1}S^{l}_{Z}(C_{Z/X})^{\vee}).
\end{equation}
\end{theorem}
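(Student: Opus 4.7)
The plan is to read off the statement directly from \cref{thm:nmain} by specializing the family $\mf{W}^{f}_{a,b}$ along the line $a=0$. No independent construction is needed; the whole content is to identify the endpoints and the successive cofibers of the filtration.

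First I would verify the identifications in (1). By item (3) of \cref{thm:nmain}, since $-r+1\geq -r+1$, we have
$$D_{-r+1}=\mf{W}^{f}_{0,-r+1}\cong R_{Z/X}^{0}\cong \mc{O}_{X},$$
the last equivalence holding because the degree zero part of the derived extended Rees algebra is the structure sheaf. By item (2) of \cref{thm:nmain} we have $D_{0}=\mf{W}^{f}_{0,0}\cong pr_{f*}\mc{O}_{\mb{B}l_{Z/X}}(0)=pr_{f*}\mc{O}_{\mb{B}l_{Z/X}}$. Moreover, item (6) identifies the composed transition map $\mf{w}^{f}_{(0,-r+1),(0,0)}\colon D_{-r+1}\to D_{0}$ with the canonical unit $\mc{O}_{X}\to pr_{f*}\mc{O}_{\mb{B}l_{Z/X}}$, which is consistent with the identifications above.

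Next I would derive the fiber sequences in (2). Apply item (4) of \cref{thm:nmain} with $a=0$ and $b=l$: the middle row reads
$$\mf{W}^{f}_{0,l+1}\xrightarrow{\mf{w}^{f}_{(0,l+1),(0,l)}} \mf{W}^{f}_{0,l}\to \mf{H}_{l}^{f-}.$$
Substituting $D_{l+1}=\mf{W}^{f}_{0,l+1}$, $D_{l}=\mf{W}^{f}_{0,l}$, and the definition $\mf{H}_{l}^{f-}=f_{*}\bigl(S^{-r-l}_{Z}(C_{Z/X})^{\vee}\otimes \det(C_{Z/X})^{\vee}\bigr)[1-r]$ yields exactly the stated fiber sequence
$$D_{l+1}\to D_{l}\to f_{*}\det(C_{Z/X})^{-1}S^{-l-r}_{Z}(C_{Z/X})^{\vee}[1-r].$$
(The coherence of these objects over $X$ follows from the structure theorem \cref{structurethm}, since both $Z$ and $\mb{B}l_{Z/X}$ are quasi-smooth over $S=\mathrm{Spec}(\mb{C})$ and $pr_{f}$ is proper.)

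Finally, for the $K$-theoretic equation \cref{eq:noref} I would telescope the fiber sequences of (2). In $G_{0}(X)$ each cofiber sequence gives $[D_{l}]=[D_{l+1}]+[\mf{H}_{l}^{f-}]$, and the shift $[1-r]$ contributes a sign $(-1)^{1-r}$. Summing for $l=0,\ldots,-r$ and using $[D_{-r+1}]=[\mc{O}_{X}]$, $[D_{0}]=pr_{f*}[\mc{O}_{\mb{B}l_{Z/X}}]$, one obtains
$$pr_{f*}[\mc{O}_{\mb{B}l_{Z/X}}]-[\mc{O}_{X}]=(-1)^{1-r}f_{*}\Bigl(\sum_{l=0}^{-r}\det(C_{Z/X})^{-1}S^{-r-l}_{Z}(C_{Z/X})^{\vee}\Bigr),$$
and the reindexing $l\mapsto -r-l$ gives the form written in \cref{eq:noref}. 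There is no genuine obstacle here: all the heavy lifting has been absorbed into \cref{thm:nmain}, and the only point requiring care is the bookkeeping of the degree shift $[1-r]$ and of the index range $0\leq l\leq -r$, which must line up with the condition $b\geq -r+1$ used to invoke items (3) and (6).
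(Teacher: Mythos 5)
Your overall approach is exactly the one the paper intends: Theorem 1.4 is meant to be read off directly from \cref{thm:nmain} along the line $a=0$, and your identifications of the endpoints, your use of Lemma 4.17 (\cref{support1}) for $R^{0}_{Z/X}\cong\mc{O}_{X}$, and the telescoping in $G_{0}(X)$ together with the reindexing $l\mapsto -r-l$ are all correct.

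There is one small but genuine gap. You invoke item~(4) of \cref{thm:nmain} ``with $a=0$ and $b=l$'' for every $l$ in the range $0\leq l\leq -r$, but item~(4) is stated only for $a<b$: its top-right entry is $\mf{W}^{f}_{a+1,b}$, which is not even defined when $a=b$. So item~(4) covers $l\geq 1$ but not $l=0$, and the term $\mf{H}^{f-}_{0}$ (that is, $f_{*}\det(C_{Z/X})^{-1}S^{-r}(C_{Z/X})^{\vee}[1-r]$) does appear in \cref{eq:noref}, so the $l=0$ fiber sequence cannot be dropped. The correct source for the $l=0$ case is the middle row of item~(5) with $a=0$, which gives the fiber sequence $\mf{W}^{f}_{0,1}\to\mf{W}^{f}_{0,0}\to\mf{H}^{f-}_{0}$. (Note also that when $r=0$ the range is $l=0$ only, so without item~(5) the argument would produce no fiber sequences at all.) Once you add the appeal to item~(5) for $l=0$, the proof is complete.
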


By  Ciocan-Fontanine and Kapranov \cite{ciocan-fontanine07:virtual}, $[\mc{O}_{\mb{B}l_{Z/X}}]$ and $[\mc{O}_{X}]$ are the virtual $K$-theoretic virtual fundamental classes of $\pi_{0}(\mc{O}_{\mb{B}l_{Z/X}})$ and $\pi_{0}(X)$ respectively, and \cref{eq:noref} is a $K$-theorectic comparison formula for the virtual fundamental classes of derived blow-ups.

Now we moreover assume that $X$ is a derived scheme that is quasi-compact and locally of finite presentation, with an action by a connected reductive algebraic group $G$. Then we have
\begin{theorem}[Hekking-Rydh-Savvas, Theorem 5.3 of \cite{hekking2022stabilizer}]
The derived blow-up $\mb{B}l_{X^{G}}X$ is the derived enhancement of the intrinsic blow-up of fixed locus $\pi_{0}(X^{G})$ in $\pi_{0}(X)$ in the sense of Kiem-Li-Savvas \cite{kiem2017generalized}.
\end{theorem}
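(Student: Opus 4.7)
The plan is to verify the identification at the level of classical truncations through a local model argument. First, I would work Zariski-locally on $X$ and choose a $G$-equivariant closed embedding $X \hookrightarrow V$ into a smooth $G$-scheme $V$; such embeddings exist locally since $X$ is quasi-compact of finite presentation and $G$ is reductive. The naturality of $X^G \hookrightarrow V^G$ with respect to taking fixed loci gives a compatible diagram, and my aim is to realize $\mb{B}l_{X^G}X$ as the derived strict transform of $X$ inside the smooth blow-up $\mb{B}l_{V^G}V = Bl_{V^G}V$, which is itself just the classical blow-up.

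Second, I would invoke functoriality of the derived extended Rees algebra $R^{ext}_{-/-}$ with respect to closed embeddings to obtain a canonical map $\mb{B}l_{X^G}X \to \mb{B}l_{V^G}V$ factoring $pr_{X^G/X}$. Passing to $\pi_0$ and using that the classical truncation of a derived blow-up of a quasi-smooth embedding equals the proper transform of $\pi_0$ of the source inside the blow-up of any smooth ambient (modulo the derived Tor corrections encoded in $C_{X/V}$), one identifies $\pi_0(\mb{B}l_{X^G}X)$ with the closed subscheme of $Bl_{V^G}V$ obtained by saturating the ideal of $\pi_0(X)$ by the exceptional divisor. This is precisely the Kiem-Li-Savvas intrinsic blow-up construction, which is defined by taking the proper transform under a smooth $G$-equivariant embedding and then checking independence of the embedding via isotypic decomposition under the reductive $G$.

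Third, I would verify independence of the chosen local model $V$ by dominating any two such embeddings by a common one and checking compatibility of the derived strict transform under smooth pullback, which is where the quasi-smoothness hypothesis and Assumption \ref{ass1} (characteristic zero, reductivity) are essential for controlling $G$-invariants.

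The main obstacle is the third step: matching the derived structure coming from $R^{ext}_{X^G/X}$ with the explicit ideal-theoretic recipe of Kiem-Li-Savvas, which uses the $G$-invariant part of the normal cone ideal in a rather hands-on way. The bridge is that in characteristic zero, reductivity forces splitting of $T_X|_{X^G}$ into weight spaces, so the derived Rees algebra decomposes accordingly, and the $t^{-1}$-action recovers exactly the grading used by Kiem-Li-Savvas; verifying this decomposition matches on the nose (not merely up to equivalence) is the technical heart, and this is what Hekking-Rydh-Savvas carry out in their Theorem 5.3.
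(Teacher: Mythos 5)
The paper does not prove this statement at all: it is a direct citation of Theorem~5.3 of Hekking--Rydh--Savvas \cite{hekking2022stabilizer}, stated here only to motivate the subsequent discussion of virtual localization. The only material in the paper touching on its proof is the pair of local Rees-algebra computations quoted in Section~\ref{sec:4} (the examples drawn from \cite{hekking2022stabilizer}), which serve as ingredients but are not assembled into an argument. So there is no ``paper's own proof'' against which to compare line by line; what follows assesses the proposal on its own merits.

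Your overall strategy (work locally in a $G$-equivariant Kuranishi chart, compare the derived blow-up in the chart with the Kiem--Li--Savvas recipe, then check independence of the chart) is indeed the right framework, and the final paragraph correctly isolates the isotypic decomposition under reductive $G$ in characteristic zero as the essential input. However, the second step contains a genuine error: the Kiem--Li--Savvas intrinsic blow-up is \emph{not} obtained by saturating the ideal sheaf of $\pi_0(X)$ by the exceptional divisor in $\mathrm{Bl}_{V^G}V$. That saturation produces the strict (proper) transform, which is the ordinary classical blow-up $\mathrm{Bl}_{\pi_0(X^G)}\pi_0(X)$. The intrinsic blow-up is a different object: in the local model where $X = s^{-1}(0)$ for $s$ a $G$-equivariant section of a bundle $E$ on a smooth $V$, one splits $E = E^{\mathrm{fix}} \oplus E^{\mathrm{mov}}$ over $\mathrm{Bl}_{V^G}V$, divides only the moving component $s^{\mathrm{mov}}$ by the defining equation of the exceptional divisor (to first order), and takes the zero locus of the resulting section. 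This generally contains components supported on the exceptional divisor that the strict transform discards, so the two constructions do not coincide. Matching $\pi_0(\mb{B}l_{X^G}X)$ with this fixed/moving-split construction is exactly where the graded structure of $R^{ext}_{X^G/X}$ and its weight decomposition enter, and glossing it as ideal saturation skips the heart of the comparison. You should replace the ``saturating the ideal'' description with the split-section description and show that the degree-$d$ pieces $R^{d}_{X^G/X}$ of the derived Rees algebra, decomposed by $G$-weight, reproduce the Kiem--Li--Savvas equations after passing to $\pi_0$.
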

In \cref{sec:C} we show that for perfect obstruction theories induced from quasi-smooth derived schemes, the virtual localization formula is a particular case of \cref{eq:noref}.

\begin{remark}
  A more general series of localization theorems (for Chow groups) was developed by Aranha-Khan-Latyntsev-Park-Ravi \cite{aranha2022localization}, which moreover holds for derived Artin stacks. Their and our proofs both rely on the deformation to the normal cone and thus it is interesting to consider the potential relationship between these two proofs.
\end{remark}

\subsection{Desingulization of quasi-smooth derived schemes} The following example explains that the empty scheme is essential in the desingularization theory of quasi-smooth derived schemes:
\begin{example}
Let 
$$X:=\mb{R}Z_{U}^{V}(0)$$ 
be the derived zero locus of zero co-section of a locally free sheaf $V$
over a smooth variety $U$.
Then for any smooth variety $Z\subset X$, by \cref{0524B} we have
$$\mb{B}l_{Z/X}\cong \mb{R}Z_{\mb{B}l_{Z/U}}^{pr_{Z/U}^{*}V\otimes\mc{O}_{\mb{B}l_{Z/U}}(-1)}(0).$$ 

\end{example}
Now we formulate our desingularization theorem:
\begin{theorem}[Desingulization Theorem, \cref{thm:main2}]
\label{thm:1.9}
  Let $X\subset S$ be a quasi-smooth closed embedding (i.e. regular embedding) into a smooth variety $S$. Then there exists a collection of closed embeddings of derived schemes:
  $$f_{i}:Z_{i}\to X_{i}\subset S_{i},
  \quad 0\leq i<n$$
where $n$ is a non-negative integer, such that
  \begin{enumerate}
  \item $X_{0}\cong X$, $S_{0}\cong S$, $X_{n}\cong \emptyset$, $X_{i}\cong \mb{B}l_{Z_{i-1}/X_{i-1}}$ and $S_{i}\cong \mb{B}l_{Z_{i-1}/S_{i-1}}$ for $1\leq i\leq n$;
  \item all $X_{i}\subset S_{i}$ are quasi-smooth closed embeddings into smooth varieties.
    \end{enumerate}
\end{theorem}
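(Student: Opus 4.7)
The plan is to combine Hironaka's embedded desingularization (available in characteristic zero) with the structure theorem for derived blow-ups, iterating derived blow-ups along smooth centers $Z_i \subset X_i$ until the scheme becomes empty. The argument proceeds by induction on a suitable Hironaka-type invariant attached to the pair $(\pi_0(X_i), S_i)$ together with the local defining section of the quasi-smooth embedding.

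\emph{Iterative construction.} At each step, I would choose a smooth closed subvariety $Z_i \subset \pi_0(X_i)_{\mathrm{red}}$ according to the Hironaka algorithm applied to the classical support $\pi_0(X_i) \subset S_i$. Since $\pi_0(X_i)_{\mathrm{red}} \hookrightarrow \pi_0(X_i) \hookrightarrow X_i$ are all closed embeddings, $Z_i$ is a smooth closed subscheme of $X_i$. Setting $X_{i+1} := \mb{B}l_{Z_i/X_i}$ and $S_{i+1} := \mb{B}l_{Z_i/S_i}$, condition (2) is maintained at each step: $S_{i+1}$ is smooth (classical blow-up of a smooth variety along a smooth center); $X_{i+1}$ is quasi-smooth by the structure theorem; the induced map $X_{i+1} \hookrightarrow S_{i+1}$ is a closed embedding by functoriality of the derived blow-up applied to the chain $Z_i \hookrightarrow X_i \hookrightarrow S_i$; and quasi-smoothness of this embedding follows from a cotangent-complex computation using the fiber sequence of relative cotangent complexes for $X_{i+1} \to S_{i+1} \to S$ compared against that for $X_i \to S_i \to S$.

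\emph{Termination.} After finitely many Hironaka-style blow-ups, one reaches a state where $\pi_0(X_{n-1})_{\mathrm{red}}$ is smooth and $X_{n-1}$ is locally of the form $\mb{R}Z_{S_{n-1}}^V(0)$ as in the example preceding the theorem. This local reduction combines classical Hironaka applied to the singular locus of $\pi_0(X_i)$ with iterated derived blow-ups that ``divide out'' the defining section by successive powers of the exceptional divisor until the transformed section becomes the zero section. Taking $Z_{n-1} := \pi_0(X_{n-1})_{\mathrm{red}}$, the example identifies $X_n = \mb{B}l_{Z_{n-1}/X_{n-1}}$ locally as a derived zero locus over $\mb{B}l_{Z_{n-1}/S_{n-1}}$; since $\mb{B}l_{Z_{n-1}/S_{n-1}} = \emptyset$ when $Z_{n-1}$ locally coincides with the ambient smooth variety $S_{n-1}$, I conclude $X_n = \emptyset$.

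\emph{Main obstacle.} The principal difficulty is the termination argument: one must run a Hironaka-style algorithm on the defining data of a derived scheme. Classical Hironaka provides an invariant for the classical ideal sheaf $\mc{I}_{\pi_0(X_i)} \subset \mc{O}_{S_i}$, but the quasi-smooth embedding $X_i \subset S_i$ carries additional derived data (the defining section $s$ of a vector bundle $V$ in the local model $\mb{R}Z_{S_i}^V(s)$), and one must show that Hironaka can be extended to simultaneously reduce both the classical singularities of $\pi_0(X_i)$ and the derived thickening encoded by $s$, until the local section becomes zero and the example applies. A secondary difficulty is the cotangent-complex calculation showing that $L_{X_{i+1}/S_{i+1}}$ remains a vector bundle in degree one; this uses the explicit description of the derived blow-up via extended Rees algebras together with the structure theorem stated earlier in the paper.
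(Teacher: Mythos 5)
Your proposal has the right general shape (iterate derived blow-ups along smooth centers prescribed by a Hironaka-type algorithm), but the termination argument is broken, and this reflects the use of the wrong Hironaka theorem. You invoke embedded \emph{desingularization}, which makes the strict transform of $\pi_0(X)$ smooth but does not make anything vanish; the paper instead invokes Hironaka \emph{principalization} (\cref{hironaka}), which produces a sequence of smooth-center blow-ups $\sigma_i \colon S_i \to S_{i-1}$ together with ideal sheaves $\mc{I}_i = \mc{I}(D_i)^{-1}\sigma_i^*(\mc{I}_{i-1})$ (the weak transform), terminating with $\mc{I}_n \cong \mc{O}_{S_n}$. The single ingredient that makes the whole argument work is \cref{0524B}: for a chain $Z \hookrightarrow X \hookrightarrow S$ with $Z$, $S$ smooth and $X$ quasi-smooth, the derived blow-up $\mb{B}l_{Z/X}$ embeds in $\mb{B}l_{Z/S}$ with ideal precisely the weak transform $\mc{I}(E_{Z/S})^{-1}pr^*(\mc{I}(X,S))$. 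Thus applying principalization to the pair $(S, \mc{I}(X,S))$, taking $X_i$ to be the quasi-smooth derived scheme in $S_i$ with ideal $\mc{I}_i$, and iterating, one has $\mc{I}_n \cong \mc{O}_{S_n}$ at the end, hence $\pi_0(X_n) = V(\mc{I}_n) = \emptyset$, hence $X_n = \emptyset$.

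Your termination step is concretely false: you claim that eventually $Z_{n-1} = \pi_0(X_{n-1})_{\mathrm{red}}$ ``locally coincides with the ambient smooth variety $S_{n-1}$'' so that $\mb{B}l_{Z_{n-1}/S_{n-1}} = \emptyset$. But if $X$ has positive virtual codimension in $S$, then $\pi_0(X_{n-1})_{\mathrm{red}}$ remains a proper closed subvariety of $S_{n-1}$ through all blow-ups, so this never happens. The ``main obstacle'' you identify --- needing to run Hironaka on a derived invariant tracking both the classical singular locus and the derived thickening --- is also a false problem: a quasi-smooth closed embedding into a smooth variety is a classical regular embedding, so it is completely captured by the classical ideal sheaf $\mc{I}(X,S) = \mc{I}(\pi_0(X),S)$, and principalization of this single ideal, propagated by \cref{0524B}, suffices. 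There is no separate derived bookkeeping to be done.

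Your cotangent-complex observations for maintaining quasi-smoothness of $X_{i+1} \subset S_{i+1}$ are in the right direction and not wrong, but they follow immediately from \cref{0524B}: the upper-left square in that lemma is a strict virtual Cartier divisor, exhibiting $\mb{B}l_{Z_i/X_i} \to \mb{B}l_{Z_i/S_i}$ as a closed embedding, and quasi-smoothness of this embedding is part of the conclusion (it is again defined by a regular sequence since $\mc{I}_{i+1}$ is a weak transform inside the smooth $S_{i+1}$). Similarly, $S_{i+1}$ is smooth because $Z_i \subset S_i$ is a smooth closed subvariety of a smooth variety, so the derived blow-up agrees with the classical one. The fix to your proposal, then, is to replace desingularization by principalization and to cite \cref{0524B} for the compatibility of the derived blow-up ideal with the weak transform; once that is done the termination is a one-line consequence of $\mc{I}_n \cong \mc{O}_{S_n}$.
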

Combining with \cref{eq:noref}, we induce the following theorem on the virtual fundamental class of a quasi-smooth derived scheme:
\begin{theorem}[Approximation Theorem, \cref{conj:approx}]
  
  Let $X$ be a quasi-smooth derived scheme such that 
  \begin{itemize}
      \item the rank of $L_{X}$ is constant, which we denote as $vdim(X)$;
      \item $X$ has a closed embedding into a smooth ambient variety.
  \end{itemize}
Then there exists a collection $\{(Z_{i},p_{i},\mc{F}_{i},r_{i})\}|_{1\leq i\leq n}$ such that for any $1\leq i\leq n$,
  \begin{enumerate}
  \item $r_{i}$ is non-positive integer;
  \item $Z_{i}$ is a smooth variety such that $dim(Z_{i})=-r_{i}+vdim(X)$;
  \item  $p_{i}:Z_{i}\to X$ is a proper morphism;
  \item  $\mc{F}_{i}$ is a perfect complex over $Z_{i}$ with tor-amplitude $[0,1]$ and $rank(\mc{F}_{i})=r_{i}$
  \item the following formula holds:
    \begin{equation}
      \label{K-theoretic}
    [\mc{O}_{X}]=\sum_{i=1}^{n}(-1)^{r_{i}}p_{i*}(det(\mc{F}_{i})^{-1}\sum_{j=0}^{-r_{i}}S^{j}(\mc{F}_{i}^{\vee})).
  \end{equation}
  \end{enumerate}
\end{theorem}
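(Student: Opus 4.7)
The plan is to iterate the $K$-theoretic comparison formula \cref{eq:noref} along the tower of derived blow-ups produced by the Desingularization Theorem \cref{thm:1.9}. Applying that theorem to $X \subset S$ yields a sequence of closed embeddings $g_{j} : Y_{j} \to X_{j}$ for $0 \leq j < n$, with $X_{0} \cong X$, $X_{j+1} \cong \mb{B}l_{Y_{j}/X_{j}}$, $X_{n} \cong \emptyset$, and each $X_{j} \subset S_{j}$ a quasi-smooth closed embedding into a smooth variety. Write $q_{j} : X_{j} \to X$ for the composite of the blow-up projections; it is proper by the Structure Theorem.

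For each $j$, rearranging \cref{eq:noref} gives
$$[\mc{O}_{X_{j}}] = (pr_{g_{j}})_{*}[\mc{O}_{X_{j+1}}] + (-1)^{\rho_{j}}(g_{j})_{*}\!\left(det(C_{Y_{j}/X_{j}})^{-1}\sum_{l=0}^{-\rho_{j}}S^{l}(C_{Y_{j}/X_{j}})^{\vee}\right)$$
in $G_{0}(X_{j})$, where $\rho_{j} := rank(L_{X_{j}}) - rank(L_{Y_{j}})$. Pushing forward by $q_{j*}$ and telescoping over $j = 0, \dots, n-1$, the residual term $q_{n*}[\mc{O}_{X_{n}}]$ vanishes because $X_{n}$ is empty. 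This produces \cref{K-theoretic} with $Z_{i} := Y_{i-1}$, $p_{i} := q_{i-1}\circ g_{i-1}$, $\mc{F}_{i} := C_{Y_{i-1}/X_{i-1}}$, and $r_{i} := \rho_{i-1}$, after discarding those indices with $\rho_{i-1} > 0$, for which the empty sum $\sum_{l=0}^{-\rho_{i-1}}$ already kills the contribution (in agreement with \cref{thm:nmain}(6) asserting that the blow-up projection is $K$-theoretically an isomorphism in that range).

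It remains to verify the four properties claimed of each quadruple $(Z_{i}, p_{i}, \mc{F}_{i}, r_{i})$. Properness of $p_{i}$ follows as a composition of proper morphisms. Since $Z_{i}$ is a smooth variety, $L_{Z_{i}}$ sits in cohomological degree zero, and since $X_{i-1}$ is a quasi-smooth derived scheme admitting an embedding into a smooth ambient, $L_{X_{i-1}}$ has tor-amplitude $[-1,0]$; the cofiber sequence $g_{i-1}^{*}L_{X_{i-1}} \to L_{Z_{i}} \to L_{Z_{i}/X_{i-1}}$ together with the long exact sequence on cohomology sheaves then forces $L_{Z_{i}/X_{i-1}}$ to be supported in cohomological degrees $[-1,0]$, so $\mc{F}_{i} = L_{Z_{i}/X_{i-1}}[-1]$ is perfect of tor-amplitude $[0,1]$ with Euler-characteristic rank equal to $r_{i}$. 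The rank identity $rank(L_{X_{i-1}}) = rank(L_{X}) = vdim(X)$, preserved under derived blow-ups by the Structure Theorem, yields $dim(Z_{i}) = -r_{i} + vdim(X)$. The main technical point is the tor-amplitude verification for $\mc{F}_{i}$; everything else is bookkeeping of signs and a telescoping identity that terminates precisely because the desingularization ends with the empty scheme.
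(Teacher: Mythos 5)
Your proposal matches the paper's (implicit) argument exactly: apply the Desingularization Theorem to get a finite tower of derived blow-ups ending at $\emptyset$, rearrange \cref{eq:noref} at each stage, push forward along the composite projections to $X$, and telescope; the residual term vanishes because $X_n=\emptyset$, and stages with positive virtual codimension contribute nothing because the sum $\sum_{l=0}^{-r}$ is empty. The bookkeeping of $(Z_i,p_i,\mc{F}_i,r_i)$, the properness of $p_i$ as a composite of a closed embedding and blow-up projections, and the dimension/rank identity via the Structure Theorem are all handled correctly.

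One small slip in the tor-amplitude verification: for a closed embedding $g : Z_i \to X_{i-1}$, the relative cotangent complex $L_{Z_i/X_{i-1}}$ is $1$-connective, so it sits in cohomological degrees $\leq -1$, not $[-1,0]$ as you state. Running your long exact sequence argument with $g^*L_{X_{i-1}}$ in cohomological degrees $[-1,0]$ and $L_{Z_i}$ concentrated in degree $0$ shows that $L_{Z_i/X_{i-1}}$ is supported in cohomological degrees $[-2,-1]$ (the degree $0$ part vanishes precisely because $\Omega_{X_{i-1}}|_{Z_i}\to\Omega_{Z_i}$ is surjective for a closed embedding). It is then $\mc{F}_i = C_{Z_i/X_{i-1}} = L_{Z_i/X_{i-1}}[-1]$ that lands in cohomological degrees $[-1,0]$, which is what the paper means by tor-amplitude $[0,1]$ in its homological convention (compare with the two-term model $\mc{F}\cong\{V\to W\}$ used throughout). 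Your final conclusion on the amplitude of $\mc{F}_i$ is therefore correct, but the intermediate placement of $L_{Z_i/X_{i-1}}$ is off by one degree.
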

\begin{remark}The assumption that $X$ embeds into an ambient smooth variety are probably not needed and we will try to remove it in the future.
\end{remark}
\subsection{Resolutions of the diagonal for derived blow-up of regular embeddings}
Given a locally free sheaf $\mc{F}$ over a derived stack $X$, Beilinson constructed a resolution of the diagonal structure sheaf $\mc{O}_{\mb{P}_{X}({\mc{F}})}$ in 
$$\mb{P}_{X}(\mc{F})\times_{X}\mb{P}_{X}(\mc{F}).$$
is constructed by Beilinson \cite{beilinson1978coherent}, which induces a semi-orthogonal decomposition of the derived category of $\mb{P}_{X}(\mc{F})$ (Beilinson \cite{beilinson1978coherent} for $\mb{P}^{n}$, Orlov \cite{MR1208153} for the relative case and Khan \cite{MR4149835} for the derived algebraic geometry setting).

In \cref{sec:A}, we constructed a resolution of $\mc{O}_{\mb{B}l_{Z/X}}$ in the diagonal embedding of $\mb{B}l_{Z/X}\times_{X}\mb{B}l_{Z/X}$, where 
 $$f:Z\to X$$
 is a quasi-smooth closed embedding of derived stacks such that $L_{Z/X}$ has rank $r$. Like Beilinson's resolution, it also induces a new constructive proof of the semi-orthogonal decomposition theorem of $\mathrm{QCoh}(\mb{B}l_{Z/X})$ (resp. $\mathrm{Perf}(\mb{B}l_{Z/X})$) by Orlov \cite{MR1208153} and Khan (for the derived setting) \cite{MR4149835}. Our new observation is that
\begin{theorem}[\cref{thm:resolution}]
  \label{thm:1.7}
  The diagonal embedding of
  $$\Delta_{pr_{Z/X}}:\mb{B}l_{Z/X}\to \mb{B}l_{Z/X}\times_{X}\mb{B}l_{Z/X}$$ is the projection map of the derived blow-up of $\mb{B}l_{Z/X}\times_{X}\mb{B}l_{Z/X}$ along the closed subscheme $\mb{P}_{Z}(C_{Z/X})\times_{Z}\mb{P}_{Z}(C_{Z/X})$, where $C_{Z/X}$ is a rank $r$ locally free sheaf over $Z$.
\end{theorem}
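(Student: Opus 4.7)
The plan is to verify the universal property of the derived blow-up as formulated by Hekking~\cite{Hekking_2022}. Writing $Y := \mb{P}_{Z}(C_{Z/X}) \times_{Z} \mb{P}_{Z}(C_{Z/X})$ and $W := \mb{B}l_{Z/X} \times_{X} \mb{B}l_{Z/X}$, the goal is to establish an equivalence $\mb{B}l_{Y/W} \simeq \mb{B}l_{Z/X}$ over $W$ which realizes the derived blow-up projection as the diagonal $\Delta_{pr_{Z/X}}$.

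For the forward direction, I would construct a map $\mb{B}l_{Z/X} \to \mb{B}l_{Y/W}$ via the universal property of $\mb{B}l_{Y/W}$. The central claim is that the following square is derived Cartesian:
\begin{equation*}
\begin{tikzcd}
\mb{P}_{Z}(C_{Z/X}) \ar[r, hook] \ar[d, "\Delta_{E/Z}"'] & \mb{B}l_{Z/X} \ar[d, "\Delta_{pr_{Z/X}}"] \\
\mb{P}_{Z}(C_{Z/X}) \times_{Z} \mb{P}_{Z}(C_{Z/X}) \ar[r] & W.
\end{tikzcd}
\end{equation*}
This identifies $\Delta_{pr_{Z/X}}^{-1}(Y)$ with the exceptional divisor $E = \mb{P}_{Z}(C_{Z/X})$ of $\mb{B}l_{Z/X}$, which is a virtual Cartier divisor by the structure theorem. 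Cartesianness would be checked by a $T$-point analysis together with a careful comparison of the cotangent complexes of both fiber products.

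For the inverse direction, composing the blow-up projection $\mb{B}l_{Y/W} \to W$ with the two projections $pr_i: W \to \mb{B}l_{Z/X}$ yields maps $g_i: \mb{B}l_{Y/W} \to \mb{B}l_{Z/X}$ ($i=1,2$). I would show that $g_1 \simeq g_2$, so that the combined map factors uniquely through $\Delta_{pr_{Z/X}}$. By the universal property of $\mb{B}l_{Z/X}$, each $g_i$ corresponds to a virtual Cartier divisor $D_i = g_i^{-1}(E) \hookrightarrow \mb{B}l_{Y/W}$, a compatible lift to $Z$, and a conormal map. The virtual Cartier divisor structure on the pullback of $Y$ to $\mb{B}l_{Y/W}$, combined with the decomposition of $L_{Y/W}$ as an extension involving the two pulled-back tautological line bundles on the respective factors and the derived diagonal contribution $L_{Z/X}[1]$, would force both $D_1 = D_2$ and the two conormal data to coincide.

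The main obstacle is the detailed analysis of the cotangent complex $L_{Y/W}$, which carries a non-trivial extension structure from two sources: the exceptional divisor inclusion $E \times_{X} E \hookrightarrow W$ on one hand, and the derived self-intersection structure of $Z$ in $X$ on the other (since $Y = E \times_{Z} E$ is obtained from $E \times_{X} E$ by pulling back along the derived diagonal $Z \to Z \times_{X} Z$, whose cotangent is $L_{Z/X}[1]$). Tracking this extension through Hekking's derived blow-up construction is where the substantive work lies. A potentially cleaner route is to reduce to a smooth affine local model (e.g., $X = \mb{A}^{r}$, $Z = \{0\}$) via \'etale descent and verify the claim by explicit Koszul computation, which the derived Rees algebra framework developed in this paper naturally accommodates.
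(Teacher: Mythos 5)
Your primary route --- verify Hekking's universal property of the derived blow-up by showing $\Delta_{pr_{Z/X}}^{-1}(\mb{P}_Z(C_{Z/X}) \times_Z \mb{P}_Z(C_{Z/X}))$ is a strict virtual Cartier divisor equivalent to the exceptional divisor and then constructing an inverse --- is different from the paper's, and it leaves the substantive work undone. The central derived-Cartesian claim is exactly where the content lies: writing $E = \mb{P}_Z(C_{Z/X})$ and $W = \mb{B}l_{Z/X}\times_X\mb{B}l_{Z/X}$, the fiber product $\mb{B}l_{Z/X} \times_W (E \times_X E)$ is the derived self-intersection $E \times_{\mb{B}l_{Z/X}} E$, and the further restriction along $E \times_Z E \hookrightarrow E \times_X E$ must collapse this against the derived self-intersection of $Z$ in $X$; you flag the requisite cotangent-complex bookkeeping as the ``substantive work'' and do not carry it out. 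There is also a subtlety in the inverse direction: in the derived setting, an equivalence $g_1 \simeq g_2$ of the two composites $\mb{B}l_{Y/W} \to \mb{B}l_{Z/X}$ does not by itself produce a factorization through the diagonal, since a map into the fiber product $\mb{B}l_{Z/X} \times_X \mb{B}l_{Z/X}$ records a coherence homotopy over $X$ that must also be matched.

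The ``cleaner route'' you sketch at the end is in fact precisely the paper's proof. After reducing to the local model $Z = \{0\} \subset X = \mb{A}^r$ via fppf descent, \cref{131666}, and stability of derived blow-ups under pull-back, the paper realizes $\mb{P}^{r}_{1}\times\mb{P}^{r}_{2}$ and $\mb{V}_{\mb{P}_{1}^{r}}(\mc{O}_{\mb{P}_{1}^{r}}(1))\times_{\mb{A}^{r}}\mb{V}_{\mb{P}_{2}^{r}}(\mc{O}_{\mb{P}_{2}^{r}}(1))$ as derived zero loci of cosections built from the pulled-back Euler fiber sequences, applies \cref{lemA421} to present the derived blow-up as the zero locus of a tautological cosection over $\mb{V}_{\mb{P}_{0}^{r}}(\mc{O}_{\mb{P}_{0}^{r}}(1)) \times \mb{P}_{1}^{r} \times \mb{P}_{2}^{r}$, and identifies the result with $\mb{V}_{\mb{P}^{r}}(\mc{O}_{\mb{P}^{r}}(1))$ via Beilinson's resolution of the diagonal of $\mb{P}^{r}$. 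The universal-property approach does not sidestep this computation --- it merely repackages it as a cotangent-complex statement --- so as written your proposal contains a genuine gap on both of the routes it proposes.
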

Combining \cref{thm:1.7} and \cref{thm:nmain}, we have the resolution of diagonals:
\begin{theorem}[Beilinson-type resolution, \cref{thm:fil}]
    We have perfect complexes $\mf{W}_{i,r-1}$ over $\mb{B}l_{Z/X}\times \mb{B}l_{Z/X}$ for $0\leq i\leq r-1$
  $$\mf{W}_{0,r-1}\cong \mc{O}_{\mb{B}l_{Z/X}\times_{X}\mb{B}l_{Z/X}},\quad \mf{W}_{r-1,r-1}\cong \Delta_{\mb{B}l_{Z/X}*}(\mc{O}_{\mb{B}l_{Z/X}}(r-1))$$
  with fiber sequences:
  $$\mf{W}_{i+1,r-1}\to \mf{W}_{i,r-1}\to \bar\beta_{Z/X*} S^{i}(C_{\beta_{Z/X}})$$
  for $0\leq i\leq r-2$. Here $\bar\beta_{Z/X}$ is the closed embedding of $\mb{P}_{Z}(C_{Z/X})\times_{Z}\mb{P}_{Z}(C_{Z/X})$ into $\mb{B}l_{Z/X}\times\mb{B}l_{Z/X}$, and $C_{\beta_{Z/X}}$ is the two-term complex
  $$L_{\mb{P}_{Z}(C_{Z/X}^{1})/Z}\otimes \mc{O}_{\mb{P}_{Z}(C_{Z/X}^{1})}(1)\to \mc{O}_{\mb{P}_{Z}(C_{Z/X}^{2})}(1),$$
  where $C_{Z/X}^{1}$ and $C_{Z/X}^{2}$ are two copies of $C_{Z/X}$, and we abuse the notation to denote $C_{Z/X},\mc{O}_{\mb{P}_{Z}(C_{Z/X}^{1})}(1)$ and $\mc{O}_{\mb{P}_{Z}(C_{Z/X}^{2})}(1)$ as the pull-back of $C_{Z/X},\mc{O}_{\mb{P}_{Z}(C_{Z/X}^{1})}(1)$ and $\mc{O}_{\mb{P}_{Z}(C_{Z/X}^{2})}(1)$ from $Z,\mb{P}_{Z}(C_{Z/X}^{1})$ and $\mb{P}_{Z}(C_{Z/X}^{2})$ to $\mb{P}_{Z}(C_{Z/X}^{1})\times_{Z}\mb{P}_{Z}(C_{Z/X}^{2})$ respectively.
\end{theorem}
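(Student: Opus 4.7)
The plan is to apply the Generalized Vanishing Theorem \RN{2} (\cref{thm:nmain}) to the closed embedding
$$f := \bar\beta_{Z/X}: \mb{P}_Z(C_{Z/X})\times_Z \mb{P}_Z(C_{Z/X}) \hookrightarrow Y := \mb{B}l_{Z/X}\times_X \mb{B}l_{Z/X},$$
and then push forward to $\mb{B}l_{Z/X}\times \mb{B}l_{Z/X}$ along the canonical closed embedding $\bar\iota:Y\hookrightarrow \mb{B}l_{Z/X}\times \mb{B}l_{Z/X}$ pulled back from the diagonal $X\to X\times X$. By \cref{thm:1.7}, the associated derived blow-up $\mb{B}l_{f}$ is $\mb{B}l_{Z/X}$ itself, with projection the diagonal $\Delta_{pr_{Z/X}}$, so feeding $f$ into the tower produced by \cref{thm:nmain} is a natural candidate for the desired Beilinson-type resolution.

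To invoke \cref{thm:nmain}, first compute the virtual codimension $r_f$. By the Structure Theorem, $\mathrm{rank}(L_{\mb{B}l_{Z/X}/S}) = \mathrm{rank}(L_{X/S})$, hence $\mathrm{rank}(L_{Y/S}) = \mathrm{rank}(L_{X/S})$, while $\mathrm{rank}(L_{\mb{P}_Z(C_{Z/X})\times_Z \mb{P}_Z(C_{Z/X})/S}) = 2(r-1) + \mathrm{rank}(L_{Z/S})$, so $r_f = 2-r$. Next verify that $C_{\bar\beta_{Z/X}}$ is canonically equivalent to the two-term complex $C_{\beta_{Z/X}}$ of the statement: factor $\bar\beta_{Z/X}$ as a composition of two base changes of the exceptional Cartier divisor inclusion $\mb{P}_Z(C_{Z/X})\hookrightarrow \mb{B}l_{Z/X}$, extract the cotangent distinguished triangle of the composition, and combine it with the relative cotangent triangle for $\mb{P}_Z(C_{Z/X})\to Z$.

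Now set $\mf{W}_{i,r-1} := \bar\iota_{*}\mf{W}_{i,r-1}^{\bar\beta_{Z/X}}$. Since $r-1\geq \max\{-r_f+1,0\}=\max\{r-1,0\}$, item (6) of \cref{thm:nmain} yields $\mf{W}_{0,r-1}^{\bar\beta_{Z/X}}\cong \mc{O}_Y$; item (2) gives $\mf{W}_{r-1,r-1}^{\bar\beta_{Z/X}}\cong \Delta_{pr_{Z/X}*}\mc{O}_{\mb{B}l_{f}}(r-1)$. The exceptional line bundle $\mc{O}_{\mb{B}l_{f}}(1)$ agrees with $\mc{O}_{\mb{B}l_{Z/X}}(1)$ because the exceptional locus of the $\bar\beta_{Z/X}$-blow-up, namely $\Delta_{pr_{Z/X}}^{-1}(\mb{P}_Z(C_{Z/X})\times_Z\mb{P}_Z(C_{Z/X}))$, is the usual exceptional divisor $\mb{P}_Z(C_{Z/X})\subset \mb{B}l_{Z/X}$. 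The claimed fiber sequences are then the middle-column vertical fiber sequences of item (4) of \cref{thm:nmain},
$$\mf{W}_{i+1,r-1}^{\bar\beta_{Z/X}}\to \mf{W}_{i,r-1}^{\bar\beta_{Z/X}}\to \mf{H}_i^{\bar\beta_{Z/X}+} = \bar\beta_{Z/X*}S^i(C_{\bar\beta_{Z/X}}),$$
for $0\leq i\leq r-2$, transported along $\bar\iota_{*}$.

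The main obstacle is the cotangent complex identification $C_{\bar\beta_{Z/X}}\cong C_{\beta_{Z/X}}$: not merely as a quasi-isomorphism of two-term complexes, but as the specific two-term complex with differential $L_{\mb{P}_Z(C_{Z/X}^{1})/Z}\otimes \mc{O}(1) \to \mc{O}(1)$ prescribed in the statement. This requires carefully tracking the canonical maps built in \cref{thm:1.7} and the Cartier divisor structure of both exceptional divisors involved; the parallel identification $\mc{O}_{\mb{B}l_{f}}(1)\cong \mc{O}_{\mb{B}l_{Z/X}}(1)$ is part of the same bookkeeping. Once the sign and normalization conventions fixed in the proof of \cref{thm:1.7} are propagated through, the remaining verifications are formal.
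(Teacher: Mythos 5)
Your proposal follows the paper's route exactly: apply \cref{thm:nmain} to the quasi-smooth closed embedding of $\mb{P}_{Z}(C_{Z/X})\times_{Z}\mb{P}_{Z}(C_{Z/X})$ into the fiber product $\mb{B}l_{Z/X}\times_{X}\mb{B}l_{Z/X}$ (of virtual codimension $2-r$), use \cref{thm:resolution} to identify the resulting derived blow-up with $\mb{B}l_{Z/X}$ via $\Delta_{pr_{Z/X}}$, and then push everything to $\mb{B}l_{Z/X}\times\mb{B}l_{Z/X}$; the paper's own ``proof'' consists of a single sentence saying to apply \cref{thm:main} to $\beta_{Z/X}$, so you have correctly filled in the intended details. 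Two small clean-ups: you set $f:=\bar\beta_{Z/X}$ while the target you describe is the fiber product over $X$, so $f$ should be $\beta_{Z/X}$ in the paper's notation (the embedding into $\mb{B}l_{Z/X}\times_{X}\mb{B}l_{Z/X}$); with that fixed, the phrase ``$C_{\bar\beta_{Z/X}}\cong C_{\beta_{Z/X}}$'' becomes the identification of the abstract conormal complex $C_{f}$ with the explicit two-term complex defined in the statement, which the paper obtains directly from the Euler fiber sequence. Also, for $\mf{W}_{0,r-1}^{f}\cong \mc{O}_{\mb{B}l_{Z/X}\times_{X}\mb{B}l_{Z/X}}$ the cleaner reference is item (3) of \cref{thm:nmain} together with $R^{0}\cong\mc{O}$ from \cref{support1} (valid because $r-1\geq -r_{f}+1$), rather than item (6).
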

\subsection{Quantum loop groups and derived birational geometry of nested quiver varieties}
\label{sec:1.8}
Let $Q=(I,E)$ be a quiver. Given $k\in I$ and $\mbf{v}^{0},\mbf{v}^{1},\mbf{v}^{1'},\mbf{v}^{2},\mbf{w}\in \mb{Z}_{\geq 0}^{I}$ such that $\mbf{v}^{1}=\mbf{v}^{1'}$ and  $\mbf{v}^{2}-\mbf{v}^{1}=\mbf{v}^{1'}-\mbf{v}^{0}=(\delta_{kj})_{j\in I}$ ($\delta$ is the Kronecker symbol), we consider the following triple moduli spaces
\begin{align*}
  \mf{Z}_{k}^{-}(\mbf{v}^{1}):=\mf{P}(\mbf{v}^{0},\mbf{v}^{1})\times_{\mf{M}(\mbf{v}^{0},\mbf{w})}\mf{P}(\mbf{v}^{0},\mbf{v}^{1'})  \\
 \mf{Z}_{k}^{+}(\mbf{v}^{1}):=\mf{P}(\mbf{v}^{1},\mbf{v}^{2})\times_{\mf{M}(\mbf{v}^{2},\mbf{w})}\mf{P}(\mbf{v}^{1'},\mbf{v}^{2}),
\end{align*}
where $\mf{M}(\mbf{v},\mbf{w})$ is the Nakajima quiver variety and $\mf{P}(\mbf{v}_{0},\mbf{v}_{1})$ is the Hecke correspondence defined by Nakajima \cite{10.1215/S0012-7094-94-07613-8,10.1215/S0012-7094-98-09120-7}. In \cite{yuzhaoderived} we proved that
\begin{theorem}[Theorem 1.1 of \cite{yuzhaoderived}]
  \label{thm:1.4}
  Regarding $\mf{P}(\mbf{v}^{0},\mbf{v}^{1})$ (resp. $\mf{P}(\mbf{v}^{1},\mbf{v}^{2})$) as a closed derived subscheme of $\mf{Z}_{k}^{-}(\mbf{v}^{1})$ (resp. $\mf{Z}_{k}^{+}(\mbf{v}^{1})$) through the diagonal embedding, then there is a smooth variety $\mf{Y}_{k}(\mbf{v}^{1})$ such that
  \begin{equation*}
     \mb{B}l_{\mf{P}(\mbf{v}^{0},\mbf{v}^{1})/\mf{Z}_{k}^{-}(\mbf{v}^{1})}\cong \mf{Y}_{k}(\mbf{v}^{1})\cong \mb{B}l_{\mf{P}(\mbf{v}^{1},\mbf{v}^{2})/\mf{Z}_{k}^{+}(\mbf{v}^{1})},    
  \end{equation*}
\end{theorem}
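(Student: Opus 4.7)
The plan is to identify the mysterious smooth variety $\mf{Y}_{k}(\mbf{v}^{1})$ explicitly as a moduli space of ``Hecke ladders'', and then verify directly that this space is both derived blow-ups simultaneously. Specifically, I would define $\mf{Y}_{k}(\mbf{v}^{1})$ to parametrize quadruples of framed preprojective representations $(V^{0},V^{1},V^{1'},V^{2})$ with respective dimension vectors $\mbf{v}^{0},\mbf{v}^{1},\mbf{v}^{1'},\mbf{v}^{2}$ together with inclusions $V^{0}\subset V^{1},V^{1'}$ and $V^{1},V^{1'}\subset V^{2}$ (all Hecke modifications at vertex $k$). Using Nakajima's GIT description and the fact that the fibers of the flag projections are classical flag varieties, $\mf{Y}_{k}$ is manifestly smooth provided the moment map equations cut out a smooth ambient affine (which they do by a dimension count: imposing four compatible Hecke lines costs exactly the expected codimension in the linearized problem).

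Next I would construct the natural forgetful morphisms $p^{-}\colon \mf{Y}_{k}\to \mf{Z}_{k}^{-}(\mbf{v}^{1})$ and $p^{+}\colon \mf{Y}_{k}\to \mf{Z}_{k}^{+}(\mbf{v}^{1})$ dropping $V^{2}$ and $V^{0}$ respectively. Over the open locus where $V^{1}\neq V^{1'}$, the space $V^{1}+V^{1'}$ has exactly dimension vector $\mbf{v}^{2}$ and is therefore the unique choice of $V^{2}$, while dually $V^{1}\cap V^{1'}$ has dimension vector $\mbf{v}^{0}$; hence each $p^{\pm}$ is an isomorphism away from the diagonal $\mf{P}(\mbf{v}^{0},\mbf{v}^{1})\subset \mf{Z}_{k}^{-}$ (respectively $\mf{P}(\mbf{v}^{1},\mbf{v}^{2})\subset \mf{Z}_{k}^{+}$). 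Over those diagonals, where $V^{1}=V^{1'}$, the fiber of $p^{-}$ (resp. $p^{+}$) parametrizes $V^{2}\supset V^{1}$ (resp.\ $V^{0}\subset V^{1}$), giving exactly a projective space whose dimension matches the virtual codimension $r$ of the diagonal embedding.

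To upgrade this classical picture to a derived statement, I would appeal to the universal property of Hekking's derived blow-up together with the structure theorem (\cref{structurethm}): it suffices to exhibit, on $\mf{Y}_{k}$, a virtual Cartier divisor pulled back from the diagonal whose underlying projectivization of $C_{\mf{P}/\mf{Z}_{k}^{\pm}}$ matches the exceptional divisor. The key input is an explicit identification of $C_{\mf{P}(\mbf{v}^{0},\mbf{v}^{1})/\mf{Z}_{k}^{-}(\mbf{v}^{1})}$ (and similarly on the $+$ side) with the two-term complex governing infinitesimal deformations of the ``degenerate'' Hecke pair into a genuine one; the derived projectivization of this complex is then manifestly the degeneracy divisor $\{V^{1}=V^{1'}\}\subset \mf{Y}_{k}$. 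This gives both $p^{\pm}$ the desired universal property and yields the two isomorphisms in one stroke.

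The main obstacle, I expect, is not the classical geometry (which reduces to a pleasant exercise in elementary linear algebra of Hecke modifications) but the derived verification: the triple spaces $\mf{Z}_{k}^{\pm}$ are themselves only quasi-smooth, not smooth, so one must take the \emph{derived} fiber products throughout, compute cotangent complexes at singular points, and confirm that the locus $V^{1}=V^{1'}$ in $\mf{Y}_{k}$ is exactly the virtual (not just classical) Cartier divisor demanded by the universal property. A convenient way around this is to work locally in a Koszul model, writing $\mf{Z}_{k}^{\pm}$ as derived zero loci of sections of explicit vector bundles on smooth flag spaces; then the derived blow-up can be computed by the formula of \cref{0524B}, and the symmetric definition of $\mf{Y}_{k}$ above makes both identifications visible simultaneously without privileging either the $+$ or the $-$ side.
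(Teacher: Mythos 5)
This theorem is imported verbatim from \cite{yuzhaoderived} (it is labeled ``Theorem 1.1 of \cite{yuzhaoderived}'' and prefaced by ``In \cite{yuzhaoderived} we proved that\ldots''), so the present paper contains no proof against which your argument can be compared. What the paper does supply, in the remark immediately following, is the identification of $\mf{Y}_{k}(\mbf{v}^{1})$ as Negu\c{t}'s quadruple moduli space; your choice to take $\mf{Y}_{k}$ to parametrize the quadruples $(V^{0}\subset V^{1},V^{1'}\subset V^{2})$ is therefore at least the correct object, and the two forgetful maps $p^{\pm}$ you propose to compare against the blow-up projections are the natural candidates.

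On its own terms, the sketch has the right shape but leaves the two genuinely hard points as assertions. First, smoothness of $\mf{Y}_{k}$ is not a formal consequence of a ``linearized'' dimension count: the quadruple space is a fiber product over a singular quiver variety $\mf{M}$, and the individual Hecke correspondences $\mf{P}$, while smooth, intersect in $\mf{Z}_{k}^{\pm}$ in a way that is only quasi-smooth, not transverse; you would need to exhibit $\mf{Y}_{k}$ directly as a GIT quotient of a smooth affine by a free action, or identify it as a tower of projective bundles over a smooth quiver variety, rather than appealing to expected codimension. Second, the claim that over the open locus $V^{1}\neq V^{1'}$ the sum $V^{1}+V^{1'}$ is automatically a stable framed submodule of $V^{2}$ of dimension $\mbf{v}^{2}$ (and dually for the intersection) needs justification: closure under the quiver and framing maps is clear, but stability of these modifications and the claim that no other lift exists are precisely the ``linear algebra of Hecke modifications'' that has to be carried out, not invoked. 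Your instinct to reduce the derived verification to Koszul charts and \cref{0524B} is sound and is exactly the technique this paper makes available, but since the actual proof lives in \cite{yuzhaoderived} I cannot certify that it follows your route rather than, say, proceeding via an explicit ambient smooth model of $\mf{Z}_{k}^{\pm}$ and checking the universal property of the blow-up directly against a virtual Cartier divisor constructed from the tautological line bundles $\mc{L}_{i},\mc{L}_{i}'$.
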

\begin{remark}
  The smooth variety $\mf{Y}_{k}(\mbf{v}^{1})$ is called the Negu\c{t}'s quadruple moduli space, as the first non-trivial case, i.e. the Jordan quiver case, was discovered by Negu\c{t} \cite{neguct2018hecke}.
\end{remark}

\cref{thm:nmain} and \cref{thm:1.4} are related to the weak categorification of the quantum loop (and toroidal) algebra action, due to the following observation: the moduli spaces $\mf{Z}_{k}^{-}(\mbf{v}^{1})$ parametrized stable framed representations $B_{0}\subset B_{1}$ and $B_{0}\subset B_{1}'$. Then $B_{1}/B_{0}$ and $B_{1}'/B_{0}$ induces two line bundles over $\mf{Z}_{k}^{-}(\mbf{v}^{1})$, which we denote as $\mc{L}_{1}$ and $\mc{L}_{1}'$. We can consider similar line bundles  $\mc{L}_{2}$ and $\mc{L}_{2}'$ on $\mf{Z}_{k}^{+}(\mbf{v}^{1})$. The following theorem is a generalization of Proposition 2.39 of Negu\c{t} \cite{neguct2018hecke}:
\begin{proposition}[\cite{zhao2023}]
  The pull backs of $\mc{L}_{1},\mc{L}_{2},\mc{L}_{1}'$ and $\mc{L}_{2}'$ to $\mf{Y}_{k}(\mbf{v}^{1})$ satisfies the relation that
  $$\mc{L}_{1}\mc{L}_{2}'^{-1}\cong \mc{L}_{1}'\mc{L}_{2}^{-1}\cong \mc{O}_{\mf{Y}_{k}(\mbf{v}^{1})}(-\Delta_{\mf{Y}})$$
  where $\mc{O}_{\mf{Y}_{k}(\mbf{v}^{1})}(-\Delta_{\mf{Y}})$ is the exceptional divisor of $\mf{Y}_{k}(\mbf{v}^{1})$ along the derived blow-ups.
\end{proposition}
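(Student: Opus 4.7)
The plan is to exhibit a canonical map $\phi:\mc{L}_{1}\to \mc{L}_{2}'$ on $\mf{Y}_{k}(\mbf{v}^{1})$ whose vanishing locus is the exceptional divisor $\Delta_{\mf{Y}}$ with multiplicity one; the second isomorphism $\mc{L}_{1}'\otimes \mc{L}_{2}^{-1}\cong \mc{O}(-\Delta_{\mf{Y}})$ then follows by interchanging the roles of $B_{1}$ and $B_{1}'$. By \cref{thm:1.4}, $\mf{Y}_{k}(\mbf{v}^{1})$ is a smooth variety parametrizing compatible flags $B_{0}\subset B_{1},B_{1}'\subset B_{2}$ of framed quiver representations, and in this description the line bundles pull back to the successive quotients $\mc{L}_{1}=B_{1}/B_{0}$, $\mc{L}_{1}'=B_{1}'/B_{0}$, $\mc{L}_{2}=B_{2}/B_{1}$, $\mc{L}_{2}'=B_{2}/B_{1}'$. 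The map $\phi$ is then the composition
$$\phi:B_{1}/B_{0}\hookrightarrow B_{2}/B_{0}\twoheadrightarrow B_{2}/B_{1}',$$
well defined because $B_{0}\subset B_{1}'$. Over the open locus $\{B_{1}\neq B_{1}'\}$ one has $B_{1}+B_{1}'=B_{2}$ and $B_{1}\cap B_{1}'=B_{0}$, so $\phi$ is an isomorphism, whereas on $\{B_{1}=B_{1}'\}$ the composite vanishes identically. Hence $\phi$, viewed as a section of $(\mc{L}_{1})^{-1}\otimes \mc{L}_{2}'$, has set-theoretic zero locus equal to the diagonal, which by \cref{thm:1.4} is the exceptional Cartier divisor $\Delta_{\mf{Y}}$ of $\mb{B}l_{\mf{P}(\mbf{v}^{0},\mbf{v}^{1})/\mf{Z}_{k}^{-}(\mbf{v}^{1})}$; since $\mf{Y}_{k}$ is smooth, $\phi$ factors through a section of $(\mc{L}_{1})^{-1}\otimes \mc{L}_{2}'\otimes \mc{O}(m\Delta_{\mf{Y}})$ for some integer $m\geq 1$.

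Next I would verify that $m=1$ by a local computation at a generic point of $\Delta_{\mf{Y}}$. Choose a smooth point, a normal coordinate $t$ transverse to $\Delta_{\mf{Y}}$, and a one-parameter family $B_{1}'(t)$ with $B_{1}'(0)=B_{1}$; the derivative $v:=\partial_{t}B_{1}'|_{t=0}\in \mathrm{Hom}(B_{1}/B_{0},B_{2}/B_{1})$ is the universal element parametrizing the normal bundle to $\Delta_{\mf{Y}}$. The first $t$-jet of $\phi$ is, up to natural identifications, exactly $v$, and by Hekking's universal property of the derived blow-up applied to the derived conormal complex $C_{\mf{P}/\mf{Z}_{k}^{-}}$ computed in \cite{yuzhaoderived}, this $v$ coincides with the tautological section on $\Delta_{\mf{Y}}=\mb{P}_{\mf{P}}(C_{\mf{P}/\mf{Z}_{k}^{-}})$, which is everywhere nonzero. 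This forces $m=1$ and yields the desired $\mc{L}_{1}\otimes(\mc{L}_{2}')^{-1}\cong \mc{O}(-\Delta_{\mf{Y}})$.

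The main technical obstacle is the identification in the second step: matching the first $t$-jet of the quiver-theoretic map $\phi$ with the tautological section coming from the derived blow-up. Concretely, one must combine the explicit description of $C_{\mf{P}(\mbf{v}^{0},\mbf{v}^{1})/\mf{Z}_{k}^{-}(\mbf{v}^{1})}$ from \cite{yuzhaoderived} with the definition of $\mc{O}(1)$ on $\mb{P}_{\mf{P}}(C_{\mf{P}/\mf{Z}_{k}^{-}})$, and check that the pairing $(B_{1}/B_{0})\otimes (B_{2}/B_{1})^{\vee}\to \mc{O}_{\Delta_{\mf{Y}}}$ arising from $\phi$ is compatible with the tautological surjection $C_{\mf{P}/\mf{Z}_{k}^{-}}^{\vee}\twoheadrightarrow \mc{O}(1)$ on the exceptional projectivization. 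Once this compatibility is in hand, the proof is immediate, and the symmetric argument swapping $B_{1}$ with $B_{1}'$ delivers the second isomorphism simultaneously.
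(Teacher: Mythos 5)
The paper does not actually prove this proposition: it is stated with a forward citation to the companion paper \cite{zhao2023}, and the surrounding text explicitly defers the argument (``We will explain the details of the above discussions in \cite{zhao2023}\ldots''), so there is no in-paper proof to compare your attempt against.

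Evaluating your proposal on its own terms: the outline is the natural quiver-theoretic argument via Negu\c{t}'s quadruple moduli description of $\mf{Y}_{k}(\mbf{v}^{1})$, and the parts you actually carry out are sound. Constructing $\phi:\mc{L}_{1}\to\mc{L}_{2}'$ as the composite $B_{1}/B_{0}\hookrightarrow B_{2}/B_{0}\twoheadrightarrow B_{2}/B_{1}'$, showing it is an isomorphism where $B_{1}\neq B_{1}'$ (the dimension count at vertex $k$ forces $B_{1}+B_{1}'=B_{2}$ and $B_{1}\cap B_{1}'=B_{0}$), and identifying the set-theoretic zero locus of $\phi$ with $\{B_{1}=B_{1}'\}=\Delta_{\mf{Y}}$ are all correct, as is the symmetry argument giving the second isomorphism.

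The genuine gap is the one you flag yourself: proving the vanishing order along $\Delta_{\mf{Y}}$ equals exactly one. What you have so far only yields $\mc{L}_{1}\mc{L}_{2}'^{-1}\cong\mc{I}(\Delta_{\mf{Y}})^{m}$ for some $m\geq 1$. Closing the gap requires step (ii) below; step (i) is automatic: (i) since $\phi$ vanishes set-theoretically on the Cartier divisor $\Delta_{\mf{Y}}$ in the smooth variety $\mf{Y}_{k}(\mbf{v}^{1})$, it factors through the twist $\mc{L}_{1}\to\mc{L}_{2}'\otimes\mc{I}(\Delta_{\mf{Y}})$; (ii) one must show the induced map restricted to $\Delta_{\mf{Y}}\cong\mb{P}_{\mf{P}(\mbf{v}^{0},\mbf{v}^{1})}(C_{\mf{P}/\mf{Z}_{k}^{-}})$ is nowhere zero, which amounts to explicitly identifying $\mc{I}(\Delta_{\mf{Y}})|_{\Delta_{\mf{Y}}}$ with the tautological $\mc{O}(1)$, identifying $\mc{L}_{1}|_{\Delta_{\mf{Y}}}$ and $\mc{L}_{2}'|_{\Delta_{\mf{Y}}}$, and checking the resulting pairing is the tautological surjection. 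This requires the explicit form of $C_{\mf{P}/\mf{Z}_{k}^{-}}$ from \cite{yuzhaoderived} and is precisely the compatibility you label a ``main technical obstacle'' without resolving. The proposed first-jet computation is the right strategy, but it is the entire content of the proposition, so until it is done the proof is not complete. (Also note a small sign slip: if $\phi$ vanishes to order $m$, it is a section of $\mc{L}_{1}^{-1}\mc{L}_{2}'$ lying in the image of the twist by $\mc{I}(\Delta_{\mf{Y}})^{m}$, which in the paper's convention is $\mc{O}_{\mf{Y}_{k}(\mbf{v}^{1})}(-\Delta_{\mf{Y}})^{m}$, not $\mc{O}(m\Delta_{\mf{Y}})$.)
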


We recall the fact that the quantum loop algebra has generators $e_{k}^{i},h_{k}^{\pm j},f_{k}^{l}$, where $k\in I, i,j\in \mb{Z},j\in \mb{Z}_{\geq 0}$. Moreover, the $e_{k}^{i}f_{k}^{l}$ action (lifting to the derived category of coherent sheaves), is induced by the Fourier-Mukai kernel $\mc{L}_{1}^{i}\mc{L}_{1}'^{l}$ over $\mf{Z}_{k}^{-}(\mbf{v}^{1})$, and the $f_{k}^{l}e_{k}^{i}$ action is induced by the Fourier-Mukai kernel $\mc{L}_{2}^{l}\mc{L}_{2}'^{i}$ over $\mf{Z}_{k}^{+}(\mbf{v}^{1})$ (strictly speaking, we need to twist a line bundle which depends on the quiver $Q$ and the vectors $\mbf{v}^{1},\mbf{w}$). Thus combining \cref{thm:nmain} and \cref{thm:1.4}, we can explicitly compute the categorical commutator of $e_{k}^{i}f_{k}^{l}$ and $f_{k}^{l}e_{k}^{i}$.

We will explain the details of the above discussions in \cite{zhao2023} and construct a weak categorification of quantum loop and toroidal algebra actions on the Grothendieck group of quiver varieties.

\subsection{Several conjectures}
In this subsection, we propose several related conjectures. 

First, we propose the following conjecture, which suggested the relationship between the (virtual) codimension and the discrepancy, a fundamental concept from the birational geometry:
\begin{conjecture}
  \label{conj:dis}
  We assume all the settings of \cref{thm:main}. Let $L_{\mb{B}l_{Z/X}/S}$ and $L_{X/S}$ be the cotangent complex of $\mb{B}l_{Z/X}$ and $X$ over $S$, then we have
  $$pr_{Z/X}^{*}det(L_{X/S})\cong det(L_{\mb{B}l_{Z/X}/S})\otimes \mc{O}_{\mb{B}l_{Z/X}}(r-1).$$
\end{conjecture}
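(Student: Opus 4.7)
The plan is to reduce the conjecture to the identity
$$\det(L_{\mb{B}l_{Z/X}/X}) \cong \mc{O}_{\mb{B}l_{Z/X}}(1-r),$$
which, by multiplicativity of determinants along the canonical fiber sequence $pr_{Z/X}^{*} L_{X/S} \to L_{\mb{B}l_{Z/X}/S} \to L_{\mb{B}l_{Z/X}/X}$, is equivalent to the stated conjecture. I would then verify this identity separately on the exceptional divisor $E := \mb{P}_{Z}(C_{Z/X})$ and on its open complement. Off $E$ the projection $pr_{Z/X}$ is an equivalence, so $L_{\mb{B}l_{Z/X}/X}$ vanishes and both sides are trivial. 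On $E$, writing $\pi_{E} : E \to Z$ for the structural map, the two factorizations of $E \to X$ produce fiber sequences
$$L_{\mb{B}l_{Z/X}/X}|_{E} \to L_{E/X} \to L_{E/\mb{B}l_{Z/X}}, \qquad \pi_{E}^{*} L_{Z/X} \to L_{E/X} \to L_{E/Z}.$$
Three ingredients combine: the virtual Cartier divisor property of $E$ gives $L_{E/\mb{B}l_{Z/X}} \simeq \mc{O}_{E}(1)[1]$ and hence $\det(L_{E/\mb{B}l_{Z/X}}) \cong \mc{O}_{E}(-1)$; Jiang's derived Euler sequence for $E = \mb{P}_{Z}(C_{Z/X})$ gives $\det(L_{E/Z}) \cong \pi_{E}^{*}\det(C_{Z/X}) \otimes \mc{O}_{E}(-r)$; and $L_{Z/X} \cong C_{Z/X}[1]$ gives $\det(\pi_{E}^{*} L_{Z/X}) \cong \pi_{E}^{*}\det(C_{Z/X})^{-1}$. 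The $\det(C_{Z/X})$ contributions cancel, yielding $\det(L_{E/X}) \cong \mc{O}_{E}(-r)$ and hence $\det(L_{\mb{B}l_{Z/X}/X})|_{E} \cong \mc{O}_{E}(1-r)$.

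To conclude globally, I would study the line bundle $\mc{L} := \det(L_{\mb{B}l_{Z/X}/X}) \otimes \mc{O}_{\mb{B}l_{Z/X}}(r-1)$, which is trivial off $E$ and whose restriction to $E$ is trivial. A descent argument, in analogy with the classical decomposition $\mathrm{Pic}(\mb{B}l_{Z} X) \cong \mathrm{Pic}(X) \oplus \mb{Z}[E]$ for smooth blow-ups, then identifies $\mc{L}$ with $\mc{O}_{\mb{B}l_{Z/X}}$: every line bundle trivial away from $E$ has the form $\mc{O}(mE)$, whose restriction to $E$ equals $\mc{O}_{E}(-m)$, forcing $m = 0$.

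The main obstacle is this last globalization step in the derived setting: one must carefully justify the derived analogue of the Picard decomposition, rule out spurious derived-only contributions, and ideally produce a canonical rather than merely abstract isomorphism. Two potentially cleaner alternative strategies bypass this difficulty: (a) extract an explicit presentation of $L_{\mb{B}l_{Z/X}/X}$ directly from Hekking's construction of $\mb{B}l_{Z/X}$ via the derived extended Rees algebra $R_{Z/X}^{ext}$, from which the determinant formula should follow structurally; or (b) reduce by deformation to the derived normal cone to the linear case $X = \mb{V}(C_{Z/X})$ with $Z$ the zero section, where $\mb{B}l_{Z/X}$ is the total space of $\mc{O}_{\mb{P}_{Z}(C_{Z/X})}(-1)$ and the relative cotangent admits a direct description immediately yielding the formula.
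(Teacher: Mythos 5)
This statement is labelled a \emph{conjecture} in the paper, and the paper does not prove it: it is only verified there in special cases (both $Z$ and $X$ smooth, via Koll\'ar--Mori, or $\mb{B}l_{Z/X}$ smooth, via the author's earlier paper), and otherwise the author reduces it to a further conjecture about determinants of pushforwards along virtual Cartier divisors, namely $\det(f_*\mc{F})\cong \mc{O}_X(lZ)$ for $f:Z\to X$ a virtual Cartier divisor and $\mc{F}$ a rank-$l$ perfect complex. So there is no proof in the paper against which to check yours.

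Your reduction to $\det(L_{\mb{B}l_{Z/X}/X})\cong\mc{O}_{\mb{B}l_{Z/X}}(1-r)$ is the correct reformulation, and your computation of the restriction to the exceptional divisor $E=\mb{P}_Z(C_{Z/X})$ is sound (the two fiber sequences for $L_{E/X}$ together with the Euler sequence, $L_{Z/X}\cong C_{Z/X}[1]$, and $L_{E/\mb{B}l_{Z/X}}\cong\mc{O}_E(1)[1]$ give exactly $\det(L_{\mb{B}l_{Z/X}/X})|_E\cong\mc{O}_E(1-r)$). But the globalization step, which you yourself flag as the main obstacle, is where the content of the conjecture actually lives, and as written it does not go through. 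The Picard decomposition $\mathrm{Pic}(\mb{B}l_Z X)\cong \mathrm{Pic}(X)\oplus\mb{Z}[E]$ uses normality, irreducibility, and density of the complement of $E$; none of these hold for $\pi_0(\mb{B}l_{Z/X})$ in general. Indeed $\pi_0(\mb{B}l_{Z/X})$ can be non-reduced and reducible, and in extreme cases (e.g.\ when $\pi_0(Z)=\pi_0(X)$, as for a derived zero locus blown up along its reduction) the open complement of $E$ is empty, so the ``trivial off $E$'' condition carries no information at all. A closed substack and its open complement do not form a descent cover, so knowing the restrictions of $\mc{L}$ to each piece does not determine $\mc{L}$. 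Your alternative (a) is essentially the route the paper takes — using Hekking--Khan--Rydh's computation of $L_{pr_{Z/X}}$, which exhibits it as a pushforward from $E$ — but that reduces the problem to the second conjecture (determinant of a pushforward along a virtual Cartier divisor) rather than resolving it. Your alternative (b), deformation to the normal cone, also requires an argument that the line bundle in question specializes correctly along the $\mb{A}^1$-family; checking the identity only on the special fiber $t=0$ does not by itself recover the statement at $t=1$.
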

In Hekking's thesis \cite{Hekking_2022}, Hekking-Khan-Rydh computed the relative cotangent complex of $pr_{Z/X}$. By their computation, we can reduce \cref{conj:dis} to the following conjecture:
\begin{conjecture}
  Let $f:Z\to X$ be a virtual Cartier divisor of derived Artin stacks. Then for any perfect complex $\mc{F}\in \mathrm{Perf(Z)}$, we have
  $$det(f_{*}\mc{F})\cong \mc{O}_{X}(lZ)$$
  where $l$ is the rank of $\mc{F}$.
\end{conjecture}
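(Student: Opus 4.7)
The plan is to reduce to the case $\mc{F} = \mc{O}_Z$ via Zariski-local resolutions by free modules and then invoke the defining fiber sequence of the virtual Cartier divisor.

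By the definition of a virtual Cartier divisor, $f_*\mc{O}_Z$ is canonically the cofiber of the tautological map $\mc{O}_X(-Z) \to \mc{O}_X$. Additivity of the Knudsen--Mumford determinant on fiber sequences of perfect complexes then gives
$$\det f_*\mc{O}_Z \cong \det \mc{O}_X \otimes \det \mc{O}_X(-Z)^{-1} \cong \mc{O}_X(Z),$$
settling the case $\mc{F} = \mc{O}_Z$ and hence $\mc{F} = \mc{O}_Z^l$ for any $l$ by multiplicativity.

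For a general perfect complex $\mc{F}$ of rank $l$, I work Zariski-locally on $X$. On a sufficiently small affine open $U \subset X$, the restriction $\mc{F}|_{Z \cap U}$ admits a bounded resolution $[\mc{O}^{a_n}_{Z \cap U} \to \cdots \to \mc{O}^{a_0}_{Z \cap U}]$ by finite free modules with $\sum_i (-1)^i a_i = l$. Iterating the base case (one application per term of the resolution) and using multiplicativity of the determinant over the resulting iterated fiber sequences,
$$\det f_*\mc{F}|_U \cong \bigotimes_i \det\bigl(f_*\mc{O}_Z^{a_i}|_U\bigr)^{(-1)^i} \cong \bigotimes_i \mc{O}_X(a_i Z)^{(-1)^i}|_U \cong \mc{O}_X(lZ)|_U.$$
Functoriality of the Knudsen--Mumford determinant guarantees these local isomorphisms are canonical --- independent of the chosen resolution up to unique isomorphism --- so they glue to a global equivalence $\det f_*\mc{F} \cong \mc{O}_X(lZ)$.

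The principal difficulty will be verifying the gluing: one must check that the isomorphisms produced by different local resolutions agree on overlaps, which reduces to the naturality of the Knudsen--Mumford determinant under quasi-isomorphisms of perfect complexes. A secondary concern is ensuring the existence of sufficiently fine local free resolutions of perfect complexes on the derived stack $Z$; while this is automatic derived-locally on affines, for general derived Artin stacks one may need to work in the smooth topology and apply descent for $\mathrm{Pic}(X)$.
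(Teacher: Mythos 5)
The statement you are trying to prove is left as an \emph{open conjecture} in the paper (one of several in the introduction); no proof is given there. So there is no argument of the author's to compare against. That said, your attempt has a genuine gap, and it is worth being precise about where it sits.

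Your base case is fine: for $\mc{F}=\mc{O}_Z$, the fiber sequence $\mc{O}_X(-Z)\to\mc{O}_X\to f_*\mc{O}_Z$ together with additivity of the determinant gives $\det f_*\mc{O}_Z\cong\mc{O}_X(Z)$, and multiplicativity handles $\mc{F}=\mc{O}_Z^{\oplus l}$.

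The problem is the passage from local to global. After resolving $\mc{F}$ Zariski-locally by finite free modules, you conclude that $\det f_*\mc{F}$ and $\mc{O}_X(lZ)$ are \emph{locally isomorphic}. But any two line bundles on $X$ are locally isomorphic, so this alone carries no information. What you need is a canonical global isomorphism, which requires that the local isomorphisms produced by different choices of resolution agree on overlaps \emph{as morphisms}, not merely as abstract isomorphism classes. Your appeal to ``functoriality of the Knudsen--Mumford determinant'' is exactly where the argument stalls: the determinant is canonical as a functor, but the specific isomorphism $\det f_*\mc{F}|_U\cong\mc{O}_X(lZ)|_U$ extracted from a resolution depends on that resolution via the additivity isomorphisms, and if the resulting transition functions fail to form a coboundary they assemble to a nontrivial class in $\mathrm{Pic}(X)$ which obstructs gluing. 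Put $K$-theoretically, you need the composite $K_0(Z)\xrightarrow{f_*}K_0(X)\xrightarrow{\det}\mathrm{Pic}(X)$ to factor through the rank homomorphism $K_0(Z)\to\mathbb{Z}$. Your local resolutions show that $[\mc{F}]-l[\mc{O}_Z]$ is locally zero in $K_0$, but this class can be globally nonzero, and showing $\det\circ f_*$ annihilates such rank-zero classes is precisely the content of the conjecture. It is not reached by local free resolutions alone: a Chern-class computation gives $c_1(f_*\mc{F})=l[Z]$, but a line bundle is not determined by its first Chern class in the presence of torsion in $\mathrm{Pic}(X)$, so a genuinely refined argument (e.g.\ a spectrum-level analysis of $\det\circ f_*$ restricted to the rank-zero part of $K$-theory, or a deformation-to-the-normal-cone argument producing an explicit global trivialization) is still required.
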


For the case that $S=\mathrm{Spec}(\mb{C})$ and $X$ is a derived scheme, \cref{conj:dis} was verified by Kollar-Mori \cite[Lemma 2.29]{Kollar-Mori} if both $Z$ and $X$ are smooth, and by us \cite[Theorem 1.13]{yuzhaoderived} if $\mb{B}l_{X/Z}$ is smooth.

Halpern-Leistner \cite{halpern2020derived} proposed the following conjecture between the determinant of the cotangent complex and the dualizing complex:
\begin{conjecture}
  \label{conj:1.14}
  Let $f:X\to Y$ be a morphism between quasi-smooth locally almost of finite presentation derived Artin stacks. Then there is a canonical isomorphism
  $$\omega_{X/Y}\cong det(L_{X/Y})[rank(L_{X/Y})],$$
  where $\omega_{X/Y}$ is the relative dualizing complex. 
\end{conjecture}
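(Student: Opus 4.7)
The plan is to reduce the statement to the two basic cases---smooth morphisms and quasi-smooth closed embeddings---and then assemble them via the composition law for the relative dualizing complex. Since $f:X\to Y$ is quasi-smooth and locally almost of finite presentation, after passing to smooth atlases of both source and target one can factor $f$ locally as
$$X\xrightarrow{i}V\xrightarrow{\pi}Y,$$
where $\pi$ is smooth of relative rank $d$ and $i$ is a quasi-smooth closed embedding of virtual codimension $c$. This is standard in derived algebraic geometry: locally $X$ is presented as the derived zero locus of a section of a vector bundle on a smooth $Y$-scheme, yielding $V$.

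For the smooth factor, I would invoke the classical identification $\omega_{V/Y}\cong \det(L_{V/Y})[d]$, which is available in the derived Artin setting. For the quasi-smooth closed embedding, the Koszul resolution of $i_{*}\mc{O}_{X}$---which exists since $L_{X/V}[-1]=C_{X/V}$ is locally a vector bundle of rank $c$---computes $i^{!}\mc{O}_{V}$ explicitly and gives $\omega_{X/V}\cong \det(C_{X/V})^{-1}[-c]=\det(L_{X/V})[\mathrm{rank}(L_{X/V})]$. Combining the two through $\omega_{X/Y}\cong i^{*}\omega_{V/Y}\otimes \omega_{X/V}$, together with the fiber sequence $i^{*}L_{V/Y}\to L_{X/Y}\to L_{X/V}$ whose determinant yields $\det(L_{X/Y})\cong i^{*}\det(L_{V/Y})\otimes \det(L_{X/V})$, produces a local isomorphism with total shift $d-c=\mathrm{rank}(L_{X/Y})$.

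The main obstacle will be to promote these local isomorphisms to a canonical global one that is independent of the chosen factorization. Two factorizations differ locally by a smooth enlargement of $V$, and the two induced identifications of $\omega_{X/Y}$ with $\det(L_{X/Y})[\mathrm{rank}(L_{X/Y})]$ must be shown canonically equivalent---an $\infty$-categorical coherence statement that ultimately rests on the compatibility of the Koszul trace with base change. Smooth descent along atlases for Artin stacks then also needs to be verified. An alternative route, better adapted to the blow-up language of this paper, is to bootstrap from iterated derived blow-ups using the desingularization of \cref{thm:1.9} together with \cref{conj:dis}: the structure theorem preserves quasi-smoothness and the rank of the cotangent under blow-up, while \cref{conj:dis} controls how $\det(L)$ transforms under $pr_{Z/X}$, so one could inductively transport the isomorphism from smooth ambient targets. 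Establishing the coherence of the canonical isomorphism through such iterated blow-ups is the most delicate technical point of either strategy.
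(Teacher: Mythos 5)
This statement is Conjecture~\ref{conj:1.14} in the paper; it is \emph{not} proved there. The paper attributes it to Halpern-Leistner \cite{halpern2020derived} and only records two pieces of partial progress in the subsequent remark: Halpern-Leistner's Proposition~3.1.5, which establishes the isomorphism after restriction to $\pi_{0}(X)$, and an unpublished sketch of Arinkin via deformation to the normal cone. So there is no paper proof to compare against, and any purported proof would need to close the gap the paper itself leaves open.

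Your sketch correctly identifies the local picture (factor $f$ Zariski- or smooth-locally as a quasi-smooth closed embedding followed by a smooth map, apply the smooth case and the Koszul computation for a regular embedding, and compose via $\omega_{X/Y}\cong i^{*}\omega_{V/Y}\otimes\omega_{X/V}$ together with the determinant of the transitivity fiber sequence). This is standard and unobjectionable. But, as you yourself flag, the entire content of the conjecture lives in the word \emph{canonical}: one must show that the resulting local equivalences are coherently independent of the chosen factorization and glue over a smooth atlas, i.e.\ descend from the \v{C}ech nerve. Your proposal names this obstacle and then stops; it does not supply the homotopy-coherent compatibility data (e.g.\ a functorial comparison of Koszul traces under change of ambient smooth $V$, plus higher coherences over the simplicial resolution). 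Without that, one only recovers the isomorphism on $\pi_{0}(X)$ already known from Halpern-Leistner. Your proposed ``alternative route'' is moreover circular in the context of this paper: it invokes Conjecture~\ref{conj:dis}, which is itself an open conjecture here, and the desingularization theorem~\ref{thm:1.9} only applies to quasi-smooth derived \emph{schemes} admitting an embedding into a smooth ambient variety, not to the general Artin-stack case the conjecture requires. In short, you have accurately located where the difficulty is, but you have not resolved it, and neither has the paper.
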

\begin{remark}
  In \cite[Proposition 3.1.5]{halpern2020derived}, Halpern-Leistner proved a weak version of \cref{conj:1.14}, which stated that both two sides are isomorphic after restricting to $\pi_{0}(X)$. Moreover, Halpern-Leisner claimed that Dima Arinkin already had sketchy proof through the deformation to the normal cone, which we would be very interested in. 
\end{remark}

By combining \cref{conj:1.14} and \cref{conj:dis}, we propose a more prescise description about $\mf{W}_{1-r,1-r}^{f}$:
\begin{conjecture}
  We have
  $$pr_{f!}\mc{O}_{\mb{B}l_{Z/X}}\cong \mf{W}_{1-r,1-r}^{f}.$$
  When $r\leq 0$, the canonical map
  $$pr_{f!}\mc{O}_{\mb{B}l_{Z/X}}\to \mc{O}_{X}$$
  is represented by $\mf{w}_{(r-1,r-1),(0,r-1)}^{f}$.
\end{conjecture}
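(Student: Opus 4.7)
The plan is to reduce the conjecture to the combination of \cref{conj:dis} and Halpern-Leistner's \cref{conj:1.14}. Since $pr_{f}$ is proper by the structure theorem, I interpret $pr_{f!}\mc{O}_{\mb{B}l_{Z/X}}$ in the Grothendieck duality sense as $pr_{f*}\omega_{pr_{f}}$, where $\omega_{pr_{f}} = pr_{f}^{!}\mc{O}_{X}$ is the relative dualizing complex. From the fiber sequence $pr_{f}^{*}L_{X/S} \to L_{\mb{B}l_{Z/X}/S} \to L_{pr_{f}}$, combined with parts (1)--(2) of the structure theorem (which assert that $L_{\mb{B}l_{Z/X}/S}$ and $pr_{f}^{*}L_{X/S}$ have the same rank), the relative cotangent complex $L_{pr_{f}}$ has rank zero. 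Taking determinants and substituting \cref{conj:dis} gives $\det(L_{pr_{f}}) \cong \mc{O}_{\mb{B}l_{Z/X}}(1-r)$, and then \cref{conj:1.14} yields $\omega_{pr_{f}} \cong \mc{O}_{\mb{B}l_{Z/X}}(1-r)$. Pushing forward and invoking property (2) of \cref{thm:nmain}, I conclude
\begin{equation*}
pr_{f!}\mc{O}_{\mb{B}l_{Z/X}} \cong pr_{f*}\mc{O}_{\mb{B}l_{Z/X}}(1-r) \cong \mf{W}_{1-r,1-r}^{f}.
\end{equation*}

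For the second assertion (assuming $r \leq 0$ so that the target is well-defined), property (3) of \cref{thm:nmain} identifies $\mf{W}_{0,1-r}^{f} \cong R_{Z/X}^{0} \cong \mc{O}_{X}$, and the natural candidate is the iterated transition $\mf{w}_{(1-r,1-r),(0,1-r)}^{f}$ (which I read as the intended interpretation of the stated indices, since they must satisfy $a' \leq a$ and $b' \leq b$). On the other hand, the canonical morphism is the Grothendieck-Serre trace $pr_{f*}\omega_{pr_{f}} \to \mc{O}_{X}$. I would compare the two by iterating property (4) of \cref{thm:nmain}: the intermediate cofibers $\mf{H}_{a}^{f+} = f_{*}S_{Z}^{a}(C_{Z/X})$ for $1 \leq a \leq -r$ must all vanish against the trace for support-theoretic reasons (the trace is concentrated in the top symmetric power dictated by $\omega_{pr_{f}}$), so that $\mf{w}_{(1-r,1-r),(0,1-r)}^{f}$ factors through the trace up to a unit. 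Reducing to the universal case of the derived projective bundle $\mb{P}_{Z}(C_{Z/X}) \to Z$ via the deformation to the normal cone, where the matching of the Serre trace with the Euler-type connecting map is classical, should then pin down the identification.

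The main obstacle is precisely this final identification: matching the intrinsically defined Grothendieck-Serre trace with the combinatorial iterated transition $\mf{w}_{(1-r,1-r),(0,1-r)}^{f}$ coming from the generalized vanishing theorem. The two constructions rely on very different inputs --- coherent duality on one hand and filtered bicomplexes on the other --- and an explicit compatibility will likely require threading the Koszul-type resolutions behind the Beilinson-style resolution of the diagonal (\cref{thm:1.7}) through the trace pairing on the virtual exceptional divisor. This is analogous to, but more delicate than, the classical identification of Serre duality on a projective bundle with the residue of the Euler sequence.
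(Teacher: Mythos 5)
This statement is a \emph{conjecture} in the paper, not a theorem: the paper supplies no proof, and the surrounding text says only that it is proposed ``by combining \cref{conj:1.14} and \cref{conj:dis}'' --- both of which the paper itself leaves open. So there is no ``paper's own proof'' to compare against. What you have written is precisely the conditional reduction that the paper gestures at: using the transitivity fiber sequence for cotangent complexes, parts (1)--(2) of \cref{structurethm} to conclude $rank(L_{pr_f}) = 0$, \cref{conj:dis} to get $\det(L_{pr_f}) \cong \mc{O}_{\mb{B}l_{Z/X}}(1-r)$, and \cref{conj:1.14} to identify this with $\omega_{pr_f}$, which together with property (2) of \cref{thm:nmain} gives $pr_{f!}\mc{O}_{\mb{B}l_{Z/X}} \cong pr_{f*}\mc{O}_{\mb{B}l_{Z/X}}(1-r) \cong \mf{W}_{1-r,1-r}^{f}$. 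This reconstruction is sound \emph{modulo} those two open conjectures, and your reading of $pr_{f!}$ as $pr_{f*}(- \otimes \omega_{pr_f})$ (the left adjoint of $pr_f^*$) is the only interpretation under which the claimed equivalence with $\mf{W}_{1-r,1-r}^f$ rather than $\mf{W}_{0,0}^f$ can hold, since $pr_f$ is proper.

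Two remarks. First, you correctly flag that the indices $\mf{w}_{(r-1,r-1),(0,r-1)}^{f}$ as printed cannot be well-defined when $r \leq 0$ (one needs $a' \leq a$, $b' \leq b$), so the intended morphism is almost certainly $\mf{w}_{(1-r,1-r),(0,1-r)}^{f}$; this reading is further confirmed by property (3) of \cref{thm:nmain}, which gives $\mf{W}_{0,1-r}^{f} \cong R_{Z/X}^{0} \cong \mc{O}_{X}$ as the correct target. Second, you honestly identify the genuine gap in the second assertion --- matching the Grothendieck--Serre trace with the combinatorial iterated transition $\mf{w}^f$ --- and the paper does not close this gap either. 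Be aware, though, that the overall status of your proposal is a reduction of one conjecture to two others, not a proof; so while your argument is a faithful elaboration of the paper's intent, it does not establish the statement unconditionally, nor does it claim to.
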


Finally, we propose a conjectural relation between the derived blow-ups and the $L_{\infty}$-algebroid structure. Assuming all the setting of \cref{thm:main}, we denote the structure ring $\mc{O}_{Z_{X}^{\infty}}$ as the inverse limit
$$\varprojlim_{n\to \infty}cofib(R_{Z/X}^{n}\to R_{Z/X}^{0}).$$
Inspired by Calaque-C\u{a}ld\u{a}raru-Tu \cite{MR3228441}, we proposed the following conjecture:
\begin{conjecture}
  There exists a minimal $L_{\infty}$-algebroid structure on $C_{Z/X}^{\vee}$ whose Chevalley-Eilenberg algebra is equivalent to $\mc{O}_{Z_{X}^{\infty}}$.
\end{conjecture}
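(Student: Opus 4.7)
The plan is to identify $\mc{O}_{Z_{X}^{\infty}}$ with the structure sheaf of the derived formal neighbourhood of $Z$ in $X$, and then to apply the derived Koszul duality between formal moduli problems and $L_{\infty}$-algebroids. First I would verify that $\mathrm{cofib}(R_{Z/X}^{n}\to R_{Z/X}^{0})$ recovers the derived enhancement of the $n$-th infinitesimal neighbourhood of $Z$ in $X$: in the classical regular-embedding case $R_{Z/X}^{n}$ specializes to $I_{Z}^{n}$ and its cofiber in $R_{Z/X}^{0}\cong \mc{O}_{X}$ to $\mc{O}_{X}/I_{Z}^{n}$, and the derived Rees-algebra construction of Hekking-Khan-Rydh is designed to preserve this identification. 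Passing to the inverse limit should then realise $\mc{O}_{Z_{X}^{\infty}}$ as the derived formal completion $\widehat{\mc{O}}_{X,Z}$.

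Second, I would appeal to the equivalence between pointed formal moduli problems under $Z$ (in the sense of Lurie and Gaitsgory-Rozenblyum, extended to the algebroid setting by Nuiten and Hennion) and $L_{\infty}$-algebroids on $Z$. Under this equivalence the formal moduli problem $\widehat{X}_{Z}$ corresponds to an $L_{\infty}$-algebroid on $Z$ whose underlying complex is, up to the usual shift convention, the shifted normal complex $C_{Z/X}^{\vee}$, and whose Chevalley-Eilenberg cochain complex recovers $\mc{O}_{\widehat{X}_{Z}}$; this is exactly the asserted equivalence with $\mc{O}_{Z_{X}^{\infty}}$. The quasi-smoothness hypothesis of \cref{ass1} ensures that $C_{Z/X}$ is perfect and its dual $C_{Z/X}^{\vee}$ is too, so the symmetric and divided-power constructions appearing in the Chevalley-Eilenberg algebra behave well.

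Third, to arrange that the resulting $L_{\infty}$-algebroid is \emph{minimal}, in the sense that its underlying complex is $C_{Z/X}^{\vee}$ itself rather than some cofibrant replacement, I would transfer the structure along a quasi-isomorphism via a homotopy transfer theorem for $L_{\infty}$-algebroids. In the classical smooth case this should specialise to the Lie algebra structure of Calaque-C\u{a}ld\u{a}raru-Tu \cite{MR3228441} on the shifted normal bundle $N_{Z/X}[-1]$.

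The hard part is the second step: the extension of Lurie-Pridham Koszul duality to $L_{\infty}$-algebroids over a (possibly singular) quasi-smooth derived Artin stack $Z$, with nontrivial anchor into $T_{Z}$, is not as fully developed in the literature as the absolute case, and one must carefully verify both that $\widehat{X}_{Z}$ is a formal moduli problem in the correct relative sense (a pro-nilpotent thickening of $Z$) and that the tangent complex of this moduli problem really is $C_{Z/X}^{\vee}$ rather than some auxiliary resolution. A more hands-on alternative, bypassing the abstract equivalence, would be to build the $L_{\infty}$-brackets directly by transferring the algebra structure on $R_{Z/X}^{ext}$ through the cofiber sequences $R_{Z/X}^{n}\to R_{Z/X}^{0}$ and Koszul-dualising, in the spirit of the HKR-exponential construction of Calaque-C\u{a}ld\u{a}raru-Tu.
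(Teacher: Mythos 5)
This statement is an open \emph{conjecture} in the paper, not a theorem: the author explicitly labels it as a conjecture, states only that the special case where both $X$ and $Z$ are smooth varieties over $\mathrm{Spec}(\mb{C})$ was verified by Calaque--C\u{a}ld\u{a}raru--Tu (Proposition 1.4 of \cite{MR3228441}), and offers no argument in the general quasi-smooth setting. There is therefore no proof in the paper against which to compare your attempt.

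Your proposal is a sensible roadmap and its outline---identify $\mc{O}_{Z_{X}^{\infty}}$ with the derived formal completion, invoke Koszul duality between formal moduli problems under $Z$ and $L_{\infty}$-algebroids, and then transfer to a minimal model---is plausibly the one the author has in mind. But as written it is not a proof, and you yourself flag the decisive gap: the equivalence between formal moduli problems and $L_{\infty}$-algebroids has not been established in the relative setting over a quasi-smooth derived Artin stack $Z$ (as opposed to over a field, an affine derived scheme, or a smooth variety). Beyond that, two further steps need real justification. First, you must actually verify that $\varprojlim_{n}\mathrm{cofib}(R_{Z/X}^{n}\to R_{Z/X}^{0})$ computes the derived formal completion $\widehat{\mc{O}}_{X,Z}$ in the quasi-smooth case; this is not formal here, since the derived Rees algebra is defined via the deformation space $D_{Z/X}$ rather than via powers of an ideal, and the identification with $\mc{O}_{X}/I^{n}$ in the classical regular-embedding case does not by itself settle the derived situation. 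Second, the minimality step requires a homotopy transfer theorem for $L_{\infty}$-algebroids with nontrivial anchor over a non-affine, possibly singular base; since $C_{Z/X}^{\vee}$ is only perfect of Tor-amplitude $[-1,1]$ and need not be globally a strict complex of locally free sheaves, the usual transfer machinery is not automatic and may require extra hypotheses or a local-to-global gluing argument. Until these are supplied, the proposal remains a strategy consistent with the conjecture's stated status rather than a proof of it.
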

The case that both $X$ and $Z$ are smooth varieties over $\mathrm{Spec}(\mb{C})$ was verified in \cite[Proposition 1.4]{MR3228441}.

\subsection{Organization of the paper}
We introduce the derived algebraic geometry, derived projectivization theory, and derived blow-up theory from \cref{sec:2} to \cref{sec:4}. The generalized vanishing theorems are proved in \cref{sec:5}. In the appendix, we introduce three applications, and the weak categorification of quantum loop and toroidal algebras will be written in a separate paper \cite{zhao2023}. 
\subsection{Acknowledgements}
The paper is written in memory of our best friend Kun Wang, as his insistence on studying mathematics despite serious illness is always a great inspiration to all of us.

Jeroen Hekking shared his thesis with the author, with many details about him and his collaborator's study. The first version of this paper was greatly motivated by private communication with Siqi He, for the construction of a single Kuranishi chart. The relation with the intrinsic blow-up theory was suggested by Zijun Zhou. Part of this paper was first presented in a talk at HKUST, where Wei-Ping Li and Huai-Liang Chang asked many stimulating questions. Qingyuan Jiang and Will Donovan gave many useful comments for the draft of this paper. The author would thank them for their help.

The author is supported by World Premier International Research Center Initiative (WPI initiative), MEXT, Japan, and Grant-in-Aid for Scientific Research grant  (No. 22K13889) from JSPS Kakenhi, Japan.

\section{Notations and Derived Algebraic Geometry}
\label{sec:2}
In this section, we review the language of derived algebraic geometry and introduce the notations we need in our paper. Other than the standard literature like \cite{toen2008homotopical,gaitsgory2019study,SAG}, we also refer to Hekking's thesis \cite{Hekking_2022}, Jiang's monograph \cite{jiang2022derived} and Khan's paper \cite{MR4149835} and adapt their notations.

\subsection{$\infty$-category}
We denote 
\begin{itemize}
    \item $\mathrm{Spc}$ as the $\infty$-category of spaces;
    \item $\mathrm{Mod}$ as the $\infty$-category associated to the unbounded derived category of $\mb{C}$-vector spaces;
    \item $\mathrm{Alg}$ as the $\infty$-category of connective $\mb{E}_{\infty}$-algebras in $\mathrm{Mod}$.
\end{itemize}
As we work over an algebraically closed field of characteristic $0$, $\mathrm{Alg}$ is equivalent to the $\infty$-category of commutative differential graded algebras (or commutative simplicial rings) over $\mb{C}$ concentrated in homological degree (resp. homotopy degree) $\geq 0$.

\subsection{Higher algebra}

An object $R$ of $\mathrm{Alg}$ is called an algebra. We denote $\mathrm{Alg}_{R}$ as the $\infty$-category of $R$-algebras, and $\mathrm{Mod}_{R}$ as the $\infty$-category of $R$-modules.  For $R\in \mathrm{Alg}$ and $M\in \mathrm{Mod}_{R}$, the homotopy groups $\pi_{n}(M)$ are canonically $\pi_{0}(R)$-modules, for all $n\in \mb{Z}$. In particular, $\pi_{n}(R)$ is a $\pi_{0}(R)$-module for all $n\geq 0$.

For $n\geq 0$, we adjoin a free variable in homological degree $n$ to $R$ by writing $R[u]=R[S^{n}]$, which has the universal property that the space of $R$-algebra maps $R[u]\to B$ is equivalent to the space of maps $(S^{n},*)\to (B,0)$ of pointed spaces.

Given a sequence $(\sigma_{1},\cdots,\sigma_{k})$ of elements $\sigma_{i}\in \pi_{n_{i}}(R)$, we define $R/(\sigma_{1},\cdots, \sigma_{k})$ as the pushout diagram
\begin{equation*}
  \begin{tikzcd}
    R[u_{1},\cdots,u_{k}]\ar{r}{z} \ar{d} & R\ar{d} \\
    R\ar{r} & R/(\sigma_{1},\cdots,\sigma_{k})
  \end{tikzcd}
\end{equation*}
where the $u_{i}$ are free in homogoical degree $n_{i}$ and the map $S$ is induced by lifts $S^{n_{i}}\to R$ of $\sigma_{i}$ and $z$ sends each $u_{i}$ to $0$.
\subsection{Derived schemes and stacks}
We denote
\begin{itemize}
    \item $\mathrm{Aff}:=\mathrm{Alg}^{op}$ as the $\infty$-category of affine derived schemes over $\mathrm{Spec}(\mb{C})$;
    \item a derived stack as a functor $X:\mathrm{Aff}^{op}\to \mathrm{Spc}$ which satisfies \'{e}tale descent conditions;
    \item a derived scheme as a derived stack which has a cover of open immersions by affine derived schemes;
    \item $\mathrm{Stk}$ (resp. $\mathrm{Sch}$) as the $\infty$-category of derived stacks (resp. derived schemes). 
\end{itemize}

The definition of derived Artin stacks is defined in \cite[Vol. \RN{1}, Section 4.1]{gaitsgory2019study}, which is more involved, and thus we recommend the readers read the original literature for the precise definition. The main property we will use is that for a derived Artin stack, there exists a smooth cover by a derived scheme so we can consider the fppf descent.

Any derived stack $X$ admits an underlying classical stack, which we denote as $\pi_{0}(X)$. Moreover, if $X$ is a derived scheme, Deligne-Mumford stack or Artin stack, then $\pi_{0}(X)$ is a classical such.

\begin{remark}
  Hekking's thesis \cite{Hekking_2022} considered a more generalized definition than a derived Artin stack, which they called the derived algebraic stack. Instead, they called a derived Artin stack a derived geometric stack. All the results in our paper should also work for derived algebraic stacks.
\end{remark}

\subsection{Quasi-smooth morphisms}
\begin{definition}
  A map of derived Artin stacks $f:Z\to X$ is called quasi-smooth if $f$ has a relative cotangent complex $L_{Z/X}$ which is perfect and has $Tor$-amplitude $[-1,1]$.
\end{definition}

\begin{lemma}[Proposition 2.3.8 and Proposition 2.3.14 of \cite{khan2018virtual}]
\label{131666}
  Let $f:Z\to X$ be a map of derived schemes. Then
  \begin{itemize}
  \item $f$ is a quasi-smooth closed embedding if and only if $f$ is locally of the form $\mathrm{Spec}(A/(f_{1},\cdots,f_{n}))\to \mathrm{Spec}(A)$ for some $f_{1},\cdots,f_{n}\in \pi_{0}(A)$.
  \item $f$ is a quasi-smooth map if and only if it admits, Zarisky-locally on $X$, a factorization
    $$Z\xrightarrow{g}Y\xrightarrow{g'} X$$
    where $g$ is a quasi-smooth closed embedding and $g'$ is smooth.
    \end{itemize}
\end{lemma}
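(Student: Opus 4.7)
The plan is to treat the two bullets separately by reduction to local affine computations. For the first bullet, work Zariski-locally so that $X=\mathrm{Spec}(A)$ and $Z=\mathrm{Spec}(B)$ with $A\to B$ a morphism of connective $\mathbb{E}_\infty$-algebras that is surjective on $\pi_0$. Because $f$ is a closed embedding, $\Omega^1_{B/A}=0$, so the quasi-smoothness hypothesis forces $L_{B/A}\simeq C[1]$ with $C$ a locally free $B$-module of constant rank $n$; after shrinking $X$ around a chosen point of $Z$ we may take $C$ free. The identification $\pi_{-1}(L_{B/A})\cong I/I^2$ with $I:=\ker(\pi_0(A)\to\pi_0(B))$ allows us to lift a basis of $C$ to elements $f_1,\dots,f_n\in\pi_0(A)$ whose classes generate $I/I^2$, and hence generate $I$ itself after a further classical-Nakayama shrinking. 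The derived Koszul quotient $R:=A/(f_1,\dots,f_n)$ admits a canonical map $R\to B$ which is an isomorphism on $\pi_0$, and the transitivity fiber sequence for $A\to R\to B$, combined with the fact that both $L_{R/A}\otimes_R B$ and $L_{B/A}$ realize the free module $C[1]$, shows $L_{B/R}\simeq 0$. A derived Nakayama argument then promotes $R\to B$ to an equivalence, completing the forward direction; the converse is a direct computation of the cotangent complex of the Koszul quotient.

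For the second bullet, I would reduce to the closed-embedding case by constructing a smooth ambient. Locally write $X=\mathrm{Spec}(A)$, $Z=\mathrm{Spec}(B)$, choose generators $\bar x_1,\dots,\bar x_m\in\pi_0(B)$ for $\pi_0(B)$ as a $\pi_0(A)$-algebra, and lift them to $x_i\in\pi_0(B)$. This yields a factorization
$$Z\xrightarrow{g}Y:=\mathrm{Spec}(A[x_1,\dots,x_m])\xrightarrow{g'}X$$
with $g'$ smooth and $g$ surjective on $\pi_0$. The fiber sequence
$$g^*L_{Y/X}\to L_{Z/X}\to L_{Z/Y}$$
places $L_{Z/Y}$ in cohomological degrees $\leq -1$ because $g^*L_{Y/X}$ is locally free in degree $0$ and the first map is surjective on $H^0$ by construction, and the tor-amplitude of $L_{Z/X}$ then forces $L_{Z/Y}$ to be locally free in degree $-1$. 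Hence $g$ is a quasi-smooth closed embedding, and the first bullet presents $f$ locally as claimed. Conversely, compositions of quasi-smooth closed embeddings with smooth maps are quasi-smooth by the same fiber sequence.

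The main obstacle is the derived Nakayama step closing out the first bullet: upgrading a morphism of connective $\mathbb{E}_\infty$-algebras over $A$ which is an isomorphism on $\pi_0$ and has vanishing relative cotangent complex into an equivalence. The standard way to handle this is by induction up the Postnikov tower of $B$ relative to $R$, identifying the successive square-zero extensions with sections of the cotangent complex in the sense of Lurie; the vanishing of $L_{B/R}$ prevents any nontrivial obstruction class from arising at each stage. A secondary subtlety is the amount of Zariski localization needed to trivialize $C$ and to ensure the lifts $f_i\in\pi_0(A)$ have the correct properties, which is routine but relies on the local finite presentation implicit in the derived-scheme setup.
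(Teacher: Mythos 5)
The paper does not prove this lemma; it cites it verbatim as Propositions 2.3.8 and 2.3.14 of Khan's paper \cite{khan2018virtual}, so there is no in-paper argument to compare against. Your treatment of the second bullet is correct and standard: the factorization through $\mathrm{Spec}(A[x_1,\dots,x_m])$, the cofiber sequence $g^*L_{Y/X}\to L_{Z/X}\to L_{Z/Y}$, and the observation that a perfect complex with Tor-amplitude in $[-1,0]$ and vanishing $H^0$ is a shifted vector bundle all go through.

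Your argument for the first bullet, however, contains a genuine gap. The asserted identification of the degree-$(-1)$ homotopy of $L_{B/A}$ with $I/I^2$ fails whenever $B$ has nontrivial higher homotopy over $\pi_0(B)$. For a concrete counterexample take $A=k[x,y]$ and $B=A/(x,x)$, the derived Koszul quotient by the length-two sequence $(x,x)$. Then $L_{B/A}\simeq B^2[1]$, so $H^{-1}(L_{B/A})\otimes_B\pi_0(B)\cong \pi_0(B)^2$ has rank $2$; but $I=(x)\subset\pi_0(A)$ and $I/I^2\cong\pi_0(B)$ has rank $1$. The natural comparison runs the wrong way for your purposes: there is a surjection
\[
H^{-1}(L_{B/A})\otimes_B\pi_0(B)\twoheadrightarrow I/I^2,
\]
whose kernel records the higher homotopy of $B$, so one cannot lift a basis of the conormal bundle $C$ to elements of $I$ with independent classes in $I/I^2$. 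In the example any family $f_1,\dots,f_n\in I$ with classes independent in $I/I^2$ forces $n=1$, and $A/(f_1)=A/(x)\not\simeq B$ (the former is discrete, the latter has $\pi_1\cong k[y]$). The lemma is still true for this $B$ --- take $f_1=f_2=x$, or $f_1=x$, $f_2=0$ --- but the recipe you give for producing the $f_i$ cannot find these.

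The repair is not a local shrinking but a Postnikov-type induction: choose generators $g_1,\dots,g_m$ of $I$ so that $R_0:=A/(\underline g)\to B$ is an isomorphism on $\pi_0$, note $L_{B/R_0}$ is perfect in cohomological degrees $\leq -1$ but generally not $\leq -2$, and then adjoin further Koszul generators (some of which may be lifts of \emph{zero}, i.e.\ elements of $I$ that are trivial mod $I^2$) chosen so that $L_{A/(\underline f)/A}\otimes B\to L_{B/A}$ becomes an equivalence fiberwise; only then does the derived Nakayama argument you invoke close the loop. This more delicate bookkeeping is the actual content of the cited proposition in \cite{khan2018virtual}.
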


\subsection{Quasi-coherent sheaves}
Given a derived stack $X$, the stable $\infty$-category of quasi-coherent sheaves $\mathrm{QCoh}(X)$ is the limit
\begin{equation*}
  \mathrm{QCoh}(X)=\varprojlim_{\mathrm{Spec}(A)\to X} \mathrm{Mod}_{A}
\end{equation*}
taken over all morphisms from derived affine schemes to $X$.

We denote $\mathrm{Coh}(X)$ as the full subcategory of $\mathrm{QCoh}(X)$ which contains bounded complexes whose cohomology sheaves are coherent $\mc{O}_{\pi_{0}(X)}$-modules. We denote $\mathrm{Perf}(X)$ as the full subcategory of $\mathrm{QCoh}(X)$ which contains perfect objects.

Given a quasi-coherent sheaf $\mc{E}$ on $X$, we say that $\mc{E}$ is connective if $\pi_{i}(\mc{E})\cong 0$ for all $i<0$.

\subsection{Derived functors}
Given a derived $X$, \cite[\S 6.2.6]{SAG} introduces a symmetric monoidal structure on $\mathrm{QCoh}(X)$, where the structure sheaf $\mc{O}_{X}$ is the unit element.

Given a map of derived stacks $h:X\to Y$, the pull-back of modules induces a pull-back functor:
\begin{equation*}
  h^{*}:\mathrm{QCoh}(Y)\to \mathrm{QCoh}(X)
\end{equation*}
By Corollary 1.3.11 of \cite{gaitsgory2019study}, the functor $\mathrm{QCoh}$ satisfies descent in flat topology (under the pull-back functors).

The following theorem collected the properties of push-forward functors for quasi-coherent sheaves. Particularly, it shows that regarding $\mathrm{QCoh}$ as a functor under the push forward, it is stable with the pull-backs:
\begin{theorem}[Lipman-Neeman, Lurie, Theorem 3.7 of \cite{jiang2022derived}]
  \label{thm:lipman}
  Let $h:X\to Y$ be a map of derived stacks that is quasi-compact, quasi-separated, and schematic. Let $h^{*}:\mathrm{QCoh}(Y)\to \mathrm{QCoh}(X)$ denote the pullback functor. Then
  \begin{enumerate}
  \item The pullback $h^{*}$ admits a right adjoint $h_{*}:\mathrm{QCoh}(X)\to \mathrm{QCoh}(Y)$, called pushforward functor, which preserves small colimits.
  \item For every pullback diagram of derived stacks:
    \begin{equation*}
      \begin{tikzcd}
        X'\ar{r}{g'}\ar{d}{h'} & X\ar{d}{h} \\
        Y'\ar{r}{g} & Y,
      \end{tikzcd}
    \end{equation*}
    the canonical base change transformation, i.e. the Beck-Chevalley transformation $g^{*}h_{*}\to h_{*}'g'^{*}$ is an equivalence of functors from $\mathrm{QCoh}(X)$ to $\mathrm{QCoh}(Y')$.
  \item For every pair of objects $\mc{F}\in \mathrm{QCoh}(X)$ and $\mc{G}\in \mathrm{QCoh}(Y)$ the canonical map $\mc{F}\otimes f_{*}\mc{G}\to f_{*}(f^{*}\mc{F}\otimes \mc{G})$ is an equivalence.
  \item  If $h$ is proper, locally almost of finite presentation and local of finite Tor-amplitude, then the push forward functor $h_{*}:\mathrm{QCoh}(X)\to \mathrm{QCoh}(Y)$ preserves perfect objects.
  \end{enumerate}
\end{theorem}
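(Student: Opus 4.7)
The plan is to prove the four parts in order, reducing each to the affine case via quasi-compact, quasi-separated descent, and handling the most delicate statement (preservation of perfect objects) by a separate argument involving compact generation.

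For part (1), the existence of a right adjoint $h_{*}$ follows from the adjoint functor theorem for presentable stable $\infty$-categories: both $\mathrm{QCoh}(X)$ and $\mathrm{QCoh}(Y)$ are presentable, and $h^{*}$, being symmetric monoidal and colimit-preserving, admits a right adjoint. The nontrivial assertion is that $h_{*}$ itself preserves small colimits. This is local on $Y$, so we may assume $Y$ affine; by the schematic hypothesis, $X$ is then a derived scheme, and by quasi-compactness and quasi-separatedness a finite cover by affine opens $U_{i}$ expresses $h_{*}\mc{F}$ as the totalization of a bounded \v{C}ech complex whose terms are pushforwards along affine morphisms $U_{i_{0}\cdots i_{n}}\to Y$. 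Each affine pushforward is restriction of scalars and preserves small colimits; a finite totalization of colimit-preserving functors preserves colimits.

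For parts (2) and (3), the same reduction strategy works. Base change and the projection formula are both immediate for affine morphisms from elementary commutative algebra on modules, and the finite \v{C}ech description for quasi-compact quasi-separated schematic $h$ lets one transport these equivalences from the affine case to the general case, using that both sides of the base-change transformation and projection map commute with finite limits.

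Part (4) is the most delicate step and the expected main obstacle. One characterization is that perfect objects on a quasi-compact quasi-separated $X$ coincide with compact objects of $\mathrm{QCoh}(X)$. Showing $h_{*}$ preserves perfection then amounts to showing it preserves compact objects, which by adjunction is equivalent to showing $h^{*}$ preserves a set of compact generators of $\mathrm{QCoh}(Y)$. Three ingredients combine here: properness together with the projection formula of part (3) ensures $h_{*}$ commutes with filtered colimits; local almost finite presentation allows a Noetherian approximation argument reducing to the Noetherian base case; and the finite Tor-amplitude hypothesis ensures that $h^{*}$ sends perfect objects to objects of bounded Tor-amplitude, so that a bounded resolution by vector bundles is preserved. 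The cleanest organization invokes Grothendieck duality, i.e. the existence of a right adjoint $h^{!}$ to $h_{*}$ preserving perfect objects, together with the compatibility $h^{!}\cong\omega_{X/Y}\otimes h^{*}$ for perfect relative dualizing complex. The hard technical point is coordinating Noetherian approximation and Grothendieck duality coherently in the derived stack setting, which is precisely the content of Lurie's Spectral Algebraic Geometry; once set up, the formal argument is as above.
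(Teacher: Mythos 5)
The paper does not prove this theorem; it states it as a citation (Lipman--Neeman, Lurie, Theorem 3.7 of Jiang's monograph) and uses it as a black box, so there is no internal proof to compare against. That said, your sketch can still be evaluated on its own.

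Parts (1)--(3) are handled correctly and in the standard way: reduce along quasi-compactness, quasi-separatedness, and the schematic hypothesis to a finite affine \v{C}ech cover, observe that affine pushforward is restriction of scalars (hence colimit-preserving, base-change-compatible, and projection-formula-compatible on the nose), and note that in a stable $\infty$-category bounded totalizations commute with colimits, so these properties pass to the \v{C}ech totalization.

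Part (4) contains a genuine logical slip. You assert that ``$h_*$ preserves compact objects \ldots by adjunction is equivalent to showing $h^*$ preserves a set of compact generators.'' This is the wrong adjunction: the duality between a left adjoint preserving compacts and its right adjoint preserving filtered colimits gives that $h^*$ preserves compacts $\iff$ $h_*$ preserves filtered colimits (which is already part (1)), whereas $h_*$ preserves compacts $\iff$ $h^!$ (the right adjoint of $h_*$) preserves filtered colimits. You do eventually invoke $h^!$ and $h^!\cong\omega_{X/Y}\otimes h^*$, which is the correct ingredient --- if $\omega_{X/Y}$ is perfect (the finite Tor-amplitude hypothesis) then $h^!$ preserves colimits and one concludes --- but the sentence as written, and the parallel confusion of ``$h^*$ sends perfect objects to objects of bounded Tor-amplitude'' (pullback of perfect is always perfect; the issue is the Tor-amplitude of $h_*$), obscure the argument. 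The more direct route, and the one usually given (SAG, and Lipman--Neeman), is the characterization that a complex is perfect iff it is almost perfect (pseudo-coherent) and of finite Tor-amplitude: properness plus local almost finite presentation gives that $h_*$ preserves almost perfect objects, and the finite-Tor-amplitude hypothesis on $h$ gives that $h_*$ has finite Tor-amplitude, so $h_*$ of a perfect object is perfect. You should either adopt that decomposition or restate the adjunction step with $h^!$ in place of $h^*$.
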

\begin{remark}
  Theorem 3.7 of \cite{jiang2022derived} actually holds even for prestacks.  The definition of quasi-compact and quasi-separated property is like the classical algebraic geometry situation, which we will not introduce in our paper but refer to \cite{SAG}. Theorem 3.7 of \cite{jiang2022derived} contained more information about other adjoint functors $f^{!}$ and $f_{!}$, which we will study in future work.
\end{remark}

\subsection{Symmetric powers of quasi-coherent sheaves}
Given a quasi-coherent sheaf $\mc{F}$ over a derived stack $X$ and an integer $n\geq 0$, we denote the quasi-coherent sheaf
$$S^{n}_{X}(\mc{F})$$
as the nth symmetric power of $\mc{F}$, following the notation of Section 2 of \cite{jiang2022derived} or Section 25.2 of \cite{SAG} (except replacing the notation $Sym^{n}$ by $S^{n}$). We denote
$$S^{\bullet}_{X}(\mc{F}):=\oplus_{n=0}^{\infty}S^{n}_{X}(\mc{F})$$
as a quasi-coherent $\mc{O}_{X}$-algebra and denote $S_{X}^{n}(\mc{F}):= 0$ when $n<0$. We will omit $X$ in $S^{n}_{X}(\mc{F})$ if $X$ is clear from the context. We denote $\mc{F}^{\vee}$ as the derived dual of $\mc{F}$.

\begin{example}
    Let $\mc{F}\cong \{V\to W\}$ where $V$ and $W$ are locally free sheaves over a smooth variety $X$. Then we have
    \begin{align*}
    S^n_X(\mc{F})\cong\{\wedge^n_X V\to \cdots\to S_X^n W\} \\
    S^n_X(\mc{F}^\vee)\cong\{S^n_X W^\vee\to \cdots \to \wedge_X^n V^\vee \}[-n].
    \end{align*}
The heuristic argument can be generalized to the case that $X$ is a derived stack and $V,W$ are quasi-coherent sheaves over $X$, by Lemma 4.16 of \cite{jiang2022derived}.
\end{example}
\section{Jiang's derived projectivization theory and the generalized Serre theorem}
\label{sec:3}
In this section, we assume that $X$ is a derived stack, with connective quasi-coherent sheaves $\mc{F}$ and $\mc{F}'$. Jiang \cite{jiang2022derived} defined the derived projectivization $\mb{P}_{X}(\mc{F})$ over $X$, and proved that the projection
$$pr_{F}:\mb{P}_{X}(\mc{F})\to X$$
is schematic. Moreover, when $\mc{F}$ is of perfect amplitude contained in $[0,1]$ and has constant rank $r$, Jiang computed the push-forward of tautological line bundles over $\mb{P}_{X}(\mc{F})$, which generalized the theorem of Serre and Lurie. In this section, we introduce Jiang's generalized Serre theorem.

\subsection{Derived Affine Cones}

\begin{definition}[Derived affine cone in Section 4.1 of \cite{jiang2022derived}, or quasi-coherent bundle in Definition 2.4 of Paper B of \cite{Hekking_2022}]
 The affine cone of $\mc{F}$, which we denoted as $\mb{V}_{X}(\mc{F})$, is defined as the derived stack
  $$\mb{V}_{X}(\mc{F})(U)=Map(f^{*}\mc{F},\mc{O}_{U})$$
 \end{definition}
\begin{proposition}[Proposition 4.3 of \cite{jiang2022derived}, or Proposition 2.11 of Paper B of \cite{Hekking_2022}]
  The derived stack $\mb{V}_{X}(\mc{F})$ is representable by the relative affine derived scheme over $X$
  $$\Pi_{F}:\mb{V}_{X}(\mc{F})=\mathrm{Spec}(S_{X}^{*}(\mc{F}))\to X.$$
  The universal morphism of $\mc{O}_{\mb{V}_{X}(\mc{F})}$-module $\varrho_{F}:\Pi_{F}^{*}F\to \mc{O}_{\mb{V}_{X}(F)}$ is induced from the canonical morphism $F\to S^{*}_{X}(F)$ by adjunction.
\end{proposition}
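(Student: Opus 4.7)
The plan is to verify the representability by unwinding the universal property of the relative affine derived spectrum and then chaining it to the free–forgetful adjunction between connective $\mb{E}_\infty$-algebras and connective modules. Concretely, for a test morphism $f:U\to X$ with $U\in \mathrm{Aff}$, I first want to show that the mapping space of $X$-morphisms $U\to \mathrm{Spec}(S_X^{*}(\mc{F}))$ is naturally equivalent to $\mathrm{Map}_{\mc{O}_U}(f^{*}\mc{F},\mc{O}_U)$. By the defining universal property of the relative $\mathrm{Spec}$ (as a right adjoint to pushforward of $\mc{O}_X$-algebras along $X\to \mathrm{pt}$, applied fiberwise), the former space is equivalent to the space of $\mc{O}_X$-algebra maps $S_X^{*}(\mc{F})\to f_{*}\mc{O}_U$, and by $(f^{*},f_{*})$ adjunction this is the space of $\mc{O}_U$-algebra maps $f^{*}S_X^{*}(\mc{F})\to \mc{O}_U$.

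Next, I would use that $f^{*}$ is symmetric monoidal and colimit-preserving, hence commutes with the free-$\mb{E}_\infty$-algebra construction; this yields $f^{*}S_X^{*}(\mc{F})\simeq S_U^{*}(f^{*}\mc{F})$. Invoking the free–forgetful adjunction between $\mathrm{Alg}_{\mc{O}_U}$ and connective $\mc{O}_U$-modules, I get
\begin{equation*}
\mathrm{Map}_{\mathrm{Alg}_{\mc{O}_U}}(S_U^{*}(f^{*}\mc{F}),\mc{O}_U)\simeq \mathrm{Map}_{\mathrm{Mod}_{\mc{O}_U}}(f^{*}\mc{F},\mc{O}_U),
\end{equation*}
which by definition is $\mb{V}_X(\mc{F})(U)$. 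Naturality in $U$ identifies the two functors $\mathrm{Aff}^{op}\to \mathrm{Spc}$, proving the representability claim.

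For the universal morphism $\varrho_F$, I would extract it as the unit of the composed adjunction. The identity map $\mathrm{id}:\mathrm{Spec}(S_X^{*}(\mc{F}))\to \mathrm{Spec}(S_X^{*}(\mc{F}))$ corresponds under the chain above to a distinguished map $\Pi_F^{*}\mc{F}\to \mc{O}_{\mb{V}_X(\mc{F})}$, which is precisely the pullback along $\Pi_F$ of the unit $\mc{F}\to S_X^{*}(\mc{F})$ of the free–forgetful adjunction, composed with the canonical map $\Pi_F^{*}S_X^{*}(\mc{F})\simeq S^{*}_{\mb{V}_X(\mc{F})}(\Pi_F^{*}\mc{F})\to \mc{O}_{\mb{V}_X(\mc{F})}$ given by the $\mc{O}_{\mb{V}_X(\mc{F})}$-algebra structure. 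This is exactly the description in the statement.

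The main obstacle is bookkeeping at the $\infty$-categorical level: one must justify that $f^{*}$ genuinely commutes with $S^{*}$ in the derived/simplicial sense (not just on classical symmetric algebras), and that the free–forgetful adjunction between connective $\mb{E}_\infty$-algebras and connective modules used here is the correct one, including verifying that $\mc{F}$ being connective matches the connectivity hypothesis needed for the symmetric algebra monad. Both points follow from the general framework of \cite{SAG}, Section 25.2, and the symmetric monoidal structure on $\mathrm{QCoh}$ from \cite{SAG}, \S 6.2.6, so the remaining work is only to assemble these ingredients in a natural-in-$U$ fashion.
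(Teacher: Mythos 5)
Your argument is correct and is essentially the standard one that appears in the sources the paper cites for this proposition (Jiang \cite{jiang2022derived}, Prop.~4.3; Hekking et al.\ \cite{Hekking_2022}, Paper B, Prop.~2.11); the paper itself does not supply a proof, only the citation. Your chain — universal property of relative $\mathrm{Spec}$, the $(f^{*},f_{*})$ adjunction, commutation of $S^{*}$ with symmetric monoidal colimit-preserving pullback, and the free--forgetful adjunction for connective $\mb{E}_{\infty}$-algebras — is precisely the argument of those references, and your identification of $\varrho_{F}$ as the image of the identity under the composed equivalence correctly recovers the unit $\mc{F}\to S_{X}^{*}(\mc{F})$ pulled back along $\Pi_{F}$.
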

\begin{definition}[Universal and hyperplane $\mb{G}_{m}$-action]
   For any connective quasi-coherent sheaf $\mc{F}$ over $X$, we define the universal (resp. hyperplane) $\mb{G}_{m}$-action on $\mb{V}_{X}(\mc{F})$ such that $S^{k}_{X}\mc{F}$ has grading $k$ (resp. $-k$).

  Paticularly, We define the hyperplane $\mb{G}_{m}$-action on $\mb{A}^{1}$ by the grading that $\mb{A}^{1}=\mathrm{Spec}\ \mb{C}[t^{-1}]$, where $t^{-1}$ has grade $-1$.
\end{definition}

\begin{definition}[Derived zero locus of a co-section]
  By Section 4.1 of \cite{jiang2022derived}, a morphism $\gamma:\mc{F}\to \mc{F}'$ over $X$ induces a map, which we denote as
  $$\mb{V}_{X}(\gamma):\mb{V}_{X}(\mc{F}')\to \mb{V}_{X}(\mc{F}).$$

  A co-section $\tau:\mc{F}\to \mc{O}_{X}$ induces a $X$-scheme closed embedding, which we denote as
  $$i_{\tau}:X\to \mb{V}_{X}(\mc{F}).$$
  Particularly, we denote $i_{0}$ or $i_{0,\mc{F}}$
  $$i_{0,\mc{F}}:X\to \mb{V}_{X}(\mc{F})$$
  as the closed embedding induced from the zero cosection $0:\mc{F}\to \mc{O}_{X}$. We denote
  $$\mb{V}_{X}^{*}(\mc{F}):=\mb{V}_{X}(\mc{F})-X.$$
  
  We define the derived zero locus of the cosection $\tau:\mc{F}\to \mc{O}_{X}$, which is denoted as $\mb{R}Z_{X}^{\mc{F}}(\tau)$, through the following Cartesian diagram:
  \begin{equation*}
    \begin{tikzcd}
      \mb{R}Z_{X}^{\mc{F}}(\tau)\ar{r}\ar{d} & X\ar{d}{i_{\tau}}\\
      X \ar{r}{i_{0}} & \mb{V}_{X}(\mc{F}).
    \end{tikzcd}
  \end{equation*}
\end{definition}
\subsection{Derived projectivization of complexes}
\begin{definition}[Section 1.1 and 4.2 of \cite{jiang2022derived}]
   The derived projectivization $\mb{P}_{X}(\mc{F})$ of $\mc{F}$ over $X$ is defined as the derived stack that for every map $\eta:T\to X$ from a derived scheme $T$, the space of $T$-points $\mb{P}_{X}(\mc{F})(\eta)$ is the space of morphisms of quasi-coherent sheaves $\eta^{*}\mc{F}\to \mc{L}$ which induce surjective morphisms $\pi_{0}(\eta^{*}\mc{F})\to \pi_{0}(\mc{L})$, where $\mc{L}$ is a line bundle on $T$. 
\end{definition}

\begin{proposition}[Proposition 4.18 of \cite{jiang2022derived}]
  The functor $\mb{P}_{X}(\mc{F})$ is represented by a relatively derived scheme over $X$.
\end{proposition}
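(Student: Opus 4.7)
The plan is to reduce representability to an explicit local construction using a free resolution of $\mc{F}$, and then to invoke the representability of derived zero loci of cosections already set up in this section. Since quasi-coherent sheaves and line bundles satisfy fppf descent, the functor $\mb{P}_{X}(\mc{F})$ is an fppf sheaf on $X$, and representability by a relatively derived scheme is fppf-local on the base, so I may assume $X = \mathrm{Spec}(A)$ is affine and $\mc{F}$ corresponds to a connective $A$-module $M$.

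Next, working Zariski-locally, I would choose a map $\mc{G}_{0} \to M$ with $\mc{G}_{0}$ a free $A$-module of some finite rank $n+1$ whose induced map on $\pi_{0}$ is surjective, and set $\mc{G}_{1} := \mathrm{fib}(\mc{G}_{0} \to M)$, so that $M \simeq \mathrm{cofib}(\mc{G}_{1} \to \mc{G}_{0})$ with $\mc{G}_{1}$ connective. For a map $\eta : T \to X$ and a line bundle $\mc{L}$ on $T$, the cofiber sequence identifies the space of maps $\eta^{*}M \to \mc{L}$ with the fiber over the zero map of
\[
    \mathrm{Map}(\eta^{*}\mc{G}_{0}, \mc{L}) \to \mathrm{Map}(\eta^{*}\mc{G}_{1}, \mc{L}),
\]
while $\pi_{0}$-surjectivity of $\eta^{*}M \to \mc{L}$ translates to the same condition on $\eta^{*}\mc{G}_{0} \to \mc{L}$. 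The subfunctor classifying only the latter datum is the classical relative projective bundle $\mb{P}^{n}_{X}$ carrying the tautological quotient $\mc{O}(1)$, and the remaining datum -- a null-homotopy of the composite $\mc{G}_{1} \to \mc{G}_{0} \to \mc{O}(1)$ -- is by definition a point of the derived zero locus of the induced cosection. Thus, over this chart,
\[
    \mb{P}_{X}(\mc{F}) \simeq \mb{R}Z_{\mb{P}^{n}_{X}}^{\mc{G}_{1} \otimes \mc{O}_{\mb{P}^{n}_{X}}(-1)}(\tau),
\]
with $\tau$ the cosection induced by $\mc{G}_{1} \to \mc{G}_{0} \to \mc{O}(1)$. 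The right-hand side is a closed derived subscheme of $\mb{P}^{n}_{X}$ and hence relatively representable over $X$.

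The main technical obstacle is the gluing step, since a priori the construction depends on both the resolution $\mc{G}_{0} \to M$ and the affine chart. I would address this by showing that any map of resolutions over $M$ induces, via functoriality of the derived zero locus construction in cosections, a canonical equivalence of the associated local models, and that any two resolutions on a common chart are connected by such a zigzag; fppf descent then produces a canonical global derived scheme representing $\mb{P}_{X}(\mc{F})$. A cleaner alternative sidestepping explicit gluing is to verify the hypotheses of Lurie's representability theorem for $\mb{P}_{X}(\mc{F})$ directly -- infinitesimal cohesiveness, nilcompleteness, and the existence of a cotangent complex -- using the local derived zero-locus description only to exhibit a smooth atlas.
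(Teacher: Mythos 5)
Your local model is correct and is essentially the one used in the reference (and recorded, for two-term complexes of locally free sheaves, in Example~\ref{prop:jiang433} of this paper): choose a $\pi_0$-surjection $\mc{G}_0 \to M$ from a finite free module, observe that $\pi_0$-surjectivity of $\eta^*M\to\mc{L}$ is equivalent to $\pi_0$-surjectivity of the composite $\eta^*\mc{G}_0\to\mc{L}$, and identify the remaining datum (a null-homotopy of $\eta^*\mc{G}_1\to\mc{L}$) as a point of a derived zero locus over $\mb{P}_X(\mc{G}_0)$. (You should state the finiteness hypothesis on $\mc{F}$ that makes the finite free presentation available; this is part of Jiang's statement.)

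Where the proposal goes astray is in framing the gluing as the ``main technical obstacle'' and proposing a zigzag-of-resolutions coherence argument. This is both unnecessary and, if taken literally, harder than the problem you are trying to solve: you would need to verify higher coherence of the equivalences produced by the zigzag, which is exactly the kind of bookkeeping one wants to avoid. The point is that $\mb{P}_X(\mc{F})$ is defined \emph{globally} as a functor of points and is a sheaf, so any local representing object is pinned down by the Yoneda lemma as representing the restricted functor $\mb{P}_X(\mc{F})|_U$; two local models built from different resolutions (or on overlapping charts) are therefore canonically and coherently equivalent with no further work. Combined with the standard fact that representability by (relative) derived schemes descends along covers, the local zero-locus model already finishes the proof. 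Your second alternative (Lurie's representability criterion) is also a legitimate route, but it, too, is more machinery than this argument requires once representability has been reduced to a local statement.
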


\begin{definition}
We denote
\begin{itemize}
    \item $\mc{O}_{\mb{P}_{X}(\mc{F})}(1)$ as the universal line bundle on $\mb{P}_{X}(\mc{F})$;
    \item $pr_{\mc{F}}:\mb{P}_{X}(\mc{F})\to X$ as the canonical projection map;
    \item $\rho_{\mc{F}}:pr_{\mc{F}}^{*}\mc{F}\to \mc{O}_{\mb{P}_{X}(\mc{F})}(1)$
    as the tautological quotient morphism.
\end{itemize}

\end{definition}
\begin{theorem}[Euler fiber sequence, Theorem 4.27 of \cite{jiang2022derived}]
  The cotangent complex $L_{\mb{P}_{X}(\mc{F})/X}$ is the fiber of $\rho_{\mc{F}}\otimes \mc{O}_{\mb{P}_{X}(\mc{F})}(-1)$.
\end{theorem}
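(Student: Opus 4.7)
The plan is to verify the claim pointwise via deformation theory and then assemble the identifications by Yoneda. Writing $p:=pr_{\mathcal{F}}$, $P:=\mathbb{P}_X(\mathcal{F})$, I would show that for every $T$-point $\alpha:T\to P$, with $\mathcal{L}:=\alpha^*\mathcal{O}_P(1)$ and $\rho_\alpha:=\alpha^*\rho_{\mathcal{F}}:\alpha^*p^*\mathcal{F}\to\mathcal{L}$, and for every connective $M\in\mathrm{QCoh}(T)$, there is a natural equivalence
\[
\mathrm{Map}_{\mathcal{O}_T}\bigl(\alpha^*L_{P/X},\,M\bigr)\;\simeq\;\mathrm{Map}_{\mathcal{O}_T}\bigl(\alpha^*\mathrm{fib}(\rho_{\mathcal{F}}\otimes\mathcal{O}_P(-1)),\,M\bigr).
\]
By the universal property of $L_{P/X}$, the left-hand side is the space of lifts of $\alpha$ to the square-zero extension $T[M]\to P$ over $X$.

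Next I would unwind the right-hand side. Applying $\mathrm{Map}(-,M)$ to the fiber sequence
\[
\alpha^*\mathrm{fib}(\rho_{\mathcal{F}}\otimes\mathcal{O}_P(-1))\longrightarrow\alpha^*p^*\mathcal{F}\otimes\mathcal{L}^{-1}\xrightarrow{\ \rho_\alpha\otimes\mathcal{L}^{-1}\ }\mathcal{O}_T,
\]
tensoring by $\mathcal{L}$, and using the adjunction $\mathrm{Map}(\mathcal{G}\otimes\mathcal{L}^{-1},M)\simeq\mathrm{Map}(\mathcal{G},M\otimes\mathcal{L})$, the right-hand side is identified with the fiber of the precomposition map
\[
\mathrm{Map}(\alpha^*p^*\mathcal{F},\,\mathcal{L}\otimes M)\;\xrightarrow{\ \rho_\alpha^*\ }\;\mathrm{Map}(\mathcal{L},\,\mathcal{L}\otimes M).
\]

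Then I would match these two descriptions by unwinding the $T$-point datum. By the definition of $P$, a lift of $\alpha$ over $T[M]\to X$ is a pair $(\widetilde{\mathcal{L}},\widetilde{\rho})$ consisting of a line bundle $\widetilde{\mathcal{L}}$ on $T[M]$ with a chosen equivalence $\widetilde{\mathcal{L}}|_T\simeq\mathcal{L}$ together with a lift of $\rho_\alpha$ to a quotient of $\alpha^*p^*\mathcal{F}$ on $T[M]$ onto $\widetilde{\mathcal{L}}$; the surjectivity-on-$\pi_0$ requirement is automatic because $\pi_0(T[M])=\pi_0(T)$ for $M$ connective. Standard square-zero deformation theory shows that the space of such $\widetilde{\mathcal{L}}$ is a torsor over $\mathrm{Map}(\mathcal{L},\mathcal{L}\otimes M)$, while the fiber over a fixed $\widetilde{\mathcal{L}}$ of lifts of the map $\rho_\alpha$ is a torsor over $\mathrm{Map}(\alpha^*p^*\mathcal{F},\mathcal{L}\otimes M)$. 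Assembling these two torsor structures presents the total deformation space as exactly the fiber of $\rho_\alpha^*$, matching the right-hand side.

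The principal obstacle is to promote this pointwise matching to a canonical equivalence of quasi-coherent complexes on $P$, rather than a mere cardinality-by-cardinality coincidence. I would handle this by running the argument in the universal case $\alpha=\mathrm{id}_P$, where both sides naturally corepresent the same functor $M\mapsto\{\text{lifts of }\mathrm{id}_P\}$ on $\mathrm{QCoh}(P)_{\geq 0}$ with values in spaces, and invoke base change along arbitrary $\alpha:T\to P$ to extend naturally. A secondary technical issue is the coherence of the equivalences of line bundles $\widetilde{\mathcal{L}}|_T\simeq\mathcal{L}$ entering the torsor description; this is settled by working throughout in the relative $\infty$-category of line bundles over the square-zero extension, where the identifications become fiber sequences of mapping spaces and glue automatically.
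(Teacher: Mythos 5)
The paper does not prove this theorem; it is imported verbatim as Theorem 4.27 of Jiang \cite{jiang2022derived}, so there is no internal argument to compare against. Your deformation-theoretic strategy (compute lifts over square-zero extensions, then corepresent) is a reasonable and standard way to establish the Euler fiber sequence, and it differs in spirit from what one would expect in Jiang's treatment, which reduces to explicit local models for two-term complexes of locally free sheaves. That said, your write-up as given contains a genuine error that has to be fixed before the two sides actually match.

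The problem is in the identification of the right-hand side and in the torsor structure for line-bundle deformations; the two slips are related but not self-cancelling. Applying $\mathrm{Map}(-,M)$ to the fiber sequence
\[
\alpha^*\mathrm{fib}\bigl(\rho_{\mathcal{F}}\otimes\mathcal{O}_P(-1)\bigr)\longrightarrow \alpha^*p^*\mathcal{F}\otimes\mathcal{L}^{-1}\longrightarrow\mathcal{O}_T
\]
gives a fiber sequence in the \emph{opposite} direction, so what you obtain is the \emph{cofiber} of the precomposition map $\rho_\alpha^*:\mathrm{Map}(\mathcal{L},\mathcal{L}\otimes M)\to\mathrm{Map}(\alpha^*p^*\mathcal{F},\mathcal{L}\otimes M)$ (note the direction: precomposing with $\rho_\alpha:\alpha^*p^*\mathcal{F}\to\mathcal{L}$ sends maps out of $\mathcal{L}$ to maps out of $\alpha^*p^*\mathcal{F}$), equivalently the fiber of $\mathrm{Map}(\mathcal{L},\mathcal{L}\otimes M[1])\to\mathrm{Map}(\alpha^*p^*\mathcal{F},\mathcal{L}\otimes M[1])$. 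You wrote it as the fiber of a map in the wrong direction with no shift, which is a different object. On the deformation side you likewise dropped a shift: since $L_{B\mathbb{G}_m}\simeq\mathcal{O}[-1]$, the space of extensions of $\mathcal{L}$ across $T\to T[M]$ is a torsor over $\mathrm{Map}(\mathcal{L},\mathcal{L}\otimes M[1])$, not over $\mathrm{Map}(\mathcal{L},\mathcal{L}\otimes M)$. Once both corrections are made the fibration sequence
\[
\mathrm{Map}(\alpha^*p^*\mathcal{F},\mathcal{L}\otimes M)\longrightarrow\{\text{lifts}\}\longrightarrow\mathrm{Map}(\mathcal{L},\mathcal{L}\otimes M[1])
\]
rotates to exactly the fiber sequence coming from the cotangent-complex side, so your plan goes through. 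The final assembling step (running the universal case $\alpha=\mathrm{id}_P$ and using that both sides are connective, hence determined by their restricted corepresented functors on $\mathrm{QCoh}(P)_{\geq 0}$) is fine; you should just state explicitly that the fiber $\mathrm{fib}(\rho_{\mathcal{F}}\otimes\mathcal{O}_P(-1))$ is connective because $\rho_{\mathcal{F}}$ is surjective on $\pi_0$, which is what makes the Yoneda argument on connective modules legitimate.
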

\begin{example}
  \label{ex:215}
  We consider the projection morphism:
  $$pr_{\mc{F}}\circ\Pi_{\mc{O}_{\mb{P}_{X}(\mc{F})}(1)}:\mb{V}_{\mb{P}_{X}(\mc{F})}(\mc{O}_{\mb{P}_{X}(\mc{F})}(1))\to X.$$
  Over $\mb{V}_{\mb{P}_{X}(\mc{F})}(\mc{O}_{\mb{P}_{X}(\mc{F})}(1))$, the composition
  \begin{align*}
    (pr_{\mc{F}}\circ \Pi_{\mc{O}_{\mb{P}_{X}(\mc{F})}(1)})^{*}\mc{F}&\xrightarrow{\Pi_{\mc{O}_{\mb{P}_{X}(\mc{F})}(1)}^{*}(\rho_{\mc{F}})}\Pi_{\mc{O}_{\mb{P}_{X}(\mc{F})}(1)}^{*}(\mc{O}_{\mb{P}_{X}(\mc{F})}(1))\\
    & \xrightarrow{\varrho_{\mc{O}_{\mb{P}_{X}(\mc{F})(1)}}} \mc{O}_{\mb{V}_{\mb{P}_{X}(\mc{F})}(\mc{O}_{\mb{P}_{X}(\mc{F})}(1))}  \end{align*}
induces a map
$$\epsilon_{\mc{F}}:\mb{V}_{\mb{P}_{X}(\mc{F})}(\mc{O}_{\mb{P}_{X}(\mc{F})}(1))\to \mb{V}_{X}(\mc{F})$$
which forms the commutative diagram:
  \begin{equation}
    \label{cd:projection}
    \begin{tikzcd}
      \mb{P}_{X}(\mc{F})\ar{r}{i_{0}'} \ar{d}{pr_{\mc{F}}} &  \mb{V}_{\mb{P}_{X}(\mc{F})}(\mc{O}_{\mb{P}_{X}(\mc{F})}(1))\ar{d}{\epsilon_{\mc{F}}} \\
      X \ar{r}{i_{0}} & \mb{V}_{X}(\mc{F})
    \end{tikzcd}
  \end{equation}
  where $i_{0}$ and $i_{0}'$ are the zero cosections.
\end{example}

\begin{lemma}
  The diagram \cref{cd:projection} induces an isomorphism
  $$\mb{V}_{X}^{*}(\mc{F})\cong \mb{V}_{\mb{P}_{X}(\mc{F})}^{*}(\mc{O}_{\mb{P}_{X}(\mc{F})}(1))$$
  and moreover, we have
  $$\mb{P}_{X}(\mc{F})\cong [\mb{V}_{X}^{*}(\mc{F})/\mb{G}_{m}]$$
  where the $\mb{G}_{m}$-action is the universal action.
\end{lemma}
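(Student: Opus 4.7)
The plan is to verify both statements through the functor of points. A $T$-point of $\mb{V}_X(\mc{F})$ is a pair $(\eta,\sigma)$ with $\eta:T\to X$ and $\sigma:\eta^*\mc{F}\to \mc{O}_T$, and the zero section $i_0$ is cut out by $\sigma=0$; I would identify the open complement $\mb{V}_X^*(\mc{F})(T)$ with those pairs for which $\pi_0(\sigma):\pi_0(\eta^*\mc{F})\to \mc{O}_{\pi_0(T)}$ is surjective. Dually, a $T$-point of $\mb{V}_{\mb{P}_X(\mc{F})}(\mc{O}_{\mb{P}_X(\mc{F})}(1))$ is a triple $(\eta,q:\eta^*\mc{F}\to \mc{L},\tau:\mc{L}\to \mc{O}_T)$ with $\mc{L}$ a line bundle and $\pi_0(q)$ surjective, and removing $i_0'$ imposes that $\tau$ is $\pi_0$-surjective, hence an isomorphism of line bundles.

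With these identifications in place, the map $\epsilon_{\mc{F}}$ sends $(\eta,q,\tau)\mapsto (\eta,\tau\circ q)$, and I would exhibit its quasi-inverse on the punctured loci by $(\eta,\sigma)\mapsto (\eta,\sigma,\mathrm{id}_{\mc{O}_T})$ with $\mc{L}=\mc{O}_T$. One composition is strictly the identity; the other sends $(\eta,q,\tau)$ to $(\eta,\tau\circ q,\mathrm{id}_{\mc{O}_T})$, which is canonically equivalent to the original triple via the isomorphism $\tau:\mc{L}\xrightarrow{\sim}\mc{O}_T$, since the $\infty$-groupoid of pairs $(\mc{L},q)$ is insensitive to isomorphisms of $\mc{L}$.

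For the quotient stack description, I would use that $\mc{O}_{\mb{P}_X(\mc{F})}(1)$ is a line bundle, so $\mb{V}_{\mb{P}_X(\mc{F})}(\mc{O}_{\mb{P}_X(\mc{F})}(1))\to \mb{P}_X(\mc{F})$ is an $\mb{A}^1$-bundle and its punctured total space is a $\mb{G}_m$-torsor over $\mb{P}_X(\mc{F})$. Transferring the universal $\mb{G}_m$-action on $\mb{V}_X^*(\mc{F})$ (with $t\cdot \sigma = t\sigma$) across the first isomorphism produces the rescaling action $\tau\mapsto t\tau$ on trivializations of $\mc{L}$, which is exactly the principal action of the torsor. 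Hence $[\mb{V}_X^*(\mc{F})/\mb{G}_m]\cong \mb{P}_X(\mc{F})$.

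The main obstacle I expect is setting up precisely the correspondence between ``open complement of the zero section'' and ``$\pi_0$-surjective cosection'' on both sides: since we are working with derived stacks, one must verify that $\epsilon_{\mc{F}}^{-1}(\mb{V}_X^*(\mc{F}))$ agrees with $\mb{V}_{\mb{P}_X(\mc{F})}^*(\mc{O}_{\mb{P}_X(\mc{F})}(1))$ at the level of open substacks, not just on classical points. Once this open-subfunctor bookkeeping is in hand, the rest is a direct comparison of $T$-points and requires no further input beyond the definitions in \cref{ex:215}.
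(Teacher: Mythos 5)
Your proposal is correct and follows the same line as the paper's proof: the paper also constructs the quasi-inverse of $\epsilon_{\mc{F}}|_{\mb{V}_X^*(\mc{F})}$ by observing that the universal cosection $\Pi_{\mc{F}}^*\mc{F}\to\mc{O}_{\mb{V}_X(\mc{F})}$ becomes $\pi_0$-surjective on $\mb{V}_X^*(\mc{F})$, hence factors through a map $\alpha_{\mc{F}}:\mb{V}_X^*(\mc{F})\to\mb{P}_X(\mc{F})$ together with a trivialization $\alpha_{\mc{F}}^*\mc{O}_{\mb{P}_X(\mc{F})}(1)\simeq\mc{O}_{\mb{V}_X(\mc{F})}$, and this is precisely your $(\eta,\sigma)\mapsto(\eta,\sigma,\mathrm{id}_{\mc{O}_T})$ written universally rather than on $T$-points. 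Your version spells out the $T$-point bookkeeping and also makes explicit the quotient-stack claim, which the paper leaves as an immediate consequence of the punctured line bundle being a $\mb{G}_m$-torsor; both are sound and the content is the same.
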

\begin{proof}
  We only need to construct an inverse of $\epsilon_{\mc{F}}|_{\mb{V}_{X}^{*}(\mc{F})}$. We notice that the universal co-section
  $$\Pi_{\mc{F}}^{*}\mc{F}\to \mc{O}_{\mb{V}_{X}(\mc{F})}$$
  is surjective on $\pi_{0}(\mb{V}_{X}^{*}(\mc{F}))$.
  It induces a morphism
  $$\alpha_{\mc{F}}:\mb{V}_{X}^{*}(\mc{F})\to \mb{P}_{X}(\mc{F})$$
  and an equivalence
$$\beta_{F}:\alpha_{F}^{*}\mc{O}_{\mb{P}_{X}(\mc{F})}(1)\to\mc{O}_{\mb{V}_{X}(\mc{F})},$$
  which induces a map from $\mb{V}_{X}^{*}(\mc{F})$ to $\mb{V}_{\mb{P}_{X}(\mc{F})}^{*}(\mc{O}_{\mb{P}_{X}(\mc{F})}(1))$ and is an inverse of $\epsilon_{\mc{F}}|_{\mb{V}_{X}^{*}(\mc{F})}$.
\end{proof}

\subsection{An example of derived affine cones and projectivization}
The following example is a simplified version of Proposition 4.11, Corollary 4.32, and Proposition 4.33 of \cite{jiang2022derived}, which described the derived affine cones and projectivizations of a two-term complex of locally free sheaves:

\begin{example}
    \label{prop:jiang433}
  Let $\tau:V\to W$ be a morphism of locally free sheaves over $X$. Then we have the Cartesian diagram:
  \begin{equation*}
    \begin{tikzcd}
       \mb{V}_{X}(\tau)\ar{r}\ar{d} & \mb{V}_{X}(W) \ar{d}{\mb{V}_{X}(\tau)} \\
      X \ar{r}{i_{0,V}} & \mb{V}_{X}(V)
    \end{tikzcd}
  \end{equation*}
  Over $\mb{P}_{X}(W)$ we consider the map
  $$taut:pr_{W}^{*}V\otimes \mc{O}_{\mb{P}_{X}(W)}(-1)\to \mc{O}_{\mb{P}_{X}(W)}$$
  as the composition of
  $$pr_{W}^{*}V\xrightarrow{pr_{W}^{*}(\tau)}pr_{W}^{*}W\xrightarrow{\rho_{W}}\mc{O}_{\mb{P}_{X}(W)}(-1)$$
  tensoring by $\mc{O}_{\mb{P}_{X}(W)}(1)$. Then $\mb{P}_{X}(\tau)$ is the derived zero locus of $taut$. Paticularly, if $W\cong \mc{O}_{X}$, then $\mb{P}_{X}(\tau)$ is the derived zero locus of $\tau$ over $X$.
\end{example}

\subsection{Generalized Serre Theorem}
Now we state Jiang's generalized Serre theorem:
\begin{theorem}[Generalized Serre Theorem \cite{jiang2022derived}]
  \label{thm:serre2}
  If $\mc{F}$ is of perfect-amplitude contained in $[0,1]$ and has constant rank $r$, then
  \begin{enumerate}
      \item the projection map $pr_{\mc{F}}:\mb{P}_{X}(\mc{F})\to X$ is proper, quasi-smooth of relative virtual dimension $r-1$;
      \item let $$\phi_{d,\mc{F}}:S^{d}(\mc{F})\to pr_{\mc{F}*}\mc{O}_{\mb{P}_{X}(\mc{F})}(d)$$ 
      be the map induced by the canonical map
  $$pr_{\mc{F}}^{*}S^{d}(\mc{F})\xrightarrow{S^{d}(\rho_{\mc{F}})} \mc{O}_{\mb{P}_{X}(\mc{F})}(d)$$
  and the adjoint relation. Then the co-fiber of of $\phi_{-r,\mc{F}}[r-1]$ is a line bundle on $X$, which we denote as $det(\mc{F})^{\vee}$;
  \item the canonical map
    $$\mc{O}_{\mb{P}_{X}(\mc{F})}(d)\xrightarrow{S^{-d-r}(\rho_{\mc{F}}^{\vee})\otimes\mc{O}_{\mb{P}_{X}(\mc{F})}(-r)} pr_{\mc{F}}^{*}S^{-d-r}(\mc{F}^{\vee})\otimes \mc{O}_{\mb{P}_{X}(\mc{F})}(-r) $$
    induces a canonical map
    $$pr_{\mc{F}*}(\mc{O}_{\mb{P}_{X}(\mc{F})}(d))\xrightarrow{\psi_{d,\mc{F}}}det(\mc{F})^{\vee}S^{-d-r}(\mc{F}^{\vee})[1-r].$$
    We denote $\phi_{\mc{F}}:=\oplus_{d\in \mb{Z}}\phi_{d,\mc{F}}$ and $\psi_{\mc{F}}:=\oplus_{d\in \mb{Z}}\psi_{d,\mc{F}}$. Then we have a canonical fiber sequence of $\mb{Z}$-graded quasi-coherent $S_{X}^{*}\mc{F}$-modules over $X$:
     \begin{equation}
      \label{eq:serre}
   \bigoplus_{d\in \mb{Z}} S^{d}(\mc{F})\xrightarrow{\phi_{\mc{F}}}pr_{\mc{F}*}(\mc{O}_{\mb{P}_{X}(\mc{F})}(d))\xrightarrow{\psi_{\mc{F}}}\bigoplus_{d\in \mb{Z}}(S^{-r-d}\mc{F})^{\vee}\otimes (det\mc{F})^{\vee}[1-r];  
    \end{equation}
    \item $\psi_{d,\mc{F}}$ (resp. $\phi_{d,\mc{F}}$) is a canonical equivalence if $d<0$ (resp. $d\geq -r+1$).
  \end{enumerate}
\end{theorem}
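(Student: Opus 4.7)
The plan is to reduce to the classical Serre theorem for smooth projective bundles by working Zariski-locally on $X$. Since $\mc{F}$ has Tor-amplitude $[0,1]$ and constant rank $r$, we may locally present $\mc{F}$ as a two-term complex $\{V\xrightarrow{\tau}W\}$ of locally free sheaves of ranks $v$ and $w$ with $w-v=r$. By \cref{prop:jiang433}, $\mb{P}_{X}(\mc{F})$ is then the derived zero locus inside $\mb{P}_{X}(W)$ of the tautological map $taut:pr_{W}^{*}V\otimes\mc{O}_{\mb{P}_{X}(W)}(-1)\to\mc{O}_{\mb{P}_{X}(W)}$, a quasi-smooth closed subscheme of virtual codimension $v$. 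Since $pr_{W}$ is smooth proper of relative dimension $w-1$, the composition $pr_{\mc{F}}$ is proper and quasi-smooth of relative virtual dimension $(w-1)-v=r-1$, which proves part (1).

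Next I would compute the push-forwards via the Koszul resolution
$\mc{O}_{\mb{P}_{X}(\mc{F})}\cong\bigl[\bigwedge^{v}pr_{W}^{*}V\otimes\mc{O}(-v)\to\cdots\to pr_{W}^{*}V\otimes\mc{O}(-1)\to\mc{O}_{\mb{P}_{X}(W)}\bigr]$
attached to $taut$, so that $pr_{\mc{F}*}\mc{O}_{\mb{P}_{X}(\mc{F})}(d)$ is the totalization of the push-forward of this complex twisted by $\mc{O}(d)$. By the projection formula and the classical Serre computation on $\mb{P}_{X}(W)$, the term $pr_{W*}(\bigwedge^{k}pr_{W}^{*}V\otimes\mc{O}(d-k))$ equals $\bigwedge^{k}V\otimes S^{d-k}W$ when $d-k\geq 0$, vanishes when $-w+1\leq d-k\leq -1$, and equals $\bigwedge^{k}V\otimes det(W)^{\vee}\otimes S^{k-d-w}(W^{\vee})[1-w]$ when $d-k\leq -w$.

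In the regime $d\geq -r+1$ only the first case occurs for all $0\leq k\leq v$, and the resulting totalization is precisely the standard Koszul-type presentation of $S^{d}_{X}(\mc{F})$ recalled at the end of \cref{sec:2}; this identifies $\phi_{d,\mc{F}}$ with the canonical comparison map and shows it is an equivalence. Dually, in the regime $d\leq -w$ only the third case contributes. Using the isomorphism $\bigwedge^{k}V\cong\bigwedge^{v-k}V^{\vee}\otimes det(V)$ and reindexing $k\mapsto v-k$ converts the push-forward complex into the Koszul-type presentation of $det(W)^{\vee}\otimes det(V)\otimes S^{-d-r}(\mc{F}^{\vee})[1-r]$. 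This simultaneously produces the line bundle $det(\mc{F})^{\vee}:=det(W)^{\vee}\otimes det(V)$ as the cofiber of $\phi_{-r,\mc{F}}[r-1]$ (part 2) and shows that $\psi_{d,\mc{F}}$ is an equivalence in the negative regime, yielding part (4).

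Finally, for the fiber sequence of part (3) I would split the Koszul totalization along the above trichotomy into a ``positive'' sub-complex and a ``negative'' quotient complex, and verify that the resulting boundary map coincides with the canonical $\phi_{d,\mc{F}}$ and $\psi_{d,\mc{F}}$ defined via $S^{d}(\rho_{\mc{F}})$ and $S^{-d-r}(\rho_{\mc{F}}^{\vee})$ together with adjunction. The main obstacle will be ensuring that this split is independent of the chosen local presentation $V\to W$ so as to globalize by fppf descent on $X$; the cleanest route I see is to first establish the relative dualizing complex identification $\omega_{pr_{\mc{F}}}\cong det(\mc{F})^{\vee}\otimes\mc{O}_{\mb{P}_{X}(\mc{F})}(-r)[r-1]$, and then derive the fiber sequence of part (3) from relative Grothendieck--Serre duality applied to the proper quasi-smooth map $pr_{\mc{F}}$, which renders the cofiber identification in part (2) and the naturality of $\phi_{\mc{F}}, \psi_{\mc{F}}$ manifest at once.
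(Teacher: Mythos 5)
The paper does not prove this theorem: it is quoted verbatim from Jiang's monograph \cite{jiang2022derived} and used as a black box, as the bracketed citation in the theorem header indicates. There is therefore no internal proof to compare your argument against. That said, your Koszul-resolution route is the natural one and is essentially how the result is established in the cited source, using precisely the local model recorded in \cref{prop:jiang433}, so your sketch is substantively correct.

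Two points deserve care. First, in the regime $d\geq -r+1$ it is not quite true that ``only the first case occurs for all $0\leq k\leq v$''; what is true is that the \emph{third} (negative-degree) case is excluded, while the middle vanishing case still occurs for $k>d$. This is harmless --- the surviving terms $\wedge^{k}V\otimes S^{d-k}W$ for $0\leq k\leq\min(v,d)$ are exactly the Koszul presentation of $S^{d}_{X}(\mc{F})$ recalled in \cref{sec:2} --- but stated as written it is misleading. Second, the globalization worry at the end is over-stated for parts (1), (2) and (4): the maps $\phi_{d,\mc{F}}$ and $\psi_{d,\mc{F}}$ are already defined globally in the statement via $S^{d}(\rho_{\mc{F}})$, $S^{-d-r}(\rho_{\mc{F}}^{\vee})$ and adjunction, so whether they are equivalences is a Zariski-local question and no compatibility between local presentations $V\to W$ needs to be tracked. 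What does genuinely require a global construction is the canonical null-homotopy $\psi_{\mc{F}}\circ\phi_{\mc{F}}\simeq 0$ underlying the fiber sequence in part (3) and the identification of $\phi_{\mc{F}}$ with its fiber; your suggestion to obtain it from relative Grothendieck--Serre duality along the proper quasi-smooth map $pr_{\mc{F}}$ is a sound way to make that canonical, and matches the spirit of how the paper later obtains the analogous fiber sequence for blow-ups in \cref{isomorphism2} by working on the deformation space rather than term by term in a local model.
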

\begin{remark}
  When $\mc{F}$ is equivalent to a two-term complex of locally free sheaves $\{V\to W\}$, Jiang \cite{jiang2022derived} proved that $det(\mc{F})\cong det(W)det(V)^{-1}$, which is compatible with the definition from perfect complexes.
\end{remark}
  
\section{Hekking's thesis}
\label{sec:4}
In this section, we will introduce the derived blow-up theory in Hekking's thesis \cite{Hekking_2022}, particularly Paper B of \cite{Hekking_2022} by Hekking-Khan-Rydh. 

\begin{remark}
  We will introduce their theory in the convenience of introducing all the concepts and theorems we need rather than following the logical order. We strongly recommend that readers read Hekking's thesis, as many important contents are not treated in our introduction.
\end{remark}

In this section, we will always assume that $f:Z\to X$ is a closed embedding of derived stacks over $\mb{C}$, and we also use the pair $(Z,X)$ to denote the closed embedding $f$.
\subsection{Virtual Cartier divisors}
\begin{definition}[Section 3.1.1 and Section 3.2.1 of \cite{khan2018virtual}]
  A virtual Cartier divisor over $X$ is a quasi-smooth
closed embedding $Z\hookrightarrow X$ such that $C_{Z/X}$ is a line bundle over $Z$.

  A generalized Cartier divisor over $X$ is a pair $(\mc{L},s)$, where $\mc{L}$ is a line bundle over $X$ and $s$ is a co-section of $\mc{L}$.
\end{definition}

\begin{lemma}[Proposition 3.2.6 of \cite{khan2018virtual}]
  \label{lem:2018}
  The virtual Cartier divisors and generalized Cartier divisors are both classified by the derived stack 
  $$[\mb{A}^{1}/\mb{G}_{m}].$$
  Moreover, the universal virtual divisor is $0:B\mb{G}_{m}\to [\mb{A}^{1}/\mb{G}_{m}]$.
\end{lemma}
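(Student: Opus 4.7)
The plan is to deduce both classifications from a single computation of maps into $[\mb{A}^{1}/\mb{G}_{m}]$, and then to match virtual Cartier divisors with generalized ones via the derived zero locus construction.

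First I would unwind the space of maps $X \to [\mb{A}^{1}/\mb{G}_{m}]$. By the universal property of the quotient stack, such a map is the data of a $\mb{G}_{m}$-torsor $P$ on $X$ together with a $\mb{G}_{m}$-equivariant morphism $P \to \mb{A}^{1}$. The torsor $P$ is equivalent to a line bundle $\mc{L}$ on $X$, and with the hyperplane $\mb{G}_{m}$-action in which $t^{-1}$ has grade $-1$, an equivariant map is exactly a cosection $s: \mc{L} \to \mc{O}_{X}$. This identifies the mapping space $\mathrm{Map}(X,[\mb{A}^{1}/\mb{G}_{m}])$ with the $\infty$-groupoid of generalized Cartier divisors on $X$, establishing the first half of the claim.

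Next, I would compare generalized and virtual Cartier divisors through the zero-locus construction. Given $(\mc{L}, s)$, the derived zero locus $Z := \mb{R}Z_{X}^{\mc{L}}(s)$ is by construction the homotopy pullback of $0: B\mb{G}_{m} \hookrightarrow [\mb{A}^{1}/\mb{G}_{m}]$ along the classifying map, and the Euler-type computation from \cref{prop:jiang433} yields $L_{Z/X} \cong i^{*}\mc{L}[1]$. Hence $C_{Z/X} \cong i^{*}\mc{L}$ is a line bundle on $Z$, so $Z \hookrightarrow X$ is a virtual Cartier divisor; specializing to $X = [\mb{A}^{1}/\mb{G}_{m}]$ with the identity map, one reads off that $0: B\mb{G}_{m} \to [\mb{A}^{1}/\mb{G}_{m}]$ is the universal virtual Cartier divisor.

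The main obstacle is the inverse direction: showing that every virtual Cartier divisor arises uniquely as such a derived zero locus. I would argue by fpqc descent on $X$. By \cref{131666}, a quasi-smooth closed embedding with rank-one conormal is, locally on $X$, of the form $\mathrm{Spec}(A/f) \hookrightarrow \mathrm{Spec}(A)$ for a single $f \in \pi_{0}(A)$; this realizes $Z$ locally as the derived zero locus of $f: \mc{O}_{U} \to \mc{O}_{U}$, giving local generalized-divisor data $(\mc{O}_{U}, f)$. Two such presentations yielding equivalent virtual divisors must differ by multiplication by a unit, so the transition data assemble into a global line bundle $\mc{L}$ with cosection $s: \mc{L} \to \mc{O}_{X}$. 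To upgrade this bijection to an equivalence of $\infty$-groupoids rather than a mere bijection on isomorphism classes, I would verify fpqc descent for the virtual-Cartier-divisor functor and match deformation theories by computing $L_{B\mb{G}_{m}/[\mb{A}^{1}/\mb{G}_{m}]}$ and identifying it with the universal line bundle, which agrees with the conormal of any virtual Cartier divisor.
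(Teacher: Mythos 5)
The paper does not give its own proof of \cref{lem:2018}: it is imported verbatim from Khan--Rydh \cite{khan2018virtual} (Proposition 3.2.6). There is therefore no in-paper argument to compare against, so I will comment on your reconstruction on its own merits.

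Your first two paragraphs are fine: the identification of $\mathrm{Map}(X,[\mb{A}^{1}/\mb{G}_{m}])$ with pairs $(\mc{L},s)$ is standard, and the observation that the derived zero locus of a cosection $s:\mc{L}\to \mc{O}_{X}$ is a quasi-smooth closed embedding with conormal $i^{*}\mc{L}$ (hence a virtual Cartier divisor), and that specializing to the identity of $[\mb{A}^{1}/\mb{G}_{m}]$ exhibits $0:B\mb{G}_{m}\to[\mb{A}^{1}/\mb{G}_{m}]$ as the universal one, are both correct.

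The gap is concentrated in the third paragraph, and it is a real one. What you establish there, after local presentations $\mathrm{Spec}(A/f)\hookrightarrow\mathrm{Spec}(A)$ and gluing, is that the comparison functor $F$ from generalized Cartier divisors to virtual Cartier divisors is (essentially) surjective on $\pi_{0}$. But the statement being proved is that $F$ is an equivalence of $\infty$-stacks, and that requires full faithfulness: for each $X$, the mapping \emph{space} between two generalized Cartier divisors must be equivalent to the mapping space between the induced virtual Cartier divisors, with all higher coherences. Your phrase ``two such presentations must differ by multiplication by a unit'' handles isomorphism classes only; an automorphism of $A/f$ over $A$ is a point in a \emph{space}, and the derived self-intersections contribute higher homotopy that a unit alone does not see. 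Likewise, the gluing of local data $(\mc{O}_{U_{i}},f_{i})$ into a global $(\mc{L},s)$ requires descent for the $\infty$-groupoid-valued functor of virtual Cartier divisors (coherent compatibilities on all higher overlaps), which you flag but do not prove. The standard way to close this gap is the deformation-theoretic route you mention: show the moduli functor of virtual Cartier divisors admits a cotangent complex, compute it (it should be $i_{*}C_{D/X}[1]$ up to shift/twist), identify it with the pullback of $L_{[\mb{A}^{1}/\mb{G}_{m}]}$ along the classifying map, and conclude that $F$ is \'etale; combined with your $\pi_{0}$-surjectivity this gives the equivalence. Until that computation is carried out, the proof is a plausible outline rather than a complete argument.
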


\begin{definition}[Definition 4.1.1 of \cite{khan2018virtual}, or Definition 3.5.2 of \cite{Hekking_2022}]
    A virtual Cartier divisor over the pair $(Z,X)$ is a commutative diagram:
  \begin{equation*}
    \begin{tikzcd}
      D \ar{r}\ar{d}{g} & T \ar{d} \\
      Z \ar{r} & X
    \end{tikzcd}
  \end{equation*}
  such that $D\to T$ is a virtual Cartier divisor. Moreover, we say a virtual Cartier divisor $(T,D)$ over $(Z,X)$ is strict if 
    \begin{itemize}
        \item the underlying square of classical schemes is Cartesian;
        \item the induced map $g^{*}C_{Z/X}\to C_{D/T}$ is surjective on $\pi_{0}$.
    \end{itemize}  
\end{definition}
\begin{remark}
  We adapt the notation of \cite{Hekking_2022}, where the strict virtual Cartier divisor is called ``the virtual Cartier divisor'' in \cite{khan2018virtual}.
\end{remark}

\subsection{Derived blow-ups}

\begin{definition}[Derived blow-ups]
  The derived blow-up of $Z$ in $X$ is the stack over $X$ which classifies strict virtual Cartier divisors over $(Z,X)$, and denote is as $\mb{B}l_{Z/X}$.  The natural projection map is denoted as
  $$pr_{f}\text{ or }pr_{Z/X}:\mb{B}l_{Z}X\to X.$$
  Over $\mb{B}l_{Z/X}$, we have the following universal strict virtual divisor diagram:
  \begin{equation*}
    \begin{tikzcd}
      E_{Z/X}\ar{r}{\iota_{Z/X}}\ar{d} & \mb{B}l_{Z/X}\ar{d}{pr_{Z/X}} \\
      Z\ar{r} & X
    \end{tikzcd}
  \end{equation*}
  We denote $\mc{O}_{\mb{B}l_{Z/X}}(1)$ as the generalized Cartier divisor corresponding to $E_{Z/X}$.
\end{definition}
\begin{example}
    If both $Z$ and $X$ are smooth varieties over $\mathrm{Spec}\ \mb{C}$, then the derived blow-up and the classical blow-up coincide. A more detailed criterion about if the classical and derived blow-ups coincide was studied in \cite{yuzhaoderived}.
\end{example}
\begin{proposition}[Stable under pull-back, Proposition 5.5 of Paper B of \cite{Hekking_2022}]
  \label{prop:func}
  A Cartesian diagram of derived schemes:
  \begin{equation*}
    \begin{tikzcd}
      Z'\ar{r}\ar{d}{f'} & Z' \ar{d}{f}\\
      X'\ar{r}{g} & X,
    \end{tikzcd}
  \end{equation*}
 where $f$ is a closed embedding, induces the following Cartesian diagram:
  \begin{equation*}
    \begin{tikzcd}
      E_{Z'/X'} \ar{r} \ar{d}{\iota_{Z'/X'}} & E_{Z/X} \ar{d}{\iota_{Z/X}} \\
      \mb{B}l_{Z'}X' \ar{r} \ar{d}{pr_{Z'/X'}} & \mb{B}l_{Z}X \ar{d}{pr_{Z/X}} \\
      X' \ar{r}{g} & X.
    \end{tikzcd}
  \end{equation*}
\end{proposition}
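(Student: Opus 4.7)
The plan is to prove this directly from the moduli-theoretic definition of the derived blow-up. Since $\mb{B}l_{Z/X}$ represents the functor on derived $X$-schemes $T\to X$ sending $T$ to the space of strict virtual Cartier divisors $(T,D)$ over $(Z,X)$, it suffices to establish a natural equivalence of functors $\mb{B}l_{Z'/X'}\cong X'\times_X\mb{B}l_{Z/X}$. Once this is done, the claim about exceptional divisors follows immediately by pulling back the universal strict virtual Cartier divisor $E_{Z/X}\to \mb{B}l_{Z/X}$ along the induced map to recover $E_{Z'/X'}\to \mb{B}l_{Z'/X'}$.

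To construct the equivalence of functors I would argue as follows. Given a $T$-point of $X'\times_X\mb{B}l_{Z/X}$, that is a map $T\to X'$ together with a strict virtual Cartier divisor $(T,D)$ over $(Z,X)$ whose structure map $T\to X$ agrees with the composite $T\to X'\to X$, the universal property of the fiber product $Z'=Z\times_X X'$ promotes the map $D\to Z$ (together with $D\to T\to X'$) to a canonical map $D\to Z'$, and one thereby obtains a candidate strict virtual Cartier divisor over $(Z',X')$. Conversely, any strict virtual Cartier divisor over $(Z',X')$ composes with the canonical maps $Z'\to Z$ and $X'\to X$ to yield a strict virtual Cartier divisor over $(Z,X)$ whose structure map automatically factors through $X'$. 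At the level of diagrams these two assignments are manifestly mutually inverse.

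The main technical point is to verify that strictness is preserved in both directions. For the Cartesian condition on the underlying classical squares this is immediate because $\pi_0$ commutes with fiber products, so $\pi_0(Z')\times_{\pi_0(X')}\pi_0(T)\cong \pi_0(Z)\times_{\pi_0(X)}\pi_0(T)$ whenever $T\to X$ factors through $X'$. For the surjectivity of the induced cotangent map on $\pi_0$ I would invoke the base change equivalence $C_{Z'/X'}\cong C_{Z/X}|_{Z'}$ along the canonical map $Z'\to Z$; then the pullbacks of $C_{Z/X}$ and $C_{Z'/X'}$ to $D$ coincide, so the two surjectivity conditions become identical. The chief subtlety is to ensure that the bijection above is actually an equivalence of mapping \emph{spaces} rather than merely a bijection on isomorphism classes; this is automatic because each of the constructions is given by an $\infty$-categorical universal property (factoring through fiber products), and such assignments extend coherently to the full functor of points. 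Assembling these two paragraphs yields $\mb{B}l_{Z'/X'}\cong X'\times_X\mb{B}l_{Z/X}$, and the remaining Cartesian square with $E_{Z'/X'}$ and $E_{Z/X}$ is then simply the pullback of the universal strict virtual Cartier divisor along the base change map.
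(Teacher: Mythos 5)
The paper states this result as a citation to Proposition~5.5 of Paper~B of Hekking's thesis \cite{Hekking_2022} and gives no proof of its own, so there is no in-paper argument to compare against. Your functor-of-points argument is correct and is the standard one: the identification $\mb{B}l_{Z'/X'}\cong X'\times_X\mb{B}l_{Z/X}$ follows from the universal property of $Z'=Z\times_X X'$, with strictness preserved because $\pi_0$ of a derived fiber product of connective rings is the classical fiber product (right-exactness of $\pi_0$) and because $C_{Z'/X'}\cong C_{Z/X}|_{Z'}$ by base change for cotangent complexes, and the exceptional-divisor square is then the pullback of the universal strict virtual Cartier divisor. One minor point worth making explicit, since you flagged it yourself: the passage from ``bijection'' to ``equivalence of mapping spaces'' is not merely a coherence afterthought but is exactly what the universal property of the derived fiber product $Z'=Z\times_X X'$ delivers---the space of maps $D\to Z'$ compatible with $D\to T\to X'$ is naturally equivalent, not just bijective on $\pi_0$, to the space of maps $D\to Z$ compatible with $D\to T\to X$ and the chosen factorization through $X'$---so stating this as the source of the equivalence is cleaner than appealing to ``extending coherently.''
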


\subsection{Deformation to the normal cone}

Given a closed embedding of derived stacks $f:Z\to X$, we consider the hyperplane $\mb{G}_{m}$-action on $X\times \mb{A}^{1}$ and the natural closed embedding
$$X\to X\times \mb{A}^{1},\quad x\to (x,0).$$

The canonical morphism of line bundles:
$$\mc{O}_{\mb{B}l_{Z/X}}\to \mc{O}_{\mb{B}l_{Z/X}}(-1)$$
induces a map from $\mb{V}_{\mb{B}l_{Z/X}}(\mc{O}_{\mb{B}l_{Z/X}}(-1))$ to $\mb{B}l_{Z/X}\times \mb{A}^{1}$, and we consider the composition with the projection to $X\times \mb{A}^{1}$, with  the hyperplane $\mb{G}_{m}$-actions.  Then we have the following strict virtual Cartier divisor
\begin{equation*}
  \begin{tikzcd}
    \mb{V}_{E_{Z/X}}(\mc{O}_{\mb{B}l_{Z/X}}(-1)|_{E_{Z/X}}) \ar{r} \ar{d}& \mb{V}_{\mb{B}l_{Z/X}}(\mc{O}_{\mb{B}l_{Z/X}}(-1))\ar{d}\\
    Z \ar{r} & X\times \mb{A}^{1} 
  \end{tikzcd}
\end{equation*}
which induces a map from $\mb{V}_{\mb{B}l_{Z/X}}(\mc{O}_{\mb{B}l_{Z/X}}(-1))$ to $\mb{B}l_{Z/X\times \mb{A}^{1}}$. By restricting to $\mb{B}l_{Z/X}$ and $$\mb{V}_{\mb{B}l_{Z/X}}^{*}(\mc{O}_{\mb{B}l_{Z/X}}(-1))\cong \mb{V}_{\mb{B}l_{Z/X}}^{*}(\mc{O}_{\mb{B}l_{Z/X}}(1))$$
respectively, it induces two maps
\begin{align}
  \label{eq:051731}
  \mb{B}l_{Z/X}\to \mb{B}l_{Z/X\times \mb{A}^{1}}, \quad \mb{V}_{\mb{B}l_{Z/X}}^{*}(\mc{O}_{\mb{B}l_{Z/X}}(1))\to \mb{B}l_{Z/X\times \mb{A}^{1}},
\end{align}
where the $\mb{G}_{m}$-action on $ \mb{V}_{\mb{B}l_{Z/X}}^{*}(\mc{O}_{\mb{B}l_{Z/X}}(1))$ is universal.  We also notice that $(Z,Z\times \mb{A}^{1})$ is a strict virtual divisor of $(Z,X\times \mb{A}^{1})$, which induces a map
\begin{equation}
  \label{eq:051732}
Z\times \mb{A}^{1}\to \mb{B}l_{Z/X\times \mb{A}^{1}}.  
\end{equation}

\begin{lemma}[Lemma 5.8 of Paper B of  \cite{Hekking_2022}]
  \label{lm:cl}
  The canonical map
  $$\mb{B}l_{Z/X}\to \mb{B}l_{Z/X\times \mb{A}^{1}}$$
  in \cref{eq:051731} is a closed embedding.
\end{lemma}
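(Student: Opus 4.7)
The plan is to identify $\mb{B}l_{Z/X}$, via the map in \cref{eq:051731}, with the derived zero locus of a canonically constructed cosection of the tautological line bundle $\mc{O}_{\mb{B}l_{Z/X\times \mb{A}^{1}}}(1)$. Since the derived zero locus of a cosection of a line bundle is a virtual Cartier divisor (hence in particular a closed embedding) by \cref{lem:2018}, this identification will yield the conclusion.

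First, I would construct a canonical cosection $\tilde t \colon \mc{O}_{\mb{B}l_{Z/X\times\mb{A}^{1}}}(1) \to \mc{O}_{\mb{B}l_{Z/X\times\mb{A}^{1}}}$. The coordinate on $\mb{A}^{1}$ pulls back via $\mb{B}l_{Z/X\times\mb{A}^{1}} \to X \times \mb{A}^{1} \to \mb{A}^{1}$ to a global function $t_{\mathrm{univ}}$, and the factorization of the universal exceptional divisor $\iota_{Z/X\times\mb{A}^{1}}$ through $Z \to X \hookrightarrow X \times \mb{A}^{1}$ (inclusion at $t = 0$) supplies a canonical nullhomotopy $h_{E}$ of $\iota_{Z/X\times\mb{A}^{1}}^{*} t_{\mathrm{univ}}$. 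Applying $\mathrm{Maps}(\mc{O},-)$ to the Koszul-type fiber sequence $\mc{O}_{\mb{B}l_{Z/X\times\mb{A}^{1}}}(-1) \xrightarrow{s} \mc{O}_{\mb{B}l_{Z/X\times\mb{A}^{1}}} \to (\iota_{Z/X\times\mb{A}^{1}})_{*}\mc{O}_{E_{Z/X\times\mb{A}^{1}}}$ attached to the virtual Cartier divisor, the pair $(t_{\mathrm{univ}}, h_{E})$ lifts canonically to the desired cosection $\tilde t$.

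Next I would check that the map in \cref{eq:051731} identifies $\mb{B}l_{Z/X}$ with the derived zero locus $\mb{R}Z^{\mc{O}(1)}(\tilde t)$ by comparing universal properties. A $T$-point of $\mb{R}Z^{\mc{O}(1)}(\tilde t)$ is a strict virtual Cartier divisor $(D, T)$ over $(Z, X \times \mb{A}^{1})$ together with a nullhomotopy of $\tilde t_{T}$. Using the pullback of the Koszul fiber sequence to $T$, such nullhomotopies are in canonical bijection with nullhomotopies of $t_{\mathrm{univ}}|_{T}$ whose restriction to $D$ agrees with the canonical $h_{E}|_{D}$ already built into the strict virtual Cartier divisor structure. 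This datum is in turn equivalent to a factorization $T \to X \hookrightarrow X \times \mb{A}^{1}$ fitting into a commutative square with $D \to Z \to X$, which is precisely a strict virtual Cartier divisor of $(Z, X)$. Strictness passes from $(Z, X \times \mb{A}^{1})$ to $(Z, X)$ since $\pi_{0}(X) \hookrightarrow \pi_{0}(X \times \mb{A}^{1})$ is a closed immersion on classical truncations, and since the fiber sequence $f^{*} C_{X/X\times\mb{A}^{1}} \to C_{Z/X\times\mb{A}^{1}} \to C_{Z/X}$ induces a $\pi_{0}$-surjection on the right, which preserves the cotangent surjectivity condition on $g^{*}C_{Z/X} \to C_{D/T}$.

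The main obstacle will be the homotopy-coherent bookkeeping in the second step: since nullhomotopies are data rather than properties, the equivalence between the $T$-points of $\mb{R}Z^{\mc{O}(1)}(\tilde t)$ and of $\mb{B}l_{Z/X}$ must be carried out at the level of mapping $\infty$-groupoids, with careful tracking of the interplay between the chosen nullhomotopy of $\tilde t_{T}$, the Koszul lift defining $\tilde t$, and the canonical homotopies built into strict virtual Cartier divisors. Once this identification of universal properties is secured, \cref{lem:2018} immediately yields that $\mb{R}Z^{\mc{O}(1)}(\tilde t) \hookrightarrow \mb{B}l_{Z/X\times\mb{A}^{1}}$ is a closed embedding, concluding the proof.
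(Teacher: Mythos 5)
The paper states this lemma without proof, citing it directly from Lemma 5.8 of Paper B of \cite{Hekking_2022}, so there is no internal proof to compare against. Your strategy---realizing $\mb{B}l_{Z/X}$ as the derived zero locus of a canonical cosection of a tautological line bundle on $\mb{B}l_{Z/X\times\mb{A}^1}$ (i.e.\ as the derived strict transform of $X\times\{0\}$) and then invoking \cref{lem:2018}---is sound and reflects the right geometric picture.

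Two things need correcting or filling in. First, the twist is off relative to the paper's conventions: the corollary after \cref{thm:bigdiagram} gives the Koszul fiber sequence as $\mc{O}(1)\to\mc{O}\to\iota_*\mc{O}_E$, not $\mc{O}(-1)\to\mc{O}\to\iota_*\mc{O}_E$, with $\mc{O}(1)$ the ideal line bundle of the exceptional divisor. Lifting $(t_{\mathrm{univ}},h_E)$ through $\Gamma$ of this sequence produces a global section of $\mc{O}(1)$, equivalently a cosection of $\mc{O}(-1)$; and $\mc{O}(-1)$ is indeed the ideal line bundle of the strict transform (since the total transform of $\{t=0\}$ is $(\mc{O},t_{\mathrm{univ}})$ and the exceptional is $(\mc{O}(1),s)$). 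A cosection of $\mc{O}(1)$, as you wrote, would cut out a divisor with the same ideal line bundle as $E$, which is not $\mb{B}l_{Z/X}$. Second, your universal-property comparison establishes an abstract equivalence of functors between the zero locus $\mb{R}Z(\tilde t)$ and $\mb{B}l_{Z/X}$, but the lemma asserts that the \emph{specific} morphism in \cref{eq:051731} is a closed embedding. One still has to check that that map factors through $\mb{R}Z(\tilde t)\hookrightarrow\mb{B}l_{Z/X\times\mb{A}^1}$ and that the induced factorization is your equivalence; this comes down to unwinding that \cref{eq:051731} classifies precisely the strict virtual Cartier divisor $(E_{Z/X},\mb{B}l_{Z/X})$ over $(Z,X)$ composed with $X\hookrightarrow X\times\mb{A}^1$, i.e.\ the image of $\mathrm{id}_{\mb{B}l_{Z/X}}$ under your bijection of $T$-points, and it should not be left implicit since it is the content of the lemma.
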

\begin{definition}
  We define
  $$D_{Z/X}:=\mb{B}l_{Z/X\times \mb{A}^{1}}-\mb{B}l_{Z/X}$$
  as the deformation space of $f$, with the projection map
  $$d_{Z/X}:D_{Z/X}\to X\times \mb{A}^{1}.$$
\end{definition}
\begin{theorem}[Theorem 3.4.1, Theorem 3.5.4 and Theorem 6.3.3 of \cite{Hekking_2022}, and Proposition 4.16 of Paper B of \cite{Hekking_2022}]
  \label{thm:bigdiagram} We have the following properties about the deformation space $D_{Z/X}$:
  \begin{enumerate}
  \item \label{Rees1} The projection map $d_{Z/X}:D_{Z/X}\to X\times \mb{A}^{1}$ is affine, and induces a $\mb{G}_{m}$-equivariant Cartesian diagram
    \begin{equation}
      \label{eq:051733}
    \begin{tikzcd}
      \mb{V}_{Z}(C_{Z/X})\ar{r}\ar{d}{f\circ \Pi_{C_{Z/X}}} & D_{Z/X}\ar{d}{d_{Z/X}} & X\times \mb{G}_{m}\ar{l}\ar{d}{id}\\
      X\times \{0\} \ar{r} & X\times \mb{A}^{1} & X\times \mb{G}_{m}\ar{l},
    \end{tikzcd}
    \end{equation}
   where the $\mb{G}_{m}$-action on $\mb{V}_{Z}(C_{Z/X})$ is universal.
  \item The map $Z\times \mb{A}^{1}\to \mb{B}l_{Z/X\times {A}^{1}}$ in \cref{eq:051732} factors through a closed embedding in $D_{Z/X}$. The map $$\mb{V}_{\mb{B}l_{Z/X}}^{*}(\mc{O}_{\mb{B}l_{Z/X}}(1))\to \mb{B}l_{Z/X\times \mb{A}^{1}}$$
  in \cref{eq:051731} factors through an open embedding into $D_{Z/X}$, which we denote as
  \begin{equation}
  \label{06052132}
      j_{Z/X}:\mb{V}_{\mb{B}l_{Z/X}}^{*}(\mc{O}_{\mb{B}l_{Z/X}}(1))\to D_{Z/X}.
  \end{equation} 
  Under the map $j_{X/Z}$, $\mb{V}_{\mb{B}l_{Z/X}}^{*}(\mc{O}_{\mb{B}l_{Z/X}}(1))$ is identified as $D_{Z/X}-Z\times \mb{A}^{1}. $Moreover, it induces a  $\mb{G}_{m}$-equivariant Cartesian diagram
    \begin{equation}
      \label{eq:051734}
    \begin{tikzcd}
      Z\times \{0\} \ar{r}\ar{d}{i_{0,C_{Z/X}}} & Z\times \mb{A}^{1}\ar{d} & Z\times \mb{G}_{m}\ar{d}\ar{l} \\
      \mb{V}_{Z}(C_{Z/X})\ar{r} & D_{Z/X} & X\times \mb{G}_{m}\ar{l}, \\
    \mb{V}_{E_{Z/X}}^{*}(\mc{O}_{\mb{P}_{Z}(C_{Z/X})}(1)|_{E_{Z/X}})\ar{u} \ar{r} & \mb{V}_{\mb{B}l_{Z/X}}^{*}(\mc{O}_{\mb{B}l_{Z/X}}(1)) \ar{u}{j_{Z/X}}   & (Z-X)\times \mb{G}_{m} \ar{l}\ar{u} \\
    \end{tikzcd}
  \end{equation}
  \end{enumerate}
\end{theorem}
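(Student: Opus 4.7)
The plan is to interpret $D_{Z/X}$ as the derived relative spectrum of the extended Rees algebra $R^{ext}_{Z/X\times \mathbb{A}^{1}}$, so that both the affineness in claim (1) and the open/closed decomposition in claim (2) can be read off directly. More concretely, one constructs the projective fibration $\mb{B}l_{Z/X\times\mb{A}^{1}}$ as a $\mathrm{Proj}$-type object associated to the Rees algebra, and identifies the removal of the tautological ``$\mathrm{Proj}$-at-$t=0$'' locus $\mb{B}l_{Z/X}$ with the complementary affine cone; this is exactly the derived enhancement of the classical deformation-to-the-normal-cone picture worked out in Chapters 3 and 6 of Hekking's thesis.

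For claim (1), I would first invoke the functoriality under base change (\cref{prop:func}) along the two open/closed pieces $X\times\mb{G}_{m}\hookrightarrow X\times\mb{A}^{1}$ and $X\times\{0\}\hookrightarrow X\times \mb{A}^{1}$. Over $X\times \mb{G}_{m}$ the subscheme $Z$ is empty, so $\mb{B}l_{Z/X\times \mb{G}_{m}}\cong X\times\mb{G}_{m}$ and the complement $\mb{B}l_{Z/X}$ is disjoint from this piece, giving the right-hand square. Over $X\times\{0\}$, the derived base change of the blow-up decomposes into two pieces: the exceptional Proj-piece, which is exactly $\mb{B}l_{Z/X}$, and its affine complement, which one identifies with the derived affine cone $\mb{V}_{Z}(C_{Z/X})$ via the identification of the normal cone with the derived spectrum of $S^{\bullet}_{Z}(C_{Z/X})$. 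The $\mb{G}_{m}$-action is induced from scaling on $\mb{A}^{1}$ (hyperplane) and, at the special fiber, induces the universal $\mb{G}_{m}$-action on $\mb{V}_{Z}(C_{Z/X})$ by tracking grading through the Rees construction.

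For claim (2), the closed embedding $Z\times\mb{A}^{1}\to \mb{B}l_{Z/X\times\mb{A}^{1}}$ from \cref{eq:051732} misses the Proj-fiber over $t=0$ (its restriction to $t=0$ is the strict transform $Z\subset \mb{V}_{Z}(C_{Z/X})$, coming from the zero-cosection $i_{0,C_{Z/X}}$), so it factors through $D_{Z/X}$ by \cref{lm:cl}. For the second map in \cref{eq:051731}, one uses that $\mb{V}^{*}_{\mb{B}l_{Z/X}}(\mc{O}_{\mb{B}l_{Z/X}}(1))\cong [\mb{V}^{*}_{\mb{B}l_{Z/X}}(\mc{O}_{\mb{B}l_{Z/X}}(-1))]$ (via the canonical trivialization on the open complement of the zero section), and the resulting map to $\mb{B}l_{Z/X\times\mb{A}^{1}}$ lands in the complement of $\mb{B}l_{Z/X}$, hence in $D_{Z/X}$; the universal property of virtual Cartier divisors then promotes this to an open immersion $j_{Z/X}$. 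Finally, the bottom row of \cref{eq:051734} is obtained by restricting $j_{Z/X}$ fibrewise and comparing with the normal cone picture established in (1).

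The main obstacle is the derived identification of the central fibre: one has to verify that the canonical decomposition of $D_{Z/X}\times_{X\times\mb{A}^{1}}(X\times\{0\})$ is $\mb{V}_{Z}(C_{Z/X})$ at the level of derived stacks, not merely on $\pi_{0}$. This is precisely where Hekking's derived Rees algebra machinery \cite{hekking2021graded} is essential, since one needs the higher homotopy of the Rees construction to match the symmetric algebra $S^{\bullet}_{Z}(C_{Z/X})$; once that matching is in hand, the rest of the statement follows from the universal property of the derived blow-up and straightforward Cartesian-square chasing.
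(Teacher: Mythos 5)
The paper itself does not prove this statement: it is a restatement of Theorem 3.4.1, Theorem 3.5.4, Theorem 6.3.3 and Paper B Proposition 4.16 of Hekking's thesis \cite{Hekking_2022}, so there is no internal argument to compare against; one can only assess your sketch as a reconstruction of Hekking's proof.

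Your handling of the $\mb{G}_m$-part is correct (\cref{prop:func} plus the fact that the center becomes empty over $X\times\mb{G}_m$), and your final paragraph correctly identifies where the real work is: establishing $D_{Z/X}\times_{X\times\mb{A}^1}(X\times\{0\})\cong\mb{V}_Z(C_{Z/X})$ as derived stacks, which requires the graded Rees-algebra machinery of \cite{hekking2021graded}. However, the intermediate step asserting that over $X\times\{0\}$ ``the derived base change of the blow-up decomposes into two pieces: the exceptional Proj-piece, which is exactly $\mb{B}l_{Z/X}$, and its affine complement $\mb{V}_Z(C_{Z/X})$'' misrepresents what base change actually gives you. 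By \cref{prop:func}, the $t=0$ derived fiber of $\mb{B}l_{Z/X\times\mb{A}^1}$ is $\mb{B}l_{Z'/X}$ for $Z':=Z\times_{X\times\mb{A}^1}(X\times\{0\})$; because $Z\hookrightarrow X\times\mb{A}^1$ already factors through the non-flat closed immersion of $X\times\{0\}$, the center $Z'$ is a genuine derived thickening of $Z$ (with $\mc{O}_{Z'}\cong\mc{O}_Z\oplus\mc{O}_Z[1]$), and $\mb{B}l_{Z'/X}$ is not the same object as $\mb{B}l_{Z/X}\sqcup\mb{V}_Z(C_{Z/X})$ on the nose. Note also that the closed embedding $\mb{B}l_{Z/X}\hookrightarrow\mb{B}l_{Z/X\times\mb{A}^1}$ from \cref{eq:051731} is the strict-transform-type embedding built from $\mc{O}_{\mb{B}l_{Z/X}}(-1)$, not the fiber inclusion; the open-closed decomposition $\mb{B}l_{Z/X\times\mb{A}^1}=\mb{B}l_{Z/X}\sqcup D_{Z/X}$ happens on the total space, and producing from it the claimed decomposition of the central fiber $\mb{B}l_{Z'/X}$ is precisely the non-trivial content of Hekking's theorem, not something \cref{prop:func} hands you directly. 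The correct route is the one you gesture at in the last paragraph: use the affineness of $d_{Z/X}$ to write the central fiber of $D_{Z/X}$ as the relative spectrum of the ``associated graded'' $R^{ext}_{Z/X}\otimes^L_{\mc{O}_X[t^{-1}]}\mc{O}_X$, and then invoke Hekking's identification of this graded algebra with $S^\bullet_Z(C_{Z/X})$.
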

\begin{corollary}
  We have the exceptional divisor
  $$E_{Z}X\cong \mb{P}_{Z}(C_{Z/X})$$
  and the projection map to $Z$ is represented by $pr_{C_{Z/X}}$. Moreover, we have
  $$\mc{O}_{\mb{B}l_{Z/X}}(1)|_{E_{Z}X}\cong \mc{O}_{\mb{P}_{Z}(C_{Z/X})}(1) $$
   and the canonical following fiber sequence of quasi-coherent sheaves over $\mb{B}l_{Z/X}$:
\begin{displaymath}
  \mc{O}_{\mb{B}l_{Z}X}(n+1)\to \mc{O}_{\mb{B}l_{Z}X}(n)\to \iota_{Z/X*}\mc{O}_{\mb{P}_{Z}(\mc{N}_{Z/X})}(n)
\end{displaymath}
for any integer $n$.
\end{corollary}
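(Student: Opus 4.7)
The plan is to extract the exceptional divisor $E_{Z/X}$ by restricting the deformation to the normal cone diagram of \cref{thm:bigdiagram} to the fiber over $X\times\{0\}$, and then pass to the $\mb{G}_m$-quotient.

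Concretely, I would first pull back the middle row of diagram~(\ref{eq:051734}) along the closed embedding $X\times\{0\}\hookrightarrow X\times\mb{A}^1$. By the first item of \cref{thm:bigdiagram}, the fiber of $d_{Z/X}$ over $X\times\{0\}$ is $\mb{V}_Z(C_{Z/X})$; by the second item, the open subscheme $j_{Z/X}(\mb{V}^*_{\mb{B}l_{Z/X}}(\mc{O}_{\mb{B}l_{Z/X}}(1)))$ is the complement of $Z\times\mb{A}^1$ in $D_{Z/X}$, so after restriction to $X\times\{0\}$ it is precisely the complement of $Z\times\{0\}$ inside $\mb{V}_Z(C_{Z/X})$, namely $\mb{V}^*_Z(C_{Z/X})$. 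This produces a $\mb{G}_m$-equivariant isomorphism
$$\mb{V}^*_{E_{Z/X}}(\mc{O}_{\mb{B}l_{Z/X}}(1)|_{E_{Z/X}})\cong \mb{V}^*_Z(C_{Z/X}).$$
Taking the $\mb{G}_m$-quotient on both sides (under the universal $\mb{G}_m$-action) identifies $E_{Z/X}$ with $\mb{P}_Z(C_{Z/X})$, and the tautological line bundles coincide, giving $\mc{O}_{\mb{B}l_{Z/X}}(1)|_{E_{Z/X}}\cong \mc{O}_{\mb{P}_Z(C_{Z/X})}(1)$. The projection to $Z$ is identified with $pr_{C_{Z/X}}$ since the equivariant structure map to $Z$ on $\mb{V}^*_Z(C_{Z/X})$ descends to $pr_{C_{Z/X}}$ on the quotient.

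For the fiber sequence, the pair $(E_{Z/X},\mb{B}l_{Z/X})$ is a strict virtual Cartier divisor, so by \cref{lem:2018} (together with the universal virtual Cartier divisor $0\colon B\mb{G}_m\to[\mb{A}^1/\mb{G}_m]$) there is a canonical cofiber sequence
$$\mc{O}_{\mb{B}l_{Z/X}}(1)\to \mc{O}_{\mb{B}l_{Z/X}}\to \iota_{Z/X*}\mc{O}_{E_{Z/X}}.$$
Tensoring with $\mc{O}_{\mb{B}l_{Z/X}}(n)$, applying the projection formula (part (3) of \cref{thm:lipman}), and using the restriction identity $\mc{O}_{\mb{B}l_{Z/X}}(n)|_{E_{Z/X}}\cong \mc{O}_{\mb{P}_Z(C_{Z/X})}(n)$ already established then yields the desired fiber sequence for every $n\in\mb{Z}$.

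The main bookkeeping issue is ensuring that the two universal $\mb{G}_m$-actions (the one on $\mb{V}_Z(C_{Z/X})$ coming from the first item of \cref{thm:bigdiagram} and the one on $\mb{V}^*_{\mb{B}l_{Z/X}}(\mc{O}_{\mb{B}l_{Z/X}}(1))$) are identified under the open embedding $j_{Z/X}$, so that the two quotients really compute $\mb{P}_Z(C_{Z/X})$ and $E_{Z/X}$ together with their respective tautological line bundles. This is precisely the $\mb{G}_m$-equivariance asserted in \cref{thm:bigdiagram}, so once the restriction to $X\times\{0\}$ is made, the identification of stacks and of line bundles requires no further computation beyond restriction and taking the quotient.
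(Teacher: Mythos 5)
The proposal is correct and matches what the paper intends: the corollary is extracted directly from the $\mb{G}_m$-equivariant Cartesian diagram of \cref{thm:bigdiagram} by restricting to $X\times\{0\}$, taking $\mb{G}_m$-quotients, and combining the generalized-Cartier-divisor cofiber sequence with the projection formula, exactly as you describe. The paper states this as a corollary with no explicit proof, so your spelled-out argument is the natural derivation it implicitly leaves to the reader.
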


\subsection{The derived (extended) Rees-algebra} By (\ref{Rees1}) of \cref{thm:bigdiagram}, we have the following definition
\begin{definition}[Definition 5.13 of Paper B of \cite{Hekking_2022}, the derived Rees-algebra]
  The extended Rees algebra of $Z\to X$ is the $\mb{Z}$-graded quasi-coherent $\mc{O}_{X}[t^{-1}]$-algebra $R_{Z/X}^{ext}:=\mc{O}_{D_{Z/X}}$, where the $\mb{Z}$-grading is induced by the $\mb{G}_{m}$-action on $D_{Z/X}$. We denote
  $$R_{Z/X}^{ext}=\bigoplus_{d\in \mb{Z}}R_{Z/X}^{d},$$
  where $R_{Z/X}^{d}$ is the degree $d$ part of the extended Rees-algebra. We define the Rees algebra as
  $$R_{Z/X}^{\geq 0}:=\bigoplus_{d\geq 0}R_{Z/X}^{d}.$$
  Similarly, we denote
  $$R_{Z/X}^{\leq 0}:=\bigoplus_{d\leq 0}R_{Z/X}^{d}.$$
\end{definition}

\begin{lemma}[Section 6.10 of \cite{hekking2021graded}]
  \label{support1}
  The map $\mc{O}_{X}[t^{-1}]\to R_{Z/X}^{\leq 0}$ is an equivalence.
\end{lemma}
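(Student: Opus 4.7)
The plan is to deduce the lemma from a degree-by-degree comparison, exploiting the cofiber sequence attached to multiplication by the degree $-1$ element $t^{-1}\in R_{Z/X}^{-1}$.

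First, using the left Cartesian square in \cref{eq:051733} together with the affineness of $d_{Z/X}$, I would identify the cofiber of $t^{-1}:R_{Z/X}^{ext}\to R_{Z/X}^{ext}$ with the restriction of $R_{Z/X}^{ext}$ to $X\times\{0\}$, namely $f_{*}S^{\bullet}_{Z}(C_{Z/X})$. Taking the degree-$d$ component gives a fiber sequence
\begin{equation*}
R_{Z/X}^{d+1}\xrightarrow{\ t^{-1}\ }R_{Z/X}^{d}\longrightarrow f_{*}S^{d}_{Z}(C_{Z/X}).
\end{equation*}
Since $S^{d}_{Z}(C_{Z/X})=0$ for every $d<0$, multiplication by $t^{-1}$ is an equivalence on every strictly negative graded piece. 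Iterating, each $R_{Z/X}^{-d}$ with $d>0$ is canonically identified with $R_{Z/X}^{0}$ via $t^{-d}$, and the required equivalence $\mc{O}_{X}[t^{-1}]\to R_{Z/X}^{\leq 0}$ therefore reduces to the single statement that the structural map $\mc{O}_{X}\to R_{Z/X}^{0}$ is an equivalence.

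For the degree-zero identification, I would combine the above with the right Cartesian square of \cref{eq:051733}. The open immersion $X\times\mb{G}_{m}\hookrightarrow D_{Z/X}$ produces a $\mb{Z}$-graded map
\begin{equation*}
R_{Z/X}^{ext}\longrightarrow j_{*}\mc{O}_{X\times\mb{G}_{m}}=\mc{O}_{X}[t,t^{-1}],
\end{equation*}
whose non-positive part retracts the structural $\mc{O}_{X}[t^{-1}]\to R_{Z/X}^{\leq 0}$ and hence yields a splitting $R_{Z/X}^{\leq 0}\cong\mc{O}_{X}[t^{-1}]\oplus K$ in the stable $\infty$-category of $\mc{O}_{X}[t^{-1}]$-modules. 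By the first step, multiplication by $t^{-1}$ is invertible on $R_{Z/X}^{\leq 0}$, and hence on $K$, in each negative degree; simultaneously the global localisation $R_{Z/X}^{ext}\otimes_{\mc{O}_{X}[t^{-1}]}\mc{O}_{X}[t,t^{-1}]\cong\mc{O}_{X}[t,t^{-1}]$ forces the $\mc{O}_{X}[t,t^{-1}]$-localisation of $K$ to vanish. These two properties together are incompatible unless $K=0$, yielding the lemma.

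The main obstacle is closing the last argument rigorously: in the derived graded setting, reconciling the invertibility of $t^{-1}$ on $K$ with the vanishing of its $\mc{O}_{X}[t,t^{-1}]$-localisation requires some care with graded localisations. If that reconciliation proves delicate, one can alternatively appeal directly to the explicit construction of the derived extended Rees algebra in \cite[\S 6]{hekking2021graded}, in which the non-positive graded pieces are built into the definition as the polynomial ring $\mc{O}_{X}[t^{-1}]$; the content of the lemma then reduces to the compatibility of that construction with the deformation-space description $R_{Z/X}^{ext}=d_{Z/X*}\mc{O}_{D_{Z/X}}$ established in \cref{thm:bigdiagram}.
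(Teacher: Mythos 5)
The paper does not actually prove this lemma: it is cited verbatim from Section 6.10 of Hekking's thesis \cite{hekking2021graded}, so your argument is new content rather than a reconstruction of anything in the source. Your first reduction is exactly right: pushing forward the left Cartesian square of \cref{eq:051733} (using that $d_{Z/X}$ is affine) produces the degree-$d$ fiber sequence $R_{Z/X}^{d+1}\xrightarrow{t^{-1}}R_{Z/X}^{d}\to f_{*}S^{d}_{Z}(C_{Z/X})$, and since $S^{d}_{Z}(C_{Z/X})=0$ for $d<0$ by the paper's convention, $t^{-1}$ is an equivalence landing in every strictly negative degree. The right square of \cref{eq:051733} likewise identifies $R_{Z/X}^{ext}[(t^{-1})^{-1}]$ with $\mc{O}_{X}[t,t^{-1}]$, compatibly with the structural maps, because $X\times\mb{G}_{m}\to D_{Z/X}\to X\times\mb{A}^{1}$ composes to the standard open immersion.

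The splitting detour through $K$ is, however, unnecessary, and it is precisely what creates the difficulty you flag at the end. Once the transition maps $R_{Z/X}^{d+1}\to R_{Z/X}^{d}$ are known to be equivalences for $d<0$, the localisation in each fixed graded degree $d$ is a filtered colimit whose transition maps eventually stabilise, so $\bigl(R_{Z/X}^{ext}[(t^{-1})^{-1}]\bigr)_{d}\cong R_{Z/X}^{0}$ for every $d$; comparing with $\mc{O}_{X}[t,t^{-1}]$, whose degree-$d$ piece is $\mc{O}_{X}$, gives $R_{Z/X}^{0}\cong\mc{O}_{X}$ directly, and one checks this is the structural unit by the open-immersion observation above. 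The remaining non-positive degrees then follow by applying powers of the equivalence $t^{-1}$. This degree-by-degree argument avoids any interplay between graded direct summands and localisation entirely, so the ``care with graded localisations'' you worry about is not a genuine obstacle — but it is a signal that the more direct route is the one to take.
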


\begin{lemma}[Theorem 5.15 of Paper B of \cite{Hekking_2022} and proof of it]
  The following composition of maps:
  $$\mb{V}_{\mb{P}_{Z}(C_{Z/X})}^{*}(\mc{O}_{\mb{P}_{Z}(C_{Z/X})}(1)) \to D_{Z/X}=\mathrm{Spec} R_{Z/X}^{ext}\to \mathrm{Spec} R_{Z/X}^{\geq 0}$$
  identifies $\mb{V}_{\mb{P}_{Z}(C_{Z/X})}^{*}(\mc{O}_{\mb{P}_{Z}(C_{Z/X})}(1))$ with $\mathrm{Spec} R_{Z/X}^{\geq 0}-X$, and thus induces an equivalence
  $$Bl_{Z/X}\to \mathrm{Proj}_{X}R_{Z/X}.$$
\end{lemma}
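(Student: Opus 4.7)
The plan is to interpret the inclusion of $\mb{Z}$-graded $\mc{O}_X$-algebras $R_{Z/X}^{\geq 0}\hookrightarrow R_{Z/X}^{ext}$ as a $\mb{G}_m$-equivariant affine morphism of derived $X$-schemes
$$\pi\colon D_{Z/X}=\mathrm{Spec}\,R_{Z/X}^{ext}\longrightarrow \mathrm{Spec}\,R_{Z/X}^{\geq 0},$$
and then to show that $\pi$ matches the two canonical zero sections: the embedding $X\hookrightarrow \mathrm{Spec}\,R_{Z/X}^{\geq 0}$ cut out by the augmentation $R_{Z/X}^{\geq 0}\to R_{Z/X}^0=\mc{O}_X$, and the closed embedding $Z\times \mb{A}^1\hookrightarrow D_{Z/X}$ provided by \cref{thm:bigdiagram}. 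Granted this, passing to complements and pre-composing with the open embedding $j_{Z/X}$ of \cref{thm:bigdiagram} yields the claimed identification, after which a $\mb{G}_m$-quotient produces $\mb{B}l_{Z/X}\cong \mathrm{Proj}_X R_{Z/X}^{\geq 0}$.

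First, I verify that $\pi^{-1}(X)\cong Z\times \mb{A}^1$. By \cref{ass1} and \cref{131666} I may work Zariski-locally on $X$ and assume $Z\hookrightarrow X$ is cut out by a regular sequence $f_1,\ldots,f_r$; locally $R_{Z/X}^{\geq 0}$ is then the derived $\mc{O}_X$-algebra generated in degree $+1$ by $\tau_i:=\tau f_i$, and the augmentation ideal $R_{Z/X}^{>0}$ is generated by the $\tau_i$. Its image in $R_{Z/X}^{ext}$ generates an ideal which, via the relations $\tau_i\cdot t^{-1}=f_i$ forced by \cref{support1} and the graded multiplication, also contains each $f_i\in R_{Z/X}^0$. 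The resulting derived quotient is $\mc{O}_Z[t^{-1}]=\mc{O}_{Z\times \mb{A}^1}$, as desired.

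Next I establish that $\pi$ restricts to an equivalence between the open complements. The degree-one generators $\tau_i$ produce principal opens $D_+(\tau_i)$ which cover $\mathrm{Spec}\,R_{Z/X}^{\geq 0}-X$ (being generators of $R_{Z/X}^{>0}$), and whose preimages cover $D_{Z/X}-Z\times \mb{A}^1$. On each chart the two localizations coincide,
$$R_{Z/X}^{\geq 0}[\tau_i^{-1}]\;\simeq\;R_{Z/X}^{ext}[\tau_i^{-1}],$$
because inside the right-hand side $t^{-1}$ is realized as $f_i\cdot \tau_i^{-1}$, which already lives in the left-hand side and generates the entirety of $R_{Z/X}^{<0}$ there. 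Glueing these local equivalences along $D_+(\tau_i)\cap D_+(\tau_j)$ produces a global equivalence $D_{Z/X}-Z\times \mb{A}^1\xrightarrow{\sim}\mathrm{Spec}\,R_{Z/X}^{\geq 0}-X$.

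Finally, I pre-compose with the open embedding $j_{Z/X}\colon\mb{V}_{\mb{B}l_{Z/X}}^{*}(\mc{O}_{\mb{B}l_{Z/X}}(1))\hookrightarrow D_{Z/X}$ from \cref{thm:bigdiagram}, whose image is precisely $D_{Z/X}-Z\times \mb{A}^1$. The composition is a $\mb{G}_m$-equivariant equivalence of derived $X$-schemes, and taking the $\mb{G}_m$-quotient the left side becomes $\mb{B}l_{Z/X}$ (applying $\mb{V}_Y^*(\mc{L})/\mb{G}_m\cong \mb{P}_Y(\mc{L})$ to the line bundle $\mc{L}=\mc{O}_{\mb{B}l_{Z/X}}(1)$), while the right side is by definition $\mathrm{Proj}_X R_{Z/X}^{\geq 0}$. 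The hardest step is the derived content of the two middle paragraphs: the classical manipulations of localizations and derived quotients of the extended Rees algebra must survive in the derived setting. This is precisely where the quasi-smoothness Assumption \ref{ass1} enters through \cref{131666}, guaranteeing that the $\tau_i$'s behave as a regular sequence on $R_{Z/X}^{\geq 0}$ and that the derived manipulations above agree with their classical analogues.
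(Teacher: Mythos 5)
The paper does not supply its own proof of this lemma; it cites Theorem 5.15 of Paper~B of Hekking's thesis \cite{Hekking_2022} and leaves the argument to the reference. There is therefore no proof in the paper to compare against line by line, but your reconstruction follows what is essentially the standard derived upgrade of the classical ``$\mathrm{Proj}$ of the Rees algebra is the blow-up'' argument, and the three-step structure (compute $\pi^{-1}(X)$, identify the open complements by inverting degree-one generators, pass to $\mb{G}_m$-quotients) is the expected route and is consistent with the surrounding material, in particular with \cref{thm:bigdiagram}, \cref{support1}, and the examples \cref{exm:Rees1} and \cref{ex:2}.

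Two points to tighten. First, you silently correct what appears to be a notational slip in the paper: the source of $j_{Z/X}$ is $\mb{V}_{\mb{B}l_{Z/X}}^{*}(\mc{O}_{\mb{B}l_{Z/X}}(1))$ as in \cref{06052132}, not $\mb{V}_{\mb{P}_{Z}(C_{Z/X})}^{*}(\mc{O}_{\mb{P}_{Z}(C_{Z/X})}(1))$ as the lemma's display writes; your choice is the right one, since the $\mb{G}_m$-quotient of the latter would be $\mb{P}_Z(C_{Z/X})=E_{Z/X}$ rather than $\mb{B}l_{Z/X}$. Second, the third paragraph is the one place where the argument is genuinely incomplete. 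Observing that $t^{-1}=f_i\tau_i^{-1}$ lies in $R_{Z/X}^{\geq 0}[\tau_i^{-1}]$ only shows the localized map is ``surjective in generation''; in the derived setting one must verify that $R_{Z/X}^{\geq 0}[\tau_i^{-1}]\to R_{Z/X}^{ext}[\tau_i^{-1}]$ is an equivalence on all homotopy groups, which requires an additional argument (e.g.\ writing both sides explicitly in the Koszul-type presentation of \cref{exm:Rees1}, or checking that the relation $\tau_j f_i=\tau_i f_j$ already holds in $R_{Z/X}^{\geq 0}$ so that the two localizations have literally the same presentation). The closing paragraph acknowledges this as the ``hardest step,'' which is fair, but in a finished proof it is precisely the step that carries the content and should not be elided. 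With that caveat, the plan is sound and would almost certainly match the cited proof in Hekking's thesis.
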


\subsection{Derived blow-ups of zero cosections}We first consider several examples of the derived Rees-algebra:
\begin{example}[Example 2.6 of \cite{hekking2022stabilizer}]
  Let $A$ be a connective commutative simplicial algebra over $\mb{C}$. Let $\sigma_{1},\cdots,\sigma_{k}$ be cycles $\sigma_{i}\in \pi_{0}(A)$, and let $B$ be the finite quotient $A/(\sigma_{1},\cdots,\sigma_{k})$. Then
\begin{equation}
     \label{exm:Rees1}
    R_{B/A}^{ext}\cong \frac{A[t^{-1},u_{1},\cdots,u_{n}]}{(u_{1}t^{-1}-\sigma_{1},\cdots, u_{n}t^{-1}-\sigma_{n})},
  \end{equation}
  where the $\mb{G}_{m}$-acts on $R_{B/A}^{ext}$ such that $u_{i}$ are free in homgeneous degree $1$ and homological degree $0$.
\end{example}

\begin{example}[Proposition 2.13 of \cite{hekking2022stabilizer}]
  \label{ex:2}
  Let $A$ be a connective commutative simplicial algebra over $\mathrm{Spec}(\mb{C})$. We are given a sequence of closed embeddings $Z\to U\to \mathrm{Spec} A$, corresponding to
  $$A\to B\to D$$
  where $B=A/(f_{1},\cdots,f_{n})$ for certain $f_{i}\in \pi_{0}A$ and $D=A/(g_{1},\cdots,g_{m})$ for certain $g_{i}\in \pi_{0}A$. Then there exists $\lambda_{ji}\in \pi_{0}A$, such that
  \begin{equation}
    \label{eq:32}
  f_{i}=\sum_{j}\lambda_{ij}g_{j}    
  \end{equation}
  for any $1\leq i\leq n$. Moreover, we have
  \begin{equation}
    R_{Z/U}^{ext}\cong \frac{A[t^{-1},\underline{w}]}{(t^{-1}\underline{w}-\underline{g},\sum_{j}\lambda_{1j}w_{j},\cdots,\sum_{j}\lambda_{nj}w_{j})}
  \end{equation}
  where $\underline{w}=(w_{1},\cdots,w_{n})$ are in homogical degree $0$ and homogeneous degree $1$.
\end{example}

Now we recall \cref{cd:projection} in \cref{ex:215}:
  \begin{equation*}
    \begin{tikzcd}
      \mb{P}_{X}(\mc{F})\ar{r}{i_{0}'} \ar{d}{pr_{\mc{F}}} &  \mb{V}_{\mb{P}_{X}(\mc{F})}(\mc{O}_{\mb{P}_{X}(\mc{F})}(1))\ar{d}{\epsilon_{\mc{F}}} \\
      X \ar{r}{i_{0}} & \mb{V}_{X}(\mc{F})
    \end{tikzcd}
  \end{equation*}
  where $i_{0}$ and $i_{0}'$ are the zero cosections. Reducing to the local case and then applying \cref{ex:2}, we have
  \begin{proposition}
    \label{prop:conj41}    \label{conj:4.1}
    When $\mc{F}$ has perfect-amplitude contained in $[0,1]$ and constant rank $r$, then $$\mb{V}_{\mb{P}_{X}(\mc{F})}(\mc{O}_{\mb{P}_{X}(\mc{F})}(1))\cong \mb{B}l_{X/\mb{V}_{X}(\mc{F})}$$
    and $\epsilon_{\mc{F}}\cong pr_{X/\mb{V}_{X}(\mc{F})}$ is the canonical projection map.
  \end{proposition}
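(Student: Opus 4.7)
My plan is to construct a canonical comparison map via the universal property of the derived blow-up, then show it is an equivalence by reducing to a free local model and invoking \cref{ex:2}. The commutative square \cref{cd:projection} from \cref{ex:215} already has the shape of a strict virtual Cartier divisor on $\mb{V}_{\mb{P}_X(\mc{F})}(\mc{O}_{\mb{P}_X(\mc{F})}(1))$ over the pair $(X,\mb{V}_X(\mc{F}))$: the map $i_0'$ is the zero section of the line bundle $\mc{O}_{\mb{P}_X(\mc{F})}(1)$, hence a virtual Cartier divisor in the sense of \cref{lem:2018} with conormal $\mc{O}_{\mb{P}_X(\mc{F})}(1)$; the induced conormal map $pr_{\mc{F}}^{*}C_{X/\mb{V}_X(\mc{F})}\cong pr_{\mc{F}}^{*}\mc{F}\to \mc{O}_{\mb{P}_X(\mc{F})}(1)$ is the tautological surjection $\rho_{\mc{F}}$ and is therefore surjective on $\pi_0$; and the classical truncation of the square is Cartesian by the familiar classical blow-up of the zero section of a vector bundle. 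The universal property of $\mb{B}l_{X/\mb{V}_X(\mc{F})}$ therefore produces a canonical morphism
$$\Phi: \mb{V}_{\mb{P}_X(\mc{F})}(\mc{O}_{\mb{P}_X(\mc{F})}(1)) \to \mb{B}l_{X/\mb{V}_X(\mc{F})}$$
over $\mb{V}_X(\mc{F})$, and the asserted identification $\epsilon_{\mc{F}}\cong pr_{X/\mb{V}_X(\mc{F})}$ will be automatic once $\Phi$ is shown to be an equivalence.

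To prove $\Phi$ is an equivalence, I use that both sides are stable under pull-back along maps $X'\to X$ (for the blow-up by \cref{prop:func}, for the projectivization and affine cone by Jiang's functoriality) and work smooth-locally, so I may assume $X=\mathrm{Spec}(A_0)$ and $\mc{F}\cong [V\xrightarrow{\tau}W]$ with $V=A_0^n$, $W=A_0^r$ free. By \cref{prop:jiang433} one obtains a chain of closed embeddings $X\subset \mb{V}_X(\mc{F})\subset \mb{V}_X(W)=\mathrm{Spec}(A)$ with $A:=A_0[y_1,\ldots,y_r]$, cut out respectively by the regular sequence $y_j$ and by the relations $\tilde\tau_i:=\sum_j\lambda_{ji}y_j$ (the $\lambda_{ji}$ being the matrix entries of $\tau$). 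This is exactly the setup of \cref{ex:2} with $f_i=\tilde\tau_i$ and $g_j=y_j$; applying it and eliminating $y_j$ via the imposed relation $y_j=t^{-1}z_j$ gives
$$R^{ext}_{X/\mb{V}_X(\mc{F})}\cong A_0[t^{-1},z_j,\xi_i],\qquad d(\xi_i)=\sum_j\lambda_{ji}z_j,$$
where $z_j$ and $\xi_i$ both sit in homogeneous degree $1$ (and homological degrees $0$ and $1$, respectively) while $t^{-1}$ sits in homogeneous degree $-1$. One then recognises this DGA as $\mc{O}_{\mb{V}_X(\mc{F})}\otimes_{\mb{C}}\mb{C}[t^{-1}]$, so $D_{X/\mb{V}_X(\mc{F})}\cong \mb{V}_X(\mc{F})\times \mb{A}^1$ with the $\mb{G}_m$-action that is universal on the first factor and hyperplane on the second.

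By \cref{thm:bigdiagram}(2), the open $\mb{V}^{*}_{\mb{B}l_{X/\mb{V}_X(\mc{F})}}(\mc{O}(1))\subset D_{X/\mb{V}_X(\mc{F})}$ is the complement of $X\times\mb{A}^1$, which in the above model becomes $\mb{V}^{*}_X(\mc{F})\times\mb{A}^1$; taking the $\mb{G}_m$-quotient identifies $\mb{B}l_{X/\mb{V}_X(\mc{F})}$ with a line bundle over $\mb{V}^{*}_X(\mc{F})/\mb{G}_m=\mb{P}_X(\mc{F})$. A direct weight computation, together with Jiang's convention that the total space of $\mc{L}$ is $\mb{V}_X(\mc{L}^{\vee})$, identifies the resulting total space with $\mb{V}_{\mb{P}_X(\mc{F})}(\mc{O}_{\mb{P}_X(\mc{F})}(1))$, and matching with $\Phi$ yields the equivalence. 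I expect the main obstacle to be the careful bookkeeping of homogeneous and homological gradings and of the universal-versus-hyperplane $\mb{G}_m$-action conventions through the identification $D\cong\mb{V}_X(\mc{F})\times \mb{A}^1$ and its $\mb{G}_m$-quotient; once these are organised, the equivalence essentially falls out of \cref{ex:2}.
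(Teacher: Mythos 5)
Your proposal is correct and takes essentially the same route as the paper: reduce to a local model where $\mc{F}$ is represented by a two‐term complex of free modules, and then invoke \cref{ex:2} (Hekking–Rydh–Savvas' local description of the extended Rees algebra for a chain of closed embeddings). The paper's own argument is compressed into a single sentence, so your write‐up is a faithful filling‐in: you first set up the canonical comparison morphism via the universal property of the derived blow‐up (checking that \cref{cd:projection} has the shape of a strict virtual Cartier divisor over the pair), and then verify it is an equivalence locally. Your detour through the full deformation space $D_{X/\mb{V}_X(\mc{F})}\cong\mb{V}_X(\mc{F})\times\mb{A}^1$ followed by a $\mb{G}_m$-quotient of the open part is logically equivalent to directly taking $\mathrm{Proj}$ of the Rees algebra computed from \cref{ex:2}; you essentially re-derive \cref{cor:ccc} as an intermediate step. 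The one place where your argument would need to be pinned down to a full proof is the final degree bookkeeping (you explicitly flag this yourself): the identification of the $\mb{G}_m$-quotient of $\mb{V}^*_X(\mc{F})\times\mb{A}^1$ with $\mb{V}_{\mb{P}_X(\mc{F})}(\mc{O}_{\mb{P}_X(\mc{F})}(1))$ rather than its dual requires tracking the universal-versus-hyperplane conventions through the isomorphism $\mb{V}^*_{\mb{B}l}(\mc{O}(-1))\cong\mb{V}^*_{\mb{B}l}(\mc{O}(1))$ in \cref{eq:051731}, which flips the weight; as long as that sign is handled consistently with how $j_{Z/X}$ is defined in \cref{thm:bigdiagram}, the rest of the argument goes through.
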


  \begin{corollary}
    \label{cor:ccc} 
     Under the assumption of \cref{prop:conj41}, we have
$$D_{X/\mb{V}_{X}(\mc{F})}\cong \mb{V}_{X}(\mc{F})\times\mb{A}^{1},$$
where the $\mb{G}_{m}$-action is universal on $\mb{V}_{X}(\mc{F})$ and hyperplane on $\mb{A}^{1}$. The projection from $D_{X/\mb{V}_{X}(\mc{F})}$ to $\mb{V}_{X}(\mc{F})\times\mb{A}^{1}$ is induced by
\begin{align*}
  \mb{V}_{X}(\mc{F})\times\mb{A}^{1} & \to \mb{V}_{X}(\mc{F})\times\mb{A}^{1}\\
  (v,t)&\to (tv,t).
\end{align*}
  \end{corollary}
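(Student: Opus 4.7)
The plan is to derive the statement from the preceding \cref{prop:conj41} applied to $\mc{F}\oplus\mc{O}_X$, by computing the open complement of $\mb{B}l_{X/\mb{V}_X(\mc{F})}$ inside $\mb{B}l_{X/\mb{V}_X(\mc{F})\times\mb{A}^1}$. The starting observation is that $\mb{V}_X(\mc{F})\times\mb{A}^1\cong \mb{V}_X(\mc{F}\oplus\mc{O}_X)$ canonically, under which the closed embedding $X\hookrightarrow \mb{V}_X(\mc{F})\times\{0\}$ becomes precisely the zero cosection of $\mb{V}_X(\mc{F}\oplus\mc{O}_X)$. Since $\mc{F}\oplus\mc{O}_X$ is again of perfect-amplitude contained in $[0,1]$ and of constant rank $r+1$, \cref{prop:conj41} yields
\[
\mb{B}l_{X/\mb{V}_X(\mc{F})\times\mb{A}^1}\cong \mb{V}_{\mb{P}_X(\mc{F}\oplus\mc{O}_X)}(\mc{O}_{\mb{P}_X(\mc{F}\oplus\mc{O}_X)}(1)),
\]
with projection to $\mb{V}_X(\mc{F})\times\mb{A}^1$ given by $\epsilon_{\mc{F}\oplus\mc{O}_X}$ from \cref{ex:215}.

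Next I would identify the open complement of $\mb{B}l_{X/\mb{V}_X(\mc{F})}\cong\mb{V}_{\mb{P}_X(\mc{F})}(\mc{O}_{\mb{P}_X(\mc{F})}(1))$ inside this space. Functorially, $\mb{P}_X(\mc{F})$ embeds in $\mb{P}_X(\mc{F}\oplus\mc{O}_X)$ as the locus of quotients $\mc{F}\oplus\mc{O}_X\to L$ that kill the $\mc{O}_X$-summand, while the open complement consists of quotients for which the composition $\mc{O}_X\to L$ is surjective on $\pi_0$. Since both $\mc{O}_X$ and $L$ are line bundles and any map of line bundles surjective on $\pi_0$ is automatically an equivalence, on the complement one obtains a canonical trivialization $L\cong\mc{O}_X$, so that the quotient takes the form $(\phi,\mathrm{id}):\mc{F}\oplus\mc{O}_X\to\mc{O}_X$ for a uniquely determined cosection $\phi:\mc{F}\to\mc{O}_X$. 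This gives an equivalence $\mb{P}_X(\mc{F}\oplus\mc{O}_X)\setminus\mb{P}_X(\mc{F})\cong \mb{V}_X(\mc{F})$ on which $\mc{O}_{\mb{P}_X(\mc{F}\oplus\mc{O}_X)}(1)$ is canonically trivialized by the $\mc{O}_X$-component of the universal quotient. Combining with \cref{lm:cl} I would obtain
\[
D_{X/\mb{V}_X(\mc{F})}\cong \mb{V}_{\mb{V}_X(\mc{F})}(\mc{O}_{\mb{V}_X(\mc{F})})\cong \mb{V}_X(\mc{F})\times\mb{A}^1.
\]

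Finally I would trace a point through the identifications to verify the projection formula. A point $(v,t)\in \mb{V}_X(\mc{F})\times\mb{A}^1$ on the right-hand side corresponds to the quotient $(\phi_v,\mathrm{id}):\mc{F}\oplus\mc{O}_X\to\mc{O}_X$ paired with the scalar $t$, and by the description of $\epsilon_{\mc{F}\oplus\mc{O}_X}$ in \cref{ex:215} as the composition $\varrho_{\mc{O}(1)}\circ\Pi^{*}\rho_{\mc{F}\oplus\mc{O}_X}$, its image in $\mb{V}_X(\mc{F}\oplus\mc{O}_X)$ is the cosection $(t\phi_v,t)$, namely the point $(tv,t)$. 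The prescribed $\mb{G}_m$-equivariance then follows from the naturality of each construction. The main obstacle is that the second paragraph tacitly uses the assertion that under \cref{prop:conj41} the closed embedding $\mb{B}l_{X/\mb{V}_X(\mc{F})}\hookrightarrow \mb{B}l_{X/\mb{V}_X(\mc{F})\times\mb{A}^1}$ produced by \cref{lm:cl} coincides with the natural embedding $\mb{V}_{\mb{P}_X(\mc{F})}(\mc{O}(1))\hookrightarrow \mb{V}_{\mb{P}_X(\mc{F}\oplus\mc{O}_X)}(\mc{O}(1))$ induced by $\mb{P}_X(\mc{F})\hookrightarrow\mb{P}_X(\mc{F}\oplus\mc{O}_X)$; this compatibility between two a priori different strict virtual Cartier divisor structures is the delicate step, which I would handle either by a universal-property argument, or, as a fallback, by a direct Zariski-local computation via \cref{exm:Rees1} and \cref{ex:2}, from which the scaling description of $d_{X/\mb{V}_X(\mc{F})}$ emerges upon eliminating the redundant generators $u_i t^{-1}-w_i$.
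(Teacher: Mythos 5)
Your proposal is mathematically sound, but it takes a genuinely different route from what the paper intends. The paper gives no explicit proof of this corollary; however, based on the proof of \cref{prop:conj41} (``Reducing to the local case and then applying \cref{ex:2}''), the intended argument is almost certainly the direct local computation: by \cref{exm:Rees1}/\cref{ex:2}, for the zero cosection with $\mc{F}$ locally free with generators $x_1,\dots,x_r$, one gets $R^{\mathrm{ext}}_{X/\mb{V}_X(\mc{F})}\cong S^{*}(\mc{F})[t^{-1},u_1,\dots,u_r]/(u_it^{-1}-x_i)\cong S^{*}(\mc{F})[t^{-1}]$ after eliminating the $x_i$, with $u_i$ in homogeneous degree $1$ and $t^{-1}$ in degree $-1$; this at once identifies $D_{X/\mb{V}_X(\mc{F})}=\mathrm{Spec}\,R^{\mathrm{ext}}$ with $\mb{V}_X(\mc{F})\times\mb{A}^1$ carrying the mixed universal/hyperplane action, and the map $x_i\mapsto u_it^{-1}$ on coordinate rings is precisely $(v,t)\mapsto(tv,t)$ on points. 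Your approach instead runs everything through the global identification $\mb{V}_X(\mc{F})\times\mb{A}^1\cong\mb{V}_X(\mc{F}\oplus\mc{O}_X)$ and reapplies \cref{prop:conj41} to $\mc{F}\oplus\mc{O}_X$, then computes the open complement of $\mb{P}_X(\mc{F})$ in $\mb{P}_X(\mc{F}\oplus\mc{O}_X)$. This is more conceptual and cleanly reuses the already-established blow-up identification, but it buys a nontrivial obligation that you correctly flag: one must show that the closed embedding $\mb{B}l_{X/\mb{V}_X(\mc{F})}\hookrightarrow\mb{B}l_{X/\mb{V}_X(\mc{F})\times\mb{A}^1}$ coming from \cref{lm:cl} (a strict virtual Cartier divisor map) coincides with the ``obvious'' inclusion $\mb{V}_{\mb{P}_X(\mc{F})}(\mc{O}(1))\hookrightarrow\mb{V}_{\mb{P}_X(\mc{F}\oplus\mc{O}_X)}(\mc{O}(1))$; without this the identification of the complement is unjustified. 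Your fallback to the Zariski-local Rees computation to close this gap is exactly the paper's intended route, so in the end either you do the local check at the very end or you do it from the start --- the local version is shorter. One further point worth being more careful about than ``naturality'': the $\mb{G}_m$-action on $\mb{V}_X(\mc{F})\times\mb{A}^1$ coming from the deformation-to-the-normal-cone construction is trivial on the $\mb{V}_X(\mc{F})$-factor and hyperplane on $\mb{A}^1$, which under $\mb{V}_X(\mc{F})\times\mb{A}^1\cong\mb{V}_X(\mc{F}\oplus\mc{O}_X)$ is \emph{not} the universal action on the right-hand side; verifying that the induced action on $D_{X/\mb{V}_X(\mc{F})}$ is universal-on-$\mb{V}_X(\mc{F})$/hyperplane-on-$\mb{A}^1$ as claimed is most transparent in the local Rees presentation, where the degrees of $u_i$ and $t^{-1}$ are manifest.
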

  \begin{remark}
    \cref{prop:conj41} should hold even if only assuming that $\mc{F}$ is a connective quasi-coherent sheaf over $X$, by applying the theory developed in \cite{jiang2022derived,Hekking_2022}. The assumption in \cref{prop:conj41} is already enough for our paper, so we leave the general case to readers with interest.
  \end{remark}

 \section{The Generalized Vanishing Theorem}
 \label{sec:5}
  In this section, we denote $f:Z\to X$ as a closed embedding of derived $S$-Artin stacks which satisfy \cref{ass1}.

\subsection{Local charts}
We first prove a local structure lemma for quasi-smooth derived schemes:
\begin{lemma}
  \label{kuranishi}
  Let $T$ be a derived scheme, and 
  $$Y_{0}\xrightarrow{f}Y_{1}$$ be a closed embeddings of quasi-smooth derived $T$-schemes. Then for any closed point $y\in |Y_{1}|$, there is a Zariski-open neighborhood of $y$, which we denote as $Y_{1}'$, and a sequence of maps of locally free sheaves over a smooth derived $T$-scheme $W$:
  $$V_{1}\xrightarrow{g_{1}}V_{0}\xrightarrow{w_{0}}\mc{O}_{W}$$
  such that $Y_{1}'$ is the derived zero locus of $w_{0}\circ g_{1}$ and $Y_{0}':=Y_{1}'\times_{Y_{1}}Y_{0}$ is the derived zero locus of $w_{0}$. Moreover, we have the following Cartesian diagram:
  \begin{equation}
    \label{eq:refcd}
    \begin{tikzcd}
      Y_{0}' \ar{d}\ar{r} & Y_{1}'\ar{d} \ar{r}& W \ar{d}{i_{w_{0}}}\\
      W \ar{r}{i_{0}} & \mb{V}_{W}(g_{1}) \ar[r] \ar{d} & \mb{V}_{W}(V_{0})\ar{d}{}\\
      & W\ar{r}{i_{0}}  & \mb{V}_{W}(V_{1}).
    \end{tikzcd}
  \end{equation}
\end{lemma}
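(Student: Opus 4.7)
The plan is to apply the local structure result \cref{131666} twice, invoke \cref{ex:2} to obtain the morphism $g_1$, and derive the Cartesian diagram by iterating the construction of \cref{prop:jiang433}. First I would apply \cref{131666} to the quasi-smooth map $Y_1\to T$ near $y$: this produces a Zariski-open neighborhood $Y_1'$ of $y$ together with a factorization $Y_1'\hookrightarrow W\to T$ with $W=\mathrm{Spec}(A)$ smooth over $T$ and $Y_1'\cong\mathrm{Spec}(A/(f_1,\ldots,f_n))$ for cycles $f_i\in\pi_0(A)$. Setting $Y_0':=Y_1'\times_{Y_1}Y_0$, the composition $Y_0'\to Y_1'\to W$ is a composition of quasi-smooth closed embeddings, hence again a quasi-smooth closed embedding, so applying \cref{131666} once more (after possibly shrinking $W$) allows me to write $Y_0'\cong\mathrm{Spec}(A/(\sigma_1,\ldots,\sigma_m))$ for cycles $\sigma_j\in\pi_0(A)$.

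Next I would set $V_1:=\mathcal{O}_W^{\oplus n}$ and $V_0:=\mathcal{O}_W^{\oplus m}$, and define $w_0:V_0\to\mathcal{O}_W$ to have components $(\sigma_1,\ldots,\sigma_m)$, so that its derived zero locus is $Y_0'$. Since $Y_0'\subset Y_1'$ as closed subschemes of $W$, \cref{ex:2} supplies cycles $\lambda_{ij}\in\pi_0(A)$ with $f_i=\sum_j\lambda_{ij}\sigma_j$; these assemble into a morphism of locally free sheaves $g_1:V_1\to V_0$ given by the matrix $(\lambda_{ij})$, and the composition $w_0\circ g_1:V_1\to\mathcal{O}_W$ recovers the cosection $(f_1,\ldots,f_n)$ up to equivalence. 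Its derived zero locus is therefore equivalent to $Y_1'$.

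Finally, the Cartesian diagram \cref{eq:refcd} would be assembled by iterating \cref{prop:jiang433}. The bottom-right square is the defining Cartesian square of the derived affine cone $\mathbb{V}_W(g_1)$ associated to the morphism $g_1:V_1\to V_0$. Pulling back along the closed embedding $i_{w_0}:W\to\mathbb{V}_W(V_0)$ yields the upper-right Cartesian square, identifying the pullback $W\times_{\mathbb{V}_W(V_0)}\mathbb{V}_W(g_1)$ with the derived zero locus of $w_0\circ g_1$, namely $Y_1'$. A further pullback along $i_0:W\to\mathbb{V}_W(g_1)$ identifies the upper-left pullback with the derived zero locus of $w_0$, namely $Y_0'$. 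Pasting the three Cartesian squares produces \cref{eq:refcd}.

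The main technical subtlety I anticipate is ensuring that the classical relation $f_i=\sum_j\lambda_{ij}\sigma_j$ lifts to a coherent equivalence of cosections $w_0\circ g_1\simeq(f_1,\ldots,f_n)$ in the derived sense, so that the derived zero locus of $w_0\circ g_1$ really is equivalent to $Y_1'$ and not only to its classical truncation. This is precisely what is packaged by \cref{ex:2}: the presentation of $R_{Z/U}^{ext}$ there rests on a coherent lift of the $\lambda_{ij}$ to cycles in $A$, and the same lift transports to our setting to provide the required equivalence.
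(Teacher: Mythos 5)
Your proof follows the paper's proof closely: localize, present $Y_1'$ via \cref{131666} inside a smooth ambient $W$ over $T$, observe that $Y_0'\hookrightarrow W$ is again a quasi-smooth closed embedding, and extract $g_1$ from the matrix $(\lambda_{ij})$ of \cref{ex:2}; the Cartesian diagram is then assembled by iterating \cref{prop:jiang433}, which the paper leaves implicit and which you usefully spell out.

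The one slip is your justification of the second application of \cref{131666}: you assert that $Y_0'\to Y_1'\to W$ is a composition of quasi-smooth closed embeddings, but $Y_0'\hookrightarrow Y_1'$ need \emph{not} be quasi-smooth under the hypotheses of the lemma. From the triangle $L_{Y_1'/T}|_{Y_0'}\to L_{Y_0'/T}\to L_{Y_0'/Y_1'}$, with both outer terms perfect of Tor-amplitude $[-1,0]$, one can only conclude that $L_{Y_0'/Y_1'}$ has Tor-amplitude $[-2,-1]$. A concrete example: take $T=\mathrm{Spec}\,\mathbb{C}$, let $Y_1'$ be the derived zero locus of $0\in\mathcal{O}$ over a point and $Y_0'=\mathrm{pt}$; then $L_{Y_0'/Y_1'}\cong\mathbb{C}[2]$. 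Both $Y_0'$ and $Y_1'$ are quasi-smooth over $T$, yet the embedding has virtual codimension $-1$ (negative virtual codimension is precisely the situation this paper is built to handle). The paper instead argues directly on the composite: since $Y_0'$ is quasi-smooth over $T$ and $W$ is smooth over $T$, the triangle $L_{W/T}|_{Y_0'}\to L_{Y_0'/T}\to L_{Y_0'/W}$ forces $L_{Y_0'/W}$ to have Tor-amplitude $[-1,0]$, and being the cotangent complex of a closed embedding it is concentrated in cohomological degree $\leq -1$, hence is a shifted locally free sheaf. Substituting that justification, the rest of your argument is correct.
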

\begin{proof}
  As it is a Zariski-local question, we can take $T$ and $Y_{1}$ as affine derived schemes. By \cref{lem:2018}, we can moreover assume that there exists a closed embedding
  $$Y_{1}\hookrightarrow \mathrm{Spec} A$$
  where $\mathrm{Spec}A$ is smooth over $T$. Then the closed embedding of $Y_{0}$ into $\mathrm{Spec}\ (A)$ is also quasi-smooth, as the cotangent complex has Tor-amplitude $[-1,0]$. Then still by \cref{lem:2018}, we can assume that (locally)
  $$Y_{1}= \mathrm{Spec} A/(l_{0},\cdots l_{n}), \ Y_{0}=\mathrm{Spec}A/(g_{0},\cdots,g_{m}),$$
  for certain $l_{i},g_{j}\in \pi_{0}A$. The construction of $f_{1}$ follows from \cref{ex:2}. 
\end{proof}

\begin{remark}
  The tuple $(Y_{i}',W,V_{i}^{\vee},w_{i}^{\vee})$ is also called a Kuranishi chart of $Y_{i}$. (see \cite{MR3904157}\cite{MR4379049})
\end{remark}

\begin{lemma}[Lemma C.7 of \cite{yuzhaoderived}, also see Proposition 3.13 of \cite{hekking2022stabilizer}]
  \label{lemA421}
  let $W$ be a derived scheme with morphisms of locally free sheaves:
   $$V_{1}\xrightarrow{g_{1}}V_{0}\xrightarrow{w_{0}}\mc{O}_{W}$$
and we denote $w_{1}:=w_{0}\circ g_{1}$. Let $Y_{0}$ and $Y_{1}$ be the derived zero locus of $w_{0}$ and $w_{1}$ respectively. We consider the projection morphism:
$$pr_{V_{0}}:\mb{P}_{W}(V_{0})\to W.$$
Let
$$\bar{w}_{0}:L_{\mb{P}_{W}(V_{0})/W}\otimes \mc{O}_{\mb{P}_{W}(V_{0})}(1)\to \mc{O}_{\mb{P}_{W}(V_{0})}$$ be the morphism of $pr_{V_{0}}^{*}w_{0}$ restricted to $L_{\mb{P}_{W}(V_{0})/W}\otimes \mc{O}_{\mb{P}_{W}(V_{0})}(1)$, and
$$\bar{g_{1}}: pr_{V_{0}}^{*}V_{1}\otimes \mc{O}_{\mb{P}_{W}(V_{0})}(-1)\to \mc{O}_{\mb{P}_{W}(V_{0})}$$ as the morphism $\mc{O}_{\mb{P}_{W}(V_{0})}(-1)\otimes (\rho_{V_{0}}\circ pr_{V_{0}}^{*}g_{1})$. Then the derived blow-ups

$$\mb{B}l_{Y_{0}/W} \text{ and } \mb{B}l_{Y_{0}/Y_{1}}$$
are the derived zero locus of
$$\bar{w_{0}} \text{ and } \bar{w_{0}}+\bar{g_{1}}$$
over $\mb{P}_{W}(V_{0})$ respectively.
\end{lemma}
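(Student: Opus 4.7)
The plan is to reduce to an explicit local computation on $W$ and then match, on the standard affine charts of $\mb{P}_W(V_0)$, the Rees-algebra description of the derived blow-up to the concrete description of the derived zero locus.

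First I would reduce to the affine free case. Both sides of each asserted equivalence are Zariski-local on $W$, and both are stable under smooth base change on $W$: derived blow-ups by \cref{prop:func}, derived zero loci by base change of fiber products, and the projectivization $\mb{P}_W(V_0)$ together with the Euler fiber sequence from \cref{thm:serre2} and the cosections $\rho_{V_0}$, $pr_{V_0}^* w_0$, $pr_{V_0}^* g_1$ (and hence $\bar w_0$, $\bar g_1$) all commute with smooth base change. So I may assume $W = \mathrm{Spec}(A)$ with $V_0 \cong \mc{O}_W^n$ and $V_1 \cong \mc{O}_W^m$ free, $w_0$ encoded by $(f_1,\dots,f_n) \in \pi_0(A)^n$, and $g_1$ by a matrix $(g_{ij})$, so that $w_1$ is encoded by $F_j := \sum_i g_{ij} f_i$.

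Next I read off the Rees-algebra sides. By \cref{exm:Rees1},
$$R^{ext}_{Y_0/W} \cong A[t^{-1}, u_1, \dots, u_n]/(u_i t^{-1} - f_i),$$
with $u_i$ of weight $1$ and $t^{-1}$ of weight $-1$, and by \cref{ex:2} $R^{ext}_{Y_0/Y_1}$ is the further quotient by the additional relations $\sum_i g_{ij} u_i$ for $j = 1,\dots,m$. The two derived blow-ups are the Proj over $W$ of the corresponding non-negatively graded parts.

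Third, I work on the standard chart $U_k \subset \mb{P}_W(V_0) = \mb{P}^{n-1}_A$ where $u_k$ is invertible, trivializing $\mc{O}(1)$ by $u_k$ and setting $v_j := u_j/u_k$ for $j \neq k$. On the algebra side, the weight-$0$ relation $u_k t^{-1} = f_k$ eliminates $t^{-1}$ as $f_k u_k^{-1}$, and the remaining relations $u_j t^{-1} = f_j$ become $v_j f_k - f_j = 0$, so
$$\mb{B}l_{Y_0/W}\bigl|_{U_k} \cong \mathrm{Spec}\bigl(A[v_j]_{j \neq k}/(v_j f_k - f_j)_{j \neq k}\bigr)$$
understood as a derived quotient, and in the $Y_0 \hookrightarrow Y_1$ case one additionally imposes $\sum_i g_{ij} v_i = 0$ (setting $v_k = 1$). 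On the zero-locus side, the Euler fiber sequence identifies $L_{\mb{P}_W(V_0)/W} \otimes \mc{O}(1)$ on $U_k$ as the rank-$(n-1)$ free module with basis $\{e_j - v_j e_k\}_{j \neq k}$; $\bar w_0$ sends $e_j - v_j e_k \mapsto f_j - v_j f_k$, and $\bar g_1(e'_j) = \sum_i g_{ij} v_i$. Hence the derived zero loci of $\bar w_0$ and of the direct-sum cosection $(\bar w_0, \bar g_1)$ on $U_k$ are cut out by exactly the same equations. Gluing over $k = 1,\dots,n$, using that each side is globally defined inside $\mb{P}_W(V_0)$, yields the two claimed equivalences.

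The main technical point is ensuring that the derived structure is preserved through the localization at $u_k$: the simplicial relations $u_i t^{-1} - f_i$ must remain honestly derived after inverting $u_k$, so that the resulting weight-$0$ algebra is the derived quotient rather than merely its classical truncation. This reduces to routine bookkeeping with simplicial commutative rings, but it is where the derived enhancement plays its essential role beyond the classical blow-up computation.
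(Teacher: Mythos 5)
Your proof is essentially correct but follows a genuinely different route from the paper's. The paper constructs a Cartesian diagram exhibiting $Y_0 \hookrightarrow W$ and $Y_0 \hookrightarrow Y_1$ as base changes of the zero sections $W \hookrightarrow \mb{V}_W(V_0)$ and $W \hookrightarrow \mb{V}_W(g_1)$, then invokes \cref{prop:func} (base-change stability of derived blow-ups) together with \cref{prop:conj41} and \cref{ex:2}; the cosections $\bar{w}_0$ and $\bar{w}_0+\bar{g_1}$ fall out of the pullback and no affine charts of $\mb{P}_W(V_0)$ are ever opened. You instead compute the extended Rees algebras in coordinates from \cref{exm:Rees1} and \cref{ex:2}, pass to $\mathrm{Proj}$, and match equations on the standard charts $U_k$ of $\mb{P}^{n-1}_A$ against the derived zero loci of $\bar{w}_0$ and $\bar{w}_0+\bar{g_1}$. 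Both routes rest on the same Rees-algebra identifications, so the computational core is shared, but your chart-by-chart approach needs two additional inputs that the universal base-change argument sidesteps: (i) that on a standard chart the derived $\mathrm{Proj}$ of a graded simplicial algebra is the degree-zero part of the localization at a degree-one element, taken in the derived sense; and (ii) that eliminating $t^{-1}$ after inverting $u_k$ produces the claimed derived quotient. You flag (ii) as routine, which it is (once $u_k$ is a unit the relation $u_k t^{-1}-f_k$ becomes regular and simply eliminates the variable $t^{-1}$), but both points are precisely where the derived structure must be tracked by hand, and this is what makes the paper's one-line reduction shorter and cleaner. Your final gluing step is fine since both sides are closed derived subschemes of $\mb{P}_W(V_0)$ and the chart identifications respect these inclusions.
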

\begin{proof}
  We have the following Cartesian diagram:
  \begin{equation*}
        \begin{tikzcd}
      Y_{0} \ar{d}\ar{r} & Y_{1}\ar{d} \ar{r}& W \ar{d}{i_{w_{0}}}\\
      W \ar{r}{i_{0}} & \mb{V}_{W}(g_{1}) \ar[r] \ar{d} & \mb{V}_{W}(V_{0})\ar{d}{}\\
      & W\ar{r}{i_{0}}  & \mb{V}_{W}(V_{1}).
    \end{tikzcd}
  \end{equation*}
  By the functorial property of derived blow-ups, we can replace
  $$Y_{0},Y_{1},W \quad \text{    by    }\quad  W,\mb{V}_{W}(g_{1}),\mb{V}_{W}(V_{0})$$ and then  \cref{lemA421} follows from \cref{ex:2} and \cref{prop:conj41}.
\end{proof}

\begin{theorem}
  \label{cor:ddd}\label{structurethm}Assuming \cref{ass1}, we have that
  \begin{enumerate}
      \item the blow-up $\mb{B}l_{Z/X}$ is also a quasi-smooth derived Artin stack over $S$;
      \item the cotangent complex $L_{\mb{B}l_{Z/X}/S}$ and $L_{X/S}$ have the same rank;
      \item the projection $pr_{Z/X}$ is proper.
      \item the open embedding 
      $$j_{Z/X}:\mb{V}_{\mb{B}l_{Z/X}}^{*}(
      \mc{O}_{\mb{B}l_{Z/X}}(1))\to D_{Z/X} $$
      in \cref{06052132} is quasi-compact and locally of finite presentation.
  \end{enumerate}
\end{theorem}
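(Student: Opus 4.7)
The plan is to work smooth-locally on $X$ and reduce the entire theorem to an explicit Kuranishi-type chart via \cref{kuranishi}. All four claims — quasi-smoothness, the rank of the cotangent complex, properness of $pr_{Z/X}$, and the quasi-compactness plus local finite presentation of $j_{Z/X}$ — can be checked smooth-locally on $X$. Since the derived blow-up is compatible with pullbacks (\cref{prop:func}), after choosing a smooth cover of $X$ by an affine derived $S$-scheme $X'$ and setting $Z' := Z \times_{X} X'$, I am reduced to the case where $Z$ and $X$ are quasi-smooth derived $S$-schemes with constant-rank cotangent complexes.

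Applying \cref{kuranishi} to $f: Z \to X$, I obtain Zariski-locally a smooth $S$-scheme $W$ together with a sequence $V_{1} \xrightarrow{g_{1}} V_{0} \xrightarrow{w_{0}} \mc{O}_{W}$ of locally free sheaves such that $X$ is the derived zero locus of $w_{0} \circ g_{1}$ and $Z$ is the derived zero locus of $w_{0}$. By \cref{lemA421}, $\mb{B}l_{Z/X}$ is then the derived zero locus over the smooth $S$-scheme $\mb{P}_{W}(V_{0})$ of a cosection of the locally free sheaf $E := (L_{\mb{P}_{W}(V_{0})/W}\otimes \mc{O}(1)) \oplus (pr_{V_{0}}^{*}V_{1} \otimes \mc{O}(-1))$, whose rank is $(\mathrm{rank}(V_{0})-1) + \mathrm{rank}(V_{1})$.

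From this presentation, (1) is immediate: a derived zero locus of a cosection of a locally free sheaf over a smooth derived $S$-scheme is quasi-smooth. For (2), a straightforward rank count gives
$$\mathrm{rank}(L_{\mb{B}l_{Z/X}/S}) = \mathrm{dim}(W/S) + (\mathrm{rank}(V_{0})-1) - [(\mathrm{rank}(V_{0})-1) + \mathrm{rank}(V_{1})] = \mathrm{dim}(W/S) - \mathrm{rank}(V_{1}),$$
which agrees with $\mathrm{rank}(L_{X/S})$ as computed from $X = \mb{R}Z_{W}^{V_{1}}(w_{0} \circ g_{1})$ via the fiber sequence $f^{*}L_{W/S}\to L_{X/S}\to L_{X/W}\cong V_{1}|_{X}[1]$. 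For (3), $\mb{B}l_{Z/X}$ is a closed subscheme of $\mb{P}_{W}(V_{0})$, which is proper over $W$; the resulting proper morphism $\mb{B}l_{Z/X} \to W$ factors through the separated closed embedding $X \hookrightarrow W$, so by cancellation of properness $pr_{Z/X}$ is proper.

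For (4), the open embedding $j_{Z/X}$ is automatically locally of finite presentation. By \cref{thm:bigdiagram}(2), its closed complement in $D_{Z/X}$ is the embedding $Z \times \mb{A}^{1} \hookrightarrow D_{Z/X}$, and the explicit finite presentations of the Rees algebra recorded in \cref{exm:Rees1} and \cref{ex:2} show this complement is cut out locally by finitely many equations, giving the required quasi-compactness. The main technical step to justify is the smooth-local reduction from derived Artin stacks to derived schemes: one must verify that \cref{kuranishi} can be applied to the pulled-back $Z', X'$ and that the constancy of ranks in \cref{ass1} is preserved through smooth descent.
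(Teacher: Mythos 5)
Your proof is correct and follows essentially the same route as the paper: fppf descent reduces to the case of derived schemes, the Kuranishi chart from \cref{kuranishi} reduces to the explicit local model, and the structure of the blow-up and deformation space in that model yields all four claims. Where the paper cites \cref{prop:conj41} and \cref{cor:ccc} directly, you pass through \cref{lemA421} (which is itself proved from those lemmas) and then spell out the rank count, the properness cancellation, and the quasi-compactness argument that the paper leaves implicit.
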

\begin{proof}
  By the fppf descent, we can assume that $Z$ and $X$ are both derived schemes. Then locally we can replace $Z$ and $X$ by $Y_{0}'$ and $Y_{1}'$ in \cref{eq:refcd}. Thus \cref{cor:ddd} follows from \cref{prop:conj41} and \cref{cor:ccc}.
\end{proof}

  \subsection{The canonical morphism $\phi_{\geq d}^{f}:pr_{f*}(\mc{O}_{\mb{B}l_{Z/X}}(d))\to R_{Z/X}^{d}$}
  \label{sec:4.1}
  We recall the $\mb{G}_{m}$-equivariant Cartesian diagram \cref{eq:051734}
   \begin{equation*}
        \begin{tikzcd}
      Z\times \{0\} \ar{r}\ar{d} & Z\times \mb{A}^{1}\ar{d} & Z\times \mb{G}_{m}\ar{d}\ar{l} \\
      \mb{V}_{Z}(C_{Z/X})\ar{r} & D_{Z/X} & X\times \mb{G}_{m}\ar{l}, \\
    \mb{V}_{E_{Z/X}}^{*}(\mc{O}_{\mb{P}_{Z}(C_{Z/X})}(1)|_{E_{Z/X}})\ar{u} \ar{r} & \mb{V}_{\mb{B}l_{Z/X}}^{*}(\mc{O}_{\mb{B}l_{Z/X}}(1)) \ar{u}{j_{Z/X}}   & (Z-X)\times \mb{G}_{m} \ar{l}\ar{u} .\\
    \end{tikzcd}
  \end{equation*}
  We notice that
  \begin{align*}
    (d_{Z/X}\circ j_{Z/X})_{*}\mc{O}_{\mb{V}_{\mb{P}_{Z}(C_{Z/X})}^{*}(\mc{O}_{\mb{P}_{Z}(C_{Z/X})}(1))}&\cong \bigoplus_{d\in \mb{Z}}pr_{f*}\mc{O}_{\mb{B}l_{Z/X}}(d),\\
    d_{Z/X*}\mc{O}_{D_{Z/X}}&\cong \bigoplus_{d}R_{Z/X}^{d}
    \end{align*}
  as $\mb{G}_{m}$-equivariant quasi-coherent $\mc{O}_{X}[t^{-1}]$-modules, where $pr_{f*}\mc{O}_{\mb{B}l_{Z/X}}(d)$ and $R_{Z/X}^{d}$ both have homogeneous degree $d$. Moreover, the $t^{-1}$-action
  $$pr_{f*}\mc{O}_{\mb{B}l_{Z/X}}(d+1)\xrightarrow{t^{-1}}pr_{f*}\mc{O}_{\mb{B}l_{Z/X}}(d)$$
  is the pushforward of the canonical map of the universal Cartier divisor:
  $$\mc{O}_{\mb{B}l_{Z/X}}(d+1)\to \mc{O}_{\mb{B}l_{Z/X}}(d).$$
  As the open embedding
  $$j_{Z/X}:\mb{V}_{\mb{B}l_{Z/X}}^{*}(\mc{O}_{\mb{B}l_{Z/X}}(1))\to D_{Z/X}$$
  is quasi-compact and locally of finite presentation by \cref{cor:ddd}, it induces a canonical map of $\mc{O}_{X}[t^{-1}]$ quasi-coherent sheaves
  $$d_{Z/X*}\mc{O}_{D_{Z/X}}\to (d_{Z/X}\circ j_{Z/X})_{*}\mc{O}_{\mb{V}_{\mb{P}_{Z}(C_{Z/X})}^{*}(\mc{O}_{\mb{P}_{Z}(C_{Z/X})}(1))},$$
  which is represented by
  \begin{equation}
    \label{eq:phif}
  \phi^{f}:R_{Z/X}^{ext}\to \bigoplus_{d\in \mb{Z}}pr_{f*}\mc{O}_{\mb{B}l_{Z/X}}(d).
  \end{equation}
  Moreover, $\phi^{f}$ decomposes into   $\bigoplus_{d\in \mb{Z}}\phi_{\geq d}^{f}$, where
  \begin{equation}
    \label{eq:phid}
  \phi_{\geq d}^{f}:R_{Z/X}^{d}\to pr_{f*}\mc{O}_{\mb{B}l_{Z/X}}(d)    
  \end{equation}
  is the homogeneous degree $d$ part of $\phi^{f}$.

  By the Cartesian diagram \cref{eq:051734}, we have the following commutative diagram of fiber sequences of quasi-coherent $\mc{O}_{X}[t^{-1}]$-modules over $X$
  \begin{equation}
    \label{eq:phin}
      \begin{tikzcd}
      R_{Z/X}^{ext}\ar{r}{t^{-1}} \ar{d}{\phi^{f}} & R_{Z/X}^{ext}\ar{r}\ar{d}{\phi^{f}} & \bigoplus_{d\in \mb{Z}}f_{*}(S^{d}(C_{Z/X}))\ar{d}{f_{*}\phi_{C_{Z/X}}} \\
      \bigoplus_{d\in \mb{Z}} pr_{f*}\mc{O}_{\mb{B}l_{Z/X}}(d)\ar{r}{t^{-1}} & \bigoplus_{d\in \mb{Z}} pr_{f*}\mc{O}_{\mb{B}l_{Z/X}}(d) \ar{r}& \bigoplus_{d\in \mb{Z}}f_{*}pr_{C_{Z/X}*}(\mc{O}_{\mb{P}_{Z}(C_{Z/X})}(d)),
    \end{tikzcd}
  \end{equation}
  where $\phi_{C_{Z/X}}$ is defined by the generalized Serre theorem \cref{thm:serre2}.

  \begin{lemma}[Remark 5.10.7 of \cite{hekking2021graded}]
    \label{support}
    The canonical morphism
    $$\mc{O}_{X}\to f_{*}\mc{O}_{Z}$$ is represented by the morphism $\mc{O}_{X}\cong R^{0}_{Z/X}\to f_{*}\mc{O}_{Z}$ in \cref{eq:phin} 
      \end{lemma}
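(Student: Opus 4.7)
The plan is to unwind the cofiber sequence in \cref{eq:phin} using the Cartesian diagram \cref{eq:051733} and match the resulting degree-zero map with the unit of the adjunction $f^{*}\dashv f_{*}$. First, applying \cref{thm:lipman} to the left square of \cref{eq:051733}, the cofiber of $t^{-1}:R_{Z/X}^{ext}\to R_{Z/X}^{ext}$ is identified with $(f\circ \Pi_{C_{Z/X}})_{*}\mc{O}_{\mb{V}_{Z}(C_{Z/X})}\cong \bigoplus_{d\geq 0}f_{*}S^{d}(C_{Z/X})$ as a $\mb{Z}$-graded $\mc{O}_{X}$-module. In degree zero this cofiber is $f_{*}\mc{O}_{Z}$, and the induced boundary map $R_{Z/X}^{0}\to f_{*}\mc{O}_{Z}$ is the pushforward along $f$ of the restriction map $(\Pi_{C_{Z/X}})_{*}\mc{O}_{\mb{V}_{Z}(C_{Z/X})}\to \mc{O}_{Z}$ along the zero section $i_{0,C_{Z/X}}:Z\hookrightarrow\mb{V}_{Z}(C_{Z/X})$ that appears in \cref{eq:051734}.

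Next I would invoke \cref{support1} to identify $R_{Z/X}^{0}\cong \mc{O}_{X}$; here the equivalence arises from the open factor $X\times \mb{G}_{m}\hookrightarrow D_{Z/X}$ on the right of \cref{eq:051733}. The only remaining point is to verify that under this identification the composite $\mc{O}_{X}\cong R_{Z/X}^{0}\to f_{*}\mc{O}_{Z}$ agrees with the canonical unit. The cleanest verification is local: by \cref{kuranishi} we can assume an affine model as in \cref{exm:Rees1}, writing $R_{Z/X}^{ext}\cong A[t^{-1},u_{1},\ldots,u_{n}]/(u_{i}t^{-1}-\sigma_{i})$ with each $u_{i}$ in homogeneous degree $+1$ and $t^{-1}$ in degree $-1$. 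The degree-zero subring is then $A$, and this is the identification of \cref{support1}, while the quotient modulo $t^{-1}$ yields $A[u_{1},\ldots,u_{n}]/(\sigma_{1},\ldots,\sigma_{n})$ whose degree-zero component is $B=A/(\sigma_{1},\ldots,\sigma_{n})=\mc{O}_{Z}$. The induced map $A\to B$ is manifestly the canonical projection, which represents $\mc{O}_{X}\to f_{*}\mc{O}_{Z}$.

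The main obstacle is checking compatibility of the two identifications of $R_{Z/X}^{0}$ with $\mc{O}_{X}$ coming from different open-closed decompositions of $D_{Z/X}$ over $X\times \mb{A}^{1}$; in the local presentation above both identifications literally send the class of $1\in A$ to $1\in\mc{O}_{X}$, so they agree, and the global statement follows by fppf descent since all constructions commute with base change along affine atlases by \cref{prop:func} and \cref{thm:lipman}.
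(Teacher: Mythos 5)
The paper does not prove this lemma itself; it cites Remark 5.10.7 of \cite{hekking2021graded}, so there is no in-paper argument to match against. Your blind reconstruction is nonetheless correct and sits naturally inside the framework the paper sets up: base change along the left square of \cref{eq:051733} identifies the cofiber of $t^{-1}$ with $(f\circ\Pi_{C_{Z/X}})_{*}\mc{O}_{\mb{V}_{Z}(C_{Z/X})}$, whose degree-zero graded piece is $f_{*}\mc{O}_{Z}$; \cref{support1} gives $R_{Z/X}^{0}\cong\mc{O}_{X}$; and the Koszul model of \cref{exm:Rees1} pins the degree-zero component of the resulting map down as the projection $A\to B=A/(\sigma_{1},\ldots,\sigma_{n})$.

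Two remarks. First, you can dispense with the local computation: the composite $\mc{O}_{X}\cong R_{Z/X}^{0}\to f_{*}\mc{O}_{Z}$ is, by construction, the degree-zero component of $i^{*}$ applied to the structure map $\mc{O}_{X\times\mb{A}^{1}}\to d_{Z/X*}\mc{O}_{D_{Z/X}}$ (where $i:X\times\{0\}\hookrightarrow X\times\mb{A}^{1}$), and the Beck--Chevalley transformation of \cref{thm:lipman} turns this into the unit of the $(f\circ\Pi_{C_{Z/X}})$-adjunction, whose degree-zero piece is exactly the unit $\mc{O}_{X}\to f_{*}\mc{O}_{Z}$ --- this works globally, without charts or descent. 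Second, the ``compatibility of two identifications'' you flag in the last paragraph is vacuous: the identification $R_{Z/X}^{0}\cong\mc{O}_{X}$ from \cref{support1} is itself the degree-zero piece of that same structure map, and your local Koszul presentation merely realizes that one map; there are not two independent identifications to reconcile. Neither point affects correctness.
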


\subsection{The generalized vanishing theorem \RN{1}}
\label{sec:cosec}
Now we formulate the generalized vanishing theorem \RN{1}:
\begin{theorem}
  \label{isomorphism2}
  There exists a canonical morphism of $\mb{Z}$-graded quasi-coherent $\mc{O}_{X}[t^{-1}]$-modules
  $$\phi^{f}:R_{Z/X}^{ext}\to W_{Z/X}$$
  with the following commutative diagram (up to equivalence) where all the horizontal and vertical arrows are fiber sequences:
  \begin{equation*}
      \begin{tikzcd}
      R_{Z/X}^{ext}\ar{r}{t^{-1}} \ar{d}{\phi^{f}} & R_{Z/X}^{ext}\ar{r}\ar{d}{\phi^{f}} & \mf{H}_{\bullet}^{f+} \ar{d}{f_{*}\phi_{C_{Z/X}}} \\
      W_{Z/X}\ar{r}{t^{-1}}\ar{d} & W_{Z/X} \ar{d}\ar{r}& f_{*}pr_{C_{Z/X}*}(\mc{O}_{\mb{P}_{Z}(C_{Z/X})}(\bullet)) \ar{d}{f_{*}\psi_{C_{Z/X}}}\\
      cofib(\phi^{f})\ar{r}{t^{-1}} & cofib(\phi^{f})\ar{r} & \mf{H}_{\bullet}^{f-},
    \end{tikzcd}
  \end{equation*}
  where
  \begin{itemize}
  \item $cofib(\phi^{f})$ is the cofiber of $\phi^{f}$:
  \item we denote 
  \begin{align*}
   &   \mf{H}_{b}^{f+}:=f_{*}S^{b}(C_{Z/X})\\
   & \mf{H}_{b}^{f-}:=f_{*}(S^{-r-b}(C_{Z/X})^{\vee}\otimes det(C_{Z/X})^{\vee})[1-r]
  \end{align*}
  for $b\in \mb{Z}$. We denote $\mf{H}_{\bullet}^{f+}:=\oplus_{d\in \mb{Z}}\mf{H}_{b}^{f+}$ and make similar notations for $pr_{C_{Z/X}*}(\mc{O}_{\mb{P}_{Z}(C_{Z/X})}(\bullet))$ and $\mf{H}_{\bullet}^{f-}$.
  \item $\phi_{C_{Z/X}}$ and $\psi_{C_{Z/X}}$ is defined in Jiang's generalized Serre theorem (\cref{thm:serre2}).
  \end{itemize}
 Moreover, when $d>-r$, the degree $d$ part of $\phi^{f}$, which we denote as $\phi^{f}_{\geq d}$, is an equivalence. 
\end{theorem}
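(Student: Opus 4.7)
The top two rows of the diagram, together with the third-column vertical map $f_{*}\phi_{C_{Z/X}}$, are already constructed in \cref{sec:4.1}: the map $\phi^{f}$ is built from the open embedding $j_{Z/X}:\mb{V}^{*}_{\mb{B}l_{Z/X}}(\mc{O}_{\mb{B}l_{Z/X}}(1))\to D_{Z/X}$ (which is quasi-compact and locally of finite presentation by \cref{cor:ddd}), and the commuting fiber sequences comprising diagram \cref{eq:phin} arise by pushing forward the Cartesian diagram \cref{eq:051734} along $d_{Z/X}$. So the work left is to extend \cref{eq:phin} to the full $3\times 3$ diagram and to prove the vanishing claim.

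The key external input for the extension is Jiang's generalized Serre theorem (\cref{thm:serre2}) applied to $\mc{F}:=C_{Z/X}$. Under \cref{ass1}, the conormal complex $C_{Z/X}$ is perfect of Tor-amplitude $[0,1]$ with constant rank $r$, so \cref{thm:serre2} furnishes the fiber sequence
\[
 S^{\bullet}(C_{Z/X})\xrightarrow{\phi_{C_{Z/X}}} pr_{C_{Z/X}*}\mc{O}_{\mb{P}_{Z}(C_{Z/X})}(\bullet)\xrightarrow{\psi_{C_{Z/X}}} (S^{-r-\bullet}C_{Z/X}^{\vee})\otimes det(C_{Z/X})^{\vee}[1-r],
\]
and applying $f_{*}$ produces the right column of the desired diagram. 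The octahedral axiom, applied to the upper half of \cref{eq:phin} together with this right column, then yields the bottom row as a fiber sequence and assembles the whole $3\times 3$ diagram, identifying $cofib(\phi^{f})$ as the simultaneous cofiber (both horizontally in the third column and vertically in the third row).

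For the vanishing $\phi^{f}_{\geq d}\cong id$ when $d>-r$: part (4) of \cref{thm:serre2} says $\phi_{d,C_{Z/X}}$ is an equivalence for all $d\geq -r+1$, so $\mf{H}^{f-}_{d}=0$ in those degrees. The bottom row of the $3\times 3$ then gives that $t^{-1}:cofib(\phi^{f})^{d+1}\to cofib(\phi^{f})^{d}$ is an equivalence for $d\geq -r+1$. To go from this stability to actual vanishing, I would reduce to a local model: by \cref{kuranishi} and fppf descent, one may Zariski-locally assume $Z\to X$ is of the form in \cref{lemA421}, so that by \cref{prop:conj41} the blow-up $\mb{B}l_{Z/X}$ is realized as the explicit derived zero locus of $\bar{w}_{0}+\bar{g}_{1}$ inside $\mb{P}_{W}(V_{0})$, while $R_{Z/X}^{ext}$ is given by the explicit polynomial presentation of \cref{ex:2}. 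In this chart, both $R_{Z/X}^{d}$ and $pr_{f*}\mc{O}_{\mb{B}l_{Z/X}}(d)$ become computable (the former directly from the presentation, the latter by combining the Koszul resolution of the zero-locus with Serre's computation on $\mb{P}_{W}(V_{0})$), and one verifies the equivalence in degrees $d>-r$ by inspection.

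The hard part is clearly this final local verification: producing the diagram from \cref{eq:phin} plus Serre is essentially formal, but the vanishing is where one actually uses the specific structure of derived blow-ups. Alternatively to the chart-by-chart check, a more intrinsic argument would identify $cofib(\phi^{f})$ with the local cohomology of $\mc{O}_{D_{Z/X}}$ along the closed complement $Z\times \mb{A}^{1}$ and exploit the graded structure, combined with the degree $0$ base case $\mc{O}_{X}\cong pr_{f*}\mc{O}_{\mb{B}l_{Z/X}}$ (when $r>0$), to cascade the vanishing up through $d>-r$ via the established $t^{-1}$-equivalences; but I expect the local-chart route to give the cleanest proof.
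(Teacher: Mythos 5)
Your overall structure matches the paper's: the $3\times 3$ diagram is indeed assembled from \cref{eq:phin} plus Jiang's generalized Serre theorem applied to $C_{Z/X}$ (this part is formal), and the real content is proving the equivalence of $\phi^f_{\geq d}$ for $d>-r$ by reducing to a local Kuranishi chart via fppf descent and \cref{kuranishi}. However, the paper's local argument is cleaner than what you sketch, and you leave the key step as an unverified plan. The paper does \emph{not} compute $pr_{f*}\mc{O}_{\mb{B}l_{Z/X}}(d)$ via the Koszul resolution of the zero locus $\bar{w}_0+\bar{g}_1$ in $\mb{P}_W(V_0)$. Instead it first treats the \emph{zero-cosection special case} $f=i_{0,\mc{F}}:Z\hookrightarrow\mb{V}_Z(\mc{F})$ in isolation: by \cref{prop:conj41} and \cref{cor:ccc} one has $R^d_{Z/X}\cong\bigoplus_{n\geq d}S^n\mc{F}$ and $pr_{f*}\mc{O}_{\mb{B}l}(d)\cong\bigoplus_{n\geq d}pr_{\mc{F}*}\mc{O}_{\mb{P}_Z(\mc{F})}(n)$, and $\phi^f_{\geq d}=\bigoplus_{n\geq d}\phi_{n,\mc{F}}$ is an equivalence for $d>-r$ literally by part (4) of \cref{thm:serre2}. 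Then — and this is the step you miss — the general case follows from the \emph{left Cartesian square} of \cref{eq:refcd}: in the chart, $Z\to X$ is the base change of the zero cosection $W\to\mb{V}_W(g_1)$ along $X\to\mb{V}_W(g_1)$, and $\phi^f_{\geq d}$ is compatible with pullback (by \cref{prop:func}), so the equivalence pulls back from the already-established zero-cosection case. No Koszul computation is needed at all.

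Your alternative "cascade" idea also has a problem you should be aware of: the degree-zero base case $\mc{O}_X\cong pr_{f*}\mc{O}_{\mb{B}l_{Z/X}}$ (for $r>0$) is item (\ref{if6}) of \cref{thm:main}, which is \emph{deduced from} the vanishing theorem via \cref{support1}, not an independent input; moreover it fails when $r\leq 0$, so the cascade would not get started in that range. Your direct Koszul-plus-Serre computation in the chart is not wrong in principle, but as written you have only reduced the hard verification to a computation "by inspection" that you do not carry out. I would recommend adopting the paper's two-step reduction (chart $\Rightarrow$ zero cosection $\Rightarrow$ Serre); it turns the local verification into a one-line application of \cref{thm:serre2} rather than a genuine calculation.
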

\begin{proof} Most of the theorem follows from \cref{eq:phin} and Jiang's generalized theorem \cref{thm:serre2}. The only thing we still need to prove is that  when $d>-r$, $\phi^{f}_{\geq d}$ is an equivalence. 

First, we consider the special case that
$$X\cong \mb{V}_{Z}(\mc{F})$$
(with trivial $\mb{G}_{m}$-action), where
\begin{itemize}
\item $\mc{F}$ is a quasi-coherent sheaf with perfect-amplitude contained in $[0,1]$ and constant rank $r$ over $Z$;
\item  $f:Z\to X$ is the zero cosection $i_{0,\mc{F}}$.
\end{itemize}
Then by \cref{prop:conj41} and \cref{cor:ccc}, we have
$$\mb{B}l_{Z/X}\cong\mb{V}_{\mb{P}_{Z}(\mc{F})}(\mc{O}_{\mb{P}_{Z}(\mc{F})}(1)), \quad D_{Z/X}\cong \mb{V}_{Z}(\mc{F})\times\mb{A}^{1},$$
where the $\mb{G}_{m}$-action on $D_{Z/X}$ is induced by the hyperplane $\mb{G}_{m}$-action. The projection from $D_{Z/X}$ to $X\times \mb{A}^{1}$ is induced by
\begin{align*}
  \mb{V}_{Z}(\mc{F})\times\mb{A}^{1} & \to \mb{V}_{Z}(\mc{F})\times\mb{A}^{1}\\
  (v,t)&\to (tv,t).
\end{align*}
Moreover, we have
$$R_{Z/X}^{d}\cong \bigoplus_{n\geq d}S^{n}\mc{F},\quad pr_{f*}\mc{O}_{\mb{B}l_{Z/X}}(d)\cong \bigoplus_{n\geq d}pr_{\mc{F}*}\mc{O}_{\mb{P}_{Z}(\mc{F})}(n),$$
where the latter one is a $S_{Z}^{*}\mc{F}$ module and hence regarded as a quasi-coherent sheaf over $\mb{V}_{Z}(\mc{F})$. Moreover, $\phi_{\geq d}^{f}$ is represented by
$$\bigoplus_{n\geq d}\phi_{n,\mc{F}}:\bigoplus_{n\geq d}S^{n}\mc{F}\to  \bigoplus_{n\geq d}pr_{\mc{F}*}\mc{O}_{\mb{P}_{Z}(\mc{F})}(n).$$
By Jiang's generalized Serre's theorem (\cref{thm:serre2}), the map
\begin{equation}
  \label{isomorphism1}
  \oplus_{n\geq d}\phi_{n,\mc{F}}\text{ is an equivalence when } d>-r.
\end{equation}

For the general case, by the fppf descent of quasi-coherent sheaves, we can assume that both $X$ and $S$ are affine derived schemes. Then by \cref{kuranishi}, we can moreover assume that there is a sequence of  maps of locally free sheaves over a smooth derived $S$-scheme $W$:
  $$V_{1}\xrightarrow{g_{1}}V_{0}\xrightarrow{w_{0}}\mc{O}_{W}$$
  such that
  $X$ is the derived zero locus of $w_{0}\circ g_{1}$ and $Z$ is the derived zero locus of $w_{0}$. Thus we have the Cartesian diagram:
  \begin{equation*}
    \begin{tikzcd}
      Z \ar{d}\ar{r} & X\ar{d} \ar{r}& W \ar{d}{i_{w_{0}}}\\
      W \ar{r}{i_{0}} & \mb{V}_{W}(g_{1}) \ar[r] \ar{d} & \mb{V}_{W}(V_{0})\ar{d}{}\\
      & W\ar{r}{i_{0}}  & \mb{V}_{W}(V_{1}).
    \end{tikzcd}
  \end{equation*}
  We have $rank(g_{1})=r$. Hence the equivalence of $\phi_{\geq d}^{f}$ when $d>-codim_{Z/X}$ follows from the left square of the above diagram, the fact that $\phi_{\geq d}^{f}$ is compatible with the pull back and \cref{isomorphism1}.
\end{proof}

\subsection{The generalized vanishing theorem \RN{2}}
Given $a\leq b$, we consider the following map of quasi-coherent sheaves:
\begin{align*}
  \phi_{\geq b}^{f}:R_{Z/X}^{b}\to pr_{f*}\mc{O}_{\mb{B}l_{Z/X}}(b),\quad  t^{a-b}:R_{Z/X}^{b}\to R_{Z/X}^{a}
\end{align*}
which are defined in \cref{sec:4.1}.
\begin{definition}
  We define the quasi-coherent sheaves $\mf{W}_{a,b}^{f}$ on $X$ as the cofiber of
  $$\phi_{\geq b}^{f}\oplus t^{a-b}:R_{Z/X}^{b}\to  pr_{f*}\mc{O}_{\mb{B}l_{Z/X}}(b)\oplus R_{Z/X}^{a}.$$

  For any $(a_{1},b_{1})\geq (a_{2},b_{2})$, we consider the map
  $$\mf{w}^{f}_{(a_{1},b_{1}),(a_{2},b_{2})}:\mf{W}^{f}_{a_{1},b_{1}}\to \mf{W}^{f}_{a_{2},b_{2}}$$
  as the map induced by the following three maps:
  \begin{gather*}
    t^{b_{2}-b_{1}}:R_{Z/X}^{b_{1}}\to R_{Z/X}^{b_{2}},\quad    t^{a_{2}-a_{1}}:R_{Z/X}^{a_{1}}\to R_{Z/X}^{a_{2}},\\
    t^{b_{2}-b_{1}}:pr_{f*}\mc{O}_{\mb{B}l_{Z/X}}(b_{1})\to pr_{f*}\mc{O}_{\mb{B}l_{Z/X}}(b_{2}).
  \end{gather*}
\end{definition}

\begin{definition}
  \label{def:categorical}
  For any integer $b$, we recall the definition
  $$\mf{H}_{b}^{f+}\cong f_{*}S^{b}(C_{Z/X}), \quad \mf{H}_{b}^{f-}\cong f_{*}(S^{-r-b}(C_{Z/X})^{\vee}\otimes det(C_{Z/X})^{\vee})[1-r].$$
 If $a<b$, we define the map
  $$h_{a,b}^{f+}:\mf{W}_{a,b}^{f}\to \mf{H}_{a}^{f+}$$
  as the map induced by   $R_{Z/X}^{a}\to f_{*}S^{a}(C_{Z/X})$ in \cref{eq:phin}
  (and $0$ in the other two summands).

  If $a\leq  b$, we define the map
  $$h_{b}^{f-}:\mf{W}_{a,b}^{f}\to \mf{H}_{b}^{f-}$$
  as induced by the composition
  $$pr_{Z/X*}\mc{O}_{\mb{B}l_{Z/X}(b)}\to f_{*}pr_{C_{Z/X}*}\mc{O}_{\mb{P}_{Z}(C_{Z/X})}(b)\xrightarrow{f_{*}(\psi_{b,C_{Z/X}})} \mf{H}_{b}^{f-}$$
  in \cref{eq:phin}(and $0$ in the other two summands).
\end{definition}

Now we formulate the generalized vanishing theorem \RN{2}:
\begin{theorem}
  \label{thm:main}
  The quasi-coherent sheaves $\mf{W}_{a,b}^{f}$ and $\mf{H}_{b}^{f\pm}$ over $X$ have the following properties:
  \begin{enumerate}
 \item 
  $$\mf{w}_{(a_{2},b_{2}),(a_{3},b_{3})}^{f}\circ\mf{w}_{(a_{1},b_{1}),(a_{2},b_{2})}^{f}\cong \mf{w}_{(a_{1},b_{1}),(a_{3},b_{3})}^{f}$$
  when all the above morphisms are well defined and $\mf{w}_{(a,b),(a,b)}^{f}\cong id$. 
  \item For any $a$, we have
    $$\mf{W}_{a,a}^{f}\cong pr_{f*}\mc{O}_{\mb{B}l_{Z/X}}(a)$$
    and for any $a\leq b$, the morphism $\mf{w}_{(b,b),(a,a)}^{f}$ is represented by the push forward of the canonical map
    $$\mc{O}_{\mb{B}l_{Z/X}}(b)\to \mc{O}_{\mb{B}l_{Z/X}}(a)$$
\item For any $b\geq -r+1$, we have
    $$\mf{W}_{a,b}^{f}\cong R_{Z/X}^{a},$$
    and for any $b\geq b'\geq -r+1$, the morphism $\mf{w}_{(a,b),(a',b')}^{f}$ is represented by
    $$t^{a'-a}:R_{Z/X}^{a}\to R_{Z/X}^{a'}.$$
\item  For any $a<b$, we have the commutative diagram (up to equivalence) where all the rows and columns are fiber sequences:
\begin{equation*}
    \begin{tikzcd}[column sep=2cm]
        \mf{W}_{a+1,b+1}^{f}\ar{r}{\mf{w}_{(a+1,b+1),(a+1,b)}^{f}}\ar{d}{\mf{w}_{(a+1,b+1),(a,b+1)}^{f}} & \mf{W}_{a+1,b}^{f}\ar{d}{\mf{w}_{(a+1,b),(a,b)}^{f}} \ar{r}{\mf{H}_{a+1,b}^{f-}} & \mf{H}_{b}^{f-}\ar{d}{id}\\
        \mf{W}_{a,b+1}^{f} \ar{r}{\mf{w}_{(a,b+1),(a,b)}^{f}}\ar{d}{\mf{H}_{a,b+1}^{f+}} & \mf{W}_{a,b}^{f} \ar{r}{\mf{H}_{a,b}^{f-}} \ar{d}{\mf{H}_{a,b}^{f+}}& \mf{H}_{b}^{f-} \\
        \mf{H}_{a}^{f+}\ar{r}{id} &  \mf{H}_{a}^{f+}
    \end{tikzcd}
\end{equation*}
\item For any $a\in \mb{Z}$, we have the commutative diagram (up to equivalence) where all the rows and columns are fiber sequences:
   \begin{equation*}
      \begin{tikzcd}[column sep=2cm]
        \mf{W}_{a+1,a+1}^{f}\ar{r}{id}\ar{d}{\mf{w}_{(a+1,a+1),(a,a+1)}} & \mf{W}_{a+1,a+1}^{f} \ar{d}{\mf{w}_{(a+1,a+1),(a,a)}}\\
        \mf{W}_{a,a+1}^{f}\ar{r}{\mf{w}_{(a-1,a),(a,a)}} \ar{d}{h_{a,a+1}^{+}} & \mf{W}_{a,a}^{f} \ar{r}{h_{a,a}^{-}}\ar{d} &  \mf{H}_{a}^{f-}\ar{d} \\
        \mf{H}_{a}^{f+}\ar{r}{f_{*}(\phi_{a,C_{Z/X}})} & f_{*}pr_{C_{Z/X}*}\mc{O}_{\mb{P}_{Z}(C_{Z/X})}(a) \ar{r}{f_{*}(\psi_{a,C_{Z/X}})} & \mf{H}_{a}^{f-}.
      \end{tikzcd}
    \end{equation*}
  \item \label{if6}
    For any $d\geq max\{-r+1,0\}$, the canonical map
    $$\mc{O}_{X}\to pr_{f*}(\mc{O}_{\mb{B}l_{Z/X}})$$
    is represented by $\mf{w}_{(0,d),(0,0)}$, which is an equivalence if $r>0$. 
\item \label{if7} The morphism
    $$\mc{O}_{X}\to f_{*}\mc{O}_{Z}$$
    is represented by $h_{0,d}^{f+}$ for any $d\geq max\{-r+1,0\}$.
\end{enumerate}
\end{theorem}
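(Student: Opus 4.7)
The strategy is to read off all seven properties from the definition of $\mf{W}_{a,b}^{f}$ as a cofiber
\[
\mf{W}_{a,b}^{f} := \mathrm{cofib}\bigl(R_{Z/X}^{b} \xrightarrow{\phi_{\geq b}^{f}\oplus t^{a-b}} pr_{f*}\mc{O}_{\mb{B}l_{Z/X}}(b)\oplus R_{Z/X}^{a}\bigr),
\]
using the Generalized Vanishing Theorem~\RN{1} and Jiang's generalized Serre exact triangle as the two main inputs. Property (1) is formal: the transition maps $\mf{w}_{(a,b),(a',b')}^{f}$ are induced on cofibers by compatible $t^{-1}$-multiplications on $R_{Z/X}^{\bullet}$ and pushforwards of the canonical maps $\mc{O}_{\mb{B}l_{Z/X}}(b)\to\mc{O}_{\mb{B}l_{Z/X}}(b')$, so associativity and the identity property are automatic.

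For property (2), observe that when $b=a$ the second component of the defining map becomes $t^{0}=\mathrm{id}$, and the cofiber of any map $(\alpha,\mathrm{id})\colon A\to B\oplus A$ in a stable $\infty$-category collapses to $B$ via the automorphism $\bigl(\begin{smallmatrix}1 & -\alpha\\ 0 & 1\end{smallmatrix}\bigr)$ of $B\oplus A$. This identifies $\mf{W}_{a,a}^{f}\cong pr_{f*}\mc{O}_{\mb{B}l_{Z/X}}(a)$ and $\mf{w}_{(b,b),(a,a)}^{f}$ with the advertised pushforward. For (3), the Generalized Vanishing Theorem~\RN{1} states that $\phi_{\geq b}^{f}$ is an equivalence when $b\geq -r+1$; the analogous cofiber-collapse applied now to the first rather than the second component gives $\mf{W}_{a,b}^{f}\cong R_{Z/X}^{a}$ with transitions represented by $t^{a'-a}$.

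The core technical step is (4) and (5), which I would derive by applying the octahedral (or $3\times 3$) axiom in the stable setting to the defining two-term diagrams of $\mf{W}_{a,b}^{f}$. For (4), the three compatible fiber sequences one needs are: the cofiber of $t^{-1}$ on $R_{Z/X}^{\bullet}$ is $\mf{H}_{a}^{f+}$ (top row of Theorem~\RN{1}); the cofiber of $t^{-1}$ on $pr_{f*}\mc{O}_{\mb{B}l_{Z/X}}(\bullet)$ is $f_{*}pr_{C_{Z/X}*}\mc{O}_{\mb{P}_{Z}(C_{Z/X})}(b)$ (middle row of Theorem~\RN{1}); and the cofiber of $\phi_{b,C_{Z/X}}$ is $\mf{H}_{b}^{f-}$ by Jiang's generalized Serre theorem. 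Stacking these via the octahedral axiom produces the required $3\times 3$-diagram of fiber sequences, and identifying the connecting maps with $h_{a,b+1}^{f+}$ and $h_{a+1,b}^{f-}$ is a direct comparison with their definitions. Property (5) is the degenerate case $b=a$: the top row degenerates to an identity because both corners are $\mf{W}_{a+1,a+1}^{f}$, while the bottom row reproduces Jiang's Serre triangle $\mf{H}_{a}^{f+}\to f_{*}pr_{C_{Z/X}*}\mc{O}_{\mb{P}_{Z}(C_{Z/X})}(a)\to\mf{H}_{a}^{f-}$ combined with $\mf{W}_{a,a}^{f}\cong pr_{f*}\mc{O}_{\mb{B}l_{Z/X}}(a)$ from (2).

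Finally, for (6) and (7), take $a=0$ and $b\geq\max\{-r+1,0\}$. From (3), $\mf{W}_{0,b}^{f}\cong R_{Z/X}^{0}$, and the equivalence $\mc{O}_{X}[t^{-1}]\cong R_{Z/X}^{\leq 0}$ of the ``support'' lemma (Lemma~\ref{support1}) forces $R_{Z/X}^{0}\cong\mc{O}_{X}$ as soon as $b$ may be chosen in $\{0,\dots,-r+1\}$, i.e.\ when $r>0$. The companion support statement (Lemma~\ref{support}) then identifies the canonical map $R_{Z/X}^{0}\to f_{*}\mc{O}_{Z}$ with the unit of adjunction, which matched against the definition of $h_{0,b}^{f+}$ gives (7); property (6) follows by composing $\mc{O}_{X}\cong\mf{W}_{0,b}^{f}\to\mf{W}_{0,0}^{f}\cong pr_{f*}\mc{O}_{\mb{B}l_{Z/X}}$ using (2). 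The principal obstacle will be the octahedral bookkeeping of (4): one must verify that the induced connecting map $\mf{W}_{a,b}^{f}\to\mf{H}_{b}^{f-}$ agrees on the nose with $h_{a,b}^{f-}$, which amounts to coherently gluing three distinct cofiber cones along Theorem~\RN{1}'s own $3\times 3$-diagram.
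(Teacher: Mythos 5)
Your proposal is correct and follows essentially the same approach as the paper, which disposes of items (1)--(5) in a single sentence ("follow from the definitions and the diagram \cref{eq:phin}") and of (6)--(7) by citing \cref{support1}, \cref{isomorphism2}, and \cref{support}. You simply unwind the terse official proof into the underlying cofiber-collapse arguments, the $3\times 3$-lemma bookkeeping for items (4)--(5), and the support lemmas for (6)--(7); this is faithful to the paper's intent and adds useful detail rather than introducing a different method. One small clarification for item (6): the equivalence $R_{Z/X}^{0}\cong\mc{O}_{X}$ holds unconditionally by \cref{support1}, and what actually requires $r>0$ is the equivalence of $\phi_{\geq 0}^{f}\colon R_{Z/X}^{0}\to pr_{f*}\mc{O}_{\mb{B}l_{Z/X}}$, which comes from the last clause of \cref{isomorphism2} (an equivalence whenever $d>-r$, applied with $d=0$); your phrasing slightly conflates the two, but the conclusion and the cited lemmas are the right ones.
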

\begin{proof}
  All except the item (\ref{if6}) and (\ref{if7}) follow from the definitions and the diagram \cref{eq:phin}. The item (\ref{if6}) follows from \cref{support1} and \cref{isomorphism2}. The item (\ref{if7}) follows from \cref{support}.
\end{proof}

\appendix

\section{The resolution of the diagonal of derived blow-ups }
\label{sec:A}
Let $f:Z\to X$ be a quasi-smooth closed embedding of derived Artin stacks, such that the co-normal complex $C_{Z/X}$ is a rank $r$ locally free sheaf over $Z$. The universal strict virtual Cartier divisor diagram
\begin{equation*}
  \begin{tikzcd}
  \mb{P}_{Z}(C_{Z/X})\ar{r}\ar{d} & \mb{B}l_{Z/X}\ar{d} \\
  Z\ar{r} & X    
  \end{tikzcd}
\end{equation*}
induces a closed embedding of derived stacks:
$$\beta_{Z/X}:\mb{P}_{Z}(C_{Z/X})\times_{Z}\mb{P}_{Z}(C_{Z/X})\to  \mb{B}l_{Z/X}\times_{X}\mb{B}l_{Z/X}.$$
Moreover, we have the following diagram of the virtual Cartier divisor
\begin{equation}
  \label{eq:resolution}
  \begin{tikzcd}
    \mb{P}_{Z}(C_{Z/X})\ar{d}{\Delta_{pr_{C_{Z/X}}}} \ar{r} & \mb{B}l_{Z/X}\ar{d}{\Delta_{pr_{Z/X}}}\\
    \mb{P}_{Z}(C_{Z/X})\times_{Z}\mb{P}_{Z}(C_{Z/X})\ar{r}{\beta_{Z/X}}&  \mb{B}l_{Z/X}\times_{X}\mb{B}l_{Z/X},
  \end{tikzcd}
\end{equation}
where $\Delta_{pr_{C_{Z/X}}}$ and $\Delta_{pr_{Z/X}}$ are the diagonal maps of $pr_{C_{Z/X}}$ and $pr_{Z/X}$ respectively. Our new observation is that
\begin{theorem}
  \label{thm:resolution}
  The diagram \cref{eq:resolution} is the universal strict virtual Cartier divisor of $\beta_{Z/X}$.
\end{theorem}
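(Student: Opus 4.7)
The plan is to verify that the diagram \cref{eq:resolution} is the universal strict virtual Cartier divisor over $\beta_{Z/X}$. By the defining universal property of derived blow-ups, this identifies $\Delta_{pr_{Z/X}}:\mb{B}l_{Z/X}\to\mb{B}l_{Z/X}\times_{X}\mb{B}l_{Z/X}$ with the projection from $\mb{B}l_{\beta_{Z/X}}$. Concretely, we must check (i) that the square is itself a strict virtual Cartier divisor over the pair $(\mb{P}_{Z}(C_{Z/X})\times_{Z}\mb{P}_{Z}(C_{Z/X}),\,\mb{B}l_{Z/X}\times_{X}\mb{B}l_{Z/X})$, producing by the universal property of $\mb{B}l_{\beta_{Z/X}}$ a canonical map $\mb{B}l_{Z/X}\to\mb{B}l_{\beta_{Z/X}}$, and (ii) that this map is an equivalence.

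For (i), observe that the left vertical arrow $\mb{P}_{Z}(C_{Z/X})\to\mb{B}l_{Z/X}$ is the universal exceptional divisor and hence is a virtual Cartier divisor by construction, with conormal the restriction of $\mc{O}_{\mb{B}l_{Z/X}}(1)$. What remains is the strictness condition: the underlying classical square is Cartesian, and the induced map $\Delta_{pr_{Z/X}}^{*}C_{\beta_{Z/X}}\to C_{\mb{P}_{Z}(C_{Z/X})/\mb{B}l_{Z/X}}$ is surjective on $\pi_{0}$. These assertions are fppf-local on $X$. After reducing via \cref{kuranishi} to a Kuranishi chart and using \cref{lemA421} to present $\mb{B}l_{Z/X}$ as a derived zero locus inside $\mb{P}_{W}(V_{0})$, both conormals become directly computable from the Euler-type fiber sequence of Jiang (\cref{thm:serre2}), and the surjectivity becomes a straightforward consequence of the corresponding fact for the two projectivizations.

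For (ii), we construct an inverse via the functor of points. A map $T\to\mb{B}l_{\beta_{Z/X}}$ consists of a strict virtual Cartier divisor $E\to T$ together with maps $E\to\mb{P}_{Z}(C_{Z/X})\times_{Z}\mb{P}_{Z}(C_{Z/X})$ and $T\to\mb{B}l_{Z/X}\times_{X}\mb{B}l_{Z/X}$. By the universal property of $\mb{B}l_{Z/X}$, the second map amounts to a pair of strict virtual Cartier divisors $E_{1},E_{2}\to T$ over $(Z,X)$, one from each projection, and the first-order data of each $E_{i}$ provides a canonical map $E_{i}\to\mb{P}_{Z}(C_{Z/X})$ via its conormal line bundle. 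The strictness of $E\to T$ over the pair, combined with the observation that $\mb{P}_{Z}(C_{Z/X})\times_{Z}\mb{P}_{Z}(C_{Z/X})$ is the derived intersection of the two pullbacks of $\mb{P}_{Z}(C_{Z/X})\hookrightarrow\mb{B}l_{Z/X}$ inside $\mb{B}l_{Z/X}\times_{X}\mb{B}l_{Z/X}$, canonically identifies $E$ with each of $E_{1}$ and $E_{2}$ as virtual Cartier divisors, with compatible maps to $\mb{P}_{Z}(C_{Z/X})$. This forces $E_{1}\simeq E_{2}$ in a canonical way and yields the desired unique factorization $T\to\mb{B}l_{Z/X}$ through the diagonal.

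The principal obstacle lies in step (ii), specifically in making the homotopy-coherent identification $E\simeq E_{1}\simeq E_{2}$ precise at the level of derived structure rather than merely on the underlying classical schemes. This requires careful use of the classification of virtual Cartier divisors by maps to $[\mb{A}^{1}/\mb{G}_{m}]$ (\cref{lem:2018}) and of the $\infty$-categorical universal properties established in Paper B of \cite{Hekking_2022}; alternatively, the verification can be carried out entirely inside the Kuranishi chart of \cref{kuranishi}, where the derived structures reduce to explicit manipulations of Koszul-type resolutions and the relative projectivization theory of \cref{prop:jiang433}.
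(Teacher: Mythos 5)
Your step (ii) rests on the claim that $\mb{P}_Z(C_{Z/X})\times_Z\mb{P}_Z(C_{Z/X})$ is the derived intersection of the two pullbacks of $\mb{P}_Z(C_{Z/X})\hookrightarrow\mb{B}l_{Z/X}$ inside $\mb{B}l_{Z/X}\times_X\mb{B}l_{Z/X}$. This is incorrect: that derived intersection is $\mb{P}_Z(C_{Z/X})\times_X\mb{P}_Z(C_{Z/X})$, which differs from the fiber product over $Z$ whenever $Z\to X$ has positive virtual codimension, because the derived self-intersection $Z\times_X Z$ carries a nontrivial Koszul structure (its structure sheaf is, locally, the exterior algebra on $C_{Z/X}$). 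The two fiber products coincide only on $\pi_0$, where $\pi_0(Z)\to\pi_0(X)$ is a monomorphism of classical schemes. Since your identification of the universal virtual Cartier divisor $E$ with $E_1$ and $E_2$ relies on this equality at the derived level rather than merely classically, the functor-of-points argument does not go through; and you flag exactly this as the ``principal obstacle'' without resolving it, so the appeal to ``carry it out in a Kuranishi chart'' is a gesture rather than a proof.

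The paper's proof takes a different route and avoids this issue. It first uses fppf descent, the local form of quasi-smooth closed embeddings (\cref{131666}), and pullback-stability of derived blow-ups (\cref{prop:func}) to reduce to the universal local model $Z\cong\{0\}\subset X\cong\mb{A}^r$. In that model both $\mb{B}l_{Z/X}\times_X\mb{B}l_{Z/X}$ and $\mb{B}l_{\beta_{Z/X}}$ are presented as explicit derived zero loci via \cref{lemA421}, and the latter is identified with $\mb{V}_{\mb{P}^r}(\mc{O}_{\mb{P}^r}(1))\cong\mb{B}l_{Z/X}$ by Beilinson's resolution of the diagonal of $\mb{P}^r$. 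This is an honest computation that sidesteps the homotopy-coherence difficulties of your approach. If you want to save your strategy, you must genuinely account for the gap between $E\times_Z E$ and $E\times_X E$ and then still address the coherence of $E\simeq E_1\simeq E_2$; the local computation is the shorter and more reliable path.
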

\begin{proof}
  By the fppf descent, \cref{131666} and the fact that derived blow-ups are stable under the pull-backs, we only need to consider the case that $Z\cong \{0\}$ and $X\cong \mb{A}^{r}$. We denote $\mb{P}_{0}^{r}$, $\mb{P}^{r}_{1}$ and $\mb{P}^{r}_{2}$ as three copies of $\mb{P}^{r}$. Over $\mb{A}^{r}\times \mb{P}_{1}^{r}\times \mb{P}_{2}^{r}$, we consider the following morphism of locally free sheaves:
  $$L_{\mb{P}^{r}_{1}}\otimes \mc{O}_{\mb{P}^{r}_{1}}(-1)\oplus L_{\mb{P}^{r}_{2}}\otimes \mc{O}_{\mb{P}^{r}_{2}}(-1)\xrightarrow{e_{1}+e_{2}} \mc{O}^{r}\xrightarrow{u} \mc{O}$$
  where $e_{i}$ are the pullbacks of Euler fiber sequences and 
  $$u(x_1,\cdots,x_n)=x_1+\cdots +x_n.$$  Then
  $\mb{P}_{1}^{r}\times \mb{P}_{2}^{r}$ is the derived zero locus of $u$ and $\mb{V}_{\mb{P}_{1}^{r}}(\mc{O}_{\mb{P}_{1}^{r}}(1))\times_{\mb{A}^{r}}\mb{V}_{\mb{P}_{2}^{r}}(\mc{O}_{\mb{P}_{2}^{r}}(1))$ is the derived zero locus of $u\circ (e_{1}+e_{2})$. Thus by \cref{lemA421}, $\mb{B}l_{\beta_{Z/X}}$ is the derived locus of the following co-section over $\mb{V}_{\mb{P}_{0}^{r}}(\mc{O}_{\mb{P}_{0}^{r}}(1))\times  \mb{P}_{1}^{r}\times \mb{P}_{2}^{r}$:
  $$L_{\mb{P}^{r}_{1}}\otimes \mc{O}_{\mb{P}^{r}_{1}}(-1)\oplus L_{\mb{P}^{r}_{2}}\otimes \mc{O}_{\mb{P}^{r}_{2}}(-1)\xrightarrow{\rho_{0}\circ (e_{1}+e_{2})}\mc{O}_{\mb{P}_{0}^{r}}(1)$$
  where $\rho_{0}$ is the tautological morphism over $\mb{P}_{0}^{r}$, which is  
  $\mb{V}_{\mb{P}^{r}}(\mc{O}_{\mb{P}^{r}}(1))$
  by Beilinson's resolution \cite{beilinson1978coherent} of the diagonal of $\mb{P}^{r}$.
\end{proof}

Now we compute the co-normal complex of $\beta_{Z/X}$. As the co-normal complex of $\mb{P}_{Z}(C_{Z/X})$ in $\mb{B}l_{Z/X}$ is
$\mc{O}_{\mb{P}_{Z}(C_{Z/X})}(1)$, the co-normal complex of  $\beta_{Z/X}$ is
$$C_{Z/X}\xrightarrow{\rho_{1}\oplus \rho_{2}}\mc{O}_{\mb{P}_{Z}(C_{Z/X}^{1})}(1)\oplus \mc{O}_{\mb{P}_{Z}(C_{Z/X}^{2})}(1)$$
where $C_{Z/X}^{1}$ and $C_{Z/X}^{2}$ are two copies of $C_{Z/X}$, and we abuse the notation to denote $C_{Z/X},\mc{O}_{\mb{P}_{Z}(C_{Z/X}^{1})}(1)$ and $\mc{O}_{\mb{P}_{Z}(C_{Z/X}^{2})}(1)$ as the pull-back of $C_{Z/X},\mc{O}_{\mb{P}_{Z}(C_{Z/X}^{1})}(1)$ and $\mc{O}_{\mb{P}_{Z}(C_{Z/X}^{2})}(1)$ from $Z,\mb{P}_{Z}(C_{Z/X}^{1})$ and $\mb{P}_{Z}(C_{Z/X}^{2})$ to $\mb{P}_{Z}(C_{Z/X}^{1})\times_{Z}\mb{P}_{Z}(C_{Z/X}^{2})$ respectively. It induces the equivalence
$$C_{\beta_{Z/X}}\cong L_{\mb{P}_{Z}(C_{Z/X}^{1})/Z}\otimes \mc{O}_{\mb{P}_{Z}(C_{Z/X}^{1})}(-1)\to \mc{O}_{\mb{P}_{Z}(C_{Z/X}^{2})}(1).$$
Now we apply the categorical comparison theorem \cref{thm:main} to $\beta_{Z/X}$.
We notice that the codimension of $\beta_{Z/X}$ is $-(r-2)$. 
\begin{theorem}
  \label{thm:fil}
  We have quasi-coherent sheaves $$\mf{W}_{i,r-1}\in \mathrm{Perf}(\mb{B}l_{Z/X}\times\mb{B}l_{Z/X}),\quad 0\leq i\leq r-1$$
  such that
  \begin{enumerate}
      \item $$\mf{W}_{0,r-1}\cong \mc{O}_{\mb{B}l_{Z/X}\times_{X}\mb{B}l_{Z/X}},\quad \mf{W}_{r-1,r-1}\cong \Delta_{\mb{B}l_{Z/X}*}(\mc{O}_{\mb{B}l_{Z/X}}(r-1))$$
      where $\Delta_{\mb{B}l_{Z/X}}$ is the diagonal embedding of $\mb{B}l_{Z/X}$;
      \item we have fiber sequences:
  $$\mf{W}_{i+1,r-1}\to \mf{W}_{i,r-1}\to \bar\beta_{Z/X*} S^{i}(C_{\beta_{Z/X}})$$
  for $0\leq i\leq r-2$, where $\bar{\beta}_{Z/X}$ is the closed embedding of $\mb{P}_{Z}(C_{Z/X})\times_{Z}\mb{P}_{Z}(C_{Z/X})$ into  $\mb{B}l_{Z/X}\times\mb{B}l_{Z/X}$.
  \end{enumerate}
\end{theorem}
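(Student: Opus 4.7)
The plan is to apply the generalized vanishing theorem II (\cref{thm:main}) directly to the closed embedding $\beta_{Z/X}:\mb{P}_{Z}(C_{Z/X})\times_{Z}\mb{P}_{Z}(C_{Z/X})\to \mb{B}l_{Z/X}\times_{X}\mb{B}l_{Z/X}$, using \cref{thm:resolution} as the crucial bridge. Since $C_{Z/X}$ has rank $r$ and $L_{\mb{P}_{Z}(C_{Z/X})/Z}$ has rank $r-1$, the conormal complex $C_{\beta_{Z/X}}$ is a two-term complex of locally free sheaves of total rank $1-(r-1)=2-r$, so the virtual codimension of $\beta_{Z/X}$ (the analogue of $r$ in \cref{thm:main}) equals $\tilde{r}:=2-r$. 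The first task is to verify that $\beta_{Z/X}$ satisfies \cref{ass1}: quasi-smoothness of $\mb{B}l_{Z/X}$ follows from \cref{structurethm}, and that of $\mb{P}_{Z}(C_{Z/X})$ from \cref{thm:serre2}, whence the relevant fiber products are quasi-smooth.

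Next, I would define $\mf{W}_{i,r-1}:=\mf{W}^{\beta_{Z/X}}_{i,r-1}$ for $0\leq i\leq r-1$, viewed on $\mb{B}l_{Z/X}\times\mb{B}l_{Z/X}$ via pushforward along the quasi-smooth closed embedding $\mb{B}l_{Z/X}\times_{X}\mb{B}l_{Z/X}\hookrightarrow \mb{B}l_{Z/X}\times \mb{B}l_{Z/X}$ (base-changed from the diagonal of $X$). The two boundary identifications come out as follows. Since $r-1\geq -\tilde{r}+1=r-1$, item (3) of \cref{thm:main} combined with \cref{support1} yields $\mf{W}^{\beta_{Z/X}}_{0,r-1}\cong R^{0}_{\beta_{Z/X}}\cong \mc{O}_{\mb{B}l_{Z/X}\times_{X}\mb{B}l_{Z/X}}$; for $i=r-1$, item (2) gives $\mf{W}^{\beta_{Z/X}}_{r-1,r-1}\cong pr_{\beta_{Z/X}*}\mc{O}_{\mb{B}l_{\beta_{Z/X}}}(r-1)$, which by \cref{thm:resolution} is precisely $\Delta_{\mb{B}l_{Z/X}*}\mc{O}_{\mb{B}l_{Z/X}}(r-1)$ after matching the projection map and the exceptional line bundle.

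The fiber sequences then come directly from the vertical fiber sequence $\mf{W}^{\beta_{Z/X}}_{a+1,b}\to \mf{W}^{\beta_{Z/X}}_{a,b}\to \mf{H}^{\beta_{Z/X}+}_{a}$ in item (4) of \cref{thm:main}, specialised to $b=r-1$ and $0\leq a=i\leq r-2$ (so $a<b$ is automatic). By \cref{def:categorical}, $\mf{H}^{\beta_{Z/X}+}_{i}=\beta_{Z/X*}S^{i}(C_{\beta_{Z/X}})$, which becomes $\bar\beta_{Z/X*}S^{i}(C_{\beta_{Z/X}})$ after the pushforward noted above. Perfection of the $\mf{W}_{i,r-1}$ follows by descending induction from $\mf{W}_{r-1,r-1}$ (perfect by \cref{thm:lipman}(4), since $\Delta_{pr_{Z/X}}$ is a proper closed embedding and $\mb{B}l_{Z/X}$ is quasi-smooth) and the fact that each $\bar\beta_{Z/X*}S^{i}(C_{\beta_{Z/X}})$ is perfect.

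The main technical obstacle I anticipate is the identification $\mc{O}_{\mb{B}l_{\beta_{Z/X}}}(1)\cong \mc{O}_{\mb{B}l_{Z/X}}(1)$ under the equivalence $\mb{B}l_{\beta_{Z/X}}\cong \mb{B}l_{Z/X}$ of \cref{thm:resolution}: one must trace through the universal strict virtual Cartier divisor diagram \cref{eq:resolution} to verify that the generalised Cartier divisor classifying $\Delta_{pr_{Z/X}}$ as a blow-up agrees with the standard exceptional divisor on $\mb{B}l_{Z/X}$. Once this bookkeeping is in place, and once the push-forward along $\mb{B}l_{Z/X}\times_{X}\mb{B}l_{Z/X}\to \mb{B}l_{Z/X}\times \mb{B}l_{Z/X}$ is handled carefully, the theorem reduces to a direct unpacking of \cref{thm:main} for the single column $\{\mf{W}^{\beta_{Z/X}}_{\bullet,r-1}\}$.
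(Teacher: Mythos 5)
Your proposal is correct and follows the same route the paper sketches: apply the generalized vanishing theorem (\cref{thm:main}) to the closed embedding $\beta_{Z/X}$ of virtual codimension $2-r$, identifying $\mf{W}_{0,r-1}^{\beta_{Z/X}}$ with $R^0_{\beta_{Z/X}}\cong\mc{O}_{\mb{B}l_{Z/X}\times_X\mb{B}l_{Z/X}}$ at the threshold $b=r-1$ of item (3), and using \cref{thm:resolution} to convert $pr_{\beta_{Z/X}*}\mc{O}_{\mb{B}l_{\beta_{Z/X}}}(r-1)$ into $\Delta_{\mb{B}l_{Z/X}*}\mc{O}_{\mb{B}l_{Z/X}}(r-1)$. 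The paper states this theorem without an explicit proof beyond the remark that one "applies the categorical comparison theorem $\ldots$ the codimension of $\beta_{Z/X}$ is $-(r-2)$," and your write-up supplies exactly the bookkeeping that remark presupposes.
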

By regarding all the perfect complexes of \cref{thm:fil} as Fourier-Mukai transforms, we induce another proof for the full generation result of Khan \cite[Theorem C]{MR4149835} and moreover, Orlov \cite{MR1208153}  for smooth varieties.

\section{Desingulization of Quasi-smooth Derived Schemes}
In this section, we study the desingularization of quasi-smooth derived schemes. The main theorem of this section is 
\begin{theorem}[Desingulization Theorem]
  \label{thm:main2}
  Let $X$ be a non-empty quasi-smooth derived scheme. We assume that there is a closed embedding $p:X\to S$ where $S$ is a smooth variety. Then there exists a collection of quasi-smooth derived schemes $\{X_{i}\}_{0\leq i\leq n}$ and a collection of smooth schemes $\{Z_{i}\}_{0\leq  i<n}$ with closed embeddings $f_{i}:Z_{i}\to X_{i}$ for $0\leq i<n$, where $n$ is a positive integer, such that
  \begin{enumerate}
  \item $X_{0}\cong X$ and $X_{n}\cong \emptyset$;
  \item $X_{i}\cong \mb{B}l_{Z_{i-1}/X_{i-1}}$ for $1\leq i\leq n$.
  \item for any $0\leq  i<n$, $X_{i}$ admits a closed embedding $p_{i}:X_{i}\to S_{i}$ where $S_{i}$ is a smooth variety. 
  \end{enumerate}
\end{theorem}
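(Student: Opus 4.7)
My plan is to build the chain $X = X_0, X_1, \ldots, X_n \cong \emptyset$ of derived blow-ups by induction, combining classical Hironaka-style resolution of the classical truncation $\pi_0(X)$ with a final ``killing'' step that exploits the identity $\mathbb{B}l_{Y/Y} \cong \mathbb{P}_{Y}(C_{Y/Y}) \cong \mathbb{P}_{Y}(0) \cong \emptyset$ to collapse a smooth $X_n$ to the empty scheme in one blow-up.

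The first task is to check that the inductive step preserves the required hypotheses. Given a quasi-smooth closed embedding $X_i \hookrightarrow S_i$ with $S_i$ a smooth variety, and any smooth closed subvariety $Z_i$ of $S_i$ contained in $\pi_0(X_i)$, the structure theorem \cref{structurethm} together with the functoriality of derived blow-ups \cref{prop:func} implies that $X_{i+1} := \mathbb{B}l_{Z_i/X_i}$ is quasi-smooth and embeds as a closed subscheme of $S_{i+1} := \mathbb{B}l_{Z_i/S_i}$. Moreover, $Z_i \hookrightarrow S_i$ is a regular embedding of smooth varieties, so $S_{i+1}$ is a smooth variety and agrees with the classical blow-up. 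Hence any choice of smooth centers $Z_i \subset \pi_0(X_i)$ produces a valid chain meeting conditions (1)--(3) of the theorem.

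To ensure termination I will first invoke Hironaka's strong resolution theorem for the reduced classical truncation $\pi_0(X)^{\mathrm{red}} \hookrightarrow S$, which in characteristic $0$ yields a canonical sequence of smooth centers $Z_0,\ldots,Z_{m-1}$ such that after these blow-ups the strict transform of $\pi_0(X)^{\mathrm{red}}$ becomes smooth in $S_m$. Blowing up the same centers in the derived $X_i$ yields the required derived enhancement by the first task. After this phase, the local Kuranishi structure \cref{kuranishi} lets me write $X_m$ locally as $\mathbb{R}Z^V_U(s)$ with $U$ smooth and $\pi_0(X_m)$ smooth, and then the formula $\mathbb{B}l_{Z/\mathbb{R}Z^V_U(0)} \cong \mathbb{R}Z^{pr_{Z/U}^* V \otimes \mathcal{O}(-1)}_{\mathbb{B}l_{Z/U}}(0)$ from the Example in Section 1.4 (combined with \cref{lemA421}) guarantees that successive blow-ups along $\pi_0(X_j)$ reduce the local rank of $V$ by one at a time. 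After at most $r = \mathrm{rank}(V)$ further steps, $V$ has rank zero and $X_n \cong \pi_0(X_n)$ is a smooth variety; one final blow-up with $Z_n = X_n$ then yields $X_{n+1} = \emptyset$.

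The main obstacle is reconciling the classical Hironaka procedure with the derived enhancements: while Hironaka makes $\pi_0(X)^{\mathrm{red}}$ smooth in $S_m$, the derived truncation $\pi_0(X_m)$ may acquire additional components from derived exceptional divisors, and the subsequent ``derived killing'' blow-ups must be shown both to terminate and to preserve smoothness of $\pi_0(X_j)$ and $S_j$. The most delicate point arises precisely when $Z_i \hookrightarrow X_i$ fails to be a quasi-smooth closed embedding, where the discrepancy between classical and derived blow-up is sharpest; there the example formula and \cref{lemA421} are the essential computational tools for carrying the induction through.
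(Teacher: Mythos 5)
Your two-phase plan (classical strong resolution of $\pi_0(X)^{\mathrm{red}}$, followed by a derived ``killing'' phase) takes a genuinely different route from the paper and has a gap at the hinge between the two phases. The paper instead applies Hironaka's \emph{principalization} theorem (\cref{hironaka}) to the pair $(S,\mc{I}(X,S))$, not strong resolution. The distinction is essential: \cref{0524B} shows that the ideal of $\mb{B}l_{Z_i/X_i}$ inside $S_{i+1}=\mb{B}l_{Z_i/S_i}$ is the \emph{weak transform} $\mc{I}(E_{Z_i/S_i})^{-1}\,pr^{*}\mc{I}(X_i,S_i)$, and this is exactly the transformation law that principalization tracks for the sequence of ideals $\mc{I}_i$. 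Strong resolution, by contrast, tracks the \emph{strict} transform of $\pi_0(X)^{\mathrm{red}}$, which discards all exceptional-divisor components; the two transforms do not agree. Consequently, after your Phase~1 the classical truncation $\pi_0(X_m)$ is not the resolved strict transform but the weak transform, which will in general still contain exceptional divisors and need not be smooth, so the claim that Kuranishi charts present $X_m$ with $\pi_0(X_m)$ smooth does not follow from Hironaka resolution. With principalization the argument closes in one stroke: $\mc{I}_n\cong\mc{O}_{S_n}$ forces $\pi_0(X_n)=\emptyset$ and hence $X_n=\emptyset$; no second phase is needed.

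Your Phase~2 also has an independent problem: the formula in the Example replaces $V$ by $pr^{*}V\otimes\mc{O}_{\mb{B}l_{Z/U}}(-1)$, a bundle of the \emph{same} rank, so the assertion that successive blow-ups ``reduce the local rank of $V$ by one at a time'' does not follow from it, and termination of Phase~2 is not established. (Your observation that $\mb{B}l_{Y/Y}\cong\emptyset$ is correct, but it is not what terminates the chain in the paper.) The inductive bookkeeping in your first paragraph, using \cref{structurethm} and \cref{prop:func} to show each $X_{i+1}$ is quasi-smooth and embeds into the smooth $S_{i+1}$, is fine and matches the paper; what needs to change is that the centers $Z_i$ must come from principalization of $\mc{I}(X,S)$, with \cref{0524B} supplying the step-by-step identification $\mc{I}(X_i,S_i)=\mc{I}_i$.
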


Together with \cref{eq:noref}, we have the following theorem for the virtual fundamental class of quasi-smooth derived schemes:
\begin{theorem}[Approximation Theorem]
  \label{conj:approx}
  Let $X$ be a quasi-smooth derived scheme with a closed embedding into a smooth ambient variety. Let $vdim(X)$ be the virtual dimension of $X$. Then there exists a collection $\{(Z_{i},p_{i},\mc{F}_{i},r_{i})\}|_{1\leq i\leq n}$ such that for any $1\leq i\leq n$,
  \begin{enumerate}
  \item $r_{i}$ is non-positive integer;
  \item $Z_{i}$ is a smooth scheme such that $dim(Z_{i})=r_{i}+vdim(X)$;
  \item  $p_{i}:Z_{i}\to X$ is a proper morphism;
  \item  $\mc{F}_{i}$ is a perfect complex over $Z_{i}$ with tor-amplitude $[0,1]$ and $rank(\mc{F}_{i})=r_{i}$
  \end{enumerate}
  such that
  \begin{equation}
    \label{eq:formula}
    [\mc{O}_{X}]=\sum_{i=1}^{n}(-1)^{r_{i}}p_{i*}(det(\mc{F}_{i})^{-1}\sum_{j=0}^{-r_{i}}S^{j}(\mc{F}_{i}^{\vee})).
  \end{equation}
\end{theorem}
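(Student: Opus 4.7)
The plan is to derive the approximation formula by applying the $K$-theoretic comparison formula \cref{eq:noref} iteratively along the sequence of derived blow-ups produced by the Desingularization Theorem~\cref{thm:main2}, and then letting the iteration telescope down to the empty scheme where $[\mc{O}_{\emptyset}]=0$.

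First I would invoke \cref{thm:main2} to obtain the sequence of quasi-smooth derived schemes $X_{0}:=X,X_{1},\dots,X_{n}\cong \emptyset$ together with smooth centers $Z_{i}$ and closed embeddings $f_{i}:Z_{i}\to X_{i}$ satisfying $X_{i+1}\cong\mb{B}l_{Z_{i}/X_{i}}$ for $0\le i<n$. By the Structure Theorem~\cref{structurethm}, the rank of the cotangent complex is preserved under derived blow-up; hence $vdim(X_{i})=vdim(X)$ for every $i$, and the virtual codimension of $f_{i}$ is $\rho_{i}:=vdim(X)-\dim(Z_{i})$. Writing $pr_{i}:X_{i+1}\to X_{i}$ for the blow-up projection (proper by the Structure Theorem) and applying \cref{eq:noref}, after rearranging with $-(-1)^{1-\rho_{i}}=(-1)^{\rho_{i}}$, gives
\[
[\mc{O}_{X_{i}}]=pr_{i*}[\mc{O}_{X_{i+1}}]+(-1)^{\rho_{i}}f_{i*}\!\Bigl(\det(C_{Z_{i}/X_{i}})^{-1}\textstyle\sum_{l=0}^{-\rho_{i}}S^{l}(C_{Z_{i}/X_{i}}^{\vee})\Bigr)
\]
in $G_{0}(X_{i})$.

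Next I would push each of these equalities forward along the proper composition $\pi_{i}:=pr_{0}\circ\cdots\circ pr_{i-1}:X_{i}\to X$, starting from $i=n-1$ and substituting downward, using $[\mc{O}_{X_{n}}]=0$ to kill the leading blow-up term. This telescoping yields
\[
[\mc{O}_{X}]=\sum_{i=0}^{n-1}(-1)^{\rho_{i}}(\pi_{i}\circ f_{i})_{*}\!\Bigl(\det(C_{Z_{i}/X_{i}})^{-1}\textstyle\sum_{l=0}^{-\rho_{i}}S^{l}(C_{Z_{i}/X_{i}}^{\vee})\Bigr),
\]
and reindexing by $1\le i\le n$ with the assignments $p_{i}:=\pi_{i-1}\circ f_{i-1}$, $\mc{F}_{i}:=C_{Z_{i-1}/X_{i-1}}$, $r_{i}:=\rho_{i-1}$ (and relabeling $Z_{i}:=Z_{i-1}$) reproduces \cref{eq:formula}. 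The required properties are then routine to check: $p_{i}$ is proper as a composition of proper maps; from the fiber sequence $f_{i-1}^{*}L_{X_{i-1}}\to L_{Z_{i-1}}\to L_{Z_{i-1}/X_{i-1}}$ with $L_{X_{i-1}}$ of Tor-amplitude $[-1,0]$ and $L_{Z_{i-1}}$ locally free, one reads off that $\mc{F}_{i}=L_{Z_{i-1}/X_{i-1}}[-1]$ is perfect with Tor-amplitude $[0,1]$ and rank $r_{i}$; the dimension identity $\dim(Z_{i})=-r_{i}+vdim(X)$ is tautological.

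The main obstacle is ensuring $r_{i}\le 0$ at every step, equivalently $\dim(Z_{i})\ge vdim(X)$, which is the standing hypothesis under which \cref{eq:noref} has been established (see the paragraph preceding \cref{conj:1.4}). This should be extracted from a careful inspection of the Kuranishi-type construction in the proof of \cref{thm:main2}, using \cref{kuranishi}: the smooth centers produced there are derived zero loci inside ambient smooth varieties whose dimensions are at least $vdim(X_{i})$. Should the raw desingularization happen to produce a step with $\rho_{i}>0$, the upper summation index $-\rho_{i}$ is negative so the correction term is an empty sum and contributes nothing, and such steps may simply be dropped from the output collection without altering the identity.
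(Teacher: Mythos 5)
Your proof is correct and fills in exactly the argument the paper intends (the paper itself only says ``Together with \cref{eq:noref}, we have\ldots'' and leaves the telescoping implicit). The structure -- invoke \cref{thm:main2} to get the blow-up tower down to $\emptyset$, apply \cref{eq:noref} at each stage, push forward along the proper composition $\pi_i$ and telescope using $[\mc{O}_{X_n}]=0$ -- is the intended route, and your observation that steps with positive virtual codimension contribute an empty sum (so can be dropped, or equivalently \cref{eq:noref} holds trivially there since $pr_{f*}\mc{O}_{\mb{B}l_{Z/X}}\cong\mc{O}_X$ for $r>0$ by item (6) of \cref{thm:main}) correctly disposes of the ``main obstacle'' you raise; you need not inspect the Kuranishi charts to bound $\dim Z_i$ from below. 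One small point worth flagging: your derivation gives $\dim Z_i=-r_i+vdim(X)$ (matching the version of the theorem stated in the introduction), whereas the appendix statement you are proving reads $\dim Z_i=r_i+vdim(X)$; since $r_i\le 0$, these disagree unless $r_i=0$, and the appendix version appears to carry a sign typo rather than your argument being off.
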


\subsection{Local Model of Derived Blow-ups}

Given a morphism of smooth schemes $\sigma: M\to N$ and let $\mc{I}$ be an ideal sheaf of $N$, we denote $\sigma^{*}(\mc{I})$ as the image of the pull back of $\mc{I}$ in $\mc{O}_{M}$, which is an ideal sheaf of $N$. Given a regular divisor $D\subset N$, we denote $\mc{I}(D)$ as the ideal sheaf of $D$. If $\mc{I}\subset\mc{I}(D)$, where $\mc{I}$ is an ideal sheaf of $N$, then $\mc{I}(D)^{-1}\otimes \mc{I}$ is also an ideal sheaf of of $N$, which we denote as $\mc{I}(D)^{-1}\mc{I}$.

Let $f:Z\to X$ be a closed embedding of derived schemes such that $X$ is smooth. We denote $\mc{I}(Z,X)$ as the ideal sheaf of $\pi_{0}(Z)$ in $X$.

Let
$$Z\xrightarrow{f}X\xrightarrow{p}S$$
be closed embeddings of quasi-smooth derived schemes such that $Z$ and $S$ are both smooth. We denote $\bar{X}:=X\times_{Z}\mb{B}l_{Z/S}$. Then we have the following commutative diagram:
\begin{equation}
  \label{dglocal}
  \begin{tikzcd}
    E_{Z/X}\ar{r}\ar{d} & \mb{B}l_{Z/X}\ar{d} \\
    E_{Z/S}\ar{r}\ar{d} & \bar{X}\ar{d} \ar{r} & \mb{B}l_{Z/S}\ar{d}\\
    Z \ar{r}{f}  & X \ar{r}{p} & S.
  \end{tikzcd}
\end{equation}
\begin{lemma}
  \label{0524B} \label{cor:0306}
  The upper left square of \cref{dglocal} is a strict universal virtual divisor, where all the arrows in the upper left square are closed embeddings. Moreover, under the closed embedding $\mb{B}l_{Z/X}\to \mb{B}l_{Z/S}$, we have
  $$\mc{I}(\mb{B}l_{Z/X},\mb{B}l_{Z/S})\cong \mc{I}(E_{Z/S})^{-1}pr_{Z/X}^{*}(\mc{I}(X,S)).$$
\end{lemma}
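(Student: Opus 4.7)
\emph{Plan.} I would (i) construct the map $\mathbb{B}l_{Z/X}\to\bar X$ abstractly from the universal property of derived blow-ups, and then (ii) verify the strict virtual Cartier divisor property together with the ideal-sheaf formula by reducing to the Kuranishi local model of \cref{kuranishi}. The hard part will be the $\pi_{0}$-surjectivity of the conormal cosection around $E_{Z/S}$, which becomes transparent only after the local computation.

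For (i), the universal strict virtual Cartier divisor $E_{Z/X}\hookrightarrow\mathbb{B}l_{Z/X}$ on $(Z,X)$, composed with $p:X\to S$, gives a strict virtual Cartier divisor on $(Z,S)$: classically $Z\times_{X}T=Z\times_{S}T$ for any $T\to X$ since $p$ is a closed embedding, so the classical Cartesianity transfers; and the $\pi_{0}$-surjectivity of $g^{*}C_{Z/X}\to C_{E_{Z/X}/\mathbb{B}l_{Z/X}}$ lifts to $g^{*}C_{Z/S}\to C_{E_{Z/X}/\mathbb{B}l_{Z/X}}$ via the cofibre sequence $f^{*}C_{X/S}\to C_{Z/S}\to C_{Z/X}$ (with $C_{Z/X}$ connective). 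The moduli interpretation of $\mathbb{B}l_{Z/S}$ then produces a canonical map $\mathbb{B}l_{Z/X}\to\mathbb{B}l_{Z/S}$ over $p$; combined with $pr_{Z/X}$ it factors through $\bar X=X\times_{S}\mathbb{B}l_{Z/S}$ and sends $E_{Z/X}$ to the pull-back of $E_{Z/S}$, giving the candidate square.

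For the local picture, I would use fppf descent (\cref{prop:func}) and \cref{kuranishi} to reduce to $S=\mathrm{Spec}(A)$ smooth, $X=\mathrm{Spec}(A/(f_{1},\dots,f_{n}))$, $Z=\mathrm{Spec}(A/(g_{1},\dots,g_{m}))$ with $f_{i}=\sum_{j}\lambda_{ij}g_{j}$ for some $\lambda_{ij}\in\pi_{0}(A)$. Then \cref{exm:Rees1} and \cref{ex:2} give
$$R^{ext}_{Z/S}\cong A[t^{-1},w_{1},\dots,w_{m}]/(t^{-1}w_{j}-g_{j}),\qquad R^{ext}_{Z/X}\cong R^{ext}_{Z/S}\big/\bigl(\textstyle\sum_{j}\lambda_{ij}w_{j}\bigr)_{1\leq i\leq n},$$
so after passing to $\mathrm{Proj}$ (equivalently by \cref{lemA421} applied with $V_{0}=\mathcal{O}_{S}^{\oplus m}$ and $V_{1}\xrightarrow{\lambda}V_{0}$), $\mathbb{B}l_{Z/X}$ is realised as the derived vanishing locus inside $\mathbb{B}l_{Z/S}$ of the $n$ sections $\tilde f_{i}:=\sum_{j}\lambda_{ij}w_{j}$ of $\mathcal{O}_{\mathbb{B}l_{Z/S}}(1)$.

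Both conclusions now follow from one identity: a local generator $e$ of $\mathcal{I}(E_{Z/S})\cong\mathcal{O}_{\mathbb{B}l_{Z/S}}(-1)$ satisfies $pr^{*}_{Z/S}(g_{j})=w_{j}\cdot e$, and hence $pr^{*}_{Z/S}(f_{i})=\tilde f_{i}\cdot e$. As ideals of $\mathcal{O}_{\mathbb{B}l_{Z/S}}$,
$$\mathcal{I}(\mathbb{B}l_{Z/X},\mathbb{B}l_{Z/S})=(\tilde f_{i})_{i}=\mathcal{I}(E_{Z/S})^{-1}\cdot(pr^{*}_{Z/S}f_{i})_{i}=\mathcal{I}(E_{Z/S})^{-1}\cdot pr^{*}_{Z/S}(\mathcal{I}(X,S)),$$
which is the stated formula (reading the $pr^{*}_{Z/X}$ of the statement as $pr^{*}_{Z/S}=(p\circ pr_{Z/X})^{*}$, since $\mathcal{I}(X,S)$ lives on $S$). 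In the same local chart, $\bar X$ is cut out of $\mathbb{B}l_{Z/S}$ by $(\tilde f_{i}e)_{i}=(pr^{*}_{Z/S}f_{i})_{i}$ while $\mathbb{B}l_{Z/X}$ is cut out by $(\tilde f_{i})_{i}$, so the factorisation $\mathbb{B}l_{Z/X}\to\bar X$ is a closed embedding, the upper classical square is Cartesian, and the required $\pi_{0}$-surjectivity of $g^{*}C_{E_{Z/S}/\bar X}\to C_{E_{Z/X}/\mathbb{B}l_{Z/X}}$ reduces to the evident surjectivity of the matrix $\lambda=(\lambda_{ij})$ on $\pi_{0}$ — which is the only delicate point, and is rendered trivial by the Kuranishi set-up.
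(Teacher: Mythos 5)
Your proposal follows the paper's own route: reduce via fppf descent and \cref{kuranishi} to the Kuranishi local model, then read off both blow-ups from the explicit \cref{ex:2}/\cref{lemA421} description as derived zero loci inside $\mb{P}_{W}(V_{0})$, with the ideal formula following because $pr_{Z/S}^{*}(f_{i})=\tilde f_{i}\cdot e$. (The paper's proof is literally one line — ``reduce to the setting of \cref{lemA421}; the lemma follows'' — so you are usefully spelling out what is left implicit; and your observation that $pr_{Z/X}^{*}$ in the statement should be read as $pr_{Z/S}^{*}=(p\circ pr_{Z/X})^{*}$ is correct, since $\mc{I}(X,S)$ lives on $S$ and the paper's pull-back convention is defined only for maps between smooth schemes.)

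Two points need repair, however. First, your stated reason for the $\pi_{0}$-surjectivity of $g^{*}C_{E_{Z/S}/\bar X}\to C_{E_{Z/X}/\mb{B}l_{Z/X}}$ — ``evident surjectivity of the matrix $\lambda=(\lambda_{ij})$'' — does not hold: $\lambda$ is an arbitrary $n\times m$ matrix of functions with no rank condition, and is typically not surjective. The surjectivity is instead automatic for a different reason: in your chart $C_{E_{Z/S}/\bar X}$ is the cofiber of $(\tilde f_{i}):\mc{O}^{n}\to\mc{O}(1)$ over $E_{Z/S}$, and after pulling back to $E_{Z/X}$ the sections $\tilde f_{i}$ all vanish (they cut out $\mb{B}l_{Z/X}$), so that map is zero and the cofiber $\mc{O}(1)\oplus\mc{O}^{n}[1]$ visibly surjects onto $C_{E_{Z/X}/\mb{B}l_{Z/X}}\cong\mc{O}(1)|_{E_{Z/X}}$ on $\pi_{0}$. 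Second, the lemma asserts the square is the \emph{universal} strict virtual Cartier divisor over $(E_{Z/S},\bar X)$, i.e.\ that $\mb{B}l_{Z/X}\cong\mb{B}l_{E_{Z/S}/\bar X}$; you only check the strictness conditions, which produce a map into $\mb{B}l_{E_{Z/S}/\bar X}$ but not yet an isomorphism. This does follow from the same local picture — apply \cref{lemA421} once more to $E_{Z/S}\subset\bar X\subset\mb{B}l_{Z/S}$ with $V_{0}=\mc{O}(1)$, $V_{1}=\mc{O}^{n}$, $g_{1}=(\tilde f_{i})$; since $V_{0}$ is a line bundle $\mb{P}_{\mb{B}l_{Z/S}}(V_{0})=\mb{B}l_{Z/S}$ and $\bar w_{0}=0$, so $\mb{B}l_{E_{Z/S}/\bar X}$ is exactly the zero locus of $(\tilde f_{i})$, i.e.\ $\mb{B}l_{Z/X}$ — but this step should be stated explicitly.
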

\begin{proof}
  As it is a local question, we can reduce $Z, X,S$ to the setting of $R,S,T$ in \cref{lemA421}. Then \cref{0524B} follows from \cref{lemA421}.
\end{proof}

\begin{remark}
  Actually, \cref{0524B} holds if all $Z,X,S$ are quasi-smooth derived schemes and $p$ is a quasi-smooth closed embedding, through a more detailed study of the Kuranishi charts. We do not need this generalization in our paper and thus left it as an exercise for readers with interest.
\end{remark}

The following is a simplified version of the Hironaka principalization theorem:
\begin{theorem}[Hironaka \cite{MR0199184,MR0498562}, Bierstone-Milman \cite{MR1001853}, Villamayor \cite{MR985852,MR1395178}, W\l odarczyk \cite{MR2163383}]
\label{hironaka}
 Given a pair $(S,\mc{I})$, where $S$ is a smooth variety and $\mc{I}$ is an ideal sheaf of $\mc{O}_{S}$, then there exists a sequence of blow-ups $\sigma_{i}:S_{i}\to S_{i-1}$ of smooth centers $Z_{i-1}\subset S_{i-1}$,
  $$S_{0}=S\xleftarrow{\sigma_{1}}S_{1}\xleftarrow{\sigma_{2}}S_{2}\xleftarrow{\sigma_{3}}\cdots S_{i}\leftarrow \cdots \xleftarrow{\sigma_{n}}S_{n},$$
  which defined a sequence of pairs $(S_{i},\mc{I}_{i})$ where 
  \begin{itemize}
      \item $\mc{I}_i$ is an ideal sheaf of $S_i$ such that $Z_{i}$ is supported in $\mc{I}_{i}$;
      \item $\mc{I}_{i}\cong \mc{I}(D_{i})^{-1}\sigma_{i}^{*}(\mc{I}_{i-1})$, where $\mc{I}(D_{i})$ is the ideal of the exceptional divisor of $\sigma_{i}$;
      \item $\mc{I}_{n}\cong \mc{O}_{S_{n}}$.
  \end{itemize}
\end{theorem}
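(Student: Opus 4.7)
Since Hironaka's principalization theorem is a classical result whose proof in any of its published forms is far too intricate to reproduce here, the plan is simply to invoke it from the listed references. Nevertheless, let me sketch the common strategy shared by the modern constructive proofs in \cite{MR1001853, MR985852, MR1395178, MR2163383}, since the argument does have a clean high-level structure even though each individual step is delicate.

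The first step is to attach to each point $x \in S$ an upper semicontinuous invariant $\mathrm{inv}(x;\mc{I})$ valued in a well-ordered set. At the crudest level this is the order $\nu_x(\mc{I}) = \max\{n \colon \mc{I}_x \subset \mf{m}_x^n\}$, but the correct invariant is a lexicographic refinement that records the contribution of the exceptional divisors accumulated during earlier blow-ups (so that the process ``remembers'' its history and does not oscillate). The two crucial properties one must establish are: (i) the maximum locus $\mathrm{Max}\,\mathrm{inv}$ is closed and admits a canonical smooth stratum which can be taken as the next blow-up centre; (ii) after the blow-up at such a smooth centre, the controlled transform $\mc{I}(D)^{-1}\sigma^{*}(\mc{I})$ has strictly smaller $\mathrm{inv}$ in the lexicographic order. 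In characteristic zero, (i) is proved through maximal contact hypersurfaces and the induction on dimension via coefficient ideals, and this is the only place where the characteristic hypothesis is truly used.

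The second step is to iterate: at stage $i$ one chooses $Z_{i-1}$ to be an irreducible smooth component of $\mathrm{Max}\,\mathrm{inv}(S_{i-1},\mc{I}_{i-1})$, performs $\sigma_i \colon S_i \to S_{i-1}$, and sets $\mc{I}_i := \mc{I}(D_i)^{-1}\sigma_i^{*}(\mc{I}_{i-1})$. This fraction is an honest ideal sheaf because $\mc{I}_{i-1}$ has positive order along $Z_{i-1}$ (since $Z_{i-1} \subset \mathrm{Max}\,\mathrm{inv}$), so $\sigma_i^{*}\mc{I}_{i-1} \subset \mc{I}(D_i)$. Termination (step three) is then purely formal: since $\mathrm{inv}$ takes values in a well-ordered set and strictly decreases at each stage, the procedure halts after finitely many blow-ups, which happens precisely when $\mc{I}_n \cong \mc{O}_{S_n}$.

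The main obstacle, and the bulk of the cited papers, is the construction of a functorial, \'etale-local invariant satisfying (i) and (ii); once such an invariant is in hand, the induction is essentially formal. I have no plan to improve on that part and simply apply the result as a black box in the sequel, noting that the conclusion of \cref{hironaka} is exactly the input required to combine with \cref{0524B} in order to produce the desingularization sequence of \cref{thm:main2}.
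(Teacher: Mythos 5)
The paper does not prove \cref{hironaka} at all: it is stated as a black-box citation to Hironaka, Bierstone--Milman, Villamayor, and W\l odarczyk, and is then used directly in the proof of \cref{thm:main2}. Your proposal takes exactly the same stance — invoke the result from the literature — and your high-level sketch of the invariant-theoretic strategy (upper semicontinuous lexicographic invariant, maximal contact hypersurfaces, controlled transforms, well-ordered descent) is an accurate description of the modern constructive proofs, so there is nothing to correct; it simply goes beyond what the paper itself bothers to record.
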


\begin{proof}[Proof of \cref{thm:main2}]
  We apply the Hironaka principalization theorem to the pair $(S,\mc{I}(X,S))$. We notice that for any quasi-smooth derived scheme $X_{i}\subset S_{i}$ such that $\mc{I}(X_{i},S_{i})=\mc{I}_{i}$, $Z_{i}$ is a closed derived subscheme of $X_{i}$. Moreover, by \cref{cor:0306}, $\mb{B}l_{Z_{i}/X_{i}}$ is a closed derived subscheme of $S_{i+1}$ and $\mc{I}(\mb{B}l_{Z_{i}/X_{i}},S_{i+1})=\mc{I}_{i+1}$. Thus we can inductively define $X_{i}$ such that $X_{i}:=\mb{B}l_{Z_{i-1}/X_{i-1}}$. Moreover, $X_{n}=\emptyset$ since $\pi_{0}(X_{n})=\emptyset$.
\end{proof}

\section{Intrinsic Blow-ups and Virtual Localization Formula}
\label{sec:C}
Let $U$ be a locally of finite presentation and quasi-compact derived scheme over $\mb{C}$, with an action by a connected reductive $G$ such that the (derived) fixed locus $\pi_{0}(U)^{G}$ is non-empty. In Theorem 5.3 of \cite{hekking2022stabilizer}, Hekking-Rydh-Savvas proved that the derived blow-up $\mb{B}l_{U^{G}}U$ is the derived enhancement of the intrinsic blow-up of $\pi_{0}(U^{G})$ in $\pi_{0}(U)$ in the sense of \cite{kiem2017generalized}. In this section, we will prove that the intrinsic blow-up along the fixed locus will induce a virtual localization formula for the perfect obstruction theory coming from a quasi-smooth derived scheme.

\subsection{The $G$-theory of a derived scheme}
Let $U$ be a quasi-smooth derived scheme over $\mb{C}$. The homotopy category of the $\infty$-category $\mathrm{Coh}(U)$ has a natural $t$-structure such that the heart is $Coh(\pi_{0}(U))$. Hence we have that the $G$-theory
$$G(X):=K(\mathrm{Coh}(X))\cong G(\pi_{0}(U)).$$
The above formula also holds for the equivariant $G$-theory. Now we assume that there is a torus $T$ acts on $X$.
\begin{lemma}[Proposition 3.24 or Proposition 7.4]
  \label{dueto}
The fixed locus $U^{T}$ is also quasi-smooth over $\mb{C}$.
\end{lemma}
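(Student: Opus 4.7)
The plan is to reduce to a local equivariant Kuranishi model and then use that the torus-fixed locus of a smooth scheme is smooth. Since quasi-smoothness can be checked Zariski-locally on $U$ by \cref{131666}, and since the formation of the fixed locus $(-)^{T}$ commutes with Zariski-open immersions that are $T$-stable, I can work locally around a closed point $u \in |U^{T}|$.

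The first step is to obtain an equivariant closed embedding of a $T$-stable affine neighborhood of $u$ into a smooth $T$-scheme. Concretely, using an equivariant version of \cref{kuranishi} (which is exactly the content of Proposition 3.24 / Proposition 7.4 of \cite{hekking2022stabilizer}, or can be deduced by combining Sumihiro-type linearization at a fixed point with the argument of \cref{kuranishi}), I may assume that a $T$-stable neighborhood of $u$ in $U$ is presented as the derived zero locus
\[
U' \cong \mathbb{R}Z_{M}^{V}(s),
\]
where $M$ is a smooth affine $T$-scheme, $V$ is a $T$-equivariant locally free sheaf on $M$, and $s : V^{\vee} \to \mathcal{O}_{M}$ is a $T$-equivariant cosection.

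The second step is to identify the fixed locus. Let $M^{T} \subset M$ denote the classical fixed locus, which is smooth since $T$ is linearly reductive, and let $V|_{M^{T}} = V^{T} \oplus V^{\mathrm{mov}}$ be the canonical decomposition into the $T$-invariant summand and the sum of non-trivial weight spaces. The cosection $s$ restricts to a $T$-equivariant cosection $s|_{M^{T}}$ whose component on the moving part $V^{\mathrm{mov}}$ is automatically zero on $T$-fixed points; hence $s|_{M^{T}}$ factors through a cosection $s^{T} : (V^{T})^{\vee} \to \mathcal{O}_{M^{T}}$. A standard functor-of-points computation (using that for any $T$-equivariant map from a trivial-$T$-action test scheme, only weight $0$ equations survive) then gives a canonical equivalence
\[
(U')^{T} \cong \mathbb{R}Z_{M^{T}}^{V^{T}}(s^{T}).
\]
The right-hand side is a derived zero locus of a section of a locally free sheaf on a smooth scheme, hence quasi-smooth by \cref{131666}.

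Gluing these local models (the construction is natural in $T$-equivariant Zariski-open inclusions) shows that $U^{T}$ is covered by quasi-smooth open subschemes, and is therefore quasi-smooth. The main obstacle is the first step: establishing a $T$-equivariant Kuranishi presentation. Once this is granted, the rest is a routine calculation that the fixed-point functor commutes with the derived zero locus construction, together with the classical fact that fixed loci of torus actions on smooth schemes are smooth. Since this equivariant Kuranishi model is exactly what is provided by the cited results of Hekking--Rydh--Savvas, the lemma follows.
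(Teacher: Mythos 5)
The paper itself does not prove this lemma: it only cites it, pointing to Proposition~3.24 or Proposition~7.4 of Hekking--Rydh--Savvas \cite{hekking2022stabilizer}. So there is no internal proof to compare against; your sketch is best read as a reconstruction of the cited argument.

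As such a reconstruction it is sound. Once a $T$-stable neighborhood is presented as a derived zero locus $\mathbb{R}Z_{M}^{V}(s)$ over a smooth $T$-scheme $M$, the fixed-locus functor $(-)^{T}$ is a (homotopy) limit and therefore commutes with the pullback square defining the derived zero locus; and the fixed locus of the affine cone is $\mathbb{V}_{M}(V)^{T}\cong \mathbb{V}_{M^{T}}(V^{T})$, since only the weight-zero summand of $V|_{M^{T}}$ survives. Together these give $(U')^{T}\cong \mathbb{R}Z_{M^{T}}^{V^{T}}(s^{T})$, which is quasi-smooth over the smooth $M^{T}$ by \cref{131666}. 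You correctly flag that the only nontrivial input is the existence of a $T$-equivariant Kuranishi chart; this is exactly what the cited propositions of \cite{hekking2022stabilizer} supply, so your proof is not circular but rather explains how quasi-smoothness follows from the equivariant chart. Two minor points to tighten: the cosection in the paper's notation $\mathbb{R}Z_{M}^{V}(\tau)$ has source $V$, not $V^{\vee}$, so your $s$ should be written $s\colon V\to \mathcal{O}_{M}$; and it is worth saying explicitly that $(-)^{T}$ (as the internal Hom out of $\mathrm{pt}$ in $T$-equivariant derived schemes) preserves fiber products, since that is the precise mechanism behind the ``functor-of-points computation'' you invoke.
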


Let $g:Y\to U$ be a $T$-equivariant closed embedding such that $(U-Y)^{T}=\emptyset$. Then by  Thomason \cite{thomason1992formule}
\begin{align*}
G^{T}(U-Y)\otimes_{\mathrm{Rep}(T)}Frac(\mathrm{Rep}(T))&\cong G^{T}(\pi_{0}(U)-\pi_{0}(Y))\otimes_{\mathrm{Rep}(T)}Frac(\mathrm{Rep}(T))\\ &\cong 0
\end{align*}
Hence we have the concentration lemma by the localization theorem
\begin{lemma}[Concentration lemma]
  \label{lem:concentration}
  The embedding $g$ induces an isomorphism
  $$g_{*}:G^{T}(Y)\otimes_{\mathrm{Rep}(T)}Frac(\mathrm{Rep}(T)) \cong G^{T}(U)\otimes_{\mathrm{Rep}(T)}Frac(\mathrm{Rep}(T)).$$
\end{lemma}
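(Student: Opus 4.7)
The plan is to reduce the derived concentration statement to Thomason's classical concentration theorem via the comparison between derived and underlying classical $G$-theory. First, I would invoke the opening observation of the subsection that the $\infty$-category $\mathrm{Coh}(U)$ carries a $t$-structure whose heart is $\mathrm{Coh}(\pi_{0}(U))$, together with the analogous statement in the $T$-equivariant setting. This yields canonical isomorphisms
\begin{equation*}
G^{T}(U)\cong G^{T}(\pi_{0}(U)),\qquad G^{T}(Y)\cong G^{T}(\pi_{0}(Y)),\qquad G^{T}(U-Y)\cong G^{T}(\pi_{0}(U)-\pi_{0}(Y)),
\end{equation*}
under which the derived pushforward $g_{*}$ is identified with the classical pushforward along the underlying closed embedding $\pi_{0}(g):\pi_{0}(Y)\to \pi_{0}(U)$.

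Next I would apply Thomason's equivariant localization sequence on the classical quasi-compact scheme $\pi_{0}(U)$: writing $j$ for the complementary open embedding of $\pi_{0}(U)-\pi_{0}(Y)\cong \pi_{0}(U-Y)$ into $\pi_{0}(U)$, one has an exact sequence
\begin{equation*}
G^{T}(\pi_{0}(Y))\xrightarrow{\pi_{0}(g)_{*}} G^{T}(\pi_{0}(U))\xrightarrow{j^{*}} G^{T}(\pi_{0}(U-Y))\to 0.
\end{equation*}
Tensoring with $\mathrm{Frac}(\mathrm{Rep}(T))$ over $\mathrm{Rep}(T)$ preserves this exactness since localization is flat. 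It then suffices to observe the following two things: the right-hand term vanishes after this localization, and the map $\pi_{0}(g)_{*}$ becomes injective after localization.

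For the first, I would combine the hypothesis $(U-Y)^{T}=\emptyset$ with \cref{dueto}, applied to $U-Y$: the derived fixed locus of the open quasi-smooth derived subscheme $U-Y$ is again quasi-smooth, and by the heart identification its classical truncation coincides with the $T$-fixed locus of $\pi_{0}(U-Y)$. So $\pi_{0}(U-Y)^{T}=\emptyset$, and Thomason's vanishing theorem \cite{thomason1992formule} gives $G^{T}(\pi_{0}(U-Y))\otimes_{\mathrm{Rep}(T)}\mathrm{Frac}(\mathrm{Rep}(T))=0$, already recorded in the paragraph preceding the statement. For the injectivity of $\pi_{0}(g)_{*}$ after localization, one applies the same Thomason vanishing to the kernel term $G^{T}(\pi_{0}(Y)-\pi_{0}(Y)^{T})$ arising from a further localization sequence on $\pi_{0}(Y)$, or alternatively iterates the localization sequence on $\pi_{0}(U)$ one step further; I expect the cleanest route to be to replace both $Y$ and $U$ by the closed embedding $Y^{T}\to U$, whose complement again has empty $T$-fixed locus, deducing that $G^{T}(Y^{T})\to G^{T}(U)$ and $G^{T}(Y^{T})\to G^{T}(Y)$ are both isomorphisms after localization, and then using two-out-of-three.

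The main obstacle, such as it is, lies in this last two-out-of-three step: one must be careful that the identification of derived and classical $G$-theory is functorial with respect to the various pushforward maps involved, and that taking derived $T$-fixed loci is compatible with passage to $\pi_{0}$ in the required sense. Both are routine given \cref{dueto} and the heart description of $\mathrm{Coh}^{T}$, but they should be verified explicitly before concluding that $g_{*}$ becomes an isomorphism after localization at $\mathrm{Frac}(\mathrm{Rep}(T))$.
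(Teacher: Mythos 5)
Your overall strategy matches the paper's: identify derived and classical $G$-theory via the heart of the $t$-structure, then use Thomason's vanishing for the open complement and conclude via localization. But you write the localization sequence as the truncated, right-exact
\begin{equation*}
G^{T}(\pi_{0}(Y))\xrightarrow{\pi_{0}(g)_{*}} G^{T}(\pi_{0}(U))\xrightarrow{j^{*}} G^{T}(\pi_{0}(U-Y))\to 0,
\end{equation*}
which only yields surjectivity of $g_{*}$ after localizing at $\mathrm{Frac}(\mathrm{Rep}(T))$, and then you spend the remainder of the argument trying to recover injectivity. This is an unnecessary detour: Quillen--Thomason localization for $G$-theory is a \emph{long} exact sequence (a fiber sequence of spectra)
\begin{equation*}
\cdots\to G^{T}_{n+1}(U-Y)\to G^{T}_{n}(Y)\xrightarrow{g_{*}} G^{T}_{n}(U)\to G^{T}_{n}(U-Y)\to G^{T}_{n-1}(Y)\to\cdots,
\end{equation*}
so the vanishing $G^{T}_{*}(U-Y)\otimes_{\mathrm{Rep}(T)}\mathrm{Frac}(\mathrm{Rep}(T))=0$ in \emph{all} degrees, which is exactly what the displayed computation before the lemma establishes, gives injectivity and surjectivity of $g_{*}$ after localization simultaneously, with no further input. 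This is what the paper means by ``by the localization theorem.''

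Your proposed patch for injectivity via two-out-of-three is in fact circular: to know that $Y^{T}\to U$ and $Y^{T}\to Y$ induce isomorphisms on localized $G$-theory, you would need the very concentration statement you are trying to prove (applied to those closed embeddings, whose open complements also have empty $T$-fixed locus), and there is no strictly smaller case to which to descend. Your other suggestion, ``iterate the localization sequence on $\pi_{0}(U)$ one step further,'' is really just a roundabout description of using the long exact sequence; stating it that way directly removes the gap. The reduction from derived to classical $G$-theory and the observation that $\pi_{0}(U-Y)^{T}=\emptyset$ follows from $(U-Y)^{T}=\emptyset$ are both fine and match the paper.
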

Specifically, we denote $i_{T}:U^{T}\to U$ as the closed embedding of fixed locus. Then we have the isomorphism
$$i_{T*}:G^{T}(U^{T})\otimes_{\mathrm{Rep}(T)}Frac(\mathrm{Rep}(T))\cong G^{T}(U)\otimes_{\mathrm{Rep}(T)}Frac(\mathrm{Rep}(T)).$$

\subsection{The infinite wedge sum $[\frac{1}{\wedge^{\bullet}\mc{F}}]$}
Let $Y$ be a quasi-smooth derived scheme over $\mb{C}$, with a trivial torus $T$ action. Let $\mc{F}$ be $T$-equivariant quasi-coherent sheaf of perfect-amplitude contained in $[0,1]$. Moreover, we assume that for every closed point $y\in Y$, the $T$-action on $\mc{F}|_{y}$ has no trivial sub-representations. The purpose of this subsection is to define
$$[\frac{1}{\wedge^{\bullet}\mc{F}}]\in G_{0}^{T}(Y)\otimes_{\mathrm{Rep(T)}}Frac(\mathrm{Rep}(T)).$$

The torus $T$ action on $\mc{F}$ induces an action of $T$ on $\mb{V}_{\mb{P}_{Y}(\mc{F})}(\mc{O}_{\mb{P}_{Y}(\mc{F})}(1))$. By  \cref{prop:conj41}, we have
$$\mb{V}_{\mb{P}_{Y}(\mc{F})}(\mc{O}_{\mb{P}_{Y}(\mc{F})}(1))-\mb{P}_{Y}(\mc{F})\cong \mb{V}^{*}_{X}(\mc{F})$$
and $(\mb{V}_{Y}^{*}(\mc{F}))^{T}\cong \emptyset$. Thus by the concentration lemma \cref{lem:concentration}, there exists a unique class, which we denote as $$[\frac{1}{1-\mc{O}_{\mb{P}_{Y}}(1)}]\in G_{0}^{T}(\mb{P}_{Y}(\mc{F}))\otimes_{\mathrm{Rep(T)}}Frac(\mathrm{Rep}(T))$$
such that
$$i_{0*}[\frac{1}{1-\mc{O}_{\mb{P}_{Y}}(1)}]\cong [\mc{O}_{\mb{V}_{\mb{P}_{Y}(\mc{F})}(\mc{O}_{\mb{P}_{Y}(\mc{F})}(1))}].$$
It is the inverse of $1-[\mc{O}_{\mb{P}_{Y}}(1)]$ in $G_{0}^{T}(\mb{P}_{Y}(\mc{F}))\otimes_{\mathrm{Rep(T)}}Frac(\mathrm{Rep}(T))$.
\begin{definition}
  \label{def:virtually}
  We define the class
  $$[\frac{1}{\wedge^{\bullet}\mc{F}}]\in G_{0}^{T}(Y)\otimes_{\mathrm{Rep(T)}}Frac(\mathrm{Rep}(T))$$
  as
  $$pr_{\mc{F}*}(\frac{1}{1-[\mc{O}_{\mb{P}_{Y}}(1)]})+(-1)^{r}det(\mc{F})^{-1}\sum_{l=0}^{-r}[S^{l}(\mc{F}^{\vee})]$$
  where $r$ is the rank of $\mc{F}$.
\end{definition}
The \cref{def:virtually} is compatible with the traditional definition, due to the following lemma:
\begin{lemma}
  \label{classicallocal}
  Let $g:S\to Y$ be a map of derived schemes, such that $g^{*}\mc{F}\cong \{V\to W\}$, where $V,W$ are $T$-equivariant locally free sheaves over $Y$ such that there $V,W$ has no components with trivial representations, and $r=w-v$. Then we have
    $$[\frac{1}{\wedge^{\bullet}(g^{*}\mc{F})}]\cong [\wedge^{\bullet}V][\frac{1}{\wedge^{\bullet}W}].$$
\end{lemma}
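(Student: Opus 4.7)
The plan is to unfold \cref{def:virtually} of $[\frac{1}{\wedge^{\bullet}(g^{*}\mc{F})}]$ using Jiang's generalized Serre theorem \cref{thm:serre2}, show that the built-in dual correction term exactly cancels against the dual Serre contributions so that the whole class collapses to $\sum_{d\geq 0}[S^{d}(g^{*}\mc{F})]$, and then expand each $[S^{d}(g^{*}\mc{F})]$ via the Koszul resolution of a symmetric power of a two-term complex to factor this sum as the product $[\wedge^{\bullet}V]\cdot[\frac{1}{\wedge^{\bullet}W}]$.

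In detail, I would first invoke \cref{thm:serre2} to write $pr_{g^{*}\mc{F}*}\mc{O}_{\mb{P}_{S}(g^{*}\mc{F})}(d)$ as $S^{d}(g^{*}\mc{F})$ for $d\geq -r+1$ and as $[det(g^{*}\mc{F})]^{-1}S^{-d-r}((g^{*}\mc{F})^{\vee})[1-r]$ for $d<0$ (the two glued by the fiber sequence \cref{eq:serre}). The no-trivial-weight hypothesis on $V,W$ guarantees via \cref{lem:concentration} that $1-[\mc{O}_{\mb{P}_{S}(g^{*}\mc{F})}(1)]$ is invertible in localized $T$-equivariant $G$-theory, so $\bigl[\tfrac{1}{1-\mc{O}(1)}\bigr]=\sum_{d\geq 0}[\mc{O}(d)]$ there, and pushing forward term-by-term would yield
\[
pr_{g^{*}\mc{F}*}\bigl[\tfrac{1}{1-\mc{O}(1)}\bigr]=\sum_{d\geq 0}[S^{d}(g^{*}\mc{F})]+(-1)^{1-r}[det(g^{*}\mc{F})]^{-1}\sum_{l=0}^{-r}[S^{l}((g^{*}\mc{F})^{\vee})].
\]
Adding the correction $(-1)^{r}[det(g^{*}\mc{F})]^{-1}\sum_{l=0}^{-r}[S^{l}((g^{*}\mc{F})^{\vee})]$ from \cref{def:virtually} and using $(-1)^{1-r}+(-1)^{r}=0$, the dual sums cancel exactly, leaving $[\tfrac{1}{\wedge^{\bullet}(g^{*}\mc{F})}]=\sum_{d\geq 0}[S^{d}(g^{*}\mc{F})]$. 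Next, using the Koszul-type description $S^{d}(\{V\to W\})\cong\{\wedge^{d}V\to\cdots\to S^{d}W\}$ from the example in \cref{sec:2}, I would compute $[S^{d}(g^{*}\mc{F})]=\sum_{i+j=d}(-1)^{i}[\wedge^{i}V]\,[S^{j}W]$ (the sign comes from $V$ sitting in cohomological degree $-1$), and summing over $d\geq 0$ factors the double series as $\bigl(\sum_{i}(-1)^{i}[\wedge^{i}V]\bigr)\bigl(\sum_{j}[S^{j}W]\bigr)$; the classical Koszul identity $\sum_{j}[S^{j}W]=[\wedge^{\bullet}W]^{-1}$ in the localization (valid because $W$ has no trivial weights) then delivers the claimed product.

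The main obstacle is rigorously justifying the termwise pushforward of the infinite geometric series $[1/(1-\mc{O}(1))]$ in localized $G$-theory. The cleanest alternative is to route the computation through the Cartesian square \cref{cd:projection}: identify $\mb{V}_{\mb{P}_{S}(g^{*}\mc{F})}(\mc{O}(1))$ with the derived blow-up $\mb{B}l_{S/\mb{V}_{S}(g^{*}\mc{F})}$ via \cref{prop:conj41}, compute $\epsilon_{g^{*}\mc{F}*}[\mc{O}]$ directly from the $K$-theoretic blow-up formula \cref{eq:noref}, and then push through the affine projection $\Pi_{g^{*}\mc{F}}$, whose pushforward of the structure sheaf is tautologically $\sum_{d\geq 0}[S^{d}(g^{*}\mc{F})]$ since $\Pi_{g^{*}\mc{F}}$ is affine with structure sheaf $S^{\bullet}(g^{*}\mc{F})$. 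This bypasses the convergence issue entirely by replacing it with the already-established identity \cref{eq:noref}.
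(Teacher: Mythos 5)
Your first route has a genuine gap exactly where you flag it: the formal expansion $[\tfrac{1}{1-\mc{O}(1)}]=\sum_{d\geq 0}[\mc{O}(d)]$ is not an identity in $G_0^T(\mb{P}_S(g^*\mc{F}))\otimes\mathrm{Frac}(\mathrm{Rep}\,T)$ (the right side is a divergent formal series, not a $G$-theory class), so you cannot push it forward termwise. The paper avoids this entirely by first multiplying $[\tfrac{1}{1-\mc{O}(1)}]$ by $pr_{\mc{F}}^*[\wedge^\bullet W]$: the Euler sequence of $\mb{P}_Y(W)$ restricted along $\mb{P}_Y(\mc{F})\hookrightarrow\mb{P}_Y(W)$ shows $(1-[\mc{O}(1)])$ divides $[\wedge^\bullet pr_{\mc{F}}^*W]$, so this product is a genuine finite class, which is then pushed down using \cref{thm:serre2} in the case $r>0$ and by an induction on $-r$ (adding a copy $t\mc{O}$ to $\mc{F}$) in the case $r\leq 0$. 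None of this appears in your proposal.

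Your proposed fix still has the same gap, just relocated. After identifying $\mb{V}_{\mb{P}_S(g^*\mc{F})}(\mc{O}(1))\cong\mb{B}l_{S/\mb{V}_S(g^*\mc{F})}$ via \cref{prop:conj41} and applying \cref{eq:noref} — which is a sound and genuinely different route from the paper's, and does correctly show the dual corrections cancel, leaving $[\tfrac{1}{\wedge^\bullet g^*\mc{F}}]=$ ``$\Pi_{g^*\mc{F}*}[\mc{O}_{\mb{V}_S(g^*\mc{F})}]$'' — you then appeal to a pushforward along the affine projection $\Pi_{g^*\mc{F}}$. But $\Pi_{g^*\mc{F}}$ is not proper, $S^\bullet(g^*\mc{F})$ is not coherent over $\mc{O}_S$, and the asserted identity ``$\Pi_{g^*\mc{F}*}[\mc{O}]=\sum_{d\geq 0}[S^d(g^*\mc{F})]$'' is again a divergent formal series, not a well-defined $G$-theory class; this is precisely the convergence issue you set out to bypass, and nothing in the argument bypasses it.

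The way to close your route would be to interpret $\Pi_{g^*\mc{F}*}$ as $i_{0,g^*\mc{F}*}^{-1}$, which does exist on $G_0^T\otimes\mathrm{Frac}(\mathrm{Rep}\,T)$ by the concentration lemma \cref{lem:concentration} (since $g^*\mc{F}$ has no trivial weights, $(\mb{V}_S^*(g^*\mc{F}))^T=\emptyset$), and then to evaluate $i_{0,g^*\mc{F}*}^{-1}[\mc{O}_{\mb{V}_S(g^*\mc{F})}]$ concretely using the two-term presentation: by the Cartesian square $\mb{V}_S(g^*\mc{F})\cong \mb{V}_S(W)\times_{\mb{V}_S(V)}S$ from \cref{prop:jiang433}, the closed embedding $\alpha:\mb{V}_S(g^*\mc{F})\to\mb{V}_S(W)$ is the derived zero locus of a cosection of $\Pi_W^*V$, so $\alpha_*[\mc{O}_{\mb{V}_S(g^*\mc{F})}]=[\wedge^\bullet\Pi_W^*V]$ in $G_0^T(\mb{V}_S(W))$, and applying $i_{0,W*}^{-1}$ together with the projection formula and the classical identity $i_{0,W*}^{-1}[\mc{O}_{\mb{V}_S(W)}]=[\tfrac{1}{\wedge^\bullet W}]$ yields $[\wedge^\bullet V][\tfrac{1}{\wedge^\bullet W}]$. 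This last Koszul computation is where the hypothesis $g^*\mc{F}\cong\{V\to W\}$ and the no-trivial-weight condition on $W$ actually enter, and it is the step your write-up asserts tautologically rather than proves. If you supply it, your route works and is a legitimate alternative to the paper's multiplicative/inductive argument, trading the inductive case analysis in $r$ for a single application of the blow-up formula \cref{eq:noref} plus a Koszul computation in $\mb{V}_S(W)$; as written, though, the key step is missing.
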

\begin{proof}
  
  We can assume that $g=id$, as the projectivization is compatible with pull-backs. We denote $v$ and $w$ as the ranks of $V$ and $W$ respectively.

  Over $\mb{P}_{Y}(\mc{F})$, we have the following fiber sequences:
  \begin{equation*}
    L_{\mb{P}_{Y}(W)/Y}|_{\mb{P}_{Y}(\mc{F})}\otimes \mc{O}_{\mb{P}_{Y}(\mc{F})}(1)\to pr_{\mc{F}}^{*}W\to \mc{O}_{\mb{P}_{Y}(\mc{F})}(1)
  \end{equation*}
  and thus we have
  \begin{equation*}
    [\frac{1}{1-\mc{O}_{\mb{P}_{Y}(\mc{F})}(1)}]\cdot pr^{*}_{\mc{F}}[\wedge^{\bullet}W]=\sum_{i=0}^{w}(-1)^{i}\sum_{j=0}^{i}(-1)^{j}[\wedge^{i-j}pr_{\mc{*}}W][\mc{O}_{\mb{P}_{Y}(\mc{F})}(j)].
  \end{equation*}
 We first prove the case that $r>0$: by Jiang's generalized Serre theorem \cref{thm:serre2}, we have
  \begin{align*}
    & pr_{\mc{F}*}[\frac{1}{1-\mc{O}_{\mb{P}_{Y}(\mc{F})}(1)}]\cdot [\wedge^{\bullet}W]\\ &=\sum_{i=0}^{w}(-1)^{i}\sum_{j=0}^{i}(-1)^{j}[\wedge^{i-j}W][S^{j}\mc{F}] \\
    &=[\wedge^{\bullet}V].
  \end{align*}
  Next we prove the case $r\leq 0$ be the induction on $-r$. Over $\mb{P}_{Y}(\mc{F}\oplus t\mc{O})$, where $t$ is a non-trivial representation, the derived stack  $\mb{P}_{Y}(\mc{F})$ is the derived locus of
  $$t\mc{O}_{\mb{P}_{Y}(\mc{F})}(-1)\to \mc{O}_{\mb{P}_{Y}(\mc{F})}$$
  and thus we have
  \begin{align*}
    pr_{\mc{F}*}[\frac{1}{1-\mc{O}_{\mb{P}_{Y}(\mc{F})}(1)}]=pr_{(\mc{F}\oplus t\mc{O})*}[\frac{1}{1-\mc{O}_{\mb{P}_{Y}(\mc{F}\oplus t\mc{O})}(1)}]-pr_{(\mc{F}\oplus t\mc{O})*}[\frac{t\mc{O}_{\mb{P}_{Y}(\mc{F}\oplus t\mc{O})}(-1)}{1-\mc{O}_{\mb{P}_{Y}(\mc{F}\oplus t\mc{O})}(1)}]\\
    =(1-t)pr_{(\mc{F}\oplus t\mc{O})*}[\frac{1}{1-\mc{O}_{\mb{P}_{Y}(\mc{F}\oplus t\mc{O})}(1)}]+t\cdot pr_{(\mc{F}\oplus t\mc{O})*}\mc{O}_{\mb{P}_{Y}(\mc{F}\oplus t\mc{O})}(-1).
  \end{align*}
  Thus \cref{classicallocal} follows from the induction on $\mb{P}_{Y}(\mc{F}\oplus t\mc{O})$, since the rank of $\mc{F}\oplus t\mc{O}$ is $r+1$.
\end{proof}

\subsection{The virtual localization theorem}
Now we consider the virtual localization theorem. Let $U$ be a quasi-smooth derived scheme with a constant virtual dimension over $\mb{C}$ with a torus $T$ action. Let $V_{1},\cdots, V_{l}$ be the connected components of $U^{T}$, which are all quasi-smooth due to \cref{dueto}. As the rank of $C_{U^{T}/U}$ is a locally constant, all $C_{V_{j}/U}$ has constant ranks which we denote as $r_{j}$.
\begin{theorem}[Virtual localization theorem]
  We have
  $$[\mc{O}_{U}]=\sum_{j=1}^{l}i_{V_{j}*}[\frac{1}{\wedge^{\bullet}C_{V_{j}/U}}]\in G_{0}^{T}(U)\otimes_{\mathrm{Rep(T)}}Frac(\mathrm{Rep}(T)).$$
\end{theorem}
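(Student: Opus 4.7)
The plan is to apply the K-theoretic comparison formula \cref{eq:noref} $T$-equivariantly to the closed embedding $f\colon U^T\to U$ of the fixed locus, and then to identify $pr_{f*}[\mc{O}_{\mb{B}l_{U^T/U}}]$ explicitly by a direct computation in localized equivariant $G$-theory of the derived blow-up. Working one component $V_j$ at a time (restricting to a $T$-invariant open neighbourhood so that $C_{V_j/U}$ has constant rank $r_j$) and using that the constructions of \cref{thm:nmain} are functorial and therefore compatible with the $T$-action, I obtain
\begin{equation*}
  [\mc{O}_U] \;=\; pr_{f*}[\mc{O}_{\mb{B}l_{U^T/U}}] \;+\; \sum_{j}(-1)^{r_j}\,i_{V_j*}\Bigl(\det(C_{V_j/U})^{-1}\sum_{l=0}^{-r_j}[S^l(C_{V_j/U}^\vee)]\Bigr),
\end{equation*}
with the convention that the inner sum is empty when $r_j>0$; in that case \cref{thm:nmain}(6) gives $pr_{f*}\mc{O}_{\mb{B}l}\simeq\mc{O}_U$ so the formula still holds. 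Comparing with \cref{def:virtually}, this already produces the polynomial part of $[\tfrac{1}{\wedge^\bullet C_{V_j/U}}]$, and the whole statement reduces to identifying $pr_{f*}[\mc{O}_{\mb{B}l_{U^T/U}}]$ with $\sum_j i_{V_j*}pr_{C_{V_j/U}*}[\tfrac{1}{1-\mc{O}(1)}]$.

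The core claim is the identity
\begin{equation*}
  [\mc{O}_{\mb{B}l_{U^T/U}}] \;=\; \iota_{E*}\Bigl[\tfrac{1}{1-\mc{O}(1)}\Bigr] \qquad \text{in } G_0^T(\mb{B}l_{U^T/U})\otimes_{\mathrm{Rep}(T)}\mathrm{Frac}(\mathrm{Rep}(T)),
\end{equation*}
where $E=\mb{P}(C_{U^T/U})$ and the class on the right is defined as just before \cref{def:virtually}. To prove it I multiply both sides by $1-[\mc{O}_{\mb{B}l}(1)]$. On the left, the ideal-sheaf fiber sequence $\mc{O}_{\mb{B}l}(1)\to\mc{O}_{\mb{B}l}\to\iota_{E*}\mc{O}_E$ gives $\iota_{E*}[\mc{O}_E]$; on the right, the projection formula combined with $\iota_E^*\mc{O}_{\mb{B}l}(1)\cong\mc{O}_{\mb{P}(C_{U^T/U})}(1)$ and the defining relation $(1-[\mc{O}(1)])\cdot[\tfrac{1}{1-\mc{O}(1)}]=1$ yields the same $\iota_{E*}[\mc{O}_E]$. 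Since the open complement $\mb{B}l\setminus E\simeq U\setminus U^T$ has no $T$-fixed points, $(\mb{B}l_{U^T/U})^T\subset E$, and on each component $\mb{P}((C_{V_j/U})^\chi)$ of this fixed locus the restriction of $\mc{O}(1)$ carries a nontrivial weight $\chi\neq 0$ (by the standing no-trivial-subrepresentation hypothesis). Hence $1-[\mc{O}_{\mb{B}l}(1)]$ has invertible image in $G_0^T((\mb{B}l)^T)\otimes\mathrm{Frac}$; applying the concentration lemma \cref{lem:concentration} to $(\mb{B}l)^T\subset\mb{B}l$ together with the projection formula allows cancellation of this factor in the localized group, yielding the claim.

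Pushing the previous identity forward along $pr_f$ and using $pr_f\circ\iota_E=i_{U^T}\circ pr_{C_{U^T/U}}$ gives the desired expression for $pr_{f*}[\mc{O}_{\mb{B}l_{U^T/U}}]$, which upon substitution into the first paragraph and comparison with \cref{def:virtually} produces $[\mc{O}_U]=\sum_j i_{V_j*}[\tfrac{1}{\wedge^\bullet C_{V_j/U}}]$, as required. The hard part is the cancellation in the middle step: one needs that multiplication by $1-[\mc{O}_{\mb{B}l}(1)]$ is injective on $G_0^T(\mb{B}l_{U^T/U})\otimes\mathrm{Frac}$, which via concentration and projection formula is equivalent to its invertibility on the fixed locus. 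That in turn rests on the structure theorem \cref{structurethm} identifying $\mb{B}l_{U^T/U}$ as a quasi-smooth derived scheme with well-behaved equivariant projective-bundle-like exceptional divisor, together with the weight analysis of $\mc{O}(1)$ on $\mb{P}((C_{V_j/U})^\chi)$.
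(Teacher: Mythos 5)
Your proof is correct and follows essentially the same route as the paper: apply the blow-up formula \cref{eq:noref} equivariantly, reduce to the exceptional divisor via the concentration lemma, and identify $[\mc{O}_{\mb{B}l_{U^T/U}}]$ with the pushforward of the inverted Euler class $\bigl[\tfrac{1}{1-\mc{O}(1)}\bigr]$. The only difference is that you spell out the cancellation step (multiplying by $1-[\mc{O}_{\mb{B}l}(1)]$, invoking the ideal-sheaf sequence, and checking invertibility on the $T$-fixed locus), which the paper compresses into the terse phrase ``by restricting to $E_j$.''
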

\begin{proof}
  We consider the intrinsic blow-up $\mb{B}l_{U^{T}}U$. Then by \cref{eq:noref}, we have
  $$[\mc{O}_{U}]=[pr_{U^{T}/U*}\mc{O}_{\mb{B}l_{U^{T}/U}}]+\sum_{j=1}^{l}i_{V_{j}*}(-1)^{r_{j}}det(C_{V_{j}/U})^{-1}\sum_{k=0}^{-r_{i}}[S^{k}(C_{V_{j}/U}^{\vee})].$$
  Moreover, we consider the exceptional divisors $\eta_{j}:E_{j}\to \mb{B}l_{U^{T}}U$, where $E_{j}\cong \mb{P}_{V_{j}}(C_{V_{j}/U})$. Then $(\mb{B}l_{U^{T}}U-\cup_{j=1}^{l}E_{j})^{T}=\emptyset$ and thus we have the isomorphism:
  $$\bigoplus_{j=1}^{l}\eta_{j*}:\bigoplus_{j=1}^{l}G_{0}(E_{j})\otimes_{\mathrm{Rep(T)}}Frac(\mathrm{Rep}(T))\to G_{0}(\mb{B}l_{U^{T}}U)\otimes_{\mathrm{Rep(T)}}Frac(\mathrm{Rep}(T)).$$
  By restricting to $E_{j}$, we have
  $$[\mc{O}_{\mb{B}l_{U^{T}/U}}]\cong \sum_{j=1}^{l}\eta_{j*}[\frac{1}{1-\mc{O}_{\mb{P}_{V_{j}(C_{V_{j}/U})}}(1)}].$$
  in $ G_{0}(\mb{B}l_{U^{T}}U)\otimes_{\mathrm{Rep(T)}}Frac(\mathrm{Rep}(T))$.
  Thus we have $[\mc{O}_{U}]$ is the sum of all
  $$i_{V_{j}*}(pr_{C_{V_{j}/U}*}\frac{1}{1-[\mc{O}_{\mb{P}_{V_{j}}(C_{V_{j}/U})}(1)]}+(-1)^{r_{j}}det(C_{V_{j}/U})^{-1}\sum_{k=0}^{-r_{j}}[S^{k}(C_{V_{j}/U}^{\vee})])$$
  for all $1\leq j\leq l$, which is
  $$\sum_{j=1}^{l}i_{V_{j}*}[\frac{1}{\wedge^{\bullet}C_{V_{j}/U}}]\in G_{0}^{T}(U)\otimes_{\mathrm{Rep(T)}}Frac(\mathrm{Rep}(T)).$$
\end{proof}

\begin{remark}
  To generalize the above proof to Artin stacks, we would need a concentration theorem, like Theorem A of \cite{aranha2022localization}. It will appear in the future work of Aranha-Khan-Latyntsev-Park-Ravi, by Remark 0.4 of \cite{aranha2022localization}.
\end{remark}
\bibliography{Resolution.bib}
\bibliographystyle{plain}
\vspace{5mm}
Kavli Institute for the Physics and 
Mathematics of the Universe (WPI), University of Tokyo,
5-1-5 Kashiwanoha, Kashiwa, 277-8583, Japan.

\textit{E-mail address}: yu.zhao@ipmu.jp
\end{document}